\documentclass[11pt,psamsfonts, oneside, reqno]{amsart}
%LOGIK/MENGEN
\newcommand{\hkra}{\hookrightarrow}

\newcommand{\dimp}{\Leftrightarrow}

\newcommand{\set}[1]{\left\{ #1 \right\}}

\newcommand{\cc}[1]{\overline{#1}}

\newcommand{\union}[2]{\bigcup\limits_{#1}{#2}}%union
\newcommand{\inter}[2]{\bigcap\limits_{#1}{#2}}%intersection
%subobject
\newcommand{\sub}{\subset}%subset
%proper subset
%not subset

\newcommand{\ssuper}{\supsetneq}

\newcommand{\sm}{\ensuremath{\setminus}}%set difference
%symmetric difference
 % disjoint union
 % disjoint union

%comment in implication

%union
%intersection

%Arithmetik
\newcommand{\s}[2]{\sum\limits_{#1}{#2} }
%sums
\newcommand{\p}[2]{\prod\limits_{#1}{#2} }
%prods
%
%Groups
\newcommand{\g}{\circ}%group operation
\newcommand{\modulo}[2]{{\raisebox{.2em}{$#1$}\left/\raisebox{-.2em}{$#2$}\right.}}%modulo
%modulo
%generated subgroup
%normal closure

\newcommand{\inv}{^{-1}}
\newcommand{\limes}[3]{\lim\limits_{#1 \rightarrow #2}{#3}}
\newcommand{\norm}[1]{\lVert#1\rVert}

\newcommand{\cl}{\colon}
\newcommand{\djun}[1]{\bigsqcup\limits_{#1}}

\newcommand{\lbr}[1]{\Bigl(#1\Bigr)}
\newcommand{\emst}{\emptyset}
   %\modtime above
\newcommand{\iI}{_{i \in I}}

%direct sum index set

\newcommand{\jN}{_{j\in \bN}}

%inverse limit
%intersection over range
\newcommand{\xra}{\xrightarrow}

\newcommand{\scA}{\mathscr{A}}

\newcommand{\scC}{\mathscr{C}}

\newcommand{\scF}{\mathscr{F}}
\newcommand{\scG}{\mathscr{G}}

\newcommand{\scJ}{\mathscr{J}}

\newcommand{\scP}{\mathscr{P}}

\newcommand{\scS}{\mathscr{S}}

\newcommand{\scV}{\mathscr{V}}

 %inner product

%limsup

\newcommand{\normd}{\norm{\cdot}}

\newcommand{\conn}{\nabla}
\newcommand{\eva}{ev}
\newcommand{\ii}{\text{i}}

\newcommand{\lspan}[1]{\langle {#1}\rangle}
\newcommand{\pf}{\pitchfork}

\newcommand{\inn}{^\mathrm{o}}
\newcommand{\Ide}{Id}

\newcommand{\lc}{\underline}

\newcommand{\dul}{^\vee}
\newcommand{\vfc}[1]{[#1]^{\virt}}
\newcommand{\ide}{\normalfont\text{id}}
\newcommand{\pt}{\normalfont\text{pt}}
\newcommand{\im}{\normalfont\text{im}}
\newcommand{\reg}{\normalfont\text{reg}}

%Groups
\newcommand{\GL}{\normalfont\text{GL}}
\newcommand{\PGL}{\normalfont\text{PGL}}

\newcommand{\Set}{\normalfont\text{Set}}
\newcommand{\coker}{\normalfont\text{coker}}
\newcommand{\pr}{\normalfont\text{pr}}

\newcommand{\Mbar}{\overline{\mathcal M}}

\newcommand{\vdim}{\normalfont\text{vdim}}
\newcommand{\delbar}{\bar\partial}
\newcommand{\del}{\partial}
\newcommand{\Hom}{\normalfont\text{Hom}}

\newcommand{\Aut}{\normalfont\text{Aut}}

%\b for \mathbb

\newcommand{\bB}{\mathbb{B}}
\newcommand{\bC}{\mathbb{C}}
\newcommand{\bD}{\mathbb{D}}
\newcommand{\bE}{\mathbb{E}}

\newcommand{\bK}{\mathbb{K}}

\newcommand{\bN}{\mathbb{N}}

\newcommand{\bP}{\mathbb{P}}
\newcommand{\bQ}{\mathbb{Q}}
\newcommand{\bR}{\mathbb{R}}

\newcommand{\bZ}{\mathbb{Z}}
%\c for \mathcal
\newcommand{\cA}{\mathcal{A}}
\newcommand{\cB}{\mathcal{B}}
\newcommand{\cC}{\mathcal{C}}

\newcommand{\cE}{\mathcal{E}}
\newcommand{\cF}{\mathcal{F}}
\newcommand{\cG}{\mathcal{G}}
\newcommand{\cH}{\mathcal{H}}
\newcommand{\cI}{\mathcal{I}}
\newcommand{\cJ}{\mathcal{J}}
\newcommand{\cK}{\mathcal{K}}
\newcommand{\cL}{\mathcal{L}}

\newcommand{\cN}{\mathcal{N}}
\newcommand{\cO}{\mathcal{O}}
\newcommand{\cP}{\mathcal{P}}

\newcommand{\cS}{\mathcal{S}}
\newcommand{\cT}{\mathcal{T}}
\newcommand{\cU}{\mathcal{U}}
\newcommand{\cV}{\mathcal{V}}
\newcommand{\cW}{\mathcal{W}}
\newcommand{\cX}{\mathcal{X}}
\newcommand{\cY}{\mathcal{Y}}
\newcommand{\cZ}{\mathcal{Z}}
%CAPS \f for \mathfrak

\newcommand{\fF}{\mathfrak{F}}

\newcommand{\fJ}{\mathfrak{J}}

\newcommand{\fL}{\mathfrak{L}}
\newcommand{\fM}{\mathfrak{M}}

%lower case \f

\newcommand{\fc}{\mathfrak{c}}

\newcommand{\ff}{\mathfrak{f}}
\newcommand{\fg}{\mathfrak{g}}

\newcommand{\fii}{\mathfrak{i}}
\newcommand{\fj}{\mathfrak{j}}

\newcommand{\fl}{\mathfrak{l}}

\newcommand{\fo}{\mathfrak{o}}
\newcommand{\fp}{\mathfrak{p}}
\newcommand{\fq}{\mathfrak{q}}

\newcommand{\fs}{\mathfrak{s}}

%Riemannian distance

\newcommand{\virt}{\normalfont\text{vir}}
\newcommand{\rank}{\normalfont\text{rank}}

\newcommand{\wt}{\widetilde}

\newcommand{\wh}{\widehat}

\newcommand{\evai}{evi}
\newcommand{\evab}{evb}

\newcommand{\ind}{\normalfont\text{ind}}
\newcommand{\topw}{\Lambda^{\max}}
\newcommand{\orl}{\hspace{0.8pt}\fo}

\newcommand{\hodge}{\bE\dul}

%Comment commands
%\newcommand{\Kai}[1]{{\color{purple}#1}}
%\newcommand{\acom}[1]{\textcolor{Blue}{\textit{#1}}}

\newcommand{\al}{\alpha}

\newcommand{\mfa}{\mathsf{a}}
\newcommand{\obs}{s}
\newcommand{\M}[1]{\Mbar_{#1}}

\newtheorem{intthm}{Theorem}

\newcommand{\stb}{\normalfont\text{st}}

\newcommand{\ov}[1]{\overline{#1}}
\newcommand{\cKc}{\cK^{\scale{\square}{0.6}}}

\newcommand{\scale}[2]{\scaleobj{#2}{#1}}
\newcommand{\cBc}{\cB^{\scale{\square}{0.6}}}
\newcommand{\cEc}{\cE^{\scale{\square}{0.6}}}
\newcommand{\cTc}{\cT^{\scale{\square}{0.6}}}

\usepackage[utf8]{inputenc}
\usepackage[english]{babel}
\usepackage{amsmath} 
\usepackage{amssymb}
\usepackage{amsfonts}
\usepackage{mathtools}
\usepackage{float}
\usepackage{amsthm}
\usepackage[shortlabels]{enumitem}
\usepackage[dvipsnames]{xcolor}
\usepackage{tikz-cd}
\usepackage{subfiles}
\usepackage[thinlines]{easytable}
\usepackage{scalerel}[2016/12/29]
\usetikzlibrary{tqft}
\usepackage[colorinlistoftodos]{todonotes}
\usepackage[mathscr]{euscript}
\usepackage{extarrows}
\usepackage[pagebackref=true]{hyperref}
\usepackage{comment}
\usepackage{abraces}
\usepackage{upgreek}

\usepackage{appendix}
\usepackage{bm}
\hypersetup{
    colorlinks=true,
    linkcolor=Blue,
    filecolor=red,      
    urlcolor=teal,
    citecolor=NavyBlue,}

\oddsidemargin 0.in
\textwidth 6.5in
\textheight 8.6in
\topmargin .1in

\DeclareFontFamily{U}{rcjhbltx}{}
\DeclareFontShape{U}{rcjhbltx}{m}{n}{<->rcjhbltx}{}
\DeclareSymbolFont{hebrewletters}{U}{rcjhbltx}{m}{n}
\DeclareMathSymbol{\mem}{\mathord}{hebrewletters}{109}
\DeclareMathSymbol{\nun}{\mathord}{hebrewletters}{110}

\newsavebox{\pullback}
\sbox\pullback{%
	\begin{tikzpicture}%
		\draw (0,0) -- (1ex,0ex);%
		\draw (1ex,0ex) -- (1ex,1ex);%
\end{tikzpicture}}

\tikzset{
	immersion/.tip={Glyph[glyph math command=looparrowleft, swap]}
}

\newtheorem{theorem}{Theorem}[section]
\newtheorem{lemma}[theorem]{Lemma}
\newtheorem{cor}[theorem]{Corollary}
\newtheorem{proposition}[theorem]{Proposition}

\newtheorem{construction}[theorem]{Construction}
\newtheorem{assum}[theorem]{Assumption}

\theoremstyle{definition}
\newtheorem{definition}[theorem]{Definition}
\theoremstyle{remark}
\newtheorem{remark}[theorem]{Remark}

\newtheorem{example}[theorem]{Example}
\newtheorem{nota}[theorem]{Notation}
\newtheorem*{notation*}{Notation}

\numberwithin{equation}{section}

%% fix numbering issue
\makeatletter
\@addtoreset{equation}{section}
\@addtoreset{theorem}{section}
\makeatother

\DeclareMathOperator{\Pic}{Pic}

\newcommand{\Addresses}{{% additional braces for segregating \footnotesize
  \bigskip
  \footnotesize
\textsc{A. Hirschi, Université Paris Cité, Sorbonne Université, CNRS, IMJ-PRG, F-75005 Paris, France}\par\nopagebreak
%\textit{E-mail address}: \texttt{hirschi@imj-prg.fr}\par\nopagebreak
\text{ORCID}: \texttt{0000-0002-2392-7875}\\
  
  \textsc{K. Hugtenburg, School of Mathematics and Statistics, University of St Andrews}\par\nopagebreak
  \text{ORCID}: \texttt{0000-0002-7823-7126}
}}

\makeatletter %%\def\l@subsection{\@tocline{2}{0pt}{1pc}{5pc}{}}
\def\l@subsection{\@tocline{2}{0pt}{2pc}{6pc}{}} \makeatother

\setcounter{tocdepth}{2} 
\usepackage{microtype} 
\begin{document}

\title[Open-closed Deligne--Mumford field theories I]{Open-closed Deligne--Mumford field theories: geometric foundations}
\author{Amanda Hirschi}\author{Kai Hugtenburg}

\begin{abstract} 
	We construct global Kuranishi charts for moduli spaces of pseudo-holomorphic maps of arbitrary genus with boundary on an embedded Lagrangian submanifold. We then build the geometric foundations required for obtaining compatible chain-level operations, which are employed in follow-up work to construct an open-closed Deligne--Mumford field theory.
\end{abstract}

\maketitle
\tableofcontents

\section{Introduction}

\subsection{Context}
The success of invariants based on curves with boundary has been clear since the construction of Lagrangian Floer theory, \cite{Flo88}, \cite{FOOO}, and the Fukaya category, \cite{Sei08}. However, while the construction of enumerative invariants based on closed curves in a closed symplectic manifold $(X,\omega)$ was established relatively early on, the same program in the open setting, that is, considering bordered curves with Lagrangian boundary conditions, was faced with two problems, the possible non-orientability of the relevant moduli spaces and the fact that they, respectively their compactifications have codimension $1$ strata.

There are several approaches to dealing with these boundary strata and the way they obstruct invariants. One way is to work in a situation where they do not occur. This was pursued by Liu in \cite{Liu20}, defining invariants from curves of any complexity in the presence of a circle action on the symplectic manifold that left the Lagrangian invariant. In a similar vein, \cite{Geo16} constructs invariants based on pseudo-holomorphic disks if the Lagrangian is the fixed point locus of an anti-symplectic involutions.
Another ansatz, restricted to dimension $4$ and $6$, is given in \cite{Wel13,Wel15}, following the seminal work \cite{Wel05}. The construction which can be described as `counting intelligently', that is, one defines an invariant explicitly in terms of certain counts and shows that the resulting numbers do not depend on the choices required for the count. While very geometric, this approach requires geometric assumptions on the Lagrangian, restricting the class of examples.

In \cite{Fu10,Fuk11}, Fukaya took a very different approach to construct genus $0$ open Gromov--Witten invariants. Concretely, he considers the algebraic structure arising from the boundary of the moduli space in its own right and extracts invariants from this structure for Lagrangians in Calabi-Yau $3$-folds. This viewpoint is further developed in \cite{FOOO,FOOO18,FOOOToric1,FOOO24}.
Building on their insights, Solomon--Tukachinsky established an algebraic framework to define genus $0$ open Gromov--Witten invariants in any dimension as long as the relevant moduli spaces were sufficiently regular, \cite{ST1,ST16,ST23}. Whereas the invariants defined by Solomon-Tukachinsky are real numbers, Haney, \cite{Han24}, constructs rational open Gromov--Witten invariants for monotone Lagrangians, based on a Morse model for the Fukaya algebra as discussed in \cite{CW17}. The main difficulty of this strategy is to construct compatible \emph{chain-level} operations based on moduli spaces of curves with boundary.

A fourth approach was outlined in \cite{Fuk06}. It is based on the observation that boundary degenerations of curves, when looking at how their boundary circles degenerate, correspond exactly to string topology operations such as the string bracket and cobracket. In the non-equivariant setting, this was proven by Irie in \cite{Iri18,Iri20}, showing that moduli spaces of disks define an Maurer-Cartan element for the BV algebra structure on the homology of the loop space of the Lagrangian. A similar approach is taken in \cite{ES24}, where they construct invariants in the skein module for Lagrangians in Calabi-Yau $3$-folds.

However, to work with moduli spaces of curves with boundary one has to deal with the lack of transversality that is ubiquitous in symplectic geometry outside of special cases. In particular, the boundary mentioned above should be understood in the derived sense - often the moduli space is highly singular. A very general framework, using the language of derived orbifolds, to describe these moduli spaces is developed in \cite{Par25}, while Kuranishi structures for moduli spaces of disks were developed in \cite{Fu10,Fuk11,FOOO16,FOOO18,FOOOToric1} in order to construct the chain-level operations described above, while \cite{Liu20} constructs local Kuranishi charts for moduli spaces of curves in higher genus. Other approaches are given by \cite{Jo19} using polyfolds and \cite{She15}, the methods of \cite{CM07} were adapted to disks. For closed holomorphic curves, the foundational work \cite{AMS21}, followed by \cite{AMS23}, \cite{HS22}, resolved these problems using the language of derived orbifolds/global Kuranishi charts. This paper extends these results to the case of holomorphic maps with a Lagrangian boundary condition.
 
\subsection{Main results}
This paper constructs global Kuranishi charts for moduli spaces of curves with boundary on a Lagrangian and describes how to get compatible systems of such charts.
The first result is a generalisation of the global Kuranishi chart construction of \cite{HS22} to open stable maps. To state it, let $(X,\omega)$ be a closed symplectic manifold and let $L \subset X$ be an embedded Lagrangian submanifold. Fix an almost complex structure $J$ tamed by $\omega$.
We denote by $\Mbar_{g,h;k,\ell}^{J,\beta}(X,L)$ the moduli space of stable $J$-holomorphic maps on bordered Riemann surfaces of genus $g$ equipped with $k$ interior marked points and $\ell$ boundary marked points as well as an ordering of the $h$ boundary components. See \cite{Rab24} for a similar construction in the case of discs.

\begin{intthm}\label{thm:open-gw-global-chart} For any $g,h \geq 0$ and $k,\ell_1,\dots,\ell_h\geq 0$ the following holds.
	\begin{enumerate}[label=\alph*),leftmargin=20pt,ref=\alph*]
		\item\label{gkc-existence} $\Mbar^{J,\beta}_{g,h;k,\ell}(X,L)$ admits a rel-$C^\infty$ global Kuranishi chart $\cK = (\cT, \cE, s)$ with boundary whose virtual dimension agrees with the expected dimension and which is unique up to equivalence of global Kuranishi charts.
		\item\label{gkc-orientation} The orientation sheaf of the global Kuranishi chart is canonically isomorphic to the orientation sheaf of the virtual tangent bundle of $\Mbar^{J,\beta}_{g,h;k,\ell}(X,L)$.
		\item\label{gkc-cobordant} If $J'$ is another $\omega$-tame almost complex structure, then we can choose auxiliary data for $\Mbar^J_{g,h;k,\ell}(X,L;\beta,\mu)$ and $\Mbar^{J'}_{g,h;k,\ell}(X,L;\beta,\mu)$ so that the resulting global Kuranishi charts are (oriented) cobordant.
	\end{enumerate}
\end{intthm}

\noindent
Both the forgetful (or stabilisation) map 
\[\stb \cl \Mbar^{\,J,\,\beta}_{g,h;k,\ell}(X,L)\to \Mbar_{g,h;k,\ell}\]
(when the latter is nonempty) and the evaluation map 
\[\eva \cl \Mbar^{\,J,\,\beta}_{g,h;k,\ell}(X,L)\to X^k\times L^{|\ell|}\]
can be lifted to smooth maps on the thickening $\cT$. Replacing the global Kuranishi chart by a stabilisation, the evaluation map can moreover be assumed to a be a submersion, cf. \cite{AMS21}. The smooth structure of $\cK$ is not canonical. We first construct a relative smooth structures as in \cite{HS22} and upgrade this to a smooth structure in \S\ref{subsec:smoothing-theory}, using a relative version of the smoothing theory of Lashof, \cite{Las79}. The advantage of this refinement is that we can achieve smoothness of the stabilisation map and the evaluation map.\\

In contrast to the case of moduli spaces of stable maps from closed surfaces, the global Kuranishi charts of Theorem~\ref{thm:open-gw-global-chart} have boundary and corners. On the level of moduli spaces, the boundary strata are given by fibre products of moduli spaces of lower complexity. We lift this statement to the level of global Kuranishi charts.

\begin{intthm}[Theorem~\ref{thm:boundary-strata-equivalent}]\label{int:boundary} The restriction of $\cK$ to a boundary stratum of $\Mbar_{g,h;k,\ell}^{\,J,\,\beta}(X,L)$ is oriented equivalent, up to an explicitly computed sign, to the respective fibre product of global Kuranishi charts.
\end{intthm}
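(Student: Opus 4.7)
The plan is to analyze the construction of the global Kuranishi chart in a neighbourhood of a codimension-one boundary stratum and show that the auxiliary data, the thickening, the obstruction bundle, and the obstruction section all decompose as fibre products in a natural way. Recall that a boundary point of $\Mbar^{J,\beta}_{g,h;k,\ell}(X,L)$ corresponds to a bordered stable map with a boundary node, which breaks the domain into one or two pieces carrying induced topological/homological data $(g_i,h_i,k_i,\ell_i,\beta_i)$ and a common evaluation into $L$ at the node. The target statement is that after restricting $\cT$, $\cE$, $s$ to the preimage of this stratum, one recovers the fibre product, over $L$, of the global Kuranishi charts built for each piece.

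First I would unpack the construction of the auxiliary datum as in the proof of Theorem~\ref{thm:open-gw-global-chart}. The thickening is built out of a framing (a basis of sections of a suitable twist of the pullback of a pre-quantum line bundle) together with a space of obstruction sections chosen to surject onto the cokernel of the linearised $\bar\partial$-operator at every point. Near the boundary stratum, the pre-quantum data restricts separately to each component, and the framing on a nodal curve is (after a small perturbation if needed) the direct sum of framings on the pieces because the twisting line bundle has no sections supported at the node. Choosing the obstruction spaces as in \cite{AMS23,HS22} with cut-off functions whose supports avoid a collar neighbourhood of the nodal boundary, one arranges that the obstruction bundle over the boundary stratum splits as the direct sum of the obstruction bundles of the pieces, and the obstruction section likewise decomposes.

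Next I would match the thickenings. A map lying in a neighbourhood of the boundary stratum inside $\cT$ is specified by the two framed holomorphic buildings glued at a boundary node, which amounts to an element of $\cT_1 \times_L \cT_2$ (or the self fibre product $\cT_1 \times_{L\times L} L$ in the non-separating case). This identification should be literal on the boundary and extends over a collar using the standard gluing/pre-gluing construction that already underlies the topology on $\cT$. Once the identification of thickenings is in place, the identification of $\cE$ and $s$ is formal from the previous paragraph, and the compatibility with the equivalence classes of unobstructed auxiliary data from item~\ref{gkc-equivalent} of Theorem~\ref{thm:open-gw-global-chart} reduces the whole statement to a single convenient choice.

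The main obstacle I expect is the \emph{compatibility of auxiliary data}: given two global Kuranishi charts produced by independently chosen unobstructed auxiliary data on the pieces, one must show that a single choice of unobstructed auxiliary datum on the full moduli space restricts, up to equivalence of global Kuranishi charts, to ones isomorphic to the chosen ones on each piece. This involves stabilising both sides by sums of auxiliary data and invoking part~\ref{gkc-equivalent} of Theorem~\ref{thm:open-gw-global-chart} carefully in families over $L$. A secondary difficulty is keeping track of orientations across the boundary so that the canonical isomorphism of item~\ref{gkc-orientation} agrees with the boundary orientation of the index bundle of $\Mbar^{J,\beta}_{g,h;k,\ell}(X,L)$; this is exactly the point flagged in the warning after Theorem~\ref{thm:OCDMFT associated to L}, and I would address it by reducing to a model computation on disks glued at a single boundary node using the relative spin structure on $L$.
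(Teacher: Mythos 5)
Your proposal correctly identifies the two key tasks (decompose the chart data over a boundary stratum; handle compatibility of the independently chosen auxiliary data via equivalence moves) and correctly flags the compatibility of auxiliary data as the central difficulty, but the mechanism you propose for resolving it differs substantively from what the paper does and has a genuine gap.

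The gap is in your claim that ``the framing on a nodal curve is (after a small perturbation if needed) the direct sum of framings on the pieces because the twisting line bundle has no sections supported at the node.'' This is not the structure of the framings used here. If $C = C_0 \cup C_1$ is a curve with one boundary node, then $H^0(C_\bC, \fL_{u,\bC}^{\otimes p})$ is the kernel of the evaluation map $H^0((C_0)_\bC,\fL^{\otimes p}) \oplus H^0((C_1)_\bC,\fL^{\otimes p}) \to \fL^{\otimes p}_{\mathrm{nd}}$, so a framing of the nodal curve is a \emph{constrained} pair of sections over the pieces, not a direct sum of arbitrary pairs. In particular, a basis of the piece-wise section spaces does not restrict to a basis of the glued section space, the projective dimensions $N$, $N_0$, $N_1$ do not fit a direct-sum pattern, and the target projective space for the framing of the full curve is not $\bC P^{N_0}\times\bC P^{N_1}$. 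Since the obstruction bundle $H^0(C,(\iota,u)^*(E_r,F_r))\oplus H^1(C,(\bC,\bR))\oplus\fp(N+1)$ depends on the framing $\iota$ through the pullback $\iota^*\cO(r)$, the obstruction data also do not split. Your appeal to ``cut-off functions whose supports avoid a collar neighbourhood of the nodal boundary'' describes a Fukaya--Ono style local obstruction space, but the global chart here is built from global section spaces of pullback bundles on $\bC P^N\times X$; the cut-off functions $\chi_j$ in $\lambda_\cU$ live on the moduli space $\cZ$, not on the domain curve, so they cannot be used to isolate the pieces.

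The paper's proof of Theorem~\ref{thm:boundary-strata-equivalent} sidesteps this entirely by the doubling device already used in Proposition~\ref{prop:uniqueness-up-to-equivalence}: one builds a doubly-thickened global Kuranishi chart $\wt\cK$ whose framings land in $\bC P^{N_0}\times\bC P^N$ and $\bC P^{N_1}\times\bC P^N$ (carrying, simultaneously, the framings for each piece \emph{and} the framing of the full curve), with obstruction bundle $\cE_0\oplus\cE_1\oplus\cE$ using perturbations of all three kinds. One then shows $\wt\cK$ is equivalent both to the fibre product $\cK_0\times_L\cK_1$ (by dropping the full-curve framing/perturbation via group enlargement, stabilisation, and base modification) and to the restriction $\cK_{\varphi,\beta}$ of $\cK$ to the boundary stratum (by dropping the piece framings/perturbations). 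No choice of auxiliary datum on the full moduli space is ever asked to restrict compatibly to choices on the pieces, and the issue you flag never has to be confronted head-on. The orientation bookkeeping is then carried out by comparing the intrinsic and limiting orientations of the Cauchy--Riemann operator using the results of~\cite{CZ24}, which is indeed where the sign subtleties reside, as you anticipated.
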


\noindent
 Theorem~\ref{thm:boundary-strata-equivalent} also determines the orientation signs of the clutching maps with respect to the boundary orientation. This recursive structure of the boundary is the geometric basis underlying all algebraic structures arising from these moduli spaces. Having a sufficiently strong equivalence of the global Kuranishi charts over boundary strata will be crucial in the construction of compatible chain-level operations such as in the Fukaya algebra. This is considerably more subtle for curves with genus because the boundary strata are more complicated and different types of nodes appear.

\begin{remark}\label{}
	To our knowledge, this is the first time that these orientation signs, even for moduli spaces of open stable curves, have been worked out in higher genus. We use the orientation construction of \cite{CZ24} and their results on behaviour of orientation lines under degenerations; similar results have been shown in \cite{WW17,FOOO16}. We recall the constructions of \cite{CZ24} in \textsection\ref{sec:orientations}. 
\end{remark}

\noindent
While it is not always explicitly phrased that way, the key point of the obtaining algebraic structures from moduli spaces in \cite{Fu10} and \cite{ST23} is to consider moduli spaces as correspondences, which define push-pull operations. Concretely, given a moduli space $\Mbar$ whose marked points are partitioned into incoming and outgoing, a global Kuranishi chart $\cK = (\cT,\cE,s)$ as in Theorem~\ref{thm:open-gw-global-chart}, \emph{equipped with a Thom form of the obstruction bundle}, can be considered as `virtual' correspondence 
\begin{center}\begin{tikzcd}
		& \cK \arrow[dl,"\eva^-"]\arrow[dr,"\eva^+"]\\X^{k^-}\times L^{\ell^-} &&X^{k^+}\times L^{\ell^+}   \end{tikzcd} \end{center}
and thus yields a pull-push operation
\begin{align*}\label{}
	\Omega^*(X^{k^-}\times L^{\ell^-}) &\to \Omega^*(X^{k^+}\times L^{\ell^+})\\
	\alpha &\mapsto \eva^+_*({\eva^-}^*(\alpha)\wedge s^*\tau_{\cE}).
\end{align*}
This can be defined in any setting where (certain) morphisms define an exceptional pushforward (or pullback). For many applications it is necessary that the operation associated to a boundary stratum $\Mbar_1\times_L \Mbar_2$ of $\Mbar$ (respectively $\cK$) agrees with the the composition of the operations coming from $\Mbar_1$ and $\Mbar_2$. This is immediate for \cite{ST16} since they work directly with the moduli spaces themselves. It holds in closed Gromov-Witten theory (\cite{Hir23}) because these operations are cohomological and the virtual fundamental class only depends on the equivalence class of global Kuranishi charts.
 
In \cite{BX22} and \cite{Rez22}, global Kuranishi charts were constructed inductively to ensure compatibility in the setting of Hamiltonian Floer theory; cf. \cite{AB24}. 
In \cite{AGV24}, perturbation data were assigned to chains to obtain a strict framed $E_2$ action on a chain-model of symplectic cohomology. 
Either approach is in the operadic setting, respectively in genus zero. We instead opt for incorporating the differences between choices into the operation itself to obtain strict compatibility. 
To state the theorem, recall that boundary strata of moduli space are indexed by dual graphs $\Gamma$. Their precise definition is given in \S\ref{subsec:graphs}. 

\begin{intthm}[Theorem~\ref{thm:cubical-cobordisms-enhanced}]\label{} Given a choice of auxiliary data for $\Mbar_{g,h}^{J,\beta}(X,L)$ for each $g,h\ge 0$ and $\beta\in H_2(X,L;\bZ)$, respectively $H_2(X;\bZ)$ if $h = 0$, there exists a global Kuranishi chart $\cKc_\Gamma$ for $I^{E(\Gamma)}\times \del_\Gamma\Mbar^{J,\beta}_{g,h}(X,L)$ for each stable map graph $\Gamma$ so that 
	\begin{equation*}\label{} \cKc_\Gamma|_{(0,\dots, 0)} = \del_\Gamma\cK_{g,h}^\beta\qquad \quad\text{and}\qquad \quad \cKc_\Gamma|_{(1,\dots, 1)} \sub \p{v\in V(\Gamma)}{\cK_{g_v,h_v,k_v,\ell_v}^{\beta_v}}
	\end{equation*}
	 is the fibre product. Moreover, we can construct compatible Thom forms for their obstruction bundles. 
\end{intthm}

\noindent
The last assertion is Proposition~\ref{prop:thom systems exist}, which requires the extension result of \S\ref{sec:extend-Thom-forms}. Note that we can construct these system for \emph{all} moduli spaces of stable maps at once.
Taking the sum of the operations associated to the global Kuranishi charts $\cKc_\Gamma$, appropriately weighted, gives operations that are strictly compatible for almost all boundary strata. 

\subsection{Applications} The purpose of this paper is to build the geometric foundations required for chain-level operations obtained from moduli spaces of open stable maps. We describe several applications here.

\subsubsection{Open-closed Deligne--Mumford field theories}
The structure of an $A_\infty$ algebra on a vector space $\scA$ is most often described as the data of linear operations $\mu^k \cl \scA^{\otimes k}\to \scA$ that satisfy the $A_\infty$ relations, cf. \cite{Sei08}, or equivalently as an action of the $A_\infty$ operad. Another formulation, which is closer to the notion of a TCFT as in \cite{Cos07}, is to see an $A_\infty$ structure as a dg functor 
\begin{equation}
	\label{} \scF\cl C_*(\Mbar_{0,1;0,\bullet+1}) \to \text{Ch}_R
\end{equation} 
from the category of chains on moduli spaces of stable disks with boundary marked points. An open-closed Deligne--Mumford field theory (DMFT) is a generalisation of this to chains on moduli spaces of stable curves of arbitrary complexity, that is, we allow any genus and any number of boundary components. In practise, we cannot achieve strict compatibility with all the gluing of surface - but we can construct explicit homotopies. This leads to the following definition.

\begin{definition}\label{}
	An \emph{open-closed Deligne--Mumford field theory} is an $A_\infty$ functor $\scF\cl C_{*}(\Mbar_{oc})\to \text{Ch}_R$. $\scF$ is \emph{strict} if $\scF$ is a dg functor, \emph{semi-strict} if $\scF$ restricts to a dg functor on $C_*(\Mbar^{st}_{oc})$ and \emph{operadic} if $\scF$ restricts to a dg functor on $C_*(\Mbar^{opr}_{oc})$.
\end{definition}
\noindent
Here $\Mbar_{oc}$ denotes the category of possibly bordered Riemann surfaces with both interior and boundary marked points. We refer the reader to the follow-up paper \cite{HH1} for a precise definition. 

The target category $\text{Ch}_\bR$ is the category of chain complexes of $\bR$-vector spaces, enriched over itself. More generally, one can take any coefficient ring with a valuation. In a second paper, we show that moduli spaces of stable maps can be used to construct such an open-closed DMFT valued in chain complexes of modules over the Novikov ring. Concretely, we use the global Kuranishi charts constructed here to define an open-closed DMFT.

\begin{theorem}[\cite{HH1}]
	\label{thm:OCDMFT associated to L}
	Suppose $L \sub (X,\omega)$ is a relatively spin embedded Lagrangian. Then, there exists a curved open-closed DFMT $\scF_L$ whose closed part recovers the Gromov--Witten theory of $X$ and whose associated $A_\infty$ algebra is the Fukaya algebra of $L$. If $L$ is equipped with a weak bounding cochain, this can be deformed to an uncurved open-closed DMFT.
\end{theorem}

\subsubsection{Open Gromov-Witten invariants}
Taking the fundamental chain of the moduli spaces of stable discs, we extend the $\fq$-operations of Solomon--Tukachinsky, \cite{ST16}, which were defined under strong regularity assumption, to the general setting. Using the algebraic framework of \cite{ST1}, this allows us to define open Gromov-Witten invariants in genus $0$ for compact, weakly unobstructed Lagrangians. Here the strict compatibility of operations in the operadic setting, that is, where there is at most one outgoing marked point, makes the proofs significantly easier. The proof of the properties of these invariants, in particular their invariance under change of the almost complex structure or Hamiltonian isotopies is delegated to future work.

\begin{intthm}[{\cite{HH2}}]
	Associated to any weakly unobstructed closed Lagrangian $L$ in $(X,\omega)$ are genus $0$ open Gromov--Witten invariants that are invariant under changes of the almost complex structure and Hamiltonian isotopies and which satisfy the open Gromov-Witten axioms of \cite[Theorem~4]{ST1}.
\end{intthm}

\noindent
The invariance is obtained by studying the relations coming from (weakly unobstructed embedded) Lagrangian cobordisms. We expect that our geometric constructions are sufficient for defining higher genus open Gromov--Witten invariants, but more work is needed to algebraically extract invariants. We have been informed of work in-progess on numerical open Gromov--Witten invariants by Kedar--Solomon. A construction of higher-genus string-valued open Gromov-Witten invariants generalising to the work of Irie and Fukaya is given in \cite{DH}. It would be interesting to see whether the techniques of \cite{AB24b} could be used in conjunction with our results to define open Gromov--Witten invariants valued in complex bordism.

\addtocontents{toc}{\protect\setcounter{tocdepth}{1}}

\subsection*{Acknowledgements} We would like to thank Mohammed Abouzaid, Nick Sheridan, Jake Solomon, and Sara Tukachinsky for useful discussions. We thank Melissa Chiu-Chu Liu for her figures and her interest in our work and Or Kedar for pointing out a problem with using rel--$C^\infty$ structures. A.H. is grateful to Soham Chanda and Penka Georgieva for valuable conversations and thanks Mohan Swaminathan for generously sharing his knowledge on gluing. K.H thanks Jonny Evans for helpful conversations, leading in particular to \textsection \ref{sec:extend-Thom-forms}.

Part of this work was completed while both authors were in residence at the Simons Center for Geometry and Physics as well as the Mittag-Leffler Institute. We thank both institutes for their hospitality. A.H. is supported by ERC Grant No.864919 and K.H. was partially supported by EPSRC Grant EP/W015749/1.

\addtocontents{toc}{\protect\setcounter{tocdepth}{2}}
\section{Global Kuranishi charts for moduli spaces of open stable maps}\label{sec:construction}

\noindent
Let $(X^{2n},\omega)$ be a closed symplectic manifold and $L \sub X$ a closed Lagrangian submanifold. Let $J \in \cJ_\tau(X,\omega)$ be arbitrary. Given $g,h,k,\ell_1,\dots,\ell_h \geq 0$, as well as 
$$\beta = \beta_X\oplus (\beta_i)_{1\leq i \leq h}\in H_2(X,L;\bZ)\oplus H_1(L;\bZ)$$ \noindent
with $\del \beta_X = \sum_{i=1}^h \beta_i$, define $$\Mbar^{\,J,\,\beta}_{g,h;k,\ell}(X,L)$$
to be the space of $J$-holomorphic maps $u \cl (C,\del C)\to (X,L)$ such that 
\begin{itemize}[leftmargin=20pt]
	\item $C$ is a bordered nodal Riemann surface of arithmetic genus $g$ with $h' \leq h$ \emph{ordered} boundary components $S_1,\dots,S_{h'}$,
	\item $x_1,\dots,x_k \in C\sm \del C$ are $k$ distinct interior marked points
	\item $y_{i,1},\dots,y_{i,\ell_i}$ are $\ell_i$ distinct boundary marked points on $S_i$,
	\item $u_*[C] = \beta_X$ and $u_*[S_i] = \beta_i$ for $1 \leq i \leq h$,
	\item $\Aut(u,C,x_1,\dots,x_k,y_{1,1},\dots,y_{h,\ell_h})$ is finite,
\end{itemize}

\begin{remark}\label{rem:order-boundary-circles} The ordering of the boundary circles is crucial for orientability, see e.g. Lemma~\ref{lem:permutation-marked-points}\eqref{permute-boundary}. However, we will omit it from the notation throughout. \end{remark}

\noindent
By \cite{Ye94}, respectively \cite[Theorem 1.1]{Liu20}, the moduli space $\Mbar^{J,\,\beta}_{g,h;k,\ell}(X,L)$ endowed with the Gromov topology is compact and Hausdorff. The main result of this section is Theorem \ref{thm:open-gw-global-chart}, rephrased as Proposition \ref{prop:open-gw-global-chart-existence} and Proposition~ \ref{prop:marked-points-gkc-simple}. In order to proceed as linearly as possible, we relegate the description of the thickening, obstruction bundle and section to \textsection\ref{subsec:overview}.

\subsection{Base space and framings}\label{subsec:base-and-framings}
As in any other global Kuranishi chart construction, the key first step is the construction of a suitable base space for the thickening of the chart. This allows us to rigidify the stable maps, which is required to be able to perturb suitably. We first define the base space abstractly in \S\ref{subsec:base-space}, whose elements we call \emph{framings}. In \S\ref{subsec:framings}, we associate framings to open stable maps, and in \S\ref{subsec:reducing-group} we define the parametrised infinite-dimensional moduli space out of which we cut the thickening by a perturbed Cauchy--Riemann equation.

\subsubsection{Base space}\label{subsec:base-space}
Equip $\bC P^N$ with the standard complex structure. By \cite[Theorem 1.2]{Liu20} and Lemma \ref{lem:doubling-preserves-regularity}, the moduli space 
\begin{equation}\label{eq:base-space}\cB:= \Mbar_{g,h;0,0}^{*}(\bC P^N,\bR P^N;m)\sub \Mbar_{g,h;0,0}(\bC P^N,\bR P^N;m) \end{equation}
of maps whose complex double is a regular non-degenerate embedding into $\bC P^N$ is a topological manifold of real dimension
$$\dim(\cB) = \mu_{\bR P^N}(m) + (N-3)(2-2g-h).$$\noindent
We make the following observation.

\begin{lemma}\label{lem:doubling-preserves-regularity} Suppose $u \cl (C,\del C)\to (Y,N)$ is a $J_Y$-holomorphic map, where $Y$ admits an anti-holomorphic involution $\varphi$ and $L = \text{Fix}(\varphi)$. Let $u_\bC\cl C_\bC\to Y$ be its complex double. Then $u_\bC$ is regular if and only if $u$ is regular. Moreover, given any collection of smooth points $x_1,\dots,x_n\in C$, we have 
	$$\Aut(u,C,\del C,x_*) \leq \Aut(u_\bC,C_\bC,x^\bC_*).$$
\end{lemma}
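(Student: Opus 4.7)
The argument separates into the automorphism inequality and the regularity equivalence, both exploiting the anti-holomorphic involution $\sigma \cl C_\bC \to C_\bC$ with fixed locus $\partial C$, together with the defining equivariance $u_\bC \circ \sigma = \varphi \circ u_\bC$ of the complex double.

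For the automorphism claim, any $\phi \in \Aut(u,C,\partial C, x_*)$ lifts uniquely to an automorphism $\phi_\bC$ of $C_\bC$ by requiring $\phi_\bC|_C = \phi$ and $\phi_\bC \circ \sigma = \sigma \circ \phi_\bC$. The equivariance $u \circ \phi = u$ together with uniqueness of the $\sigma$-equivariant extension forces $u_\bC \circ \phi_\bC = u_\bC$, and $\phi_\bC$ permutes the doubled marked set $x_*^\bC = x_* \sqcup \sigma(x_*)$ correspondingly. The assignment $\phi \mapsto \phi_\bC$ is then an injective group homomorphism, since restriction to $C$ recovers $\phi$.

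For the regularity equivalence, I observe that the $\bZ/2$-action $\xi \mapsto d\varphi \circ \xi \circ \sigma$ on sections of $u_\bC^*TY$, and the analogous action on $(0,1)$-forms, commutes with the linearised Cauchy--Riemann operator $D_{u_\bC}$, which therefore decomposes as $D^+ \oplus D^-$ on the $\pm 1$-eigenspaces. The $+$-eigenspace of $W^{1,p}(C_\bC, u_\bC^*TY)$ is canonically identified with the space of sections of $u^*TY$ on $C$ with boundary values in the $+1$-eigenbundle of $d\varphi|_{TY|_L}$, namely $TL$. Under this identification $D^+$ becomes $D_u$, so surjectivity of $D_{u_\bC}$ immediately implies surjectivity of $D_u$.

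For the converse direction, pointwise multiplication by $J_Y$ interchanges the two eigenspaces, since $\varphi$ anti-holomorphic means $d\varphi \circ J_Y = -J_Y \circ d\varphi$. In the integrable setting, which is the case relevant for the application to $\bC P^N$, this yields an honest isomorphism $D^+ \cong D^-$, whence $D_u$ surjective forces both $D^\pm$ and hence $D_{u_\bC}$ to be surjective. The main technical subtlety, which I expect to be the chief obstacle, is a non-integrable $J_Y$: there, multiplication by $J_Y$ intertwines $D^+$ and $D^-$ only up to a zero-order Nijenhuis correction. This correction is compact, so it preserves the Fredholm index comparison $\ind(D^+) = \ind(D^-)$, but slightly more care is needed to promote this into an equivalence of cokernels.
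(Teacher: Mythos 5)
Your proof is correct and takes essentially the same route as the paper's. The automorphism inequality is handled identically: extend each $\phi$ equivariantly over the double. For the regularity equivalence, the paper's one-line argument is that $\varphi$ and $\sigma$ induce an anti-complex-linear involution $\varphi_u$ on $\operatorname{coker}(D_{u_\bC})$ whose fixed locus is $\operatorname{coker}(D_u)$; hence $\operatorname{coker}(D_{u_\bC})$ is the complexification of $\operatorname{coker}(D_u)$ and the two vanish simultaneously. You reach the same conclusion via the eigenspace splitting $D_{u_\bC} = D^+\oplus D^-$ and a $J_Y$-conjugation identifying $D^+$ with $D^-$; these are algebraically the same observation (for an anti-$\bC$-linear involution, the $-1$ eigenspace equals $i$ times the $+1$ eigenspace). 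Your remark that the $J_Y$-conjugation only honestly intertwines $D^+$ and $D^-$ when $D_{u_\bC}$ is $\bC$-linear, i.e. when $J_Y$ is integrable, is accurate — but note the paper's phrasing carries the same implicit hypothesis, since $\operatorname{coker}(D_{u_\bC})$ is only a complex vector space (hence a complexification of its real form) when the operator is $\bC$-linear. Since the lemma is invoked only with $Y = \bC P^N$ and its standard integrable structure, the subtlety you flag does not affect the paper's use of the result.
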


\begin{proof}
	The first claim follows from the fact that $\varphi$ and the canonical symmetric structure $\sigma$ on $C_\bC$ induce an anti-holomorphic involution $\varphi_u$ on $H^1(C,u^*T_Y)$ with 
	$$H^1((C,\del C),u^*(T_X,T_L)) = H^1(C_\bC,u_\bC^*T_Y)^{\varphi_u}.$$\noindent
	If $\rho$ is an automorphism of $(u,C,x_1,\dots,x_n)$ preserving the boundary components set-wise, it extends to an automorphism of $u_\bC$.
\end{proof}
\noindent
We need the following stronger result.\footnote{Liu shows that the moduli space of all bordered maps to $\bC P^N$ admits a Kuranishi structure; however, their transition functions are not naturally smooth.} 

\begin{proposition}\label{prop:base-space} $\cB$ admits the structure of a smooth orientable manifold with corners, so that the doubling map $\cB\to \Mbar_{\tilde{g}}^*(\bC P^N,2m)$ is smooth.\end{proposition}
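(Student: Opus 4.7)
The plan is to realise $\cM$ as (a union of components of) the fixed locus of an anti-holomorphic involution on the closed moduli space $\Mbar^*_{\tilde g}(\bC P^N, 2m)$ of regular non-degenerate embeddings, where $\tilde g = 2g + h - 1$, and to transfer the smooth structure through this embedding. This sidesteps the issue with Liu's Kuranishi charts flagged in the footnote, because smoothness of the transition functions is inherited from the ambient complex-analytic structure.

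First I would observe that because $\bC P^N$ carries the integrable complex structure and the maps are regular non-degenerate embeddings (hence have trivial automorphism group), the closed moduli space is a smooth complex manifold by standard deformation theory. Complex conjugation $\varphi$ on $\bC P^N$ induces an anti-holomorphic involution $\iota \cl [C,u]\mapsto [\bar C,\varphi\g u]$, where $\bar C$ denotes $C$ equipped with the conjugate complex structure. By Lemma~\ref{lem:doubling-preserves-regularity}, the doubling map sends $\cM$ into $\mathrm{Fix}(\iota)$, and conversely any $\iota$-invariant closed stable map carries a canonical anti-holomorphic involution $\sigma$ intertwining $\varphi$ with the complex structure on the domain, whose fixed set is the boundary of a bordered curve of genus $g$ with $h$ boundary components; choosing an ordering of these components (as required by Remark~\ref{rem:order-boundary-circles}) then realises $\cM$ as a disjoint union of components of $\mathrm{Fix}(\iota)$.

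Transferring the smooth structure with corners proceeds locally. At an interior (unnoded) point, the fixed locus of an anti-holomorphic involution on a smooth complex manifold is a smooth totally real submanifold of half the real dimension, matching the dimension formula stated above. At a boundary node of $(C,\del C)$, the complex double acquires either a $\sigma$-fixed interior node or a pair of $\sigma$-exchanged interior nodes; in either case the local plumbing parameter $t \in \bC$ transforms under $\sigma$ as $t \mapsto \bar t$, so the $\sigma$-invariant slice is a real line, and exactly one of the two half-lines is singled out by requiring the smoothing to remain an orientable bordered surface. Assembling these local models endows $\cM$ with smooth manifold-with-corners charts in which the doubling map is smooth by construction.

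For orientability, I would combine the canonical complex orientation of the closed moduli space with the prescribed ordering of the boundary circles and the relative orientation data for $\bR P^N \sub \bC P^N$, invoking the framework on orientations of real moduli spaces recalled in \textsection\ref{sec:orientations}. The main technical hurdle I anticipate is at deep corner strata where several boundary nodes degenerate simultaneously: one must verify that the $\sigma$-invariant plumbing parameters and the resulting orientations glue consistently across the various local models, which requires bookkeeping of the real structure on the gluing parameters that appears to have been largely absent in the earlier literature.
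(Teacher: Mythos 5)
Your construction of the smooth structure via the fixed locus of the anti-holomorphic involution and the doubling homeomorphism is essentially the paper's approach; your chart-by-chart verification with plumbing coordinates is an equivalent local presentation of the paper's global identification $\cM\cong\bR\cM^{\phi_*}$, and the smoothness of the doubling map falls out of either presentation.

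The orientability argument is where your proposal has a genuine gap, and it is the harder half of the proposition. You propose to apply the standard relative-spin orientation machinery (complex orientation of the double plus the ordering of the boundary circles plus "relative orientation data for $\bR P^N\sub\bC P^N$"). This cannot work in general: the fixed locus of an anti-holomorphic involution on a complex manifold is typically non-orientable (e.g.\ $\bR P^2\sub\bC P^2$), $\bR P^N$ itself is non-orientable for $N$ even, and moduli of real maps with varying bordered domain are generically non-orientable even given a relative spin structure. The remark following Lemma~\ref{lem:action-base-space} flags precisely this: the orientation on $\cM$ is obtained ``in a way which differs from the usual way of orienting moduli spaces of curves with boundary,'' and ``the Lagrangian $\bR P^N$ might not be orientable.'' What rescues orientability is the restriction to \emph{regular, non-degenerate} embeddings, and this must enter the argument. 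The paper introduces the vector bundle $\cV\to\cM$ whose fibre at $[\iota,C]$ is $H^0((C,\del C),\iota^*(\cO(1)^{\oplus N+1},\cO(1)_\bR^{\oplus N+1}))$, invokes Georgieva's isomorphism $\det(T\cM)\cong\det(\cV)$ over the locus of maps with smooth domain, and then trivializes $\cV$ globally with the explicit frame $\{\wt\sigma_j^{(i)}\}$ pulled back from the standard sections of $\cO(1)$. Linear independence of that frame at every point is exactly where non-degeneracy of $\iota_\bC$ is used, and the equality $\dim_\bC H^0 = N+1$ makes it a basis. Hence $\det(\cV)$, and so $\det(T\cM)$, is trivial over the interior and therefore over all of $\cM$. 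Without an argument of this kind, which exploits the regularity hypothesis defining $\cM$, orientability is unsupported; your worry about consistency across deep corner strata is the lesser issue and dissolves once the determinant line bundle is trivialized globally rather than stratum by stratum.
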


\begin{proof} Let $\fc \cl \bC P^n \to \bC P^n$ be the real structure given by complex conjugation and define $$\bR\cB := \bR\Mbar_{\tilde{g}}^*(\bC P^N,2m)$$\noindent
	to be the fixed point locus of the induced anti-holomorphic involution on $\Mbar_{\tilde{g}}^*(\bC P^N,2m)$, where $\tilde{g} =2g+h-1$. Then $\bR\cB$ is a smooth manifold, and we can decompose it as
	$$\bR\cB = \union{\phi}{\bR\cB^\phi},$$\noindent
	where $\phi$ is an anti-holomorphic involution on a surface of genus $\tilde{g}$ and $\bR\cB^\phi$ is the closure of 
	$$\{u \in C^\infty(\Sigma_{\tilde{g}},\bC P^N) \mid\delbar u = 0,\; u\phi = \fc u\}/\sim$$\noindent
	in $\bR\cB$. Denote by $\phi_*$ the unique anti-holomorphic involution such that 
	$$\Sigma_{\tilde{g}}\sm \text{Fix}(\phi_*) = \text{int}(\Sigma_{g,h})\sqcup \text{int}(\cc{\Sigma}_{g,h})$$\noindent
	with $\phi_*$ interchanging the two components.
	The doubling construction induces a natural surjective map $\Phi\cl \cB \to \bR\cB^{\phi_*}.$
	It is injective as we consider real maps and not just closed stable maps to $\bC P^N$. By the definition of $\cB$, the induced map from the quotient is the restriction of a map between compact moduli spaces. Hence,it is proper and thus closed. It follows that $\Phi$ is a homeomorphism. Declaring it to be a diffeomorphism, we obtain a canonical smooth structure (with corners) on $\cB$.\par
	In order to prove orientability, let $\cV$ be the preimage of $\cB$ under the map 
	$$\Mbar_{g,h;0,0}^{\,m}(\cO(1)^{\oplus N+1},\cO(1)_\bR^{\oplus N+1})\to \Mbar_{g,h;0,0}^{\,m}(\bC P^N,\bR P^{N}),$$\noindent
	where we identify relative homology classes on $\cO(1)^{\oplus N+1}$ with those on $\bC P^N$.
	By assumption on $\cB$, the map $\cV\to \cB$ is a vector bundle, and by \cite[Corollary~1.10]{Ge13}, 
	\begin{equation}\label{eq:tangent-bundle-base}\det(T\cB)\cong\det(\cV) \end{equation} 
	over the locus of maps with smooth domain whose complement is of codimension $1$. To see that $\cV$, and thus $\det(\cV)$, is trivial, let $\sigma_0,\dots,\sigma_N\cl (\bC P^N,\bR P^N) \to (\cO(1),\cO(1)_\bR)$ be the standard sections and write $\sigma_j^{(i)}$ for the $j^{th}$ standard section mapping to the $i^{th}$ factor of $\cO(1)^{\oplus N+1}$. Hence, we can define sections $\wt\sigma_j^{(i)}\cl \cB\to \cV$ by setting 
	$$\wt\sigma_j^{(i)}([\iota,C]) = \iota^*\sigma_j^{(i)},$$\noindent
	where $[\iota,C]$ denotes the stable holomorphic map $\iota \cl (C,\del C)\to (\bC P^N, \bR P^N)$.
	These sections are linearly independent at each point since the complex double $\iota_\bC$ of $\iota$ is non-degenerate. As $$\dim H^0((C,\del C),\iota^*(\cO(1),\cO(1)_\bR)) = N+1,$$\noindent
	they form a global frame of $\cV$. In particular, $\det(T\cB)$ is trivial as well.
\end{proof}

\begin{lemma}\label{lem:action-base-space} The canonical action of $\PGL_\bR(N+1)$ on $\cB$ is orientation-preserving.\end{lemma}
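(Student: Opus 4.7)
The plan is to leverage the isomorphism $\det(T\cM)\cong\det(\cV)$ and the global frame $\{\tilde\sigma_j^{(i)}\}$ of $\cV$ established in the proof of Proposition \ref{prop:base-space}, so as to reduce the claim to a determinant computation on sections of $\cO(1)^{\oplus N+1}$.

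First I would analyze the connected components of $\PGL_\bR(N+1)$: the scalar matrix $\lambda I$ has determinant $\lambda^{N+1}$, so when $N+1$ is odd the scalars bridge the two sign-components of $GL_\bR(N+1)$, making $\PGL_\bR(N+1)$ connected, and the claim follows by continuity from the fact that the identity preserves orientation. When $N+1$ is even, $\PGL_\bR(N+1)$ has two components; the identity component is again handled by continuity, and only a single representative $g \in GL_\bR(N+1)$ of the non-identity component, taken with $\det g<0$, remains. For such $g$, observe that $(g\iota)^*\sigma_j^{(i)} = \iota^*(g^*\sigma_j^{(i)})$ and that the natural pullback action $g^*$ on $H^0(\cO(1))$ has matrix $g^T$ in the basis $\{\sigma_0,\dots,\sigma_N\}$. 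Its diagonal action on $H^0(\cO(1))^{\oplus N+1}$ therefore has determinant $\det(g)^{N+1}$, which is positive whenever $N+1$ is even, irrespective of the sign of $\det g$. Transporting this positivity along the $\PGL_\bR(N+1)$-equivariant isomorphism $\det(T\cM)\cong\det(\cV)$ then yields the claim.

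The main obstacle will be the bookkeeping of signs in the identification $\det(T\cM)\cong\det(\cV)$, which arises from the deformation-theoretic long exact sequence together with the Euler sequence of $\bC P^N$, and in verifying that this chain is $\PGL_\bR(N+1)$-equivariant up to positive scalar. A related subtlety is that $\PGL_\bR(N+1)$ only acts on $\cO(1)$ after choosing a lift to $GL_\bR(N+1)$, so different lifts in principle give different actions on $\cV$. However, any two lifts differ by a scalar $\lambda I$ which acts on each fibre of $\cV$ by $\lambda$ and hence on $\det(\cV)$ by $\lambda^{(N+1)^2}$; this is positive for all $\lambda \in \bR^*$ exactly when $N+1$ is even, precisely the case in which we need to make the computation, so the sign is independent of the chosen lift and the argument goes through.
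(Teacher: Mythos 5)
Your proof is correct and takes essentially the same route as the paper: both reduce to the disconnected case $N+1$ even, compute the determinant of the $\PGL_\bR(N+1)$-action on the frame $\{\wt\sigma_j^{(i)}\}$ of $\cV$, and observe that this determinant is $\det(g)^{N+1}$, which is positive by parity. The paper carries out the computation with the specific transposition matrix $A$ swapping $e_0,e_1$ (getting $(-1)^{N+1}=1$), whereas you argue for an arbitrary $g$ with $\det g<0$; your additional remark that the lift from $\PGL_\bR(N+1)$ to $\GL_\bR(N+1)$ only affects $\det(\cV)$ by $\lambda^{(N+1)^2}>0$ is a useful sanity check that the paper leaves implicit.
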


\begin{proof} If $N$ is even, then $\PGL_\bR(N+1)$ is connected, so the claim is immediate. Suppose $N$ is odd and let $A \in \PGL_\bR(N+1)$ be (the image of) the matrix interchanging the basis vectors $e_0$ and $e_1$ and leaving all other basis vectors invariant. Denote by $\phi_A\cl \cB \to \cB$ the associated diffeomorphism. Then 
	$$\phi_A^*\wt\sigma^{(i)}_j = \begin{cases}
		\wt\sigma^{(i)}_1 \quad & j = 0\\
		\wt\sigma^{(i)}_0 \quad & j = 1\\
		\wt\sigma^{(i)}_0 \quad & \text{otherwise}.\end{cases}$$\noindent
	Thus, the determinant of the linear map $\phi_A^*$ with respect to the basis $(\wt\sigma^{(i)}_j)_{i,j}$ is $(-1)^{N+1} = 1$.
\end{proof}

\begin{lemma}\label{lem:orientation-base} Let $\zeta = [\iota,C]$ be an element of the top stratum of $\cB$. Then there exists a canonical isomorphism $$\det(D\delbar_{J_0}(\iota)) \cong \det(\fp\fg\fl_\bR(N+1))\otimes \det(\hodge_\zeta),$$\noindent
	where $\hodge \to \cB$ is the dual of the Hodge bundle. 
\end{lemma}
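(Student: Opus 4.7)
The plan is to read off the claimed identification from the long exact cohomology sequence of the real Euler sequence
$$0 \to \cO \to \cO(1)^{\oplus N+1} \to T\bC P^N \to 0$$
pulled back via $\iota$. Applied to the identity map, this sequence realises the canonical isomorphism $H^0((\bC P^N,\bR P^N), T\bC P^N) \cong \fp\fg\fl_\bR(N+1)$, which supplies the first determinant factor, while the boundary term $H^1(\cO_C)$ supplies the Hodge factor via real Serre duality.

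First, because $\zeta$ lies in the top stratum, the complex double $\iota_\bC$ is a non-degenerate embedding, so $H^1((C,\del C),\iota^*\cO(1)) = 0$. Lemma~\ref{lem:doubling-preserves-regularity} ensures that $\iota$ itself is regular, whence $\mathrm{coker}\,D\hpd_{J_0}(\iota) = 0$, the space $H^0((C,\del C),\iota^*T\bC P^N)$ coincides with $\ker D\hpd_{J_0}(\iota)$, and $\det D\hpd_{J_0}(\iota) \cong \det \ker D\hpd_{J_0}(\iota)$. The long exact sequence of the pulled-back Euler sequence then collapses to
$$0 \to H^0(\cO_C) \to H^0(\iota^*\cO(1))^{\oplus N+1} \to \ker D\hpd_{J_0}(\iota) \to H^1(\cO_C) \to 0.$$

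Next I would split this into two short exact sequences. The first, obtained by pulling back along $\iota$ the $H^0$-sequence of the Euler sequence on $(\bC P^N,\bR P^N)$, gives a canonical identification
$$H^0(\iota^*\cO(1))^{\oplus N+1}/H^0(\cO_C) \cong \fp\fg\fl_\bR(N+1).$$
Applying the determinant functor to the remaining short exact sequence
$$0 \to \fp\fg\fl_\bR(N+1) \to \ker D\hpd_{J_0}(\iota) \to H^1(\cO_C) \to 0$$
and invoking the real Serre duality identification $H^1(\cO_C) \cong \hodge_\zeta$, which is precisely how $\hodge = \bE^\vee$ is defined fibrewise, yields the claimed canonical isomorphism.

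The main obstacle I anticipate is carefully tracking the real structures throughout: one must confirm that the pulled-back Euler sequence is exact as a sequence of holomorphic line bundles with real structure on $(C,\del C)$, that $H^0(\cO_C) \cong \bR$ and $H^0(\iota^*\cO(1)) \cong \bR^{N+1}$ with the prescribed real boundary conditions, and that the connecting homomorphism agrees with the infinitesimal action of $\PGL_\bR(N+1)$ at $\iota$. All of these should reduce to naturality of the Euler sequence, together with the equivariance of Serre duality under the anti-holomorphic involutions coming from the doubling construction of Proposition~\ref{prop:base-space}.
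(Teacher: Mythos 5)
Your proof follows essentially the same route as the paper: both take the long exact cohomology sequence of the pulled-back real Euler sequence, identify $H^0(C,\iota^*(\cO(1),\cO(1)_\bR))^{\oplus N+1}$ with $\fg\fl_\bR(N+1)$ via the standard sections so that the four-term sequence collapses to $0\to\fp\fg\fl_\bR(N+1)\to\ind(D\hpd_{J_0}(\iota))\to\hodge_\zeta\to 0$, and then take determinants. Your write-up is somewhat more explicit about the vanishing of $H^1(\iota^*\cO(1))$ from non-degeneracy of $\iota_\bC$ and about the Serre-duality identification $H^1(\cO_C)\cong\hodge_\zeta$, but the underlying argument is the same.
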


\begin{proof} Pulling back the Euler sequence $0 \to \cO_{\bC P^N}\to \cO(1)^{\oplus N+1}\to T\bC P^N \to 0$ (with its corresponding sequence of real subbundles over $\bR P^N$) to $C$ and taking the long exact sequence in homology, we get an exact sequence 
	\begin{multline}\label{eq:from-euler} 0 \to H^0(C,(\bC,\bR))\to H^0(C,\iota^*(\cO(1),\cO(1)_\bR))^{\oplus N+1}\to H^0(C,\iota^*(T\bC P^N,T\bR P^N))\\\to H^1(C,(\bC,\bR))\to 0. \end{multline}
	The index bundle of the trivial bundle $(\bC,\bR)\to (C,\del C)$ has a canonical orientation by \cite[Proposition~4.1.1]{WW17}. Furthermore, the basis $\{\iota^*\sigma_j^{(i)}\}_{i,j}$ gives a canonical identification of $\fg\fl_\bR(N+1)$ with $H^0(C,\iota^*(\cO(1),\cO(1)_\bR))^{\oplus N+1}$, and the image of the first map of \eqref{} corresponds to the inclusion of the scalar matrices into $\fg\fl_\bR(N+1)$. Thus \eqref{eq:from-euler} becomes a natural sequence
	$$0 \to \fp\fg\fl_\bR(N+1) \to \ind(D\delbar_{J_0}(\iota))\to \hodge_\zeta\to 0,$$\noindent
	whence the claim follows by taking determinants.\end{proof}

\begin{remark}\label{} We observe that the way we obtain the orientation on $\cB$ in a way which differs from the usual way of orienting moduli spaces of curves with boundary. Indeed, the Lagrangian $\bR P^N$ might not be orientable. However, Lemma \ref{lem:orientation-base} might seem more natural if we consider that adding the decoration of a framing can be seen as exhibiting the moduli space $\Mbar_{g,h;0,0}^{\,J,\,\beta}(X,L)$ as a global quotient of the form $\cN/\PGL_\bR(N+1)$. This will be made more precise in the next subsection; compare also with \cite[Discussion~3.16]{HS22}\end{remark}

\subsubsection{Framings of stable maps with boundary}\label{subsec:framings} In this subsection, we construct framings for stable maps on bordered Riemann surfaces. For this we use the results of \cite{HS22} in the closed case and the complex doubles of our curves described in detail in \cite[\textsection3]{KL06}.

\begin{lemma}[Adapted polarisation] \label{lem:adapted-polarisation} Suppose $(Y,\sigma)$ is any symplectic manifold and $K\sub Y$ is a Lagrangian so that $[\sigma]\ne 0$ in $H^2(Y,K;\bR)$. Then, given any compact subset $\scJ$ of $\sigma$-tame almost complex structures on $Y$, there exists a line bundle $\cO_Y(1)\to Y$ such that $\cO_Y(1)|_K$ admits a real structure $\cO_K(1)$, and for any $J \in \scJ$ and any non-constant $J$-holomorphic map $u \cl C\to Y$ we have $$\mu(u^*\cO_Y(1),u^*\cO_K(1)) \geq 1.$$
\end{lemma}

\noindent
We call such a line bundle an \emph{$K$-adapted polarisation}.

\begin{proof} We show first that there exists a symplectic form $\Omega$ on $Y$ taming each $J$, so that $[\Omega]\in H^2(Y;\bZ)$, $\Omega|_K = 0$, and $[\Omega]\neq 0$ in $H^2(Y,K;\bR)$. Denote by $\cZ$ the set of closed $2$-forms and let $\cZ_s \sub \cZ$ be the open subset of symplectic forms. Let $A := \{\alpha\in \cZ\mid \alpha|_K = 0\}$ and let $A_\bQ$ be the subset of $\alpha\in A$ with $[\alpha]\in H^2(X;\bQ)$. Then $A_\bQ$ is the preimage of a dense set under the open map $A \rightarrow H^2(Y,K;\bR)$ and thus dense. As $\omega\in A\cap \cZ_s$, it follows that there exists $\Omega'\in A_\bQ\cap \cZ_s$ taming $J$. Set $\Omega := N\Omega'$ for $N\gg 1$ so that $[\Omega]\in H^2(Y,K;\bZ)$.\par
	 By \cite[Lemma~3.2]{HS22}, there exists a Hermitian line bundle $(\cL,\conn')\to Y$ so that $F^{\conn'} = -\frac{1}{2\pi\text{i}}\Omega$. Let $U$ be a Weinstein neighbourhood of $K$. Then, $c_1(\cL|_U) = 0$, so there exists a complex trivialisation $\Phi \cl U\times\bC\to \cL|_U$. In particular, we may write $\Phi^*\conn'= d + \frac{\text{i}}{2\pi}\theta'$ for some $\theta'\in \Omega^1(U,\bR)$ with $\Omega|_U = d\theta'$. In particular, $\theta'|_K$ is closed. Fix some Riemannian metric on $g$ and let $\epsilon >0 $ be sufficiently small such that for any $\alpha \in \Omega^1(U)$ with $\norm{\alpha}_{C^1} < \epsilon$, the differential $d(\theta' +\alpha)$ is a symplectic form on $U$. Let $\rho$ be a bump function that is identically $1$ on $K$ and supported in $U$. Since $U$ deformation retracts onto $K$ and $H^1(K;\bQ)$ is dense in $H^1(K;\bR)$, we can find a closed form $\alpha\in \Omega^1(U)$ with $\norm{\alpha}_{C^1} < \frac{\epsilon}{2\norm{\rho}_{C^1}}$ so that $[(\theta'+\alpha)|_K] \in H^1(K;\bQ)$. Set $\theta := N'(\theta' +\rho\alpha)$ for some sufficiently large integer $N'$. Then, $\theta|_K$ is closed with $[\theta|_K] = N'[\theta'|_K+\alpha|_K] \in H^1(K;\bZ)$. Since $\norm{\rho\alpha}_{C^1} \leq 2\norm{\rho}_{C^1}\norm{\alpha}_{C^1} < \epsilon$, the derivative $d\theta$ is symplectic. Thus, we may replace ${\conn'}^{\otimes N'}$ on $\cL^{\otimes N'}$ by the connection $\conn$ given by $d+\theta$ on $U$ and by ${\conn'}^{\otimes N'}$ away from $U$. By construction, $-\frac{1}{2\pi\text{i}}F^\conn$ is still a symplectic form vanishing on $K$. We set $\cO_Y(1) := \cL^{\otimes N'}$.\par
	 We can now construct the real subbundle $\cO_K(1)$. Fix for each component $B$ of $K$ a base point $y_B$ and (using the above fixed trivialisation) let $\cO_K(1)_y$ be the image of $\Phi(\{y_B\}\times \bR)$ under the parallel transport along some path from $y_B$ to $y$. The choice of path is irrelevant since the holonomy along any loop $\gamma$ on $L$ is given by $e^{-\int_\gamma\theta} = e^{-2\pi\text{i}k}$ for some $k \in \bZ$. In particular, $\cO_K(1)$ is a well-defined smooth real subbundle of $\cO_Y(1)|_K$ and preserved by the parallel transport of $\conn$. Therefore, we can apply \cite[Theorem 3.1]{ChSh16} to obtain that 
	$$\mu(u^*\cO_Y(1),u^*\cO_K(1)) = 2\lspan{[\Omega],u_*[C]} > 0$$\noindent
	As the left-hand side is an integer, the claim follows.
\end{proof}
 
 \noindent 
Given a stable smooth map $u \cl (C,\del C)\to (X,L)$ and a complex structure $\fj$ on $C$, denote by 
$$(C_\bC,\fj_\bC) := (C,\fj)\cup_{\del C}(C,-\fj)$$\noindent
 the \emph{complex double} of $C$. It has arithmetic genus $g_{\bC} = 2g + h-1$ by \cite[\textsection 3.3]{KL06}. There exists a canonical anti-holomorphic involution $\sigma_C \cl C_\bC\to C_\bC$ with fixed-point locus given by $\del C$. By \cite[Theorem 3.3.13]{KL06} and the argument at the bottom of p.16 op. cit., the line bundle pair $(u^*\cO_X(1),(u|_{\del C})^*\cO_L(1))$ extends to a holomorphic line bundle $u^*\cO_X(1)_\bC$ on $C_\bC$ which admits an anti-holomorphic involution $\wt\sigma_C$ covering $\sigma_C$. We define
\begin{equation*}\label{eq:complex-polarisation}
	\fL_{u,\bC} = \omega_{C_\bC}\otimes u^*\cO_X(1)_\bC^{\otimes 3}
\end{equation*}
and set 
\begin{equation}\label{eq:real-line-bundle}\fL_u := \fL_{u,\bC}|_{C}. \end{equation} 
Moreover $\fL_{u,\bC}$ admits a canonical lift of $\sigma_C$ given by combining the pullback ${\sigma_C}^*$ and $\wt\sigma_C$.
By \cite{KL06},
$$\deg(\fL_{u,\bC}|_{C'}) \geq -2 + 3\mu(u^*\cO_X(1)|_{C'\cap C},u^*\cO_L(1)|_{C'\cap \del C}) > 0,$$\noindent
for any irreducible component $C'$ of $C_\bC$, on which $u$ is non-constant, while $\omega_{C_\bC}$ is positive on any doubling of a component of $C$ on which $u$ is constant. 

\begin{remark}\label{de:if-only-constant-curves} If $X$ is a non-compact manifold and $L \sub X$ is a closed embedded Lagrangian so that $[\omega] = 0$ in $H^2(X,L;\bR) = 0$, then any pseudo-holomorphic stable map $u \cl (C,\del C)\to (X,L)$ is a constant and thus $C$ stable. In this case, we can define $\fL_u$ to be the dualising line bundle of the domain (possibly twisted by marked points) to obtain a line bundle of positive degree on each component.\end{remark}

\noindent
Thus, by \cite[Lemma~3.2]{HS22}\footnote{If $\omega(\beta) = 0$, the construction has to be adapted as in \cite[Remark~3.14]{HS22}.},
  there exists $p \gg 1$, depending only on the topological type of $C$ (in fact, of $C_\bC$) and $d:=\lspan{\Omega,\beta}$, so that $\fL_{u,\bC}^{\otimes p}$ is very ample and $H^1(C_\bC,\fL_{u,\bC}^{\otimes p}) = 0$. Note that if $p'$ satisfies this condition, then so does any $p > p'$. Thus we can choose $p$ to be divisible by $4$. We have 
$$m := p\,\deg(\fL_u) = p\lbr{3\mu(u^*(O_X(1),O_L(1)) + 2(2g+ h-2)}$$\noindent
 The space of global holomorphic sections $H^0(C_\bC,\fL_{u,\bC}^{\otimes p})$ admits an anti-complex linear involution. A real basis of $H^0(C_\bC,\fL_{u,\bC}^{\otimes p})^{\bZ/2}$ defines a complex basis of $H^0(C_\bC,\fL_{u,\bC}^{\otimes p})$. We call a complex basis $F = (s_0,\dots,s_N)$ of $H^0(C_\bC,\fL_{u,\bC}^{\otimes p})$ \emph{real admissible} if $s_0,\dots,s_N$ lie in $H^0(C_\bC,\fL_{u,\bC}^{\otimes p})^{\bZ/2}$. In this case, $F$ induces a regular holomorphic embedding 
$$\iota_{F,\bC}\cl (C_\bC,\del C)\to (\bC P^N,\bR P^N),$$\noindent
where 
\begin{equation}\label{eq:dimension-base-lag} N = \dim_\bC H^0(C_\bC,\fL_{u,\bC}^{\otimes p})-1 = p \deg(\fL_{u,\bC}) - 2g-h+1.\end{equation} 

\begin{remark}\label{rem:real-admissible-uniqueness}
	A real framing $\iota\cl (C,\del C)\to (\bC P^N,\bR P^N)$ is determined by $[\iota_\bC^*\cO(1)]\in \Pic(C_\bC)$ 
	up to the $\PGL_\bR(N+1)$-action on the space of holomorphic real stable maps $C_{\bC} \rightarrow\bC P^N$.
\end{remark}

\subsubsection{Reducing the structure group}\label{subsec:reducing-group}
We will take a middle path between the approaches of \cite{AMS23} and \cite{HS22} to this problem. The aim is to simplify the construction of the cross-section $\lambda_\cU$ in \cite{HS22}, while not constructing domain-dependent metrics as in \cite{AMS23} and instead using a perturbation scheme similar to \cite{HS22}. To this end, abbreviate
$$\cG := \PGL_\bR(N+1)\qquad \qquad G := \text{PO}(N+1)$$\noindent
and denote by $\cB$ the space defined in \eqref{eq:base-space}. As $\cB$ is a manifold, it admits a universal curve
\begin{center}\begin{tikzcd}
		\cC \arrow[r,"\eva"] \arrow[d,""]&\cB\times\bC P^N\\ 
		\cB \end{tikzcd} \end{center}
By construction, the boundary of a fibre of $\cC\to \cB$ is mapped to $\bR P^N$. Similarly to $\fF_{\scF}$ defined in \cite[\textsection 4.3]{AMS23}, let $\cZ\to \cB$ be the family of maps $u\cl (\cC|_{[\iota]},\del\cC|_{[\iota]})\cl (X,L)$ with $[\iota]\in \cB$ 
such that
\begin{itemize}
	\item $\int_{C'} u^*\omega\geq 0$ for any irreducible component $C'$ of $C$,
	\item $\int_{C'} u^*\omega\geq\hbar$ for any unstable irreducible component $C'$ of $C$.
\end{itemize}

Equip $\cZ$ with the Gromov-Hausdorff metric on the graphs in $\bC P^N\times X$ and with the obvious $\cG$-action lifting the $\cG$-action on $\cB$.
Recall that the action of a topological group $H$ on space $W$ is \emph{Palais proper} if for any $w\in W$ and any subset $V\sub W$ there exists a neighbourhood $U$ of $w$ so that $\{h \in H\mid h\cdot U \cap V\neq\emst\}$ has compact closure in $H$.

\begin{lemma}\label{lem:palais-proper} The $\cG$ action on $\cZ$ is Palais proper. In particular, it admits $\cG$-invariant partitions of unity subordinate to $\cV$ for any $\cG$-invariant open cover $\cV$. 
\end{lemma}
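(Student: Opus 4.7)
The plan is to reduce the statement to the $\cG$-action on $\cM$, using the forgetful projection $\pi\cl\cZ\to \cM$ sending $([\iota],u)\mapsto [\iota]$. Since $\cG$ acts on a pair $([\iota],u)$ only through the framing component, $\pi$ is $\cG$-equivariant. Given $w\in\cZ$ and $V\sub\cZ$, it suffices to find a neighbourhood $\bar U$ of $\pi(w)$ for which $\{g\in\cG\mid g\bar U\cap\pi(V)\neq\emst\}$ has compact closure in $\cG$: then $U:=\pi\inv(\bar U)$ works for $w$ and $V$, since the implication $gU\cap V\neq\emst \;\Rightarrow\; g\bar U\cap\pi(V)\neq\emst$ is immediate. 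Thus everything reduces to Palais properness of the $\cG$-action on the base $\cM$.

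For the base, I would give the standard sequential argument. Suppose $[\iota_n]\to[\iota_\infty]$ in $\cM$ and $g_n\cdot[\iota_n]\to[\iota'_\infty]$ in $\cM$. By the non-degeneracy of the complex double $\iota_{\infty,\bC}$, I can choose $N+2$ interior smooth points $q_1,\dots,q_{N+2}$ on the domain $C_\infty$ of $\iota_\infty$, each lying on an irreducible component on which $\iota_\infty$ is non-constant, so that $\iota_\infty(q_1),\dots,\iota_\infty(q_{N+2})$ are in general position in $\bC P^N$. Using convergence in Gromov topology of the universal curve $\cC\to\cM$, I would lift these to sequences $q_{i,n}$ on the domain of $\iota_n$ converging to $q_i$ through the smooth strata. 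Then $\iota_n(q_{i,n})\to\iota_\infty(q_i)$ and $g_n\cdot\iota_n(q_{i,n})\to\iota'_\infty(q_i)$, and for large $n$ both configurations remain in general position. Since an element of $\cG = \PGL_\bR(N+1)$ is uniquely determined by its action on $N+2$ points in general position, this forces $g_n$ to converge to the unique $g_\infty\in\cG$ sending $\iota_\infty(q_i)$ to $\iota'_\infty(q_i)$. The main technical point is choosing the reference points so that the lifts can be traced through the family of nodal curves, but the non-degeneracy hypothesis on $\iota_\infty$ supplies enough good points on each component.

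Finally, the partition of unity statement follows from Palais' slice theorem for proper Lie group actions on completely regular spaces: the quotient $\cZ/\cG$ inherits a Hausdorff paracompact topology, every $\cG$-invariant open cover $\cV$ of $\cZ$ descends to an open cover of $\cZ/\cG$, and any subordinate partition of unity on the quotient pulls back to a $\cG$-invariant partition of unity on $\cZ$ subordinate to $\cV$.
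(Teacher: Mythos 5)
Your reduction of Palais properness from $\cZ$ to $\cM$ via the equivariant forgetful map, and the partition-of-unity deduction via Palais' slice theorem, are both sound; the paper itself gives no independent argument for properness but simply asserts that the proof of \cite[Lemma 4.13]{AMS23} carries over verbatim and then cites \cite{Pal61}, so there is nothing in the paper to compare the main step against.

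There is, however, a gap in the sequential argument on the base. You choose $N+2$ interior smooth points on the bordered domain $C_\infty$ with images in general position in $\bC P^N$, but the defining condition on $\cM$ is that the \emph{complex double} $\iota_{\infty,\bC}\cl C_{\infty,\bC}\to\bC P^N$ is a non-degenerate embedding, and this does \emph{not} imply that $\iota_\infty(C_\infty)$ alone spans $\bC P^N$. If $C_\infty$ has a closed irreducible component $S$ (a sphere bubble) then $S$ and its conjugate $\cc{S}$ are distinct components of $C_{\infty,\bC}$, and the span of $\iota_\bC(S)$ may be strictly smaller than that of $\iota_\bC(S)\cup\iota_\bC(\cc{S})$; one can easily arrange, already for $N=3$ with $C_\infty = \bD\cup\bP^1$, that $\iota_\bC(C_\infty)$ lies in a hyperplane even though the full double spans. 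In such cases no $N+2$ points on $C_\infty$ have images in general position, so your choice step fails. The fix is straightforward: pick the points on the complex double $C_{\infty,\bC}$ and lift them along the doubled universal curve, which exists and is $\cG$-equivariant because $\cM$ sits inside $\bR\Mbar^*_{\tilde g}(\bC P^N,2m)$ by Proposition \ref{prop:base-space}. Since $\iota_\bC$ is genuinely non-degenerate, $N+2$ images in general position exist, general position is open, and the resulting limit of the $g_n$ — a priori in $\PGL_\bC(N+1)$ — lies in $\cG$ because $\PGL_\bR(N+1)$ is closed in $\PGL_\bC(N+1)$. With that change the argument is complete.
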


\begin{proof} By verbatim the same proof as the one of \cite[Lemma 4.13]{AMS23}, one shows that the action of $\cG$ on $\cZ$ is Palais proper. Given this, we obtain the claim by \cite[Corollary 3.12]{AMS23}, respectively \cite[Theorem 4.3.4]{Pal61}.\end{proof}

\begin{remark}\label{rem:partition-of-almost-unity}In particular, given any closed $\cG$-invariant subset $Z$ of $\cZ$ and any $\cG$-invariant open covering $\cV$ thereof, we can find a collection of continuous $\cG$-invariant functions $\{\chi_V\}_{V\in \cV}$ subordinate to said open cover such that for each $x\in Z$ we have $\chi_V(x) > 0$ for some $V$.
\end{remark}

\noindent
We adapt the notion of a good covering in \cite[Definition~3.10]{HS22} to our setting.

\begin{definition}[Good covering]\label{good-covering}
	A \emph{good covering}	$\cU=\{(U_i,D_i,\chi_i)\}_{i\in\Lambda}$ is a finite set of tuples such that the following holds.
	\begin{itemize}[leftmargin=20pt]
		\item\label{stabilizing-divisor} Each $U_i$ is an open subset of $\cZ$ and $D_i\sub X\sm L$ is a submanifold with boundary of codimension $2$ satisfying for any $(u,C,\iota)\in U_i$ that
		\begin{enumerate}[\normalfont (i)]
			\item\label{transverse-marking} the preimage $u^{-1}(D_i)$ consists of exactly $3d$ distinct non-nodal points of $C$ and
			\begin{align*}
				\#(u^{-1}(D_i)\cap C') = 3\langle [\Omega],u_*[C']\rangle.
			\end{align*}
			\item\label{avoid-boundary} we have $u(C)\cap\partial D_i=\varnothing$ and $u$ is transverse to $D_i$.
		\end{enumerate}
		for any bordered stable map $(u,C,\iota)\in U_i$. In particular, to each $(u,C,\iota)\in U_i$, we can associate $\text{st}_{D_i}(u,C,\iota)\in\cB_{[3d]}$, by adding $u^{-1}(D_i)$ to $C$ as a set of $3d$ unordered interior marked points.
		\item Each $\chi_i:\cZ\to[0,1]$ is a continuous function with support contained in $U_i$. 
		\item For any $[u,C]$ in $\Mbar_{g,h}^{J,\mu}(X,L;\beta)$ there exists $\iota \cl (C,\del C)\to (\bC P^N,\bR P^N)$ and $i\in\Lambda$ such that $(u,C,\iota)\in U_i$ and $\chi_i(u,C,\iota)>0$.
\end{itemize}\end{definition}

\noindent
By \cite[Lemma 9.4.3]{P16} and Lemma \ref{lem:palais-proper}, good coverings exist. Fixing one such covering $\cU$, let $V_\cU\subset \cZ$ be the open subset where $\sum_{i\in \Lambda}\chi_i$ is strictly positive. Assume that we are additionally given a smooth $\cG$-equivariant map
\begin{align*}
	\lambda:\cB_{[3d]}\to\cG/G,
\end{align*}
where $\cB_{[3d]}\sub \Mbar_{g,h;3d,0}^{\,m}(\bC P^N,\bR P^N)/S_{3d}$ is contained in the locus of regular embedded curves whose domains are stable. The action of the symmetric group $S_{3d} $ is given by permuting the marked points.

\begin{lemma}\label{lem:real-polar-decomposition} Set $\fp(N+1) := \{A\in \fg\fl_\bR(N+1)\mid A^t = A,\; \normalfont\text{trace}(A) = 0\}$. Then the composition 
	$$\fp(N+1)\xra{\exp} \GL_\bR(N+1)\to \cG/G$$\noindent
	is an isomorphism, which we denote by $\exp$ as well, abusing notation.
\end{lemma}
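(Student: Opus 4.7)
The plan is to reduce the claim to the standard real polar decomposition applied twice, once to mod out by $O(N+1)$ and once to mod out by scalars. First I would invoke the classical fact that every $A\in \GL_\bR(N+1)$ admits a unique factorisation $A = PR$ with $P$ positive-definite symmetric and $R\in O(N+1)$, together with the fact that the matrix exponential restricts to a diffeomorphism from the full symmetric subspace $\text{Sym}(N+1)\subset \fg\fl_\bR(N+1)$ onto the space $S(N+1)$ of positive-definite symmetric matrices. Combined, this gives a diffeomorphism
\begin{equation*}
\exp\cl \text{Sym}(N+1)\xra{\cong} \GL_\bR(N+1)/O(N+1).
\end{equation*}

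Next, I would descend to the projective quotient. Writing $\cG/G = \GL_\bR(N+1)/(\bR^\times\cdot O(N+1))$ and noting that $-I\in O(N+1)$, the residual $\bR^\times$-action on $\GL_\bR(N+1)/O(N+1)\cong S(N+1)$ factors through $\bR_{>0}$ acting by ordinary scalar multiplication. Because $\exp$ intertwines addition of $tI$ in the Lie algebra with multiplication by $e^t$ in the group, this $\bR_{>0}$-action pulls back along $\exp$ to translation by the line $\bR\cdot I\subset \text{Sym}(N+1)$. The trace provides a canonical $\bR$-linear splitting $\text{Sym}(N+1) = \fp(N+1)\oplus \bR\cdot I$, whence the quotient $\text{Sym}(N+1)/\bR\cdot I$ is canonically identified with $\fp(N+1)$.

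Chaining these identifications produces a diffeomorphism $\fp(N+1)\xra{\cong} \cG/G$, and by construction it is the composition of $\exp$ with the projection, which is exactly the map in the statement. There is no genuine obstacle: both the existence of real polar decomposition and the diffeomorphism property of $\exp$ on symmetric matrices are standard, and the only bookkeeping is to verify that the trace-zero condition is the right normalisation condition selecting a unique representative of each coset modulo $\bR\cdot I$.
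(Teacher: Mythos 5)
Your proposal is correct and takes essentially the same approach as the paper. The paper's proof is a one-line appeal to the polar decomposition $\GL_\bR(N+1)\cong \text{O}(N+1)\times \text{P}(N+1)$; your write-up spells out the two quotienting steps (by $O(N+1)$ via polar decomposition, then by scalars via the trace splitting $\text{Sym}(N+1)=\fp(N+1)\oplus\bR\cdot I$) that the paper leaves implicit.
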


\begin{proof} This follows from the polar decomposition, $\GL_\bR(N+1) \cong \normalfont\text{O}(N+1)\times \normalfont\text{P}(N+1),$ where $\normalfont\text{P}(N+1)$ is the Lie algebra of positive definite symmetric matrices.
\end{proof}

\subsection{Construction}\label{subsec:overview} We now are able to define a global Kuranishi chart with corners for the moduli space $\Mbar_{g,h;0,0}^{\,J,\,\beta}(X,L)$. It requires the choice of certain auxiliary data.

\begin{definition}\label{de:aux-choices-defined}
	An \emph{auxiliary datum} for $\Mbar_{g,h;0,0}^{\,J,\,\beta}(X,L)$ is a tuple $(\nabla^X,\cO_X(1),p,\cU,\lambda,r)$  where
	\begin{enumerate}[i),leftmargin=25pt,ref=\roman*]
		\item $\nabla^X$ is a J-linear connection on the tangent bundle $TX$,
		\item\label{polarization-on-target} $(\cO_X(1),\conn)\to X$ is an $L$-adapted polarisation as in Lemma \ref{lem:adapted-polarisation} with $d := \lspan{\Omega,\beta}$.
		\item\label{framed-maps-etc} $p\ge 1$ is a positive multiple of $4$, 
		\item\label{aux-good-covering} $\cU$ is a good covering in the sense of Definition \ref{good-covering} and $$\lambda\cl \cB_{[3d]}\to \cG/G$$
		is an equivariant smooth map on the moduli space $\cB_{[3d]}$ defined in \textsection\ref{subsec:reducing-group},
		\item $r\geq 1 $ is an odd integer.
	\end{enumerate}
\end{definition}

\begin{remark} The restriction on the parity of $p$ and $r$ serves to make the discussion of orientability more convenient. \end{remark}

\noindent
Given such an auxiliary datum, define 
\begin{equation}\label{eq:perturbation-bundle}
	E_r := \cc{\Hom}_\bC(p_1^*T{\bC P^N},p_2^*TX) \otimes p_1^*\cO(r)\otimes \cc{H^0(\bC P^N,\cO(r))}
\end{equation}
over $\bC P^N\times X$ and let 
$$F_r := \Hom_\bR(p_1^*T{\bR P^N},p_2^*TL) \otimes p_1^*\cO_{\bR P^N}(r)_\bR\otimes \cc{H^0(\bC P^N,\cO(r))}^{\bZ/2}$$
be the subbundle over the Lagrangian $\bR P^N\times L$, where $\cO(r)_\bR$ is the tautological line bundle over $\bR P^N$. Note that the Hermitian structures on $\cO(r)$ and $\cO(r)_\bR$ induce maps $$\lspan{\cdot}\cl E_r\to \cc{\Hom}_\bC(p_1^*T{\bC P^N},p_2^*TX)$$ 
and 
\[\lspan{\cdot}\cl F_r \to \Hom_\bR(p_1^*T{\bR P^N},p_2^*TL).\]
Denote the induced complex structure on $E_r$ by $J_E$. We define an almost complex structure $\wt J$ on the total space of $E_r$ as follows. Write $$TE_r \cong T\bC P^N\oplus TX\oplus E_r,$$ 
using the canonical connection on $E_r$. 
Then define 
$$\wt J_{(w,x,e)}(\hat{w},\hat{x},\hat{e}) = (J_0\hat{w},J_x\hat{x}+\lspan{e}(\hat{w}),{J_E}_e(\hat{e}))$$
with respect to this splitting. Since $L$ is totally real with respect to $J$, we have $J_x\lspan{e}(\wh w) \in T_xL$ only if $\lspan{e}(\wh w) = 0$. Hence, $F_r$ is totally real with respect to $\wt J$.

\begin{construction}[Pre-thickening]\label{construction-thickening}	The \emph{pre-thickening} $\cT^{\normalfont\text{pre}}_r$ consists of equivalence classes of tuples $(u,C,\iota,\gamma,\eta)$ where 
	\begin{enumerate}[label=\roman*),leftmargin=20pt,ref=\roman*]
		\item $([\iota,C],u)\in \cZ$, which we will often write as $(\iota,u)$;
		\item $\alpha\in H^1(C,(\bC,\bR))$ satisfies 
		$$[\iota^*\cO(1)]\otimes [\fL_u^{\otimes -p}] = \exp(\gamma)$$ 
		in $\Pic(C)$, where the line bundle $\fL_u$ is defined by \eqref{eq:real-line-bundle};
		\item $\eta \in H^0(C,(\iota,u)^*(E_r,F_r))$ satisfies the perturbed Cauchy-Riemann equation 
		\begin{equation}\label{eq:perturbed-cr}\delbar_J \wt{u} + \lspan{\eta}\g d\wt{\iota} = 0 \end{equation}
		on the normalisation $\wt{C}$ of $C$, where $\wt u$ and $\wt \iota$ are the pullbacks to $\wt C$.
	\end{enumerate}
	We say $(u,C,\iota,\gamma,\eta)$ and $(u',C',\iota',\alpha',\eta')$ are \emph{equivalent} if there exists a biholomorphism $C\to C'$ preserving the order of the boundary components as well as all other structures. Note that each point of $\cT^{\normalfont\text{pre}}_r$ has a canonical representative where the domain is given by the image of $\iota$. 
\end{construction}

\noindent
Let $\ff \cl \cT^{\normalfont\text{pre}}_r\to \cZ$ be the forgetful map. Replacing $\cT^{\normalfont\text{pre}}_r$ by $\ff\inv(V_\cU)$, define $\lambda_\cU \cl \cT^{\normalfont\text{pre}}_r\to \fp(N+1)$ by 
\begin{equation}\label{eq:slice-map}
	\lambda_\cU(y) = -\text{i}\s{j\in \Lambda}{\chi_j(\ff(y))\exp\inv(\lambda_j(\text{st}_{\ell_j,D_j}(y)))}. 
\end{equation}
In order to make use of standard gluing results, Theorem \ref{thm:representable}, we will exhibit $\cT^{\normalfont\text{pre}}_r$ (almost) as a family $\Mbar_{g,h;0,0}^{*,\wt J_E,(m,\beta)}(\pi;E_r,F_r)$ of pseudo-holomorphic curves to $(E_r,F_r)$ over the base $\cB$. Let $\hodge$ be the dual of the Hodge bundle of $\pi\cl \cC\to \cB$ and denote its pullback to $\Mbar_{g,h;0,0}^{*,\wt J_E,(m,\beta)}(\pi;E_r,F_r)$ by the same symbol.

\begin{lemma}\label{lem:subspace-of-other-holomorphic-maps} There exists an injection $\cT^{\normalfont\text{pre}}_r\hkra \hodge$. \end{lemma}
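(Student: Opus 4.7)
The plan is to construct the injection $\Phi \cl \cT^{\normalfont\text{pre}}_r \to \hodge$ explicitly by
$$\Phi(u, C, \iota, \alpha, \eta) := ([\tilde u, C], \alpha),$$
where $\tilde u \cl (C, \del C) \to (E_r, F_r)$ is the lift of $(\iota, u)$ defined via the graph of $\eta$. Concretely, using the splitting $E_r \to \bC P^N \times X$ coming from the canonical connection on $E_r$ and interpreting $\eta \in H^0(C, (\iota, u)^*(E_r, F_r))$ as a section of the pullback bundle, set $\tilde u(z) = (\iota(z), u(z), \eta(z))$ in the total space of $E_r$.

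The first task is to verify that $[\tilde u, C]$ lies in $\Mbar_{g,h;0,0}^{*,\wt J_E,(m,\beta)}(\pi; E_r, F_r)$. The boundary condition $\tilde u(\del C) \subset F_r$ is immediate from the definition of sections of $(\iota,u)^*(E_r,F_r)$ together with $\iota(\del C) \subset \bR P^N$ and $u(\del C) \subset L$. The $\wt J_E$-holomorphicity of $\tilde u$ is checked componentwise using the splitting $TE_r \cong T\bC P^N \oplus TX \oplus E_r$ and the formula for $\wt J$: the $T\bC P^N$-component recovers $\delbar_{J_0}\iota = 0$, automatic from $[\iota] \in \cM$; the $E_r$-fibre component is the condition that $\eta$ be a holomorphic section of $(\iota,u)^*E_r$; and the $TX$-component is exactly the perturbed Cauchy--Riemann equation $\delbar_J \wt u + \lspan{\eta}\circ d\wt\iota = 0$ imposed in Construction~\ref{construction-thickening}. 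Stability of $\tilde u$ follows from stability of $\iota$, since the projection $E_r \to \bC P^N$ realises any automorphism of $\tilde u$ as an automorphism of the embedded stable map $\iota$. The homology class is $(m,\beta)$ by construction.

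For the second coordinate, recall that $\hodge \to \cM$ is the dual Hodge bundle and is pulled back along the forgetful map $\Mbar_{g,h;0,0}^{*,\wt J_E,(m,\beta)}(\pi; E_r, F_r) \to \cM$. Via complex doubling, the fibre of $\hodge$ at $[\iota, C] \in \cM$ is canonically identified with the fixed locus of the anti-holomorphic involution on $H^1(C_\bC, \cO_{C_\bC})$, i.e., with $H^1(C, (\bC, \bR))$. Hence $\alpha$ specifies a bona fide point in the fibre of $\hodge$ over $[\tilde u, C]$, and $\Phi$ is well-defined at the level of equivalence classes because the defining equivalence relations on $\cT^{\normalfont\text{pre}}_r$ and on $\hodge$ are both generated by biholomorphisms of $C$ preserving all decorations.

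Finally, for injectivity, suppose $\Phi(y) = \Phi(y')$ for $y = (u,C,\iota,\alpha,\eta)$ and $y' = (u',C',\iota',\alpha',\eta')$. Any biholomorphism $\varphi \cl C \to C'$ identifying $\tilde u$ with $\tilde u'$ identifies $\iota$ with $\iota'$, $u$ with $u'$, and $\eta$ with $\eta'$ via the three projections of $E_r$ and the identification of fibres. Combined with $\alpha = \alpha'$, this produces an equivalence $y \sim y'$ in $\cT^{\normalfont\text{pre}}_r$. The only real subtlety is the compatibility of the real structure on $\alpha$ with the dual Hodge bundle over the complex double, which is handled by the complex-doubling description of $\cM$ in \textsection\ref{subsec:base-space}; everything else is a routine unpacking of definitions.
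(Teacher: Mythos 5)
Your proposal is correct and is exactly what the paper means by "a straightforward adaptation of \cite[Lemma~4.16]{HS22}": the graph lift $\tilde u = (\iota, u, \eta)$ paired with $\alpha$ under the identification of the dual Hodge fibre with $H^1(C,(\bC,\bR))$, with $\wt J_E$-holomorphicity checked componentwise against the splitting $TE_r \cong T\bC P^N\oplus TX\oplus E_r$ and injectivity recovered from the three projections of $E_r$ and the fibrewise identification of $H^1$. This is the same route as the cited closed-case lemma, just spelled out.
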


\begin{proof}
	This is a straightforward adaptation of \cite[Lemma~4.16]{HS22}.
\end{proof}

\noindent
In particular, this equips $\cT^{\normalfont\text{pre}}_r$ with a topology so that the forgetful map $\ff$ is continuous. 

\begin{remark}\label{rem:doubling-perturbations}  By \cite[Theorem 3.3.8]{KL06}, the vector bundle $(\iota,u)^*E_r$ equipped with the induced real structure over $\del C$ extends to a holomorphic vector bundle $E_{\iota,u}\to C_\bC$. By \cite[Theorem 3.3.3]{KL06}, there exists a doubling map 
	$$H^0(C,(\iota,u)^*(E_r,F_r))\xra{\simeq} H^0(C_\bC,E_{\iota,u})^{\bZ/2}$$ 
	where $\bZ/2$ acts via the induced anti-holomorphic involution on $H^0(C_\bC,E_{\iota,u})$.
\end{remark}

\begin{construction}[Obstruction bundle and section]\label{construction-other-data} The \emph{obstruction bundle} $\cE\to \cT_r^{\normalfont\text{pre}}$ has fibres 
	\begin{equation}\label{eq:obstruction-fibre}
		\cE_{(u,C,\iota,\gamma,\eta)} =   H^0(C,(\iota,u)^*(E_r,F_r)) \oplus H^1(C,(\bC,\bR))\oplus \fp(N+1)
	\end{equation}
	The \emph{(preliminary) obstruction section} $\obs_0 \cl \cT_r^{\normalfont\text{pre}}\to \cE$ is given by 
	$$\obs_0(u,C,\iota,\gamma,\eta) = (\eta,\gamma,\lambda_\cU(\iota,u)).$$
	where $\lambda_\cU$ is defined by \eqref{eq:slice-map}. The \emph{covering group} is $G := \normalfont\text{PO}(N+1)$; it acts by post-composition on the framings and the perturbations.
\end{construction}

\noindent
It remains to show that $\cT^{\normalfont\text{pre}}_r$ is a manifold near $\obs_0\inv(0)$ and that $\cE$ is in fact a vector bundle. For this, not any auxiliary datum will do.

\begin{definition}\label{de:unobstruced-aux} We call an auxiliary datum $\alpha = (\conn^X,\cO_X(1),p,\cU,\lambda,r)$ \emph{unobstructed} if for any $[u,C]\in \Mbar^{J,\beta}_{g,h;n,\ell}(X,L;\mu)$ there exists a framing $\cF$ of $\fL_u^{\otimes p}$ so that $\lambda_\cU(\iota,u) = 0$ and
	\begin{enumerate}[label=\roman*),leftmargin=20pt,ref=\roman*]
		\item the linearised operator associated to \eqref{eq:perturbed-cr} is surjective when restricted to 
		\begin{equation}\label{eq:allowed-variation}
			C^\infty(C,u^*(TX,TL))\oplus H^0(C,(\iota,u)^*(E_r,F_r));
		\end{equation}
		\item\label{eq:obstruction-bundle-well-defined} $H^1(C,(\iota,u)^*(E_r,F_r)) = 0$.
	\end{enumerate}
	In this case, we define the \emph{thickening} $\cT\sub \cT^{\normalfont\text{pre}}_r$ to be the subset of elements $(u,\iota,\gamma,\eta)$, where the linearisation of \eqref{eq:perturbed-cr} restricted to \eqref{eq:allowed-variation} is surjective and for which \eqref{eq:obstruction-bundle-well-defined} holds.
\end{definition}

\begin{remark}\label{lem:unobstructedness-differently} By the analogue of \cite[Lemma~4.17]{HS22}, an auxiliary datum $\alpha$ is unobstructed if and only if the canonical map $\cT^{\normalfont\text{pre}}_r\to \Mbar_{g,h;0,0}^{\,\wt J_E,(m,\beta)}(E_r,F_r)$
	maps to the locus of regular curves.
\end{remark}

\noindent
We summarise the existence of a global Kuranishi chart and its relevant properties in the following result. Compare with \cite[Proposition~5.1]{HS22}.

\begin{proposition}\label{prop:open-gw-global-chart-existence} Unobstructed auxiliary data exist. Given an unobstructed auxiliary datum $\alpha$, the following holds.
	\begin{enumerate}[label = \arabic*), leftmargin=20pt,ref=\arabic*]
		\item\label{gkc-indeed} There exists a section $\obs \cl \cT\to \cE$ so that $\cK_\alpha = (G,\cT/\cB,\cE,\obs)$ is a rel--$C^\infty$ global Kuranishi chart for $\Mbar_{g,h}^{J,\beta}(X,L)$ of the correct virtual dimension.
		\item\label{rel-smooth-obstruction-bundle} $\cE/\cB$ is a rel--$C^\infty$-vector bundle over $\cT/\cB$ and $\obs$ is relatively smooth.
		\item\label{rel-smooth-action} $G$ acts by rel--$C^\infty$ diffeomorphisms on $\cT$ and $\cE$.
	\end{enumerate}
\end{proposition}

\begin{proof}[Proof of Proposition~\ref{prop:open-gw-global-chart-existence}]
	For the first claim, it remains to show that our perturbations suffice to achieve regularity once $r$ is sufficiently large. This is a straightforward consequence of \cite[Lemma 3.18]{HS22} and the discussion in \cite[\textsection3.4]{KL06}, and is summarised in the following lemma. Note that the locus of $J$-holomorphic curves $(u,C,\iota,0,0)$ with $\lambda_\cU(u,\iota) = 0$ is independent of $r$.
	
	\begin{lemma}\label{prop:transversality-achieved} There exists $r \gg 1$ so that the following holds. For any $y = (u,C,\iota) \in \obs_0\inv(0)$, the linearised operator 
		$$D_u +\lspan{\cdot}\g d\tilde{\iota} \cl C^\infty(C,u^*(TX,TL)) \oplus H^0(C,(\iota,u)^*(E_r,F_r))\to \Omega^{0,1}_J((\wt C,\del \wt C),\wt u^*(TX,TL))$$\noindent
		is surjective and $H^1(C,(\iota,u)^*(E_r,F_r)) = 0.$
	\end{lemma}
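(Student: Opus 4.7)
The plan is to reduce the problem on the bordered curve $(C,\partial C)$ to the corresponding transversality statement on the complex double $C_{\mathbb C}$, which is a closed nodal Riemann surface, and then invoke \cite[Lemma~3.18]{HS22}. The doubling construction of \cite[\S3.3--3.4]{KL06} provides, for each $y=(u,C,\iota)\in\obs^{-1}(0)$, a $J$-holomorphic map $u_{\mathbb C}\colon C_{\mathbb C}\to X$ together with an embedding $\iota_{\mathbb C}\colon C_{\mathbb C}\hookrightarrow\mathbb{C}P^N$, both carrying natural anti-holomorphic involutions whose fixed loci recover $u$ and $\iota$; similarly, by Remark~\ref{rem:doubling-perturbations}, the bundle $(\iota,u)^*E_r$ extends to a holomorphic bundle $E_{\iota,u}\to C_{\mathbb C}$ with a compatible real structure, and the same identification holds at the level of spaces of holomorphic sections upon passing to $\mathbb{Z}/2$-invariants.

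First I would establish the vanishing $H^1(C,(\iota,u)^*(E_r,F_r))=0$. Using the identification above, this space is the $\mathbb{Z}/2$-invariant part of $H^1(C_{\mathbb C},E_{\iota,u})$. The bundle $E_r$ is twisted by $p_1^*\mathcal O(r)$, so on $C_{\mathbb C}$ the bundle $E_{\iota,u}$ acquires an extra factor of $\iota_{\mathbb C}^*\mathcal O(r)$. Since $\iota_{\mathbb C}$ is a non-degenerate embedding (by the definition of $\mathcal M$), there is a uniform lower bound on the degree of $\iota_{\mathbb C}^*\mathcal O(1)$ on each irreducible component of $C_{\mathbb C}$; Serre vanishing then yields $H^1(C_{\mathbb C},E_{\iota,u})=0$ for $r$ sufficiently large, and hence the $\mathbb{Z}/2$-invariant part also vanishes.

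Next I would handle surjectivity. Applying \cite[Lemma~3.18]{HS22} to the doubled data $(u_{\mathbb C},C_{\mathbb C},\iota_{\mathbb C})$ gives, for $r\gg 1$, surjectivity of the linearised operator
\[
D_{u_{\mathbb C}}+\langle\cdot\rangle\circ d\tilde\iota_{\mathbb C}\colon C^\infty(C_{\mathbb C},u_{\mathbb C}^*TX)\oplus H^0(C_{\mathbb C},E_{\iota,u})\longrightarrow \Omega^{0,1}_J(\widetilde C_{\mathbb C},\tilde u_{\mathbb C}^*TX).
\]
Both sides carry involutions induced by the anti-holomorphic involutions on $C_{\mathbb C}$ and $X$; all maps in sight are equivariant with respect to them, and taking $\mathbb{Z}/2$-invariants recovers the real operator appearing in the statement together with its domain and codomain (by Lemma~\ref{lem:doubling-preserves-regularity} and the doubling identification of sections). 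Since taking invariants under a finite group action is exact, surjectivity of the complex operator passes to the real one.

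Finally I would upgrade the pointwise statement to a uniform one. The moduli space $\Mbar^{J,\beta}_{g,h;k,\ell}(X,L)$ is compact, so for each $y\in\obs^{-1}(0)$ the value $r(y)$ for which both conditions hold is upper-semicontinuous; a standard open cover and compactness argument yield a single $r\gg 1$ that works on all of $\obs^{-1}(0)$. The main obstacle in this program is the bookkeeping of the real structures --- making sure the involutions on the analytic spaces, on $E_r$, and on the linearised operator are all compatible, so that invariants of the doubled operator indeed coincide with the bordered operator; once this is set up the analytic input is \cite[Lemma~3.18]{HS22}.
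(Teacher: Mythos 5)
Your overall strategy — double the curve and the perturbation bundle, invoke the closed-case transversality of \cite[Lemma~3.18]{HS22}, and descend to the $\mathbb{Z}/2$-fixed locus — is exactly the approach the paper takes, and the bookkeeping of real structures via \cite[Theorem~3.3.13]{KL06} and Remark~\ref{rem:doubling-perturbations} is correctly identified. (The paper phrases surjectivity via the quotient map $\Phi_y\colon H^0(C,(\iota,u)^*(E_r,F_r))\to H^1(C,u^*(TX,TL))$ rather than the full augmented operator, but these are equivalent.)

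The gap is in the last paragraph. You write that because $\obs^{-1}(0)$ is compact, ``the value $r(y)$ for which both conditions hold is upper-semicontinuous,'' and then apply an open-cover argument. But compactness does not give you upper-semicontinuity — the upper-semicontinuity of $r(y)$ is precisely the content that needs a proof, and it is nontrivial. The zero locus $\obs^{-1}(0)$ is stratified by the topological type of the domain curve, and as one degenerates a smooth domain to a nodal one the Cauchy–Riemann operator, its domain Sobolev spaces, and the bundle $(\iota,u)^*(E_r,F_r)$ all change in a way that is not continuous in any naive sense. Within a fixed stratum, openness of the surjectivity condition is standard elliptic theory, but to pass the bound across strata — i.e., to show that surjectivity at a nodal limit forces surjectivity for nearby smooth curves with the same $r$ — one has to invoke gluing analysis. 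The paper makes this explicit: it cites the linear gluing analysis of \textsection\ref{sec:gluing} together with the relative sc-implicit function theorem (Proposition~\ref{prop:sc-implicit-function}), and translates the two conditions of the lemma into regularity of the image point in $\Mbar_{g,h;0,0}^{\,\wt J_E,(m,\beta)}(E_r,F_r)$, so that openness of regularity across strata becomes the precise statement being used. Your phrase ``a standard open cover and compactness argument'' hides exactly this technical work, and without it the proof of $\sup_y r_y < \infty$ is incomplete. The same issue afflicts the Serre-vanishing step for $H^1$: Serre vanishing gives an $r$ depending on the curve, and uniformity again requires the stratified-openness argument rather than just compactness.
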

	
	\begin{proof} Given $y$ as in the statement, let $C_\bC$ be the complex double of its domain and let $u^*(TX,TL)_\bC$ be the complex double of the Riemann-Hilbert bundle $(u^*TX,(u|_{\del C})^*TL)$ as in \cite[Theorem~3.3.13]{KL06}. The first statement is equivalent to the surjectivity of $$\Phi_y\cl H^0(C,(\iota,u)^*(E_r,F_r))\to H^1(C,u^*(TX,TL)) : \eta \mapsto [\lspan{\eta}\g d\iota].$$\noindent
		As $\iota$ doubles to a holomorphic map $\iota_\bC$, we have a commutative square 
		\begin{center}\begin{tikzcd}
				H^0(C,(\iota,u)^*(E_r,F_r))\arrow[r,"\Phi_y"]\arrow[d] &H^1(C,u^*(TX,TL))\arrow[d]\\
				H^0(C_\bC,E_{\iota,u})\arrow[r,"\Phi_{y,\bC}"]&H^1(C_\bC,u^*(TX,TL)_\bC)\end{tikzcd} \end{center}
		where $\Phi_{y,\bC}$ is defined analogously. The vertical maps are induced by the doubling construction of \cite[Theorem~3.3.13]{KL06} and biject onto the fixed point locus under the anti-holomorphic involutions. By the Riemann-Roch theorem, \cite[Theorem~C.1.1]{MS12} and the argument of \cite[Proposition~6.26]{AMS21}, there exists a minimal $0 \leq r_y< \infty$ so that $\Phi_{y,\bC}$ is surjective for any $r\geq r_y$; note that they denote the parameter by $k$ instead of $r$. To see that $\sup r_y <\infty$, one can argue as in \cite[\textsection4.3]{HS22}, using the (linear) gluing analysis of \textsection\ref{sec:gluing}. Concretely, one shows that the two conditions of the statement are satisfied for $y \in \cT_r$ if and only if its image in $\Mbar_{g,h;0,0}^{\,\wt J_E,(m,\beta)}(E_r,F_r)$ is regular. Thus, one can use that regularity is an open condition, even when gluing, thanks to Proposition \ref{prop:sc-implicit-function}.
	\end{proof}

\begin{lemma} The image $\cT'$ of $\cT$ in $\Mbar_{g,h}^{\wt J_E,(m,\beta)}(E,F)$ is an open subset of the locus of automorphism-free regular curves. In particular, it is a topological manifold with boundary. Moreover, it admits a canonical rel--$C^\infty$ structure with corners over $\cB$. 
\end{lemma}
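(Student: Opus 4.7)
The plan is to deduce the statement from Lemma~\ref{lem:subspace-of-other-holomorphic-maps} and the reformulation of unobstructedness in Lemma~\ref{lem:unobstructedness-differently}. By the former, $\cT$ embeds as a subset of $\Mbar_{g,h;0,0}^{\,\wt J_E,(m,\beta)}(\pi;E_r,F_r)$; by the latter, every point of $\cT^{\normalfont\text{pre}}$, and therefore of $\cT$, lands in the locus of regular curves. So I only need to exhibit $\cT'$ as an open subset of the regular, automorphism-free locus, promote this to a topological manifold with boundary using the existing analytic toolbox, and then upgrade to a rel--$C^\infty$ structure with corners over $\cM$.

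First, I would rule out automorphisms. A biholomorphism of a curve in the image of $\cT'$ must commute with the projection to $\bC P^N\times X$, and in particular with the projection to $\bC P^N$. That projection is exactly the framing $\iota$, which by the definition of $\cM$ in \eqref{eq:base-space} is a non-degenerate embedding of the underlying bordered Riemann surface; hence its only automorphism preserving the ordering of boundary components is the identity. This shows $\cT'$ lies in the automorphism-free part of the moduli space. Openness of $\cT'$ then follows from openness of the conditions defining $\cT \subset \cT^{\normalfont\text{pre}}_r$: regularity of the linearised operator in \eqref{eq:perturbed-cr} and vanishing of $H^1(C,(\iota,u)^*(E_r,F_r))$ are both upper semi-continuous, and the good covering $\cU$ consists of open subsets of $\cZ$, so the support condition on $\lambda_\cU$ is open as well.

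Second, the topological manifold-with-boundary (indeed corner) structure on the regular, automorphism-free locus is a standard consequence of the Fredholm theory for pseudo-holomorphic maps with totally real boundary conditions, combined with the sc-implicit function theorem that was invoked in the proof of Lemma~\ref{prop:transversality-achieved} (Proposition~\ref{prop:sc-implicit-function}). The codimension-one boundary strata correspond to disk and boundary-node degenerations of the domain, while interior node formations contribute higher-codimension corners, yielding exactly the anticipated corner stratification of $\cT'$.

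Finally, for the rel--$C^\infty$ structure over $\cM$, I would argue fibrewise over the base. By Proposition~\ref{prop:base-space}, $\cM$ is already a smooth manifold with corners, and the relative Cauchy--Riemann operator on the universal curve $\cC\to \cM$ with target $(E_r,F_r)$ varies smoothly in the base parameter. Applying the relative version of the gluing result Theorem~\ref{thm:representable} fibrewise gives charts for $\cT'\to \cM$ whose fibre direction carries a canonical smooth structure; rel-smoothness of the transition maps along the smooth locus is immediate, and along corner strata it follows from the sc-smoothness of the gluing maps. The principal difficulty I anticipate is precisely this last point: checking that the gluing parametrisations at nodal degenerations assemble into transition functions that are sc-smooth in the fibre direction in a way compatible with the projection to the corner stratification of $\cM$. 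Once this compatibility is in place, canonicity of the resulting structure follows from the uniqueness part of the gluing theorem.
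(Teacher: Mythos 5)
Your proof proceeds by direct, stepwise verification, whereas the paper exhibits $\cT'$ as the representing object of a functor $\fF$ on $(C^\infty/\cdot)^{\text{op}}$ and quotes the argument of \cite[Proposition~3.4]{HS22} with the gluing input replaced by Theorem~\ref{thm:representable}. The functorial formulation packages the topological structure, the rel--$C^\infty$ structure, and the canonicity assertion into a single representability statement; this is precisely why it avoids having to verify compatibility of transition maps across corner strata by hand, which is the part you rightly flag as the principal difficulty of your route. Your automorphism-free argument via the embedded framing $\iota$ is correct, and your identification of Theorem~\ref{thm:representable} as the key analytic input is the right instinct.

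There are, however, concrete gaps. First, your corner-stratification sentence is wrong: interior node formations do \emph{not} contribute corners. In the bordered moduli space $\Mbar_{g,h}^{\wt J_E,(m,\beta)}(E,F)$ the gluing parameter at an interior node is complex, so interior nodes give codimension-$2$ strata that lie in the \emph{interior} of the space, while boundary nodes (types (E), (H1), (H2), (H3)) have real gluing parameters and produce the codimension-$1$ boundary and higher codimension corners. This is why the lemma asserts a rel--$C^\infty$ structure with corners \emph{over $\cM$}: the corner structure is inherited entirely from the base (Proposition~\ref{prop:base-space}), not generated by interior degenerations. Second, your openness paragraph shows only that $\cT$ is open in $\cT^{\text{pre}}_r$; the lemma asserts that the \emph{image} $\cT'$ is an open subset of the regular automorphism-free locus of the moduli space, which additionally requires the forgetful map $\cT \to \cT'$ to be identified (the datum $\alpha$ is uniquely determined) and the defining conditions to be open on the moduli-space side, not just on the pre-thickening. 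Third, the fibrewise gluing argument for the rel--$C^\infty$ structure is the actual content, and as written it restates rather than proves the claim; the paper discharges this by invoking the local representability criterion (Proposition~\ref{prop:rep-criterion}) that feeds Theorem~\ref{thm:representable}.
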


\begin{proof} 
	Define the functor $\fF\cl (C^\infty/\cdot)^{\text{op}}\to \Set$ by letting $\fF(Y/T)$ be the set of diagrams 
	\begin{equation}
		\begin{tikzcd}
			Y  \arrow[d,""]&\cC_Y\arrow[l,""]\arrow[d,""]\arrow[r,"F"] & E\arrow[d,"p_{\bP^N}"]\\ 
			T & \cC_T \arrow[r,"f"]\arrow[l,"\pi_T"] & \bP^N
		\end{tikzcd} 
	\end{equation}
	with the following properties.
	\begin{enumerate}[label=(\arabic*),leftmargin=20pt,ref=\arabic*]
		\item The diagram
		\begin{center}
			\begin{tikzcd}
				\cC_T \arrow[d,"\pi_T"] \arrow[r,"f"] & \bP^N \\
				T & 
			\end{tikzcd}
		\end{center}
		is the pullback of $\cC\to \cB$ along a continuous map $T\to\cB$ and the first square is cartesian.
		\item The morphism $(F,f)\cl \cC_Y/\cC_T\to E/\bC P^N$ is of class rel--$C^\infty$ and maps $\del\cC_Y\to F/\bR P^N$.
		\item\label{functor-submersion} For each $y\in Y$ mapping to $t\in T$, the restriction 
		\begin{align*}
			F|_y\cl \pi_T^{-1}(t)\to E    
		\end{align*}
		is a regular pseudo-holomorphic stable map to $E$ representing $\tilde \beta$. Moreover, the projection map 
		$$\ker(D\delbar (F|_y)) \to \ker(D\delbar (f|_t))$$\noindent is surjective.
	\end{enumerate} 
	The proof of \cite[Proposition~3.4]{HS22} carries over verbatim to our setting. The only adaptation to be made is replacing \cite[Theorem~2.19]{HS22}, itself a recollection of \cite{Swa21}, by Theorem \ref{thm:representable}. 
\end{proof}

\begin{lemma}\label{} $\cE\to \cT$ is a rel--$C^\infty$ vector bundle.
\end{lemma}

\begin{proof}Recall the decomposition 
	\begin{equation}\label{eq:obs-bundle} \cE = \cH\oplus \hodge \oplus \fp\end{equation} of the obstruction bundle, where $\cH$ has fibre $H^0(C,(\iota,u)^*(E_r,F_r))$ over $(u,\iota,C,\gamma,\eta)\in \cT$. 
	 The fact that $\cH \to \cT$ is of class rel--$C^\infty$ and the section $\cT\to \cH$ is relatively smooth follows from a straightforward adaption of \cite[Theorem~2.22]{HS22}. Meanwhile $\hodge \oplus \fp \to \cT$ is the pullback of a smooth vector bundle on the base space and thus admits a canonical rel--$C^\infty$ structure.
\end{proof}

\noindent
We defined the obstruction section $\obs_0 \cl \cT\to \cE$ to be 
$$\obs_0(\iota,u,\gamma,\eta) =(\eta,\gamma,\text{i}\log\lambda_\cU(\iota,u)).$$\noindent
The first two components are relatively smooth by \cite[Theorem~2.22]{HS22} and the definition of the rel--$C^\infty$ structure on $\cT$ through \cite[Proposition~5.6]{HS22}. However, the map $\lambda_\cU$ is \textit{a priori} only continuous. To see this, recall the three ingredients in the definition of this map: the stabilisation maps $\stb_{U_i}\cl U_i \to \cB_{[3d]}$, the map $\lambda\cl\cB_{[3d]}\to \cG/G$ and the bump functions $\chi_i$. The maps $\stb_{U_i}$ are relatively smooth by \cite[Proposition~5.7]{HS22}, respectively a straightforward adaptation thereof and the map $\lambda$ is smooth by construction. The bump functions $\chi_i$ are, however, only continuous per se.

\begin{lemma}\label{lem:mollification-of-obstruction-section} There exists a rel--$C^\infty$ $G$-invariant function $\vartheta \cl \cT\to \fp$ so that $$\vartheta\inv(0)\cap \{\gamma = 0,\eta = 0\} = \obs_0\inv(0).$$
\end{lemma}

\begin{proof}  Our argument is inspired by \cite[Lemma~4.55]{AMS23}. Let $\wh\obs \cl \cT\to \fp$ be the projection of $\obs_0$ onto the last summand of $\cE$. All functions we define from now on will be rel--$C^\infty$ unless specified otherwise. Endow $\cT$ with a $G$-invariant metric $d\cl \cT\times \cT\to \bR_{\geq 0}$ and choose $\epsilon > 0$ sufficiently small so that the ball $B_{2\epsilon}(x)$ admits a (possibly non $G$-invariant) rel--$C^\infty$ chart $\phi_x \cl B_{2\epsilon}(x) \to \bR^k$ for any $x \in \obs_0\inv(0)$. Fix $x_1,\dots,x_m$ so that $B_\epsilon(x_1),\dots,B_\epsilon(x_m)$ cover $\obs_0\inv(0)$. Shrinking $\cT$, we may assume without loss of generality that they form an open cover of $\cT$. Let $\{f_1,\dots,f_m\}$ be a not necessarily $G$-invariant partition of unity subordinate to this open cover, and fix a smooth bump function $T \cl \bR^k \to [0,1]$ with $T(0) = 1$, $\text{supp}(T)\sub B_1(0)$ and $\int_{B_1(0)}T dy =1$. Given $r > 0$, set $T_r(y) = r^{-k}T(\frac{1}{r}x)$. Let $\delta \cl \cT\sm\obs_0\inv(0)\to (0,\epsilon)$ be a function so that 
	\begin{equation}\label{} \norm{\wh\obs(x) -\int_{\bR^k}T_{\delta(x)}(\phi_{x_j}(x)-\phi_{x_j}(y))\wh\obs(\phi_{x_j}\inv(y)) dy} < \frac1{2m}\norm{\wh\obs(x)}.\end{equation}
	for any $1 \leq j \leq m$ and $x \in B_\epsilon(x_j)\cap \{\gamma = 0,\eta = 0\}\sm\obs_0\inv(0)$.
	The function $\wt\vartheta_j \cl B_\epsilon(x_j)\to \fp$ given by 
	$$\wt\vartheta_j(x) = f_j(x) \int_{\bR^k}T_{\delta(x)}(\phi_{x_j}(x)-\phi_{x_j}(y))\wh\obs(\phi_{x_j}\inv(y)) dy$$\noindent
	extends to a rel--$C^\infty$ function on all of $\cT$. Denoting the normalised Haar measure on $G$ by $\mu_G$, we can define $\wt\vartheta$ by  
	$$\wt\vartheta(x) := \sum_{j = 1}^m\int_G \wt\vartheta_j(g\inv\cdot x)\,d\mu_G(g)$$\noindent
	 to obtain a $G$-invariant function satisfying for $x \in \{\gamma = 0,\eta = 0\}\sm\obs_0\inv(0)$ the inequality
	 
	\begin{align*}\norm{\wh\obs(x)-\wt\vartheta(x)} &\leq \sum_{j = 1}^m \int_G f_j(g\inv\cdot x)\norm{\wh\obs(g\inv\cdot x) -\int_{\bR^k}T_{\delta(x)}(\phi_{x_j}(g\inv\cdot x)-\phi_{x_j}(y))\wh\obs(\phi_{x_j}\inv(y)) dy}\,d\mu_G(g) 
		\\& < \sum_{j = 1}^m \int_G f_j(g\inv\cdot x)\frac12\norm{\wh\obs(g\inv\cdot x)} d\mu_G(g) 
		\\&= \frac12\norm{\wh\obs(x)}\int_G \sum_{j = 1}^m f_j(g\inv\cdot x) d\mu_G(g) 
		\\&= \frac12\norm{\wh\obs(x)}.\end{align*} 
	In particular, $\wt\vartheta(x) \ne 0$ if $\wh\obs(x)\neq 0$ for $x \in \{\gamma= 0,\eta = 0\}$.
	Let $\rho \cl \cT\to \bR_{\geq 0}$ be a $G$-invariant map so that $\rho$ vanishes along $\obs_0\inv(0)$ with all derivatives and $\rho\inv(0) = \obs_0\inv(0)$. Then $\vartheta := \rho\wt \vartheta$ is the desired function.
\end{proof}

\noindent
Define the final obstruction section $\obs\cl \cT\to \cE$ by 
$$\obs(\iota,u,\gamma,\eta) =(\eta,\gamma,\vartheta(\iota,u,\gamma,\eta)).$$

\noindent By the same argument as in \cite[\textsection 5.2]{HS22}, the group action of $G$ on $\cT$ and $\cE$ is continuous and by rel--$C^\infty$ diffeomorphisms. Finally, the reasoning in the proof of \cite[Lemma~5.10]{HS22} shows that $(G,\cT,\cE,\obs)$ is indeed a global Kuranishi chart for $\Mbar_{g,h;0,0}^{\,J\,\beta}(X,L)$.\end{proof}

\subsection{Orientation lines}
In contrast to the closed case, where the global Kuranishi chart is canonically stably complex, \cite{HS22}, orientability in the open setting is not always given and depends on additional choices, as described in detail in \cite{Ge13}. This is the reason for our specific choice of integers in the definition of auxiliary datum, Definition \ref{de:aux-choices-defined}. Recall that the \emph{orientation line} of a global Kuranishi chart is given by 
\begin{equation}\label{} 
\orl(\cK) = \orl(\cT)\otimes \orl(\fg)\dul \otimes \orl(\cE)\dul,
\end{equation}
see also Definition \ref{de:orientation-gkc}.

\begin{proposition}\label{prop:orientation-of-gkc}
	For each $[u,C]\in \Mbar_{g,h;0,0}^{\,J,\,\beta}(X,L)$ with lift $\wh u\in \obs\inv(0)$, there exists a canonical isomorphism 
	\begin{equation}\label{eq:orientation-lines} 
		\orl(\cK_\alpha)_{\wh u}\cong \orl(D\delbar_J(u))\otimes \orl(\delbar_{\,TC}\dul)\end{equation}
	of $\bZ/2$ torsors. Thus, if $(V,\gamma_)$ is a relative spin structure for $L$, then $\cK_\alpha$ is canonically oriented. 
\end{proposition}

\begin{lemma}\label{lem:obstruction-bundle-orientable} The first summand $\cH$ in the decomposition \eqref{eq:obs-bundle} has even rank. \end{lemma}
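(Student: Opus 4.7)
The plan is to reduce the rank of $\cH$ to a Riemann--Roch computation on the complex double $C_\bC$ and then verify parity using the constraints built into our auxiliary data. By the doubling construction of Remark~\ref{rem:doubling-perturbations}, together with the fact that the fixed locus of an anti-linear involution on a complex vector space has real dimension equal to the underlying complex dimension, one has
$$\rank_\bR \cH = \dim_\bR H^0(C,(\iota,u)^*(E_r,F_r)) = \dim_\bC H^0(C_\bC, E_{\iota,u}),$$
where $E_{\iota,u}\to C_\bC$ is the holomorphic extension to the double (of arithmetic genus $g_\bC = 2g+h-1$). Since the auxiliary datum is unobstructed (Definition~\ref{de:unobstruced-aux}), $H^1(C_\bC,E_{\iota,u})=0$, so Riemann--Roch gives
$$\dim_\bC H^0(C_\bC,E_{\iota,u}) = \rank_\bC(E_r)(2-2g-h) + \deg(E_{\iota,u}).$$

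The next step is to compute both terms. The complex rank equals $Nn\binom{N+r}{r}$, where $n = \dim_\bC X$. The degree $\deg(E_{\iota,u})$ coincides with the Maslov index $\mu((\iota,u)^*(E_r,F_r))$ of the underlying Riemann--Hilbert pair. Using multiplicativity of $\mu$ under tensor products, combined with $\mu(\cc{T^*\bC P^N},\cc{T^*\bR P^N})=(N+1)m$ (via a double sign cancellation for dualisation and complex conjugation), $\mu(u^*TX,u^*TL)=\mu(\beta)$, and $\mu(\iota^*\cO(r),\iota^*\cO(r)_\bR)=rm$, together with the fact that the trivial $\cc{H^0(\bC P^N,\cO(r))}$-factor contributes zero Maslov, we obtain
$$\mu((\iota,u)^*(E_r,F_r)) = \binom{N+r}{r}\bigl[(N+1)mn + N\mu(\beta) + Nnrm\bigr].$$

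The final step is a parity check. By Definition~\ref{de:aux-choices-defined}, $p$ is a positive multiple of $4$, so $m = p\deg(\fL_u)$ is even, which kills every term containing a factor of $m$ modulo $2$. The formula \eqref{eq:dimension-base-lag} forces $N \equiv 1-h \pmod{2}$, while $\mu(\beta)$ is even because $L$ is orientable (as a consequence of being relatively spin in the setting of Theorem~A). The expression for $\rank_\bR \cH$ thus reduces modulo $2$ to $\binom{N+r}{r}\cdot Nnh$, which vanishes both when $h$ is even (directly) and when $h$ is odd (since then $N$ is even). Hence $\rank_\bR\cH$ is even in all cases. The main delicacy in carrying out this plan is ensuring that the Maslov identities for the tensor factors are correctly assembled, in particular the sign handling for the conjugate dual bundle $\cc{T^*\bC P^N}$; once this is settled the parity check is a straightforward case analysis.
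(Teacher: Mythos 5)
Your strategy is the same as the paper's — Riemann--Roch for the obstruction bundle, followed by a parity analysis using $m$ even and the parity relation $N\equiv 1-h\pmod 2$ from \eqref{eq:dimension-base-lag} — and your computation of $\rank_{\bC}E_r$ and of the degree term via the complex double is correct. However, your final parity step has a genuine gap.

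You discharge the term $\binom{N+r}{r}N\mu_L(\beta)$ by asserting that $\mu_L(\beta)$ is even, appealing to $L$ being orientable via the relative spin hypothesis of Theorem~A. But Lemma~\ref{lem:obstruction-bundle-orientable} carries no such hypothesis and is quoted in places (e.g.\ Corollary~\ref{cor:orientation-thickening}) where no orientability of $L$ is assumed; the conclusion must hold unconditionally. If $L$ is non-orientable and $\mu_L(\beta)$ is odd, your argument breaks at this step. The paper's proof instead shows that $\binom{N+r}{r}N$ itself is always even: when $N$ is even this is immediate, and when $N$ is odd one uses that $r$ is odd (a constraint built into the auxiliary datum in Definition~\ref{de:aux-choices-defined} precisely for this purpose, cf.\ the remark following it) so that $N+r$ is even while $r$ is odd, forcing $\binom{N+r}{r}$ to be even by Lucas/Vandermonde. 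Your proof never invokes the oddness of $r$ at all, which is a strong hint that an essential ingredient is missing. The fix is straightforward: replace the appeal to orientability of $L$ with the observation that $\binom{N+r}{r}N\equiv 0\pmod 2$; the rest of your computation (including the $\binom{N+r}{r}Nnh$ term, which you correctly note vanishes mod $2$ using $N\equiv 1-h$, and which the paper's displayed chain of congruences glosses over) is fine.
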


\begin{proof} 	We use the Riemann-Roch theorem, \cite{MS12}, to compute, at a point $(u,C,\iota,\alpha,\eta)\in \cT$
	
	\begin{align*}\label{}\rank(\cH) &=\rank(F)\chi(C)+ \mu(C,(\iota,u)^*(E,F))  \\&= \binom{N+r}{r}\lbr{Nn(2-2g-h)+ N\mu(C,u^*(TX,TL)) + nc_1(\iota_\bC^*(T^*\bC P^N\otimes_\bC \cO(r))}\\& \equiv \binom{N+r}{r}\lbr{N\mu_L(\beta)+n(N+1)(r+1)m}\\&\equiv\binom{N+r}{r}N\mu_L(\beta)\quad (\normalfont\text{mod }2) \end{align*}
	If $N$ is even, then the last expression vanishes. If $N$ is odd, then $\binom{N+r}{r}$ is even due to the Vandermonde convolution $\binom{n}{k} = \sum_{j = 0}^{n}\binom{m}{j}\binom{n-m}{k-j}$ (for any $0 \leq m\leq n$) since $r$ is odd.
\end{proof}

\begin{lemma}\label{lem:vertical-orientation} 
	The thickening $\cT$ is orientable exactly if the vector bundle $\cH$ is orientable. If this is the case, the $G$-action on the total space of $\cH \to \cT$ is orientation-preserving. 
\end{lemma}

\begin{proof} We first show that there exists a canonical isomorphism 
	\begin{equation}\label{} 
		\orl( T_{\cT/\cB}) \cong \orl(\delbar_J)\otimes \orl(\cH).
		\end{equation}
	Indeed, the vertical tangent space of $\cT/\cB$ at $y$ is given by the kernel of 
	$(D_u+\lspan{\cdot}\g d\wt\iota)\oplus \delbar_{(\iota,u)^*(E,F)}$,
which agrees with the kernel of $$D_u+\lspan{\cdot}\g d\wt\iota \cl C^\infty(C,u^*(TX,TL))\oplus H^0(C,(\iota,u)^*(E,F))\to \Omega^{0,1}(\wt C,\wt u^*(TX,TL)).$$ 
Thus the claim follows from Lemma \ref{lem:orientation-of-sum}. Hence, the first assertion follows from this isomorphism combined with Lemma \ref{lem:orientation-base} and Lemma \ref{lem:obstruction-bundle-orientable}. The second claim follows from Lemma \ref{lem:action-base-space} and the fact that any block matrix of the form $\begin{pmatrix} A & 0\\ 0& A\end{pmatrix}$ has positive determinant. 
\end{proof}

\begin{proof}[Proof of Proposition~\ref{prop:orientation-of-gkc}] We have the canonical isomorphisms
	\begin{align}\label{eq:virtual-orientation} 
	\notag \orl(\cK) &\cong \orl(T_{\cT/\cB})\orl(\cB)\orl(\fg)\dul\orl(\cE)\dul\\ \notag
			&\cong\orl(\delbar_J)\orl(\cH)\orl(\cB)\orl(\fg)\dul\orl(\fp)\dul\orl(\bE\dul)\dul\orl(\cH)\dul\\\notag
			&\cong \orl(\delbar_J)\orl(\bE\dul)\orl(\fp\fg\fl)\orl(\Mbar)\orl(\fg)\dul\orl(\fp)\dul\orl(\bE\dul)\dul\orl(\cH)\dul\orl(\cH)\\
			&\cong \orl(\delbar_J)\orl(\Mbar)
	\end{align}
where we are throughout using the Koszul sign rule and the isomorphism $\orl(V)\dul\orl(V) \cong \orl(0)$.
	Thus, if $L$ is equipped with a relative spin structure and the boundary components of the curves are ordered, we obtain a canonical orientation of $\cK$ by \textsection\ref{sec:orientations}.
\end{proof}

\begin{remark}\label{rem:right-gkc} Suppose $L$ is relatively spin. It will be useful to have a global Kuranishi chart $\wh\cK = (\wh G,\wh\cT/\wh\cB,\wh\cE,\wh\obs)$ for $\Mbar_{g,h;k,\ell}^{\,J,\,\beta}(X,L)$, so that 
	\begin{enumerate}[\normalfont a),leftmargin=*]
		\item\label{orientable-thickening} $\wh\cT$ is orientable and the covering group $\wh G$ acts by orientation-preserving rel--$C^\infty$ diffeomorphisms,
		\item\label{same-as-curves} $\dim(\wh\cB)-\dim(G) \equiv \dim(\Mbar_{g,h;k,\ell})$ (mod $2$)
		\item\label{even-rank} $\dim(\wh\cT) \equiv\vdim(\Mbar_{g,h;k,\ell}^{\,J,\,\beta}(X,L))$ mod $2$ or, equivalently, $\rank(\wh\cE)$ is even.
	\end{enumerate}
We obtain this from the global Kuranishi chart $\cK$ above by letting $\wh\cK$ be the stabilisation of $\cK$ by the bundle $\cE \to \cT$. It follows from the previous sections that $\wh\cK$ has Properties \eqref{orientable-thickening}-\eqref{even-rank}. Moreover, the evaluation and stabilisation maps pullback to $\wh\cK$. In \cite{HH1}, it will be useful to use $\wh\cK$ instead of $\cK$. 
 \end{remark}

\subsection{Moduli space of maps with marked points}\label{subsec:marked-points}
Given an unobstructed auxiliary datum $\alpha$ for $\Mbar^{J,\beta}_{g,h;0,0}(X,L)$, let $\cK_\alpha = (G,\cT_\alpha/\cB,\cE_\alpha,\obs_\alpha)$ be the global Kuranishi chart with corners constructed in \textsection\ref{subsec:overview}. Let 
$$\cB_{k,\ell}\sub \Mbar_{g,h;k,\ell}^{\,m}(\bC P^N,\bR P^N)$$ 
be the preimage of $\cB$ under the map $\pi_{\lc{k},\lc{\ell}}$ that forgets all marked points. As constant discs and spheres are unobstructed, $\cB_{k,\ell}$ is a smooth manifold with corners of the expected dimension. 

\begin{proposition}\label{prop:marked-points-gkc-simple}
	The pullback global Kuranishi chart 
	\[\cK_{\alpha,k,\ell} := \cK_\alpha \times_\cB\cB_{k,\ell}= (G,\cT_\alpha\times_{\cB}\cB_{k,\ell}/\cB,\cE_\alpha\times_{\cB}\cB_{k,\ell},\obs_\alpha\times\ide)\] is a rel--$C^\infty$ global Kuranishi chart with corners for $\Mbar^{J,\beta}_{g,h;k,\ell}(X,L)$.
	\begin{enumerate}[label=\arabic*),leftmargin=20pt,ref=\arabic*]
		\item (Forgetful maps) Forgetting a marked point induces a morphism \[\cK_{\alpha,k,\ell}\to \cK_{\alpha,k-1,\ell}\qquad \text{ and }\qquad\cK_{k,\ell}\to \cK_{k,\ell-1},\]
		of rel--$C^\infty$ global Kuranishi charts as in \cite[Definition~A.1]{Hir23}. They are rel--$C^\infty$ submersions away from subsets of codimension $2$, respectively codimension $1$.
		\item (Evaluation maps) The evaluation map $\eva$ on $\Mbar_{g,h;k,\ell}^{J,\beta}(X,L)$ lifts to a relatively smooth $G$-invariant map $\eva\cl \cT_{\alpha,k,\ell}\to X^k\times L^{\ell}$.
	\end{enumerate} 
\end{proposition}

\begin{proof} As the forgetful map $\cB_{k,\ell}\to \cB$ is smooth, $\cK_{\alpha,k,\ell}$ is a rel--$C^\infty$ global Kuranishi chart. It remains to show that the forgetful map 
	\[\obs_{\alpha,k,\ell}\inv(0)/G\to \Mbar^{J,\beta}_{g,h;k,\ell}(X,L)\]
	is a homeomorphism. The argument is the same as in the proof of \cite[Lemma~5.2]{HS22}. The assertion about the forgetful maps is a consequence of the fact that the forgetful maps on the level of base spaces are smooth. 
	The isomorphism \eqref{eq:virtual-orientation} yields a canonical isomorphism
	\begin{align}\label{eq:orientation-with-marked-points}\notag\orl(\cK_{\alpha,k,\ell})&\cong \orl(\delbar_J)\otimes \bigotimes\limits_{i = 1}^k\orl(TC)\otimes \bigotimes\limits_{i = 1}^h\bigotimes\limits_{j = 1}^{\ell_i}\orl(T(\del C)_i)\otimes \orl(\delbar_{\,TC})\dul\\&\cong \orl(\delbar_J)\otimes  \bigotimes\limits_{i = 1}^h\bigotimes\limits_{j = 1}^{\ell_i}\orl(T(\del C)_i)\otimes \orl(\delbar_{\,TC})\dul,\end{align}
	which orients the global Kuranishi charts canonically and so proves Theorem \ref{thm:open-gw-global-chart}\eqref{gkc-orientation}.\par 

	We now turn to the evaluation maps. By construction, the \emph{interior} and \emph{boundary evaluation maps} 
	$$\evai_j \cl \Mbar^{J,\beta}_{g,h;k,\ell}(X,L)\to X$$\noindent
	and 
	$$\evab_i \cl \Mbar^{J,\beta}_{g,h;k,\ell}(X,L)\to L$$\noindent
	lift to continuous $G$-invariant maps $\evai_j \cl \cT_{\alpha,k,\ell}\to X$ and $\evab_i \cl \cT_{k,\ell}\to L$. They are relatively smooth by the same argument as used in \cite{HS22}.
	\end{proof}

\noindent	
A single \emph{boundary} evaluation map is a submersion when the auxiliary datum is chosen appropriately as in \cite[Proposition~3.7]{Hir23}. 
However, this joint evaluation map is usually not a submersion. 
To circumvent this problem, we use a general trick for global Kuranishi charts, cf. \cite[Lemma~4.25]{AMS21}.

\begin{lemma}\label{lem:evaluation-relvative-submersion}
	After stabilising $\cK_{\alpha,k,\ell}$, we can extend the evaluation map $\eva$ to a relative submersion.
\end{lemma}

\begin{proof}
	\label{pf:stabilise-by-exponential-map} Stabilise the global Kuranishi chart $\cK_{\alpha,k,\ell}$ by $\cW^{k,\ell}_\alpha := \eva^*(TX^{\bigoplus k}\boxplus TL^{\boxplus \ell})$. Fix a Riemannian metric $g$ on $X$ with respect to which $L$ is totally geodesic and let $\exp$ be the exponential map of $g$. Then we define the `new' evaluation maps to be
	\begin{equation}\label{de:new-evaluation}\evai_j(y,v_*,v'_*) = \exp_{\evai_j(y)}(v_j) \qquad \qquad \evab_i(y,v_*,v'_*) = \exp_{\evab_i(y)}(v'_i),\end{equation}
	which manifestly yield a submersion to $X^k\times L^\ell$ on the total space of $\cW^{k,\ell}_\alpha$.
\end{proof}

\begin{remark}
	Stabilising by the tangent bundle of $X$ and $L$ yields very strong submersivity properties. The cost of this is that even operations at energy zero, which could have been defined without relying on a global Kuranishi chart construction, end up being deformed.
\end{remark}

\noindent
 We now discuss the change in orientation that occurs when permutating boundary components, respectively interior and boundary marked points.

\begin{lemma}\label{lem:permutation-marked-points} Let $\alpha$ be an unobstructed auxiliary datum for $\Mbar_{g,h;0,0}^{J,\,\beta}(X,L)$ and $n := \dim(L)$.
	\begin{enumerate}[label=\arabic*),leftmargin=20pt,ref=\arabic*]
		\item\label{permute-circle} Let $\tau = (i,i+1)$ be a transposition in the symmetric group $S_h$. The associated isomorphism $\cK_{\alpha,k,\ell}\to \cK_{\alpha,k,\tau_*\ell}$ of global Kuranishi charts changing the ordering of the boundary components has orientation sign $$(-1)^{n+1+\ell_i\ell_{i+1}}.$$
		\item\label{permute-interior} The isomorphism $\cK_{\alpha,k,\ell}\to \cK_{\alpha,k,\ell}$ of global Kuranishi charts, which changes the label of two interior marked points, is orientation-preserving.
		\item\label{permute-boundary} The isomorphism $\cK_{\alpha,k,\ell}\to \cK_{\alpha,k,\ell}$ of global Kuranishi charts that changes the label of the two marked points $x^b_{i,j_1}$ and $x^b_{i,j_2}$ with $j_1 < j_2$, has orientation sign $(-1)^{\normalfont\text{sign}(\tau_{j_1,j_2})}$, where $\tau_{j_1j_2}$ is the transposition of $j_1$ and $j_2$.
	\end{enumerate}
\end{lemma}

\begin{proof} The first claim, in the case of $(k,\ell) = (0,0)$, follows from the definition of the orientation on the thickening, in particular, the isomorphism \eqref{eq:virtual-orientation}, and the way the ordering of the boundary components induces an orientation on the index bundle of a Cauchy-Riemann operator, see \cite[CROrient $1\fo\fs$(2)]{CZ24} or the discussion before Example 2.3.10 in \cite{WW17}. In the case of marked points, we can write an open subset of $\cT_{k,\ell}$ as 
	\begin{equation}\label{eq:open-stratum}U \cong V\times (C\sm \del C)^k \times \prod_{j = 1}^{h}(\del C)_j^{\ell_j} \end{equation}
		for any open subset $V\sub \cT$. The additional term $\ell_i\ell_{i+1}$ thus arises due to the permutation of the factors of the product.
		The other two cases can be deduced from the action on $U$ in \eqref{eq:open-stratum}.
\end{proof}
\subsection{Uniqueness up to equivalence and cobordism}\label{subsec:uniqueness} We will need a somewhat more general notion than that of equivalence between global Kuranishi charts, possibly with boundary, see also \cite[\textsection2.3]{HS22}. Let us thus make the following definition.

\begin{definition}\label{} Let $Y$ be a smooth manifold. A \emph{global Kuranishi chart over $Y$} is a rel--$C^\infty$ global Kuranishi chart $\cK$ equipped with a $G$-invariant rel--$C^\infty$ map $f \cl \cT\to Y$. 
\end{definition}

\begin{definition}\label{de:equivalence of gkc over Y} We call two oriented rel--$C^\infty$ global Kuranishi charts  $(\cK,f)$ and $(\cK',f')$ \emph{equivalent} if they are related by the equivalence relation generated by 
	\begin{enumerate}[\normalfont i),leftmargin=20pt,ref=\roman*]
		\item \label{de:equivalence of gkc over Y germ equiv} \textit{(germ equivalence)} if $U \sub \cT$ is a $G$-invariant open neighbourhood of $\obs\inv(0)$, then $$(\cK,f)\sim (\cK|_U) := (G,U,\cE|_U,\obs|_U);$$
		\item \label{de:equivalence of gkc over Y isomorphism}\textit{(isomorphism)} if there exists an isomorphism $G\to G'$, an equivariant rel--$C^\infty$ diffeomorphism $\psi \cl \cT/\cB\to \cT'/\cB'$ and an equivariant \emph{fibrewise affine} isomorphism $\wt\psi \cl \cE\to \psi^*\cE'$, which is linear over $\obs\inv(0)\cup\psi\inv({\obs'}\inv(0))$, so that $\wt\psi \g \obs = \obs'\g \psi$, then $\cK\sim \cK'$;
		\item \label{de:equivalence of gkc over Y stabilisation}\textit{(stabilisation)} if $p \cl \cW\to \cT$ is a $G$-vector bundle that is rel--$C^\infty$ with respect to $\cB$, then 
		$$\cK \sim (\cK_\cW,f_\cW):= (G,\cW/\cB,p^*\cE\oplus p^*\cW,p^*\obs\oplus \Delta_\cW);$$
		\item \label{de:equivalence of gkc over Y group enlargement}\textit{(group enlargement)} if $q \cl \cP\to \cT$ is a principal $G'$-bundle rel--$C^\infty$ with respect to $\cB$ and equipped with a compatible $G$-action, then $$\cK \sim (\cK_\cP,f_\cP):= (G\times G',\cP,q^*\cE,q^*\obs);$$
		\item \label{de:equivalence of gkc over Y base modification}\textit{(base modification)} if the map $\cT\to \cB$ factors through a smooth submersion $\wt \cB \to \cB$ so that $f\cl \cT/\wt\cB\to Y$ is still a relative submersion, then 
		$$\cK\sim \cK_{/\wt\cB} := (G,\cT/\wt\cB,\cE,\obs).$$ 
	\end{enumerate}
In each case, there exists, according to our conventions in \textsection\ref{subesc:gkc-conventions}, a canonical isomorphism 
\begin{equation}\label{eq:equivalent-det}\det(\cK)\cong \det(\cK') \end{equation} 
for equivalent global Kuranishi charts $\cK$ and $\cK'$. We call them \emph{oriented equivalent} if $\cK$ and $\cK'$ are oriented and the isomorphism~\eqref{eq:equivalent-det} maps one orientation to the other.
\end{definition}

\begin{proposition}\label{prop:uniqueness-up-to-equivalence} Suppose $\alpha_i = (\conn^{X,i},\cO_{X,i}(1),p_i,\cU_i,\lambda_i,r_i)$ is an unobstructed auxiliary datum for $\Mbar_{g,h}^{J,\beta}(X,L)$ for $i \in \{0,1\}$. Then $\cK_{\alpha_0}$ and $\cK_{\alpha_1}$ are rel--$C^\infty$ equivalent. If $L$ admits a relative spin structure, then they are oriented equivalent.
\end{proposition}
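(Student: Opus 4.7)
The strategy is to realise the equivalence $\cK_{\alpha_0}\sim \cK_{\alpha_1}$ as a finite chain of elementary moves, each of which is one of the operations in Definition~\ref{de:equivalence of gkc over Y}. By transitivity, it suffices to interpolate through a sequence of intermediate auxiliary data that differ in only one entry at a time. I would group the single-entry changes into two families. The \emph{algebraic} moves (changing $p$ to a multiple, increasing $r$, replacing $\cO_X(1)$ by a tensor power, or refining $\cU$) should manifestly produce stabilisations in the sense of Definition~\ref{de:equivalence of gkc over Y}\ref{de:equivalence of gkc over Y stabilisation}. The \emph{continuous} moves (changing $\nabla^X$ or $\lambda$) should be realised via a cobordism-style argument using a parametric global Kuranishi chart.

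For the algebraic moves, if $r_0<r_1$, I would fix a $\normalfont\text{PO}(N+1)$-equivariant real-linear inclusion $\cc{H^0(\bC P^N,\cO(r_0))}\hkra \cc{H^0(\bC P^N,\cO(r_1))}$ induced by multiplication with a fixed $\normalfont\text{PO}(N+1)$-invariant section of $\cO(r_1-r_0)^{\otimes 2}$. This yields a fibrewise embedding $E_{r_0}\hkra E_{r_1}$ compatible with the real subbundles $F_r$, hence an identification of $\cT_{\alpha_1}$ with the total space of a rel--$C^\infty$ vector bundle over $\cT_{\alpha_0}$, together with a compatible identification of obstruction sections (the extra summand of $\cE$ being the diagonal in the quotient bundle). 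This is exactly a stabilisation in the sense of Definition~\ref{de:equivalence of gkc over Y}\ref{de:equivalence of gkc over Y stabilisation}. Passing between $p_0$ and a common multiple $p_1$, or between $\cO_{X,0}(1)$ and a tensor product $\cO_{X,0}(1)\otimes \cO_{X,1}(1)$, is handled similarly by embedding the respective section spaces; for good coverings $\cU_0,\cU_1$, one takes the union $\cU_0\cup \cU_1$ (which is again a good covering after rescaling the partition of unity) and checks that it gives a common stabilisation of both charts.

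For the continuous moves I would run the construction of \textsection\ref{subsec:overview} in families. The space of $J$-linear connections on $TX$ is affine, and by Lemma~\ref{lem:real-polar-decomposition} the target $\cG/G$ is diffeomorphic to $\fp(N+1)$, hence contractible, so the space of equivariant smooth maps $\lambda\cl \cM_{[3d]}\to \cG/G$ is path-connected. Fix a smooth path $\alpha_t$, $t\in [0,1]$, and carry out Construction~\ref{construction-thickening} over the base $\cM\times [0,1]$ to obtain a pre-thickening $\cT^{\normalfont\text{pre}}_{r,\bullet}$ fibred over $[0,1]$. Since unobstructedness is an open condition by the parametric version of Lemma~\ref{prop:transversality-achieved}, after possibly stabilising the obstruction bundle by the pullback of a finite-dimensional auxiliary bundle (absorbing the finitely many loci where surjectivity could fail along the path), the resulting $\cK_{\alpha_\bullet}$ is a rel--$C^\infty$ global Kuranishi chart whose thickening restricts to $\cT_{\alpha_0}$ and $\cT_{\alpha_1}$ at the endpoints. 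The evident identifications of these restrictions with the original charts, combined with the stabilisation absorbing the perturbation bundle over $[0,1]$, yields the desired equivalence.

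The main obstacle is keeping track of orientations under each elementary move. For the algebraic moves, the added summand of $\cH$ has even real rank by the same Vandermonde computation as in Lemma~\ref{lem:obstruction-bundle-orientable} (here one uses that $r$ was taken odd and $p$ divisible by $4$), and the extra $\fp$-factor contributes trivially since the $G$-action on the added piece is orientation-preserving by Lemma~\ref{lem:action-base-space} and the block-diagonal argument of Corollary~\ref{cor:orientation-thickening}; so the isomorphism \eqref{eq:virtual-orientation} is preserved under stabilisation. For the continuous moves, the orientation varies continuously along the path by naturality of the construction in \cite{CZ24}, so the endpoint orientations agree. A secondary technical issue is ensuring that the rel--$C^\infty$ structure is preserved by each operation; this follows because each move either pulls back along a smooth map of base spaces (Proposition~\ref{prop:base-space}) or is a fibrewise linear operation that respects the rel--$C^\infty$ structure of Proposition~\ref{thm:open-gw-global-chart-existence}\ref{rel-smooth-obstruction-bundle}.
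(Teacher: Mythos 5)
Your strategy is genuinely different from the paper's, and I do not think it closes either of the two gaps it opens. The paper does not decompose into single-entry moves at all: it builds a single doubly-thickened chart $\cK_{01}$ over the fibre-product base $\cM_{01}$, carrying both framings $(\iota_0,\iota_1)$, both perturbations $(\eta_0,\eta_1)$, both obstruction summands $\cE_0\oplus\cE_1$, and covering group $G_0\times G_1$, and then relates $\cK_{01}$ to $\cK_0$ (and by symmetry to $\cK_1$) by the chain (base modification) followed by a vertical-linearisation surjectivity argument, (group enlargement), and (stabilisation) via Lemmas~\ref{lem:tubular-neighbourhood-for-stabilisation-1}--\ref{lem:tubular-neighbourhood-for-stabilisation-2}; the orientation sign is then computed explicitly and seen to cancel.

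Your \emph{algebraic move} claim, that enlarging $r$ (or $p$, or $\cO_X(1)$) realises $\cT_{\alpha_1}$ as the total space of a rel--$C^\infty$ vector bundle over $\cT_{\alpha_0}$, does not hold. After an equivariant splitting $E_{r_1}=E_{r_0}\oplus W$, an element of $\cT_{\alpha_1}$ is $(u,\iota,\alpha,\eta_0,\eta_W)$ with $\delbar_J u+\lspan{\eta_0}\circ d\iota+\lspan{\eta_W}\circ d\iota=0$; forgetting $\eta_W$ does not land in $\cT_{\alpha_0}$, since $(u,\iota,\alpha,\eta_0)$ only solves the $\alpha_0$-equation when $\lspan{\eta_W}\circ d\iota=0$. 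So there is no projection $\cT_{\alpha_1}\to\cT_{\alpha_0}$ and Definition~\ref{de:equivalence of gkc over Y}\ref{de:equivalence of gkc over Y stabilisation} does not apply directly. The two thickenings are only related after passing through the double thickening and using the tubular-neighbourhood lemmas, which is precisely the content of the paper's argument.

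Your \emph{continuous move} argument has a more fundamental problem: what it produces is a cobordism of global Kuranishi charts over $\cM\times[0,1]$, not an equivalence in the sense of Definition~\ref{de:equivalence of gkc over Y}. None of the five generating relations contains cobordism. In particular the inclusion $\cM\times\{0\}\hookrightarrow\cM\times[0,1]$ is not a ``base modification'': move \ref{de:equivalence of gkc over Y base modification} requires the structure map to factor $\cT\to\wt\cM\to\cM$ through a \emph{submersion onto} $\cM$, which is not what a boundary inclusion gives you. Relatedly, your orientation remark (``the orientation varies continuously along the path'') presupposes the cobordism picture and does not engage with the nontrivial sign bookkeeping in the isomorphism~\eqref{eq:virtual-orientation}; in the paper each horizontal map of diagram~\eqref{eq:uniquness-orientation} picks up a sign $(-1)^{h-1}$ that must be shown to cancel. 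I would recommend reworking the proof around the double-thickening construction, which handles all the entries of the auxiliary datum uniformly and is compatible with the listed equivalence moves.
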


\begin{proof} Define the doubly thickened rel--$C^\infty$ global Kuranishi chart 
		$$\cK_{01} = (G_0\times G_1,\cT_{01}/\cB_{01},\cE_{01},\obs_{01})$$ 
		by letting $\cB_{01}$ be the preimage of $\cB_0\times\cB_1$ under the map 
		$$\Mbar_{g,h}^{(m_0,m_1)}(\bC P^{N_0}\times \bC P^{N_1},\bR P^{N_0}\times \bR P^{N_1})\to \Mbar_{g,h}^{m_0}(\bC P^{N_0}, \bR P^{N_0})\times \Mbar_{g,h}^{m_1}(\bC P^{N_1}, \bR P^{N_1}).$$ 
		By the same argument as in Proposition \ref{prop:base-space} and \cite[Lemma~3.2]{HS22}, $\cB_{01}$ is a smooth manifold (with boundary) of the expected dimension and the natural forgetful maps $\pi_i\cl \cB_{01}\to\cB_i$ are submersions with  
		\begin{equation}\label{eq:orientation-double-base}\ker(d\pi_0([\iota,C])) = \ker(D\delbar(\iota_1)) \end{equation}
		and similarly for $\pi_1$.
		In particular, by Lemma \ref{lem:orientation-base}, we see from the short exact sequence  $$0 \to \ker(d\pi_0)\to T\cB_{01} \to \pi_0^*T\cB_0\to 0$$ 
		that $\cB_{01}$ is orientable and we have an isomorphism
		$$\Lambda^{\normalfont\text{max}}(T_{[\iota,C]}\cB_{01}) \cong \det(\delbar_{TC}) \otimes_\bR\Lambda^{\normalfont\text{max}}(\fp\fg\fl_\bR(N_0+1))\otimes \det(\hodge)\otimes \Lambda^{\normalfont\text{max}}(\fp\fg\fl_\bR(N_1+1))\otimes\det(\hodge),$$
		which is canonical up to multiplication by positive scalars.
		Define $\cZ_{01}\to \cB_{01}$ as in \textsection\ref{subsec:reducing-group}. Then 
		$$\cZ_{01} = \cB_{01}\times_{\cB_i}\cZ_i$$ 
		for $i \in \{0,1\}$.
		Let $(E_i,F_i)$ be defined by \eqref{eq:perturbation-bundle} using the unobstructed auxiliary datum $\alpha_i$. Then, the thickening $\cT_{01}$ consists of tuples $\{(\zeta,\gamma_0,\gamma_1,u,\eta_0,\eta_1)\}$, where
		 \begin{itemize}[leftmargin=20pt]
		 	\item $(\zeta,u)\in \cZ_{01}$ with $\zeta$ mapping to $[\iota_0,C]$, respectively $[\iota_1,C]$;
		 	\item $\gamma_i \in H^1(C,(\bC,\bR))$ satisfies 
		 	$$[\iota_i^*\cO_{\bC P^{N_i}}(1)]\otimes [\fL_{i,u}]^{\otimes -p_i} = \exp(\gamma_i),$$ where $\exp \cl H^1(C,(\bC,\bR))\to \Pic(C,\del C)$ is the exponential map,
		 	\item $\eta_i \in H^0(C,(\iota_i,u)^*(E_i,F_i))$, and $\eta_0$ and $\eta_1$ satisfy
		 	\begin{equation*}\label{}\delbar_J \wt u +\lspan{\eta_0}\g d\wt\iota_0 + \lspan{\eta_1}\g d\wt\iota_1  = 0\end{equation*}
		 	on the normalisation $\wt C\to C$.
		 \end{itemize}
	The fibre of $\cE_{01}$ over $(\zeta,\gamma_0,\gamma_1,u,\eta_0,\eta_1)$ is given by
		\begin{align}\label{double-obstruction}
			\bigoplus_{i=0,1}\left( H^0(C,(\iota_i,u)^*(E_i,E_i')) \oplus H^1(C,(\bC,\bR))\oplus \fp(N_i+1)\right)
		\end{align}
		and we set 
		$$\obs_{01}(\zeta,\gamma,u,\eta) = (\gamma_i,\eta_i,\vartheta_i(\zeta,u,\gamma,\eta))_{i = 0,1},$$
		where $\vartheta = (\vartheta_0,\vartheta_1) \cl \cT_{01}\to \fp_0 \oplus \fp_1$ is defined as follows. Let $U_i \sub \cT_{01}$ be an open $G_0\times G_0$-invariant neighbourhood of $\wt \cT_i:= \{\gamma_j= 0,\eta_j = 0\}$, where $\{i,j\} = \{0,1\}$, so that there exists an equivariant rel--$C^\infty$ retraction $r_i \cl U_i \to \wt \cT_i$. Let $\wt \vartheta_i$ be the pullback of the section $\vartheta_i$ on $\cT_i$ constructed in Lemma~\ref{lem:mollification-of-obstruction-section}.
		Let $t_i \cl \cT_{01}\to [0,1]$ be a $G_{01}$-invariant rel--$C^\infty$ map that is identically $1$ in a neighbourhood of $\wt\cT_i$ and is supported in $U_i$. We claim that $\vartheta := (t_0 r_0^*\wt\vartheta_0,t_1 r_1^*\wt\vartheta_1)$ has the desired properties in a neighbourhood of $\obs_0\inv(0) = \obs_1\inv(0)\sub \cT_{01}$. Indeed, restricting to the interior of the support of $t_0$ and $t_1$, we see that 
		$$\vartheta\inv(0)\cap \{\gamma_0,\gamma_1= 0,\eta_0 = 0,\eta_1\} = \wt\vartheta_0\inv(0)\cap \wt\vartheta_0\inv(0)\cap \{\gamma_0,\gamma_1= 0,\eta_0 = 0,\eta_1\}$$
		which is exactly the zero locus we need by Lemma~\ref{lem:mollification-of-obstruction-section}.\par
		By abuse of notation, we denote the vector bundle summands for $i=0,1$ from \eqref{double-obstruction} by $\cE_i$ and denote the projection of $\obs_{01}$ to the respective summand by $\obs_i$. The group $G_0\times G_1$ acts on $\cT_{01}\to\cB_{01}$ and $\cE_{01}$ in the evident way.
		The arguments of Proposition \ref{prop:open-gw-global-chart-existence} show that $\cT_{01},\cE_{01}$ and $\obs_{01}$ are rel--$C^\infty$ with respect to the forgetful maps to $\cB_0$ and $\cB_1$. By symmetry, it remains to show that $\cK_0$ and $\cK_{01}$ are equivalent. By (base modification), we may replace $\cK_{01}$ by $\cK:= (G_0\times G_1,\cT_{01}/\cB_0,\cE_{01},\obs_{01})$. The proof of \cite[Lemma~6.1]{HS22} carries over to our setting, showing that the vertical linearisation 
		$$ D\obs_1\cl T_{\cT_{01}/\cB_0}\to \cE_1$$
		is surjective in a neighbourhood $U$ of $\obs_{01}\inv(0)$. Then, 
		$\cK' := (G_0\times G_1,U\cap {\obs'}\inv(0)/\cB_0,\cE_0,\obs_0)$ 
		is a rel--$C^\infty$ global Kuranishi chart, and $\cK'$ is related to $\cK_0$ by (group enlargement). Finally, $\cK_{01}$ and $\cK'$ are related by (stabilisation) via $\cE_1\to \obs_1\inv(0)$ by Lemmas \ref{lem:tubular-neighbourhood-for-stabilisation-1} and \ref{lem:tubular-neighbourhood-for-stabilisation-2} since the tubular neighbourhood theorem generalises to the relatively smooth setting.\par
		Suppose now that $L$ admits a relative spin structure $(V,\obs)$. Then, by the same argument as in Lemma \ref{lem:vertical-orientation}, $\cK_{01}$ is orientable. We will show that the horizontal maps in the diagram
		\begin{equation}\label{eq:uniquness-orientation}\begin{tikzcd}
				\orl(\cK_0) \arrow[r,""]&\orl(\cK_{01}) & \orl(\cK_1) \arrow[l,""] \\ & \orl(D\delbar_J)\otimes \orl(\delbar_{TC})\dul \arrow[ur,""] \arrow[ul,]\arrow[u,""]\end{tikzcd} \end{equation}
	have the same sign when we endow the upper row via an orientation of $\det(D\delbar_J)\otimes \det(\delbar_{TC})\dul$. The orientation on $\cK_0$ at a point $y \in \cK_0$ corresponding to $\wh y \in \cK_{01}$ is given by 
	$$\fo_1 := \fo_D\wedge \fo_{\cH_0}\wedge \fo_{\fp\fg\fl_0}\wedge \fo_{\hodge}\wedge \fo_{\delbar\dul}\wedge \fo_{\cE_1}\wedge \fo_{\fg_1}\wedge \fo_{\fg_1}\dul \wedge \fo_{\fg_0}\dul\wedge  \fo_{\cE_1}\dul \wedge \fo_{\cE_0}\dul$$
	while the orientation induced via the vertical map on $\cK_{01}$ is 
	$$\fo_2 := \fo_D \wedge \fo_{\cH_0}\wedge \fo_{\cH_1} \wedge \fo_{\fp\fg\fl_0}\wedge \fo_{\hodge}\wedge \fo_{\fp\fg\fl_1}\wedge \fo_{\hodge}\wedge \fo_{\delbar\dul}\wedge \fo_{\fg_1}\dul \wedge \fo_{\fg_0}\dul \wedge \fo_{\cE_1}\dul \wedge \fo_{\cE_0}\dul.$$
	Thus it suffices to compare their first parts, determining an orientation on the thickening $\cT_{01}$. Then,
	\begin{align*} \fo'_1 &:= \fo_D\wedge \fo_{\cH_0}\wedge \fo_{\fp\fg\fl_0}\wedge \fo_{\hodge}\wedge \fo_{\delbar\dul}\wedge \fo_{\cE_1}\wedge \fo_{\fg_1}
		\\ & = \fo_D\wedge \fo_{\cH_0}\wedge \fo_{\fp\fg\fl_0}\wedge \fo_{\hodge}\wedge \fo_{\delbar\dul}\wedge \fo_{\cH_1}\wedge \fo_\hodge \wedge \fo_{\fp_1}\wedge \fo_{\fg_1}
		\\& = \fo_D \wedge \fo_{\cH_0}\wedge \fo_{\cH_1} \wedge \fo_{\fp\fg\fl_0}\wedge \fo_{\hodge}\wedge \fo_{\delbar\dul}\wedge \fo_{\hodge}\wedge \fo_{\fp\fg\fl_1}
		\\& = \fo_D \wedge \fo_{\cH_0}\wedge \fo_{\cH_1} \wedge \fo_{\fp\fg\fl_0}\wedge \fo_{\hodge}\wedge \fo_{\delbar\dul}\wedge \fo_{\hodge}\wedge \fo_{\fp\fg\fl_1}
		\\&= (-1)^{\ind(\delbar_{\,TC})((N_1+1)^2-1 + \rank(\hodge))+\rank(\hodge)((N_0+1)^2-1)}\fo_2'
	\end{align*}
	where $\fo_2'$ is the orientation of $\cT_{01}$ induced by $\fo_2$. Here the first equality just uses the definition of $\cE_1$, while the second equality holds because $\rank(\cH_1)$ is even by Lemma \ref{lem:obstruction-bundle-orientable}. We have $$\rank(\hodge) = 2g+h-1\equiv h-1\;(\normalfont\text{mod } 2),$$ while by \eqref{eq:dimension-base-lag}
	$$(N_i+1)^2 -1 \equiv N_i \equiv h-1\;(\normalfont\text{mod } 2).$$ 
	Therefore, $\fo_1' = (-1)^{h-1} \fo_2'$ and the first horizontal map in \eqref{eq:uniquness-orientation} has sign $(-1)^{h-1} $. In order to determine the sign of the second horizontal map, we also have to compute the parity of 
	$$\delta = \dim(G_0)\dim(G_1)+ \rank(\cE_0)\rank(\cE_1) + (\rank(\hodge)+\dim\fp\fg\fl_0) (\rank(\hodge)+\dim\dim\fp\fg\fl_1)$$
	as the sign $(-1)^{\delta}$ arises from the fact that the ordering of the groups and obstruction bundle summands given by $\fo_2$ in $\cK_{01}$ is not oriented equivalent to $\cK_1$. By the computation above and again Lemma \ref{lem:obstruction-bundle-orientable}, we see that 
	\begin{align*}\delta &\equiv \dim(G_0)\dim(G_1)+ (\rank(\hodge)+\dim\fp_0)(\rank(\hodge)+\dim\fp_1) \\&\equiv  \dim(G_0)\dim(G_1)+ (N+\dim\fp_0)(N+\dim\fp_1)\\&\equiv 0\; (\normalfont\text{mod }2) \end{align*}
	As the sign is independent of the auxiliary datum, $\cK_0$ and $\cK_1$ are oriented equivalent.
\end{proof}

\begin{lemma}\label{lem:tubular-neighbourhood-for-stabilisation-1} Suppose $\pi \cl \wt W\to B$ is a rel--$C^\infty$ vector bundle and $t \cl B\to \wt W$ is a section with $t \pf 0$ and $X := t\inv(0)$. Given any rel--$C^\infty$ retraction $r \cl B\to X$, there exist rel--$C^\infty$ diffeomorphisms $\psi \cl U\sub B\to W := \wt W|_X$ and $\wt\psi \cl \wt W|_U \to \pi^*W$ so that $\wt\psi \g t|_U = \Delta_W\g \psi$. Moreover, if a compact Lie group $G$ acts on $B$, $\pi$ is a $G$-vector bundle and $t$ and $r$ are $G$-equivariant, then we can choose $\psi$ and $\wt\psi$ to be $G$-equivariant as well. 
\end{lemma}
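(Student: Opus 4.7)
The plan is to adapt the classical argument that, near a transverse zero, a section of a vector bundle looks like the tautological section over its zero locus, making the whole construction parametric over $\cM$ and $G$-equivariant. The two ingredients are a rel-$C^\infty$ tubular neighbourhood theorem (already invoked implicitly in the proof of Proposition \ref{prop:uniqueness-up-to-equivalence}) and parallel transport for a rel-$C^\infty$ connection on $\wt W$.

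First I would construct $\psi$. Since $t \pf 0$, the zero locus $X = t\inv(0)$ is a rel-$C^\infty$ submanifold of $B/\cM$, and the linearisation of $t$ along $X$ descends to a rel-$C^\infty$ bundle isomorphism $\Phi \cl \nu_{X/B} \xra{\cong} \wt W|_X = W$. Because $r$ is a retraction, $dr \g d\iota = \ide_{TX}$, so $r$ is a rel-$C^\infty$ submersion in a neighbourhood of $X$, and $dr|_X$ identifies $\ker(dr|_X)$ canonically with $\nu_{X/B}$. Equip the fibres of $r$ with a $G$-invariant rel-$C^\infty$ fibrewise Riemannian metric, obtained by averaging any non-equivariant choice since $G$ is compact; exponentiating along the $r$-fibres produces a $G$-equivariant rel-$C^\infty$ diffeomorphism from an open neighbourhood $U$ of $X$ in $B$ onto a neighbourhood of the zero section of $\nu_{X/B}$, under which $r|_U$ corresponds to the bundle projection of $\nu_{X/B}$. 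Composing with $\Phi$ gives the desired $\psi \cl U \to W$, which is $G$-equivariant and satisfies $\pi \g \psi = r|_U$ and $\psi|_X = \ide_X$.

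Next I would construct $\wt\psi$. Choose a $G$-invariant rel-$C^\infty$ connection on $\wt W \to B$, which again exists by averaging. Parallel transport along the straight rays in $\nu_{X/B}$ from $b$ to $r(b)$ then defines a $G$-equivariant fibrewise linear rel-$C^\infty$ bundle isomorphism $\wt\psi_0 \cl \wt W|_U \to \pi^*W$ covering $\psi$. It need not intertwine $t$ with $\Delta_W$, so I would correct it by a fibrewise affine shift, setting
$$\wt\psi(v) := \wt\psi_0(v) - \wt\psi_0(t(b)) + \psi(b) \in W_{r(b)} = (\pi^*W)_{\psi(b)} \quad \text{for } v \in \wt W_b,\ b \in U,$$
where all three summands live in the vector space $W_{r(b)}$. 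Then $\wt\psi$ is a $G$-equivariant rel-$C^\infty$ fibrewise affine diffeomorphism covering $\psi$, and by construction $\wt\psi(t(b)) = \psi(b) = (\Delta_W \g \psi)(b)$. Over $X$ we have $t(b) = 0$ and $\psi(b) = 0$, so the affine correction vanishes and $\wt\psi|_X = \wt\psi_0|_X$ is linear, which suffices for subsequent use of $\wt\psi$ in the stabilisation axiom for equivalence of global Kuranishi charts.

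The only technical obstacle is verifying that every classical input used — the submanifold structure of a transverse zero locus, the tubular neighbourhood theorem, existence and averaging of bundle connections and fibrewise metrics, and parallel transport — extends to the rel-$C^\infty$ category. This is handled routinely by working fibrewise over $\cM$ and observing that each construction preserves continuous, and indeed rel-$C^\infty$, dependence on the base parameter. Equivariance of $\psi$ and $\wt\psi$ is then automatic from the $G$-invariance of the auxiliary data.
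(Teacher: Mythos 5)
Your argument is correct, but it takes a more elaborate route than the paper. The paper observes that the composition $\psi(y) := p'(\Phi(t(y)))$ --- pushing the section $t$ through an equivariant bundle isomorphism $\Phi \cl \wt W|_U \to r^*W$ and the projection $p'\cl r^*W\to W$ --- is itself already a tubular-neighbourhood identification: $\psi|_X$ is the zero section, $d\psi(x)$ is an isomorphism by transversality of $t$, and after shrinking to a $G$-invariant $U$ the map $\psi|_U$ is an open embedding. With this choice of $\psi$, setting $\wt\psi(y,w) := (\psi(y), p'(\Phi(w)))$ gives a \emph{fibrewise linear} equivariant isomorphism, and the relation $\wt\psi\circ t|_U = \Delta_W\circ\psi$ holds tautologically with no correction needed. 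Your approach instead builds $\psi$ from a separate rel--$C^\infty$ tubular neighbourhood theorem (an invariant fibrewise metric, exponentiation along $r$-fibres, and the identification $\ker(dr|_X)\cong \nu_{X/B}\cong W$ via the linearisation of $t$), and then constructs a bundle isomorphism $\wt\psi_0$ independently by parallel transport, which must afterwards be corrected by the fibrewise affine shift $\wt\psi(v) = \wt\psi_0(v) - \wt\psi_0(t(b)) + \psi(b)$ so as to intertwine $t$ with $\Delta_W$. Both are valid --- the lemma only asks for diffeomorphisms, and the notion of equivalence in Definition~\ref{de:equivalence of gkc over Y} accepts a fibrewise affine $\wt\psi$ that is linear over the zero locus, as you correctly observe yours is --- but letting $t$ itself serve as the tubular-neighbourhood coordinate gives the linear $\wt\psi$ and the intertwining for free, avoiding the extra layer of Riemannian machinery.
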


\begin{proof} Let $\Phi \cl \wt W|_U \to r^*W$ be an equivariant isomorphism and let $p' \cl r^*W\to W$ be the canonical projection. Define $\psi \cl B \to W$ by $\psi(y) = p'(\Phi(t(y)))$. As $\psi|_X$ is the zero section and $d\psi(x)$ is an isomorphism for each $x \in X$ by assumption on $s$, we can find a neighbourhood $U$ of $X$ so that $\psi|_U$ is an open embedding. 
	Replacing $U$ by $\inter{g\in G}{g\cdot U}$, we may assume that $U$ is $G$-invariant. Setting
	$$\wt\psi \cl \wt W|_U \to\pi^*W: (x,w) \mapsto (p'(\Phi(t(y))),p'(\Phi(w))),$$ 
	we obtain an equivariant isomorphism satisfying, by construction, $\wt\psi \g t|_U = \Delta_W\g\psi$.\end{proof}

\begin{lemma}\label{lem:tubular-neighbourhood-for-stabilisation-2} Given the situation of Lemma \ref{lem:tubular-neighbourhood-for-stabilisation-1}, suppose additionally that there exists a vector bundle $\rho\cl \wt E\to B$ with a section $\wt s$ and with $E := \wt E|_X$ and $s := \wt s|_X$. Given any isomorphism $\phi \cl \wt E\to r^*E$, let $s'$ be the composite $B\xra{t}\wt E\to r^*E\to E$. Then, we can find a fibrewise affine isomorphism $$\Psi \cl \pi^*E\to \pi^*E$$ 
	which is the identity on $(\pi^*E)|_X$ and with $\Psi(s'(e)) = s(\pi(e))$ for $e \in \pi^*E$.
\end{lemma}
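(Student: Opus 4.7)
The plan is to transport everything to the bundle $\pi^*E \to W$ via Lemma~\ref{lem:tubular-neighbourhood-for-stabilisation-1}: under this transport both $s'$ and the tautological pullback $\pi^*s$ become sections of $\pi^*E$ that agree along the zero-section $X \sub W$, and $\Psi$ will be the fibrewise translation by their difference.

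First, I would verify the compatibility $\pi \g \psi = r|_U$ on $U$; this follows at once from the formula $\psi(y) = p'(\Phi(t(y)))$ in the proof of Lemma~\ref{lem:tubular-neighbourhood-for-stabilisation-1}, since $\Phi(t(y))$ lies in the fibre $(r^*W)_y = W_{r(y)}$ and hence projects to $r(y) \in X$. Using this, the composite
\[ \sigma_1 \cl \psi(U) \xra{\psi\inv} U \xra{\wt s} \wt E \xra{\phi} r^*E \xra{p'} E \]
takes values in $E_{\pi(\cdot)}$ and therefore defines a section of $\pi^*E$ over the neighbourhood $\psi(U)$ of the zero-section $X$ in $W$; this is precisely $s'$ after the $\psi$-identification. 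In parallel, $\sigma_0 := \pi^*s$ is a section of $\pi^*E$ over all of $W$. By replacing $\phi$ with $r^*\bigl((\phi|_X)\inv\bigr) \g \phi$ if necessary — an automorphism of $r^*E$ that is the identity over $X$ and so does not alter the data in any essential way — we may assume $\phi|_X = \ide_E$, which makes $\sigma_0$ and $\sigma_1$ coincide on $X$.

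Second, define
\[ \Psi_w(v) := v + \sigma_0(w) - \sigma_1(w), \qquad v \in (\pi^*E)_w,\ w \in \psi(U), \]
and (after shrinking $\psi(U)$ if needed) extend by the identity outside a smaller neighbourhood of $X$. This map is manifestly fibrewise affine, indeed a fibrewise translation, is rel-$C^\infty$ because both sections are, and equals the identity on $(\pi^*E)|_X$ since $\sigma_0$ and $\sigma_1$ agree there. Finally, $\Psi(\sigma_1(w)) = \sigma_0(w) = s(\pi(w))$, which is the required equation once $s'$ is read as $\sigma_1$. The only non-trivial step is the bookkeeping that realises $s'$ as a section of $\pi^*E$ and trivialises $\phi|_X$; once this is in place, the construction of $\Psi$ is immediate, and no genuine geometric obstacle arises.
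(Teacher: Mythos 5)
Your construction is exactly the paper's: the paper's entire proof is the one line ``Simply define $\Psi(w,e) = (w,e + s(\pi(w))-s'(w))$'', which is your fibrewise translation $\Psi_w(v) = v + \sigma_0(w) - \sigma_1(w)$ once the identification $\psi^{-1}$ is made implicit. So you have reproduced the intended argument, but you have also surfaced two points that the paper's terse proof does not address: the compatibility $\pi \circ \psi = r|_U$, which is needed for $s'(\psi^{-1}(w))$ to land in the correct fibre $E_{\pi(w)}$, and the fact that the translation is the identity over $X$ only if $\phi|_X$ fixes $s$ pointwise.

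One small caveat on the second point: replacing $\phi$ by $r^*\bigl((\phi|_X)^{-1}\bigr)\circ\phi$ replaces $s'$ by a genuinely different section, so as written your argument proves the lemma for the modified $s'$, not the one the statement hands you. If the lemma is read literally — ``given \emph{any} isomorphism $\phi$'' — then $\Psi|_X = \ide$ can actually fail (for $w\in X$ one gets $\Psi(w,e) = (w, e + s(w) - \phi|_X(s(w)))$), and no fibrewise affine $\Psi$ satisfying both conclusions exists unless $\phi|_X \circ s = s$. The cleanest repair is to add the hypothesis $\phi|_X = \ide_E$ (which is what the natural tubular-neighbourhood isomorphism produces, and what the application in Proposition~\ref{prop:uniqueness-up-to-equivalence} actually supplies); with that, both your proof and the paper's one-liner are complete. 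The paper's version has the identical gap and does not flag it, so your more careful write-up is an improvement rather than a deviation.
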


\begin{proof} Simply set $\Psi(w,e) = (w,e + s(\pi(w))-s'(w))$. 
\end{proof}

\begin{remark}\label{} Suppose $\Phi \cl V\to V$ is an affine transformation of an orbibundle bundle $p\cl V\to B$ so that $\norm{\Phi-\ide}$ is bounded in some Finsler norm on $V$. Then, if $\eta$ is a Thom form for $V$, the pullback $\Phi^*\eta$ is still a Thom form since it is closed, has compact vertical support and $p_*\Phi^*\eta = p_*\eta$ by \cite[Proposition~2.1(4)]{ST16}.\end{remark}

\noindent
We also record the following `invariance' under changes of the tamed almost complex structure. 

\begin{lemma}\label{} Suppose $J'$ is another $\omega$-tame almost complex structure. Then we can choose unobstructed auxiliary data $\alpha$ and $\alpha'$ so that the associated global Kuranishi charts are oriented cobordant.
\end{lemma}

\begin{proof} Except for the orientability claim, the proof is verbatim the same as in \cite[\textsection6.2]{HS22}. The claim about orientations follows from the isomorphism \eqref{eq:virtual-orientation}.
\end{proof}
\section{Boundary strata and orientation signs}\label{subsec:boundary-strata} 
This section investigates the boundaries of the moduli spaces of stable maps and the changes in orientation sign, Theorem \ref{thm:boundary-strata-equivalent}. Recall that $(X,\omega)$ is a closed symplectic manifold and $L \sub X$ is an embedded Lagrangian with $n = \dim(L)$. The Lagrangian is equipped with a relative spin structure $(V,\fs)$ with background class $w_\fs$. Proposition~\ref{prop:open-gw-global-chart-existence}\eqref{gkc-existence} implies that any boundary stratum of the thickening is an open subset of the preimage of the boundary stratum of the base space, defined and discussed in \textsection\ref{subsec:base-space}. We thus first determine the possible boundary strata of the base spaces. Recall that $\cB_{k,\ell}$ is the preimage of $\cB\sub \Mbar_{g,h;0,0}^{\,J_0,\,m}(\bC P^N,\bR P^N)$ defined in \textsection\ref{subsec:base-space} under the forgetful map.

\begin{lemma}\label{prop:boundary-base-space-hg} Any boundary stratum of $\cB_{k,\ell}$ is the image of one of the following  maps
	\begin{enumerate}[label=\arabic*),leftmargin=20pt,ref=\arabic*]
		\item\label{i:clutching-1} the clutching map 
		\begin{equation*}\label{} \varphi_{\bR P^N}\cl \Mbar_{g_0,h_0;k_0,\ell_0+\delta_{h_0}}^{\,m_0}(\bR P^N)\times_{\evab_i,\evab_j}\Mbar_{g_1,h_1+1;k_1,\ell_1+\delta_1}^{\,m_1}(\bR P^N)\to \Mbar^{\,m}_{g,h;k,\ell}(\bC P^N,\bR P^N)\end{equation*}
		where $x_0 + x_1 = x$ for $x \in \{g,h,k,\ell\}$.
		\item\label{i:clutching-2}  the self-clutching maps
		$$\psi_{\bR P^N}^{(1)}\cl \Delta_{\bR P^N}\times_{\evab_{a,i}\times \evab_{a,j}}\Mbar_{g,h-1;k,\ell+2\delta_a}^{\,m}(\bC P^N,\bR P^N)\to \Mbar_{g,h;k,\ell}^{\,m}(\bC P^N,\bR P^N),$$\noindent
		where the marked points that are to be identified lie on the same boundary circle, and 
		$$\psi_{\bR P^N}^{(2)}\cl \Delta_{\bR P^N}\times_{\evab_{a,i}\times \evab_{b,j}} \Mbar_{g-1,h+1;k,\ell + \delta_a +\delta_b}^{\,m}(\bC P^N,\bR P^N)\to \Mbar_{g,h;k,\ell}^{\,m}(\bC P^N,\bR P^N),$$\noindent
		where $a \neq b$, i.e., the two marked points at which we clutch lie on different boundary circles;
		\item\label{i:collapse} the `collapse' map
		$$\rho_{\bR P^N}\cl \eva_{k+1}\inv(\bR P^N)\sub \Mbar_{g,h-1;k+1,\ell}^{\,m}(\bC P^N)\to \Mbar_{g,h;k,\ell}^{\,m}(\bR P^N).$$
	\end{enumerate}
	restricted to the respective preimage of $\cB_{k,\ell}$.\par 
	 The map $\varphi_{\bR P^N}$ is an embedding unless $k =\ell = 0$, $\beta_0 = \beta_1$ and $h_0+1 = h_1$, in which case it has degree $2$. Meanwhile, $\psi_{\bR P^N}^{(1)}$ and $\psi_{\bR P^N}^{(2)}$ are local embeddings of degree $2$, and $\rho_{\bR P^N}$ is always an embedding. 
\end{lemma}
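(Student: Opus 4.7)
The plan is to reduce to a classification of boundary strata of the real locus of the moduli of closed stable maps, already used in the proof of Proposition~\ref{prop:base-space}, and then translate these real degenerations into degenerations of the underlying bordered Riemann surface.

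First, I would invoke the homeomorphism $\cM \xrightarrow{\sim} \bR\cM^{\phi_*}$ from Proposition~\ref{prop:base-space}, extending it to $\cM_{k,\ell}$ by pulling back along the forgetful map. Under this identification, codimension-one boundary strata of $\cM_{k,\ell}$ correspond to real stable maps whose complex double degenerates along either (i) a single node fixed by the anti-holomorphic involution $\sigma_C$, or (ii) a conjugate pair of nodes interchanged by $\sigma_C$. Every codimension-one degeneration of a bordered Riemann surface arises in one of these two ways, so the task reduces to tracing each back to a boundary phenomenon on $C$.

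Next I would carry out the case analysis. Case (i) corresponds geometrically to a boundary circle of $C$ shrinking to an interior node whose image necessarily lies in $\bR P^N$; this produces exactly the collapse map $\rho_{\bR P^N}$. Case (ii) descends to a single boundary node on $C$, and one subdivides further according to whether removal of this node disconnects $C$. The disconnecting subcase yields $\varphi_{\bR P^N}$ after additivity of $(g,h,k,\ell,\beta)$ across the two pieces. The non-disconnecting subcase splits according to whether the two identified boundary points lie on the same boundary circle of $C$ or on two distinct circles that then get merged, giving $\psi^{(1)}_{\bR P^N}$ and $\psi^{(2)}_{\bR P^N}$ respectively, with the appropriate drop in $g$ or $h$ recorded in the source moduli space.

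The degrees are computed by counting residual relabellings of the glued curve. For $\rho_{\bR P^N}$ there is no residual choice, so it is an embedding. For $\psi^{(1)}_{\bR P^N}$ and $\psi^{(2)}_{\bR P^N}$, swapping the two clutched boundary marked points produces the same image, giving local $\bZ/2$-coverings. For $\varphi_{\bR P^N}$, interchanging the two factors changes the ordering of boundary circles and the partition of marked points and homology classes, and so produces distinct preimages unless the two factors carry identical decorations; matching both decorations yields precisely the exceptional configuration $k=\ell=0$, $\beta_0 = \beta_1$, $h_0+1 = h_1$, where the map has degree two.

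The hard part will be keeping the bookkeeping of the added marker labels $\delta_{h_0}, \delta_1, \delta_a, \delta_b$ and the induced ordering of boundary circles consistent across all subcases, and verifying that the regularity and non-degeneracy conditions cutting out $\cM$ inside $\Mbar_{g,h;0,0}^{\,J_0,\,m}(\bC P^N,\bR P^N)$ are preserved under these codimension-one degenerations, so that the described maps indeed land in $\cM_{k,\ell}$ rather than a deeper stratum.
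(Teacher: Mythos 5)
Your overall strategy (reduce to the complex double and classify real degenerations) matches the paper's, which cites \cite[\textsection3.2]{Liu20} for the classification of bordered surfaces with one boundary node and \cite[Example~1.5]{ST16} for the comparison with clutching of closed stable maps. However, the case analysis you run under that reduction is incorrect, and the error is conceptual rather than cosmetic.

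You claim that codimension-one boundary strata of $\cM_{k,\ell}$ arise from either (i) a single node of $C_\bC$ fixed by $\sigma_C$, producing the collapse map, or (ii) a $\sigma_C$-conjugate pair of nodes, producing the boundary-node clutching maps. Both halves are wrong. A $\sigma_C$-conjugate pair of nodes on the double descends to a single \emph{interior} node of the bordered curve $C$; this is a real codimension-two ``divisor'' stratum, not a boundary face of the manifold with corners, so your case (ii) produces no boundary stratum at all. All four node types --- E, H1, H2, H3 --- yield a \emph{single} real node on $C_\bC$, fixed by $\sigma_C$; what distinguishes them is whether $\sigma_C$ preserves or swaps the two branches at that node (elliptic local model $z_1^2+z_2^2=0$ with branches swapped, versus hyperbolic $z_1^2-z_2^2=0$ with branches preserved). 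Your case (i) should therefore subdivide according to that branch behaviour: branches swapped gives the collapse map $\rho_{\bR P^N}$, branches preserved gives the H-type clutching maps $\varphi_{\bR P^N}$, $\psi_{\bR P^N}^{(1)}$, $\psi_{\bR P^N}^{(2)}$ (further split by whether the node separates and where the identified boundary points sit). As written, your argument would both miss all H-type boundary faces and incorrectly classify interior-node divisors as boundary. You arrive at the right list only because you knew it in advance; the classification step needs to be replaced by the elliptic/hyperbolic dichotomy on fixed nodes, which is exactly what \cite[\textsection3.2]{Liu20} supplies.
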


\begin{proof} This follows from the corresponding statement for the maps between moduli spaces of closed stable maps using the argument of \cite[Example~1.5]{ST16}, respectively Proposition~\ref{prop:base-space}. That these images enumerate all boundary components follows from the classification of bordered surfaces with one boundary node, \cite[\textsection3.2]{Liu20}.
\end{proof}

As the subscripts suggest, each of these maps is also defined between the respective moduli spaces of stable pseudo-holomorphic maps with boundary on an arbitrary Lagrangian. We denote the domain of $\varphi_L$ by $\Mbar_{g,h;k,\ell}^{\,J,\beta}(X,L;\varphi)$ and similarly for the other maps as well as the base space. If the Lagrangian is clear from context or does not matter, we omit the subscript from the map. These maps are associated to the different types of boundary nodes in the following way
\begin{itemize}
	\item $\varphi$ creates an (H3) node, 
	\item $\psi^{(1)}$ and $\psi^{(2)}$ create an (H1), respectively an (H2) node,
	\item $\rho$ creates a node of type (E).
\end{itemize}

There exist similar clutching maps at marked point whose images are strata of codimension $2$, which therefore are not part of the boundary. Their normal bundles are canonically complex lines bundles, so these strata obtain an orientation from the ambient space (if that is oriented). We first discuss how this `normal orientation' compares to the orientation of the domain of the clutching map.

\begin{proposition}\label{prop:interior-clutching-orientation} Given an unobstructed auxiliary datum $\alpha$ for $\M{0,0}(\beta)$, there exists an open and closed subchart $\cK_{\alpha,\vartheta,\beta}$ of 
	$$\cK_{\alpha,\vartheta}:= (G,\cB(\vartheta)\times_{\cB_{k,\ell}}\cT,\cB(\vartheta)\times_{\cB_{k,\ell}}\cE,\ide\times\obs)$$\noindent
	that defines a global Kuranishi chart for $\Mbar^{\,J,\,\beta}_{g,h;k,\ell}(X,L;\vartheta)$. It is oriented equivalent to the global Kuranishi chart of $\Mbar^{\,J,\,\beta}_{g,h;k,\ell}(X,L;\vartheta)$ given by
	$$(\cK_{\alpha_0,k_0+1,\ell_0}\times_{\evab_i,\evab_1}\cK_{\alpha_1,k_1+1,\ell_1})^{(-1)^{\diamondsuit}},$$\noindent
	where $\alpha_i$ is a $(k_i+1,\ell_i)$-unobstructed auxiliary datum for $\Mbar_{g_i,h_i;0,0}^{\,J,\,\beta_i}(X,L)$ and
	\begin{equation}\label{sign-interior}\diamondsuit =  \begin{cases}
			\omega_\fs(\beta_0) \quad & h_0 = 0,\;h_1 > 0\\
			\omega_\fs(\beta_1)\quad & h_1 = 0,\;h_0 > 0\\
			|\ell_1|h_0\quad & \text{otherwise}
	\end{cases} \end{equation}
	if in the ordering of the boundary circles in $\Mbar^{\,J,\,\beta}_{g,h;k,\ell}(X,L)$, those boundary circles "coming from" $\Mbar_{g_0,h_0;k_0+1,\ell_0}^{\,J,\,\beta_0}(X,L)$ are ordered first and if the order is preserved for both factors.
\end{proposition}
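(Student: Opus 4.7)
The plan is to follow the doubly-thickened construction used for the equivalence statement in Theorem~\ref{thm:boundary-strata-equivalent} verbatim, with an interior fibre product over $X$ in place of the boundary fibre product over $L$. Concretely, I would build a doubly-thickened global Kuranishi chart $\wt\cK$ over a space $\wt\cM$ parametrising framings on both the glued curve $C$ and on each component $C_0, C_1$, with perturbations $\wh\eta_0, \wh\eta_1$ matching at an interior node rather than at a boundary one. The regularity and smoothness arguments of \textsection\ref{subsec:transversality-proof}--\textsection\ref{subsec:smoothness}, together with the reasoning of Proposition~\ref{prop:uniqueness-up-to-equivalence}, then produce equivalences of $\wt\cK$ with both the pullback chart $\cK_{\alpha,\varphi',\beta}$ and the fibre product $\cK_{\alpha_0,k_0+1,\ell_0} \times_X \cK_{\alpha_1,k_1+1,\ell_1}$, establishing the equivalence statement.

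For the orientation comparison I would use \eqref{eq:virtual-orientation} to reduce to comparing the fibre product orientation on $\det(D\delbar_J(u_0)) \otimes \det(D\delbar_J(u_1)) \otimes \det(TX)\dul$ with the limiting orientation on $\det(D\delbar_J(u))$, and separately the orientations of $\det(\delbar_{TC})\dul$ on the moduli of curves. On the curve-moduli side, the normal bundle to the interior clutching map has complex fibre $T_{x_0}C_0 \otimes_{\bC} T_{x_1}C_1$ and is therefore canonically oriented; a computation analogous to the one in the proof of Theorem~\ref{thm:boundary-strata-equivalent}\eqref{or-gluing-hg-1}, with all boundary marked-point reorderings replaced by interior ones and using the convention that the clutched point is last on each component, yields no extra sign. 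The assumption that the boundary circles coming from the first factor are ordered first and that their relative order is preserved removes any further permutation contribution.

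The entire sign is therefore concentrated in the comparison of intrinsic and limiting orientations of $\det(D\delbar_J)$ at an interior node, and this is where the main obstacle lies. When $h_0, h_1 > 0$, the relative spin structure $(V,\fs)$ pulls back to compatible stable trivialisations of $u_i^*TL$ along $\del C_i$ for both components, and by the interior-node version of \cite[CROrient~6,7]{CZ24} combined with \cite[Proposition~3.1.7]{WW17} the intrinsic and limiting orientations coincide, so $\diamondsuit = 0$. The essential new input is the mixed case, say $h_0 = 0$ and $h_1 > 0$: here $\det(D_0)$ is oriented by the complex structure on $u_0^*TX$ rather than by any relative spin structure, whereas the bordered side is oriented via $(V,\fs)$; the discrepancy between the complex orientation of a closed sphere bubble and the one inherited from a relative spin orientation on the bordered component is precisely $(-1)^{\omega_\fs(\beta_0)}$, in direct parallel with \cite[Corollary~7.3]{CZ24}. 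This gives the sign $(-1)^{\omega_\fs(\beta_0)}$, and symmetrically $(-1)^{\omega_\fs(\beta_1)}$ when $h_1 = 0$. In the remaining case $h_0 = h_1 = 0$ both components are closed, all orientations are complex, and $\diamondsuit = 0$.
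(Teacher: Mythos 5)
Your proposal is correct and follows essentially the same route as the paper: construct the doubly-thickened chart as in the boundary clutching cases, observe that the interior normal bundle is complex so the curve-moduli side contributes no sign, and trace the sign to the convention that a closed component is oriented complexly while a sphere bubble attached to a bordered component is oriented complexly only when $w_\fs$ vanishes on its class. The paper invokes \cite[CROrient~7C(1)]{CZ24} at the final step where you cite \cite[Corollary~7.3]{CZ24}, but the underlying comparison of intrinsic and limiting orientations is the same.
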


\begin{proof} 
	The equivalence between $\cK_{\alpha,\vartheta,\beta}$ and the fibre product global Kuranishi chart follows from a double thickening construction as in Proposition~\ref{prop:uniqueness-up-to-equivalence}, except that we have to stabilise the evaluation map unless one chart consists of curves with no boundary. The details are given in the case of boundary clutching below. It remains to check the statement about the orientation signs, which by the isomorphism~\eqref{eq:orientation-lines} reduces to a discussion of the orientation of the moduli space of stable curves and of the orientations of the Cauchy--Riemann operators. For the latter, fix a stable map $u \cl (C,\del C)\to (X,L)$ in the image of 
	$$ \vartheta_L\cl \Mbar_{g_0,h_0;k_0+1,\ell_0}^{\,J,\,\beta_0}(X,L)\times_X\Mbar_{g_0,h_1;k_1+1,\ell_1}^{\,J,\,\beta_1}(X,L)\;\longrightarrow\;\Mbar^{\,J,\,\beta}_{g,h;k,\ell}(X,L).$$\noindent
	In the language of \S\ref{sec:orientations} (borrowed from \cite{CZ24}), we have to determine the difference between the intrinsic and limiting orientation of the Cauchy-Riemann operator $D\delbar_J(u)$. Thus, \cite[CROrient~7C(1)]{CZ24} yields the sign in Equation~\ref{sign-interior}. The sign discrepancy between the case of $h_i = 0$ and $h_i > 0$ is due to the fact that we orient the index of $u^*TX|_{C'}$ on a closed irreducible component $C'$ of $C_i$ with the complex orientation if $C_i$ is a closed curve. However, if $C_i$ has boundary, we equip the index of $u^*TX|_{C'}$ with the complex orientation if and only if $w_\fs(u_*[C']) = 0$.  The orientation sign of the clutching map on the level of the moduli space of stable curves follows from the same computation, using that the spin structure of the boundary of a Riemann surface has vanishing background class. In the case where $h_0,h_1 > 0$, the boundary marked points on curves in $\Mbar_{g_1,h_1;k_1+1,\ell_1}$ have to be permuted past the index bundle of $\Mbar_{g_0,h_0;k_0+1,\ell_0}$, whence we obtain the last sign in~\eqref{sign-interior}.
\end{proof}

\noindent
We now turn to determining the difference between the fibre product and the boundary orientation of a (normalised) boundary stratum. For simplicity, we make the following assumptions on ordering of the boundary components and boundary marked points.

\begin{assum}\label{as:orientation} We first assume that all boundary marked point are ordered anti-clockwise on each boundary circle according to their labelling.
	\begin{enumerate}[ a),leftmargin=*]
		\item\label{as:clutching}  For $[\iota,C] = \varphi([\iota_0,C_0],[\iota_1,C_1])$ we have that
		\begin{enumerate}[ 1)]
			\item\label{ordering-clutching} the ordering $\fii$ of the boundary components of $C$ is given by $\fii = (\fii_0,\fii_1)$, 
			\item\label{ev-1-clutching} $\evab_i$ is the evaluation at the $i^{th}$ marked point of the last component in $\fii_0$
			\item\label{ev-2-clutching} $\evab_j$ is the evaluation at the first marked point on the first component in $\fii_1$,
			\item\label{marked-points-clutching}  the ordering of the boundary marked points on the clutched component is given as by
			\begin{equation}\label{fig:clutching-1}\includegraphics[scale=0.28]{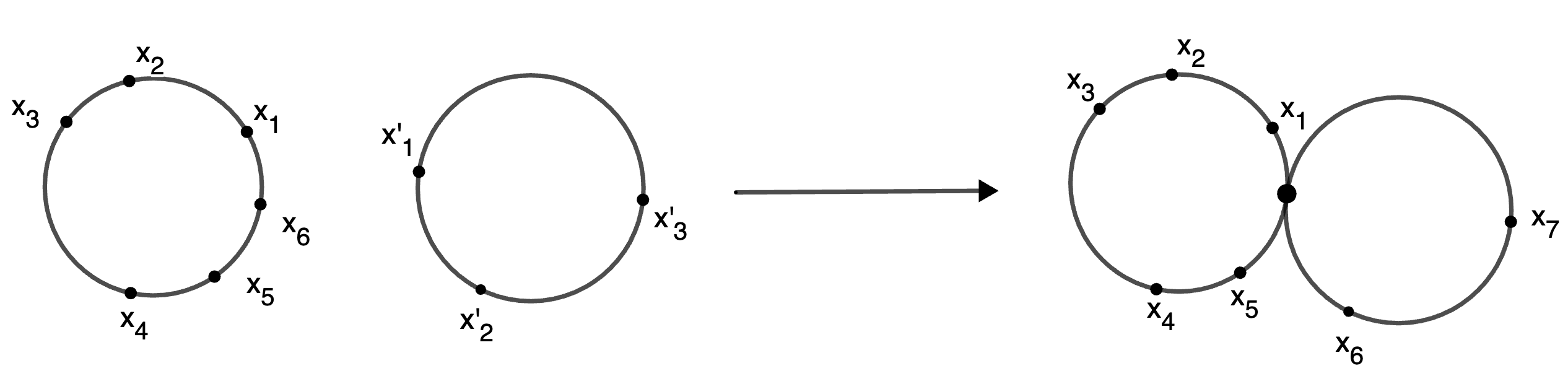}
			\end{equation}
			for $\ell_{0,h_0+1} = 5$, $\ell_{1,1} = 2$ and $i = 6$, while the ordering of all other boundary marked points remains the same.
		\end{enumerate} 
	\item\label{as:self-clutching} For $\psi^{(1)}$ we require
	\begin{enumerate}[ 1)]
		\item if $[\iota,C]$ is a smooth surface in $\Mbar_{g,h;k,\ell}^{\,m}(\bC P^N,\bR P^N)$, then the two boundary circle that are clutched in the image of $\psi^{(1)}$ are adjacent in the ordering of the boundary components of $C$,
		\item $i < j$
		\item we have the following order of the boundary marked points on the nodal boundary component as shown for $i = 2$ and $j = 6$:
		\begin{equation}\label{fig:clutching-2}\includegraphics[scale=0.25]{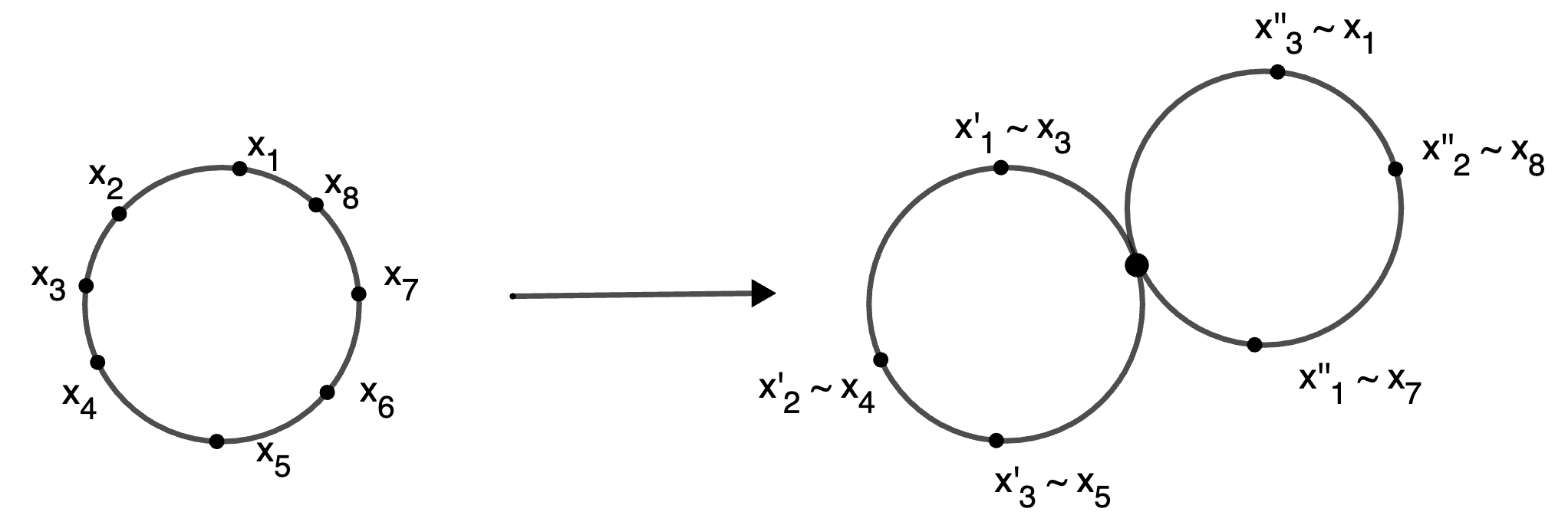}
		\end{equation}
		while the ordering of all other boundary marked points remains the same; for $\psi^{(2)}$ we assume that 
\begin{enumerate}[ 1)]
	\item\label{ordering-self-clutching-1} $b =  a+1$,
	\item\label{ev-2-self-clutching-1} $j = 1$, i.e., $\evab_{b,j}$ is the evaluation at the first marked point on $b^{th}$ boundary component,
	\item\label{marked-points-self-clutching-1} the boundary marked points on the image of curves under $\psi^{(2)}$ are ordered as in the case of $\varphi$;
\end{enumerate} 
\end{enumerate}
\item\label{as:collapse} $\rho$ preserves the ordering of all boundary marked points and the boundary component that is collapsed comes last in the ordering of boundary components.
	\end{enumerate}
\end{assum} 

\begin{remark}\label{rem:maslov-index-if-orientable} We observe that if $L$ is orientable, then $\mu_L$ takes values in $2\bZ$. Indeed, if $v \cl (C,\del C)\to (X,L)$ is a map from a surface with nonempty boundary, then 
	$$\mu_L([v]) = w_1(v|_{\del C}^*TL) = (v|_{\del C})^*w_1(TL) \equiv 0 \quad (\text{mod }2)$$\noindent
	while for $\beta \in \im(H_2(X;\bZ)\to H_2(X,L;\bZ))$ we have $\mu_L(\beta) = 2\lspan{c_1(TX),\beta}$. 
\end{remark}

\noindent
If $L$ is not relatively spin, the following statement is still true when removing any mention of orientations.

\begin{theorem}\label{thm:boundary-strata-equivalent} Given $\vartheta \in \{\varphi,\psi^{(2)},\psi^{(1)},\rho\}$ and an unobstructed auxiliary datum $\alpha$ for $\M{0,0}(\beta)$, there exists an open and closed subchart $\cK_{\alpha,\vartheta,\beta}$ of 
		$$\cK_{\alpha,\vartheta}:= (G,\cB(\vartheta)\times_{\cB_{k,\ell}}\cT,\cB(\vartheta)\times_{\cB_{k,\ell}}\cE,\ide\times\obs)$$\noindent
		that defines a global Kuranishi chart for $\Mbar^{\,J,\,\beta}_{g,h;k,\ell}(X,L;\vartheta)$.
		Suppose $\cK_{\alpha,\vartheta,\beta}$ is equipped with the boundary orientation from $\cK_\alpha$ and the same conditions on the ordering of the boundary component and marked points as in Assumption \ref{as:orientation} hold. Then, $\cK_{\alpha,\vartheta,\beta}$ is oriented rel--$C^\infty$ equivalent to the global Kuranishi chart of 
		\begin{enumerate}[label=\arabic*),leftmargin=20pt,ref=\arabic*]
			\setlength\itemsep{3pt}
			\item\label{or-gluing-hg-1} $\Mbar^{\,J,\,\beta}_{g,h;k,\ell}(X,L;\varphi)$ given by $$(\cK_{\alpha_0,k_0,\ell_0+\delta_{h_0}}\times_{\evab_i,\evab_1}\cK_{\alpha_1,k_1,\ell_1+\delta_{1}})^{(-1)^{\dagger}},$$\noindent
			where 
			\begin{equation}\label{sign-h3}\dagger = |\ell_1|(h_0-1) + |\ell_0|-\ell_{0,h_0} + \ell_{1,1}(\ell_{0,h_0}+ 1-i) + i-1 + n(h +h_1(h_0 + |\ell_0|+1))\end{equation}
			reducing to $n+\ell_1(\ell_0+1-i) +i-1\;(\normalfont\text{mod }2)$ in the case of $(g,h) = (0,1)$. 
			\item\label{or-gluing-hg-2}  $\Mbar^{\,J,\,\beta}_{g,h;k,\ell}(X,L;\psi^{(1)})$ given by $$(\Delta_L\times_{\evab_{a,i}\times\evab_{a,j}}\cK_{\alpha',k_1,\ell+2\delta_a})^{(-1)^{\ddagger_1}},$$\noindent
			where 
			\begin{equation}\label{sign-h1}\ddagger_1 = (\ell_a-j)(j-i-1) + h-a-1 +n(h-a)+\frac{n(n-1)}{2} \end{equation}
			respectively,  $\Mbar^{\,J,\,\beta}_{g,h;k,\ell}(X,L;\psi^{(2)})$ given by $$(\Delta_L\times_{\evab_{a,i}\times\evab_{b,1}}\cK_{\alpha',k_1,\ell+\delta_a +\delta_b})^{(-1)^{\ddagger_2}},$$\noindent
			where 
			\begin{equation}\label{sign-h2}\ddagger_2 =   (h-a)n +|\ell| + a +( \ell_{b}+1)(\ell_{a}+1-i).\end{equation}
			\item\label{or-gluing-hg-3} $\Mbar_{g,h;k,\ell}^{\,J,\,\beta}(X,L;\rho)$ given by the fibre product 
			$$(L \times_{\evai_{k+1}}\cK_{\wt\alpha,k+1,\ell})^{(-1)^{\heartsuit}},$$\noindent 
			with
			 \begin{equation}\label{sign-e}\heartsuit =  (h-1-i)n-1+i +|\ell|+\delta_{1,h}\,\omega_\fs(\beta)\end{equation}
			where the fibre product is taken over $X$, the collapsed boundary circle is in $i^{th}$ position, and $\wt\alpha$ is $(k+1,\ell)$-unobstructed. If $(g,h) = (0,1)$, this reduces to $n + w_\fs(\beta)$.
		\end{enumerate}
	\end{theorem}

	\noindent
	We will first prove the assertion about equivalence in Theorem \ref{thm:boundary-strata-equivalent} in the case of \eqref{or-gluing-hg-1}. The other cases are similar and left to the interested reader. 
	As the proof involves a lot of notation, let us describe the general strategy, which is similar to the one in Proposition~\ref{prop:uniqueness-up-to-equivalence}. Explicitly, we construct a doubly-thickened global Kuranishi chart that ``interpolates" between $\cK_{\alpha,\vartheta,\beta}$ and the respective fibre product of global Kuranishi charts. Due to the isomorphism \eqref{eq:virtual-orientation}, the determination of the orientation sign becomes an application of \cite{CZ24}. See \cite[Proposition~2.8 and Proposition~2.11]{ST23} for the argument in the regular case when $(g,h) = (0,1)$.

\begin{proof}[Proof of equivalence in Theorem~\ref{thm:boundary-strata-equivalent}] We consider the clutching map 
		$$ \varphi:=\varphi_{L,i}\cl \Mbar_{g_0,h_0;k_0,\ell_0+\delta_{h_0+1}}^{\,J,\,\beta_0}(X,L)\times_{\eva_i,\eva_1}\Mbar_{g_1,h_1+1;k_1,\ell_1+\delta_1}^{\,J,\,\beta_1}(X,L)\to \Mbar^{\,J,\,\beta}_{g,h;k,\ell}(X,L).$$\noindent
		Observe that 
		$$\cB(\varphi) \times_\cB\cT\cong \djun{\substack{\beta'_0 +\beta'_1 = \beta\\ m(\beta'_i) = m_i}}\cT\times_{\Mbar_{g,h;k,\ell}^{*,\, \wt J_E,\,(m,\beta)}(E,F)}\lbr{\Mbar_{g_0,h_0;k_0+1,\ell_0}^{*,\, \wt J_E,\,(m_0,\beta'_0)}(E,F)\times_{\evab_i,\evab_1}\Mbar_{g_1,h_1;k_1+1,\ell_1}^{*,\, \wt J_E,\,(m_1,\beta'_1)}(E,F)},$$\noindent
		where $m(\beta'_i)$ is the degree of $\fL_u^{\otimes p}$ for a stable map $u$ in $\M{k_i+1,\ell_i}^{J,\,\beta_i}(X,L)$\footnote{To be precise, it is the degree of $\fL_{u'}^{\otimes p}$, where $u'$ is the stabilised map given forgetting all marked points on the domain of $u$ and contracting unstable components.}. Set 
		$$\wh\cT := \cT\times_{\Mbar_{g,h;k,\ell}^{*,\, \wt J_E,\,(m,\beta)}(E,F)}\Mbar_{g_0,h_0;k_0+1,\ell_0}^{*,\, \wt J_E,\,(m_0,\beta_0)}(E,F)\times_{\evab_i,\evab_1}\Mbar_{g_1,h_1;k_1+1,\ell_1}^{*,\, \wt J_E,\,(m_1,\beta_1)}(E,F)$$\noindent
		and let 
		\begin{equation}\label{eq:pullback-chart-hg} \cK_{\varphi,\beta}  := (G,\wh\cT,\wh\cE:=\cE\times_\cT\cT_0,\wh \obs := \obs|_{\cE_0})\end{equation}
		be the associated global Kuranishi chart.\par 
		Write $\cK_{\alpha_0,k_i,\ell_i+1} = (G_i,\cT_i,\cE_i,\obs_i)$, with $\cT_i$ mapping homeomorphically to an open subset of $$\Mbar^*(F_0):=\Mbar_{g_0,h_0+1;k_0,\ell_0+\delta_{h_0+1}}^{*,\wt J_{E_0},\,(m'_0,\beta_0)}(E_0,F_0)$$\noindent
		and 
		$$\Mbar^*(F_1):=\Mbar_{g_1,h_1;k_1,\ell_1+\delta_1}^{*,\wt J_{E_1},\,(m'_1,\beta_1)}(E_1,F_1)$$\noindent
		so that $\cT_0\times_L \cT_1$ admits a codimension-$0$ embedding to $\Mbar^*(F_0)\times_L\Mbar^*(F_1)$ and a submersion to $\cB_0\times\cB_1$, where $\cB_i$ is the base space of $\cK_i$. Let 
		$$\wt \cB \sub \Mbar_{g_0,h_0+1;k_0,\ell_0+\delta_{h_0+1}}^{\,(m'_0,m_0)}(\bR P^{N_0}\times \bR P^N)\times_{\bR P^N}\Mbar_{g_1,h_1;k_1,\ell_1+\delta_1}^{\,(m'_1,m_1)}(\bR P^{N_1}\times \bR P^N)$$\noindent
		be the preimage of $\cB_0\times\cB_1 \times \cB(\varphi)$ under the canonical map to the product of moduli spaces. By the same argument as in the proof of Lemma~\ref{prop:uniqueness-up-to-equivalence}, we deduce that $\wt \cB$ is a smooth orientable manifold with corners. Moreover, the forgetful maps 
		$q\cl \wt \cB \to \cB_0\times\cB_1$ and $\wh q\cl \wt\cB \to \wh\cB$
		are submersions and invariant under the action by $G$, respectively $G_0\times G_1$. Denote by
		$$\wt E_i := E_i \times_X  E\qquad \qquad \wt F_i := F_i \times_L  F$$\noindent
		the induced complex, respectively real, vector bundles over $\bC P^{N_i}\times \bC P^N \times X$, respectively $\bC P^{N_i}\times \bC P^N \times X$. Let $\wt J_{\wt E_i}$ be the almost complex structure on the total space of $\wt E_i$ given by
		 
		\begin{align*}
			\wt J_{(y_i,y,x,e_i,e)}^{\wt E_i} := 
			\begin{pmatrix}  
				J_{y_i}^{\bC P^{N_i}} & 0&0 &0\\
				0 & J_y^{\bC P^N} & 0& 0\\
				J_x & 2 J_x\lspan{e_i} & 2 J_x\lspan{e}& 0\\
				0 & 0 &0 & J^{\wt E_i}_{(y_i,y,x)}
			\end{pmatrix},
		\end{align*}
		using the splitting $T\wt E_i \cong \bC P^{N_i}\times\bC P^N\times X\times \wt E_i$. Abbreviate 
		$$\Mbar^*(\wt E_i) := \Mbar_{g_i,h_i+\delta_{i,0},k_0,\ell_i +\delta'_i}^{*,\wt J_{\wt E_i},\,(m'_i,m_i,\beta_i)}(\wt E_i')$$\noindent
		where $\delta'_0 = \delta_{h_0+1}$ and $\delta'_1 = \delta_1$, and let
		$$\wt \cT\cong \wt\cT'\sub \Mbar^*(\wt E_0)\times_{F}\Mbar^*(\wt E_1)$$\noindent
		be subset mapping to $\wt\cB$ contained in the regular locus of the forgetful map to $\wt\cB$. The map $\wt\cT\to \wt\cT'$ is given by forgetting the parameter $\gamma_i$ that measures whether the framing $\iota_i$ pulls back $\cO(1)$ to the reference line bundle $\fL_{u_i}$. We omit it from the notation from now on and identify $\wt \cT$ with $\wt\cT'$.\par 
		Given $\wt u_i \in \Mbar^*(\wt E_i)$, we write $(\iota_i,\wh\iota_i,u_i)$ for its composition with the map to $\bC P^{N_i}\times \bC P^N\times X$ and denote by $\eta_i$, and $\wh\eta_i$, the associated holomorphic section of $(\iota_i,u_i)^*E_i$ and $(\wh\iota_i,u_i)^*E$. By the definition of $\wt J_{\wt E_i}$, 
		\begin{equation}\label{}\delbar_J u_i + \lspan{\eta_i}\g d\iota_i + \lspan{\wh\eta_i}\g d\wh\iota_i = 0\end{equation}
		on the normalisation of $C_i$. By abuse of notation, we write $\cE_i$ for the bundle $\cE_i \to \wt \cT$ with fibre 
		$$(\cE_i)_{(\wt u_0,\wt u_1)} = H^0(C_i,(\iota_i,u_i)^*(E_i,F_i))\oplus H^1(C_i,(\bC,\bR))\oplus \fp\fo(N_i+1),$$\noindent
		while $\cE\to\wt\cT$ is defined by $\cE_{(\wt u_0,\wh u_1)} = V_{(\wt u_0,\wh u_1)}\oplus \fp\fo(N+1)$, where 
		
		$$\setlength\delimitershortfall{-0.5pt} V_{(\wt u_0,\wh u_1)} := \left\{(\wh\eta_0,\wh\eta_1)\in \bigoplus_{i = 0,1}H^0(C_i,(\wt\iota_i,u_i)^*(E,F))\mid \wh\eta_0(x^b_{\ell_0+1}) = \wt \eta_1(x^b_{1})\right\}.$$\noindent
		Set $\wt \cE := \cE_0\oplus \cE_1\oplus \cE$ and let $\wt\obs\cl \wt\cT\to \wt\cE$ be the section defined analogously to the obstruction section in Construction \ref{construction-other-data}. The covering group is $\wt G := G_0\times G_1 \times G$ acting on $\wt\cT$ and $\wt \cE$ via its action on $E_0, E_1$ and $E$. Possibly after shrinking $\wt \cT$, the arguments of the proof of Proposition~\ref{prop:open-gw-global-chart-existence} show that 
		$$\wt \cK= (\wt G,\wt\cT/\wt\cB,\wt\cE,\wt\obs)$$\noindent
		is a global Kuranishi chart with corners for $\Mbar_{g,h;k,\ell}^{J,\beta}(X,L;\varphi)$. The arguments of Proposition~\ref{prop:uniqueness-up-to-equivalence} show that $\wt\cK$ is orientable and equivalent to both $\cK_0\times_L \cK_1$ and $\cK_{\varphi,\beta}$, which was defined in \eqref{eq:pullback-chart-hg}.
	\end{proof}	

\noindent
We now discuss the difference in orientations between the respective fibre product and the boundary stratum. 
		
	\begin{proof}[Proof of Theorem~\ref{thm:boundary-strata-equivalent}\eqref{or-gluing-hg-1}]
		We may assume that $\cN_i := \Mbar_{g_i,h_i+\delta_{i,0};k_i,\ell_i+1}$ is nonempty for $i \in \{0,1\}$, stabilising by interior marked points if necessary. Set $\cN:= \Mbar_{g,h;k,\ell}$. The canonical isomorphism \eqref{eq:virtual-orientation} on the zero locus is also defined for any other point of the respective thickening, so we may work with the index bundles directly.\par The fibre product orientation on $\Mbar_{g,h;k,\ell}^{\,J,\,\beta}(X,L;\varphi)$ at a point $((u_0,C_0,x^0_*),(u_1,C_1,x^1_*))$ corresponding to $(u,C,x_*)$ is given by 
		
  \begin{align*}
  	\fo_{01} = (-1)^{(n + \ind(D_1))\dim(\cN_0)} \fo_{D_0}\wedge \fo_L\dul \wedge \fo_{D_1} \wedge \fo_{\cN_0}\wedge \fo_{\cN_1}
  \end{align*}
	while the boundary orientation is given by (the pullback of)
	 \begin{align*}
		\fo_{\del} = (-1)^{\ind(D)} \fo_{D}\wedge \fo_{\del\cN}
	\end{align*}
	Here, $D_i := D\delbar_J(u_i)$ and $D := D\delbar_J(u)$. In the language of \cite{CZ24}, $\fo_{D_0}\wedge \fo_{D_1}\wedge \fo_L\dul$ is the \emph{intrinsic orientation} of $\det(D)$, see also \textsection\ref{subsec:orientation-construction},  while $\fo_{D}$ is the \emph{limiting orientation}, defined loc. cit. and induced by the orientation of the Cauchy-Riemann operator on the smoothed surface. Thus, by \cite[CROrient~7H3(a)]{CZ24} and Assumption \ref{as:orientation},
	\begin{equation}\label{eq:vertical-orientations-h3} \fo_D = (-1)^{n h_1+n}\fo_{D_0} \wedge \fo_{D_1}\wedge \fo_L\dul = \fo_{D_0}\wedge \fo_L\dul \wedge \fo_{D_1} .\end{equation}
	We observe that the first sign differs from the sign given by \cite[CROrient~7H3(a)]{CZ24} by $(-1)^n$. The reason is that for the intrinsic orientation (in the regular case), we use the short exact sequence 
	$$0 \to \ker(D)\to \ker(D_0)\oplus \ker(D_1)\xra{(\xi_0,\xi_1)\mapsto \xi_1(x_{1,1})-\xi_0(x_{\ell_0,})} T_{u(\text{nd})}L \to 0$$\noindent
	by \cite[Equation~(7.39)]{CZ24}, while for the fibre product orientation we use the orientation of $\ker(D)$ obtained from  
	$$0 \to \ker(D)\to \ker(D_0)\oplus \ker(D_1)\xra{(\xi_0,\xi_1)\mapsto \xi_0(x_{\ell_0,})-\xi_1(x_{1,1})} T_{u(\text{nd})}L \to 0$$\noindent
	by \cite[Convention~7.2(b)]{Joy}. Thus, we obtain an orientation difference of $(-1)^{n}$.
	In summary, a first part of the sign is given by 
	\begin{align}\label{h3-index}\notag\delta_D &= \ind(D) + (n+ \ind(D_1))\dim(\cN_0)\\
		\notag&\equiv nh + \mu_L(\beta)  + (nh_1 + \mu_L(\beta_1))(h_0 + |\ell_0|+1) \quad (\text{mod }2)\\
		&\equiv n(h +h_1(h_0 + |\ell_0|+1))\quad (\text{mod }2)
	\end{align}
	where the last equality follows from Remark \ref{rem:maslov-index-if-orientable}. In particular, $\delta_D\equiv n \; (\text{mod }2)$ if $(g,h) = (0,1)$.\par
	It remains to determine the orientation sign of the map $\varphi\cl \cN_0 \times\cN_1 \to \del\cN$, which we reduce to the computation of the orientation sign in the case of disks. For this recall from the proof of Proposition~\ref{prop:interior-clutching-orientation} that the clutching at an interior marked point is orientation-preserving (using the canonical orientation of the normal bundle of the divisor stratum). Thus, fix a curve $C_0 \in \cN_0$ with a single interior node, where the boundary component that is involved in the clutching is bubbled off on a separate disk, i.e., $C_0$ is the clutching of the smooth curves $C_0'$ and $C_0'' = \bD$ at an interior node. Write $\cN_0'$ for the moduli space of stable curves containing $C_0'$ and similarly for $C_0''$. Then, there exists a canonical isomorphism 
	\begin{equation}\label{iso-1} 
		\orl(T_{C_0}\cN_0)\;\cong\; \orl(\bC)\orl(T_{C'_0}\cN_0')\orl(T_{C_0''}\cN_0'')
	\end{equation}
 	that has orientation sign $(-1)^{(\ell_{0,h_0}+1)(h_0-1)}$ since we had to permute the marked points lying on the boundary of $C_0''$ past $\orl(\delbar_{TC_0'}\dul)$ which has degree $h_0-1$ mod $2$. On the other hand, letting $C_1 \sub \cN_1$ be a curve with a unique interior node splitting it into a disk $C''_1$ (with boundary the first boundary component) and a curve $C'_1$, the isomorphism 
	\begin{equation}\label{iso-2} 
	\orl(T_{C_1}\cN_1)\;\cong\; \orl(\bC)\orl(T_{C''_1}\cN_1'')\orl(T_{C_1'}\cN_1')
	\end{equation}
	has orientation sign $(-1)^{|\ell_1|-\ell_{1,1}}$. Hence, we have a commutative diagram, omitting $\orl(\bC)$, of orientation lines
	\begin{center}\begin{tikzcd}
			\orl(\bR)\orl(T_{C_0}\cN_0)\orl(T_{C_1}\cN_1)  \arrow[r,""] \arrow[d,"{(1)}"]&\orl(T_C\cN) \\
			\orl(\cN'_0)\orl(\bR)\orl(\cN''_0)\orl(\cN''_1)\orl(\cN_1) \arrow[r,"{(2)}"] &\orl(\cN'_0)\orl(\cN'')\orl(\cN_1)\arrow[u,"{(3)}"]\\
	  \end{tikzcd} \end{center}
	where $(i)$ has orientation sign $(-1)^{\delta_i}$ with 
	\begin{align*}\delta_1 \;&=\; (\ell_{0,h_0}+1)(h_0-1)+|\ell_1|-\ell_{1,1}+ (h_0-1)  +|\ell_0|-\ell_{0,h_0}\\
		&=\; \ell_{0,h_0} +|\ell_1|-\ell_{1,1}+|\ell_0|
	\end{align*}
	given by the signs of \eqref{iso-1} and \eqref{iso-2} and the sign from permuting $\orl(\bR)$ past $\orl(\cN_0')$, while
	\[\delta_2 = \ell_{1,1}(\ell_{0,h_0}+1-i)+i-1\]
	due to the sign computation of \cite[Proposition~2.8]{ST16}. There is a shift because they label the first marked point by $0$.
	Finally,
	\[\delta_3 = (\ell_{0,h_0}+\ell_{1,1})(h_0-1) + (|\ell_1|-\ell_{1,1})h_0\]
	due to Proposition~\ref{prop:interior-clutching-orientation} in the case where the target manifold is a point. Then, 
	\begin{equation}\label{eq:or-sign-h3}
		\orl(\varphi) = (-1)^{|\ell_1|(h_0-1) + |\ell_0|-\ell_{0,h_0} + \ell_{1,1}(\ell_{0,h_0}+ 1-i) + i-1} 
		\end{equation}
	since
	\begin{align*}\label{h3-curves}
		\delta \;&=\; \delta_1 +\delta_2 + \delta_3\\
				 &\equiv\; \ell_{0,h_0}h_0 +|\ell_1|-\ell_{1,1} +|\ell_0|+\ell_{1,1}(\ell_{0,h_0}+1-i)+i-1 +(\ell_{0,h_0}+\ell_{1,1})(h_0-1) + (|\ell_1|-\ell_{1,1})h_0\\
				 &\equiv\;(|\ell_1|-\ell_{1,1})(h_0-1) + |\ell_0|+\ell_{1,1}(h_0-1)\\
				 &\equiv\;|\ell_1|(h_0-1) + |\ell_0|-\ell_{0,h_0} + \ell_{1,1}(\ell_{0,h_0}+ 1-i) + i-1  \mod{2}.
	\end{align*}
If $(g,h) = (0,1)$, this reduces to $\fo(\varphi) = (-1)^{\ell_1(\ell_0+1-i)+i-1}$, recovering the sign in \cite[Proposition~2.8]{ST16} for the case $n = 0$. Note that they have $i$ instead of $i-1$, due to the reason that they index the marked points by $\{0,\dots,\ell_i\}$. The final sign is now $(-1)^{\delta_D +\delta}$, where $\delta_D$ was computed in~\eqref{h3-index}.
\end{proof}	

\noindent
We now turn to the proof of the orientation sign in the case of the nodes obtained from self-clutching.
	
\begin{proof}[Proof of Theorem~\ref{thm:boundary-strata-equivalent}\eqref{or-gluing-hg-2}]  By adding interior marked points, we may assume that $(g,h,k,\ell)$ lies in the stable range. Set $\cN:= \Mbar_{g,h,k,\ell}$ and let
	$$\cN_1:= \Mbar_{g,h-1;k,\ell+2\delta_a}\qquad \quad\text{and}\qquad\quad\cN_2:= \Mbar_{g-1,h+1;k,\ell+\delta_a+\delta_b}.$$\noindent
	The fibre product orientation on $\Mbar_{g,h;k,\ell}(X,L,\psi^{(1)})$ at a point $(u',C',x'_*)$ mapping to $(u,C,x_*)\in \Mbar_{g,h;k,\ell}(X,L)$ is given by 
	$$\fo_1 := \fo_L\dul \wedge \fo_{D'} \wedge \fo_{\cN_1},$$\noindent
	while the boundary orientation is given by 
	$$\fo_{\psi^{(1)}} = (-1)^{\ind(D)}\fo_D\wedge \fo_{\del\cN}.$$\noindent
 	By Proposition \ref{prop:orientation-for-H1-node} and the difference between the intrinsic orientation and the fibre product orientation explained below Equation~\eqref{eq:vertical-orientations-h3}, we have 
 	\begin{equation}\label{} 
 		\fo_L\dul\wedge\fo_{D'} = (-1)^{n(h-a+1)+\ind(D')n}\fo_D = (-1)^{n\, a}\fo_D.
 	\end{equation}
 	To compare the orientation sign $\psi^{(1)}\cl \cN' \to \del\cN$, we employ a similar argument as in the previous proof, reducing to the case where the gluing happens on a disk. For this, let $C\in \cN'$ be a curve with a two interior nodes, splitting into $C'\in \cN_1$, carrying the first $a-1$ boundary components of $C$, $C'' = \bD$, defining an element of $\cN_2$, where the $a$th boundary component of $C$ is the boundary of $C''$, and $C''\in \cN_3$ carrying the last boundary components of $C$. The canonical isomorphism 
 	\begin{equation}\label{iso-h1}\orl(\cN')\cong \orl(\bC)\orl(\cN_1)\orl(\cN_2)\orl(\cN_3) \end{equation}
 	has orientation sign $(-1)^{\ell_a(a-1)+ (\ell_{a+1}+\dots+\ell_{h-1})a}$ by Proposition~\ref{prop:interior-clutching-orientation}. Then we have as before a commutative square
 	\begin{center}\begin{tikzcd}
 			\orl(\bR)\orl(\cN')  \arrow[r,""] \arrow[d,"{(1)}"]&\orl(\cN) \\
 			\orl(\cN_1)\orl(\bR)\orl(\cN_2)\orl(\cN_3) \arrow[r,"{(2)}"] &\orl(\cN_1)\orl(\cN_{23}) \arrow[u,"{(3)}"]\\
 	\end{tikzcd} \end{center}
 where $\cN_{23}$ is the target of the clutching map on $\cN_2\times\cN_3$ and $(i)$ has orientation sign $(-1)^{\delta_i}$ with 
 \begin{align*}\delta_1 \;&=\;\ell_a(a-1)+ (\ell_{a+1}+\dots+\ell_{h-1})a+ \dim(\cN_1')
% 	&\equiv\;  (\ell_{a}+\dots+\ell_{h-1})a+ a-1 + \ell_{1}+\dots+\ell_{a}
 \end{align*}
 given by the orientation sign of \eqref{iso-h1} and the sign from permuting $\orl(\bR)$ past $\orl(\cN_1')$, while
 \[\delta_2 = (\ell_a-j)(j-i-1)\]
 by Proposition~\ref{prop:orientation-for-H1-node} (which gives $0$ since here the rank of the vector bundle is $1$) and the reordering of the marked points.
 Finally,
 \[\delta_3 = \ell_a(a-1)+ (\ell_{a+1}+\dots + \ell_{h-1})(a+1) \]
 due to Proposition~\ref{prop:interior-clutching-orientation} again, noting that in $\cN$ we have one boundary component more. Thus, the orientation sign of $\psi^{(1)}\cl \cN_1 \to \del\cN$ is 
 \begin{equation}\label{} 
 	\orl(\psi^{(1)}) = (-1)^{(\ell_a-j)(j-i-1) + h-a-1 }.
 \end{equation}
 	
 	\medskip
 
 	\noindent The case of an (H2) boundary node, reduces essentially to the case of \eqref{or-gluing-hg-3} by the definition of the intrinsic orientation, cf. \S\ref{subsec:orientation-construction}. Write $D'$ for the Cauchy--Riemann operator of an element $u'$of $\Mbar_{g,h;k,\ell}(X,L,\psi^{(1)})$ and $D$ for the Cauchy--Riemann operator of its image in $\Mbar_{g,h;k,\ell}(X,L)$. Then, fibre product orientation at this point is given by 
 		$$\fo_2 := \fo_L\dul \wedge \fo_{D'} \wedge \fo_{\cN_1},$$\noindent
 		while the boundary orientation is given by 
 		$$\fo_{\psi^{(2)}} = (-1)^{\ind(D)}\fo_D\wedge \fo_{\del\cN}.$$ \noindent
 		Invoking \cite[CROrient~7H3(a)]{CZ24} and the same reasoning as for Equation~\eqref{eq:vertical-orientations-h3}, we obtain for $b = a+1$ that 
 		$$\fo_L\dul \wedge \fo_{D'} = (-1)^{\ind(D')n}\fo_{D'}\wedge \fo_L\dul = (-1)^{\ind(D')n+ (h-a)n+n}\fo_D = (-1)^{n a}\fo_D.$$\noindent
 		To compute the sign of $\psi^{(2)}\cl \cN'' \to \del\cN$, we consider a curve $C\in \cN''$ with three interior nodes splitting $C$ into $C_1$ carrying the first $a-1$ boundary components, $C_2 = \bD$ carrying the $a$th boundary component, $C_3$ carrying the boundary component labelled by $b = a+1$ and $C_4$ carrying the remaining boundary components. Write $\cN_i$ for the moduli space containing $C_i$. The isomorphism
 		\begin{equation}\label{} 
 			\orl(\bC)^{\otimes 3}\orl(\cN_1)\orl(\cN_2)\orl(\cN_3)\orl(\cN_4)\;\cong\;\orl(\cN'')
 		\end{equation}
 		has sign $(-1)^{(\ell_a+1)(a-1) + (\ell_b+1)(b-1) + (\ell_{b+1}+\dots +\ell_h)b}$ by Proposition~\ref{prop:interior-clutching-orientation}. As before, we have a commutative square
 		\begin{center}\begin{tikzcd}
 				\orl(\bR)\orl(\cN'')  \arrow[r,""] \arrow[d,"{(1)}"]&\orl(\cN) \\
 				\orl(\cN_1)\orl(\bR)\orl(\cN_2)\orl(\cN_3)\orl(\cN_4) \arrow[r,"{(2)}"] &\orl(\cN_1)\orl(\cN_{23})\orl(\cN_4) \arrow[u,"{(3)}"]\\
 		\end{tikzcd} \end{center}
 		where $(i)$ has orientation sign $(-1)^{\delta_i}$ with 
 		\begin{align*}\delta_1 \;&=\;(\ell_a+1)(a-1) + (\ell_b+1)(b-1) + (\ell_{b+1}+\dots +\ell_{h+1})b+\dim(\cN_1)
 		\end{align*}
 		given by the orientation sign of \eqref{iso-h1} and the sign from permuting $\orl(\bR)$ past $\orl(\cN_1)$, while
 		\[\delta_2 = \ell_{b}(\ell_{a}+1-i)+i-1\]
 		by \cite[Proposition~2.8]{ST16}, which we may apply because $C_2$ and $C_3$ are disks. As above, the formula is slightly different because we start labelling by $1$ instead of $0$ as they do. The last exponent is
 		 \[\delta_3 = (\ell_a+\ell_b)(a-1)+ (\ell_{b+1}+\dots + \ell_{h+1})a \]
 		due to Proposition~\ref{prop:interior-clutching-orientation} again, noting that in $\cN$ we have one boundary component less. Thus, the orientation sign of $\psi^{(2)}\cl \cN'' \to \del\cN$ is 
 		\begin{equation}\label{} 
 			\orl(\psi^{(2)}) = (-1)^{|\ell| + a +( \ell_{b}+1)(\ell_{a}+1-i)}.
 		\end{equation}
 		This completes the proof.
\end{proof}

\begin{proof}[Proof of Theorem~\ref{thm:boundary-strata-equivalent}\eqref{or-gluing-hg-3}] Let $(u',C')$ be an element of $L\times_X \Mbar_{g,h-1;k+1,\ell'}^{\,J,\,\beta}(X,L)$ with image $(u,C)$ in $\Mbar^{\,J,\,\beta}_{g,h;k,\ell}(X,L)$ and let $D'$ and $D$ be their respective Cauchy--Riemann operator. Let $\cN' =\Mbar_{g,h-1;k+1,\ell'}$ and $\cN= \Mbar_{g,h;k,\ell}$, where $\ell = (\ell',0)$. We have to compare 
		$$\fo_\rho := \fo_L\dul \wedge \fo_{D'} \wedge \fo_{\cN'},$$\noindent
	with the boundary orientation  given by 
	$$\fo_\del = (-1)^{\ind(D)}\fo_D\wedge \fo_{\del\cN}.$$\noindent
	By Lemma~\ref{lem:permutation-marked-points}\eqref{permute-boundary}, we may assume $i = h$. By \cite[Corollary~7.3]{CZ24}, we have 
	$$\fo_D = (-1)^{\omega_\fs(\beta)} \fo_{D'}\wedge\fo_L\dul = (-1)^{\omega_\fs(\beta)+nh + n\mu_L(\beta)}\fo_{L}\dul \wedge \fo_{D'}.$$\noindent
	if $h = 1$. If $h > 1$, then by \cite[Corollary~A.13]{CZ24} and \cite[Proposition~3.1.7]{WW17}, the stable trivialisation of the pullback of $TL$ to the boundary of the surface is determined by its restriction to the smooth boundary circles. Thus, we have 
	$$\fo_D = (-1)^{\ind(D)n}\fo_L\dul\wedge \fo_{\wt D} = (-1)^{(h-1)n}\fo_L\dul\wedge \fo_{\wt D}$$\noindent
	by the definition of the intrinsic orientation. We now turn to the orientation sign of the clutching map $\rho \cl \cN'\to \del\cN$ on the level of moduli space of stable curves. Picking up a sign of $(-1)^{h-i}$, we may assume $i = h$. Let $C\in \cN'$ be a curve with a unique interior node, splitting $C$ into $C_1\in \cN_1'$ and $C_2= S^2$ lying in $\cN'_2$, where $C_2$ carries the $k+1$st marked point and one other interior marked point (since we may assume without loss of generality that $k > 0$). Then the isomorphism $\orl(\bC)\orl(\cN_1')\orl(\cN_2')\cong\orl(\cN')$ is orientation-preserving and the isomorphism induced by $\rho$ factors as 
	\begin{center}\begin{tikzcd}
			\orl(\bR)\orl(\cN') \arrow[r,""] \arrow[d,"(1)"]&\orl(\cN)\\ 
			 \orl(\cN_1')\orl(\bR)\orl(\cN'_2)\arrow[r,"(2)"] & \orl(\cN_1')\orl(\cN_2) \arrow[u,"(3)"] \end{tikzcd} \end{center}
	where $\cN_2$ is the moduli space of disks obtained by smoothing of the boundary node and we omit $\orl(\bC)$. Then, $(1)$ has orientation sign $(-1)^{h-1+|\ell|}$, $(2)$ has orientation sign $(-1)$ by \cite{ST16} and $(3)$ has orientation sign $1$ by Proposition~\ref{prop:interior-clutching-orientation}. Hence,
	\begin{equation}\label{} 
		\orl(\rho) = (-1)^{i+|\ell|}
	\end{equation}
	and the claimed sign follows.
\end{proof}

\section{Cubical cobordisms and Thom systems}\label{subsec:cubical-cobordisms}
If a compact space admits an oriented global Kuranishi chart without boundary, it has a virtual fundamental class. In our case, where the boundary is nonempty, one has to work on the level of chains. A literal virtual fundamental chain requires the choice of a \emph{Thom form}, that is, a representative of the Thom class of the obstruction bundle, as well as a fundamental chain of the thickening.
Instead of a virtual fundamental chain, we construct operations based on integration, which can be considered as `virtual pushforwards' of differential forms. These pushforwards need to be constructed compatibly in order to define strict algebraic structures. Instead of making the auxiliary choices necessary for the construction of global Kuranishi charts coherently, we use the equivalence statements of Theorem~\ref{thm:boundary-strata-equivalent} and Proposition~\ref{prop:interior-clutching-orientation} to interpolate between different choices.

\subsection{Cubical cobordisms}
 Throughout, let $(X,\omega)$ be a closed symplectic manifold and $L\sub X$ an embedded Lagrangian with a relative spin structure $(V,\fs)$. Define 
\begin{equation*}\label{} 
	\cA := \set{(\beta,g,h,k,\ell_1,\dots,\ell_h)\mid\beta \in H_2(X,L;\bZ),\,\omega(\beta) \geq0,\; g,h,k,\ell_i \geq 0,\;\beta\in H_2(X;\bZ)\text{ if }h = 0}.
\end{equation*}
Fixing $J \in \cJ_\tau(X,\omega)$, we abbreviate 
$$\Mbar_{\mathsf{a}}(X,L) := \Mbar_{g,h;k,\ell}^{\,J,\,\beta}(X,L)$$\noindent 
for $\mathsf{a} = (\beta,g,h,k,\ell)\in \cA$ and denote by $\Mbar(X,L)$ the collection $\{\Mbar_{\mathsf{a}}(X,L)\}_{\mathsf{a}\in \cA}$.
Given a stable map graph $\Gamma$ of an open stable map, we can associate to each vertex $v \in V(\Gamma)$ a unique element $\mathsf{a}_v \in \cA$. Denote by 
$$\Mbar_{\Gamma}(X,L)\sub \p{v \in V(\Gamma)}{\Mbar_{\mathsf{a}_v}(X,L)}$$ \noindent
the associated fibre product. Its quotient admits an embedding 
\begin{equation}\label{eq:embedding-of-fibre-product}\Mbar_{\Gamma}(X,L)/\Aut(\Gamma)\to \Mbar_{\mathsf{a}}(X,L) \end{equation}
onto a (virtual) boundary or divisor stratum $\del_\Gamma\Mbar_{\mathsf{a}}(X,L)$ of $\Mbar_{\mathsf{a}}(X,L)$. The stabilisation map induces the map
\begin{equation}\label{} \stb \cl \Mbar_\Gamma(X,L)\,\to \,\Mbar_{\Gamma^{\stb}}\end{equation}
where $\Gamma^{\stb}$ is the graph obtained by forgetting the degrees associated to vertices of $\Gamma$ and collapsing any vertices that become unstable.\par
Given $\mathsf{a} = (\beta,g,h,k,\ell)$ in $\cA$, write $\mathsf{a}_0 := (\beta,g,h,0,0)$ for the tuple without marked points. Let $\cK_{\mathsf{a}_0} = \cK_{\alpha_{\mathsf{a}}}$ be the global Kuranishi chart of Proposition~\ref{prop:open-gw-global-chart-existence}\eqref{gkc-indeed} and let $\cK_{\mathsf{a}}$ be the pullback of $\cK_{\mathsf{a}_0}$ as in Proposition~\ref{prop:marked-points-gkc-simple}. Given a stable map graph $\Gamma$, we can associate to each vertex $v \in V(\Gamma)$ an element $\mathsf{a}_v\in \cA$ and thus obtain a derived orbifold chart 
\begin{equation}\label{eq:graph-gkc}\cK_\Gamma \sub \p{v\in V(\Gamma)}{\cK_{\mathsf{a}_v}}\end{equation}
for $\Mbar_\Gamma(X,L)$, where we have used the `stabilised' evaluation maps from Lemma~\ref{lem:evaluation-relvative-submersion} \emph{except in the case where $\Mbar_{\mathsf{a}_v}(X,L)$ is a moduli space of constant genus zero maps with at most one outgoing marked point, which lies on the boundary if the curves have boundary}. Therefore, the fibre product is a well-defined global Kuranishi chart.

\begin{theorem}\label{thm:cubical-cobordisms-enhanced} 
	Given a choice of unobstructed auxiliary datum $\alpha_{\mathsf{a}}$ for each $\mathsf{a}\in \cA$, there exists for each stable map graph $\Gamma$ a \emph{smooth} global Kuranishi chart ${\cKc_\Gamma}$ for $I^{E(\Gamma)}\times\Mbar_{\Gamma}(X,L)$ so that the following holds
		\begin{enumerate}[label=\arabic*),leftmargin=20pt,ref=\arabic*]
		\item ${\cKc_{\mathsf{a}}} =\cK_{\mathsf{a}}$ for any $\mathsf{a} \in \cA$.
		\item ${\cKc_\Gamma}$ is oriented.
		\item\label{cubical-1-boundary-enhanced} for any edge $e$ of $\Gamma$ we have a local smooth embedding
		\begin{equation}\label{eq:restricted-e-0-enhanced} 
			{\cKc_{\Gamma}}|_{\{t_e = 0\}} \;\stackrel{\sim\,}{\longrightarrow}\; (\pm 1)\,\del_\Gamma{\cKc_{\Gamma_e}}
		\end{equation}
	of degree \[d_1(\Gamma,\Gamma_e) = |\{\phi\in \Aut(\Gamma)\mid f\g\phi = f\},\] where $f\cl \Gamma\to \Gamma_e$ is the underlying contraction of graphs.
	\item\label{cubical-0-boundary-enhanced} The restriction to the other boundary face induces a smooth embedding
	\begin{equation}\label{eq:restricted-e-1-enhanced} {\cKc_{\Gamma}}|_{\{t_e = 1\}} \;\stackrel{\sim\,}{\longrightarrow}\; (\pm 1)\,{\cKc_{\Gamma_1}}\times_{Y}{\cKc_{\Gamma_2}}
	\end{equation}
	where $\Gamma_1*_e \Gamma_2 = \Gamma$ and $Y = X$ or $L$, depending on whether $e$ is an interior or a boundary edge.
	\item\label{evaluation-cubical} ${\cKc_\Gamma}$ admits a smooth submersive lift $\eva_\Gamma\cl {\cKc_\Gamma}\to X^k\times L^\ell$ of the evaluation map on $\Mbar_\Gamma(X,L)$ and the (local) embeddings of~\eqref{eq:restricted-e-0-enhanced} and~\eqref{eq:restricted-e-1-enhanced} intertwine these lifts.
	\end{enumerate}
\end{theorem}

\begin{definition}\label{de:cubical-cobordism} We call the system $\{\cKc_{\Gamma}\}_{\Gamma_*}$ a \emph{cubical cobordism} for $\{\Mbar_{\mathsf{a}}(X,L)\}_{\mathsf{a}\in \cA}$.
\end{definition}

\begin{proof} 
	 The proof relies on the observation that the construction of the double thickening, e.g. in the proof of Proposition~\ref{prop:uniqueness-up-to-equivalence} or Theorem~\ref{thm:boundary-strata-equivalent}\eqref{or-gluing-hg-3}, can be adapted to yield a global Kuranishi chart $\cK $ with a submersion $\cT\to [0,1]$ so that for $i = 0,1$, the restriction $\cK|_{\{i\}}$ admits a map 
	\begin{equation}\label{} \cK|_{\{i\}}\stackrel{\sim\;}{\to} \cK_i\end{equation}
	i.e., a morphism that is given by a stabilisation and a group enlargement as in Definition~\ref{de:equivalence of gkc over Y}. Since we stabilised the evaluation map to a relative submersion (Lemma~\ref{lem:evaluation-relvative-submersion}), we can take that morphism to preserve the evaluation map.
	Recall that, roughly, the tuples of the double thickening of the proof of Theorem~\ref{thm:boundary-strata-equivalent}\eqref{eq:or-sign-h3} are tuples \[(\varphi,\varphi_1,\varphi_2,u,C = C_1\vee C_2,\eta,\eta_1,\eta_2s),\]
	where 
	\begin{itemize}
		\item $C_1,C_2$ are nodal Riemann surfaces clutched at marked points to form the curve $C$
		\item $u \cl C\to X$ is a smooth stable map 
		\item $\varphi \cl C\to \bC P^N$ is a framing of $C$ with $\deg(\varphi) = \deg(\fL_u^{\otimes p})$,
		\item $\varphi_i \cl C_i \to \bC P^{N_i}$ is a framing of $C_i$ with $\deg(\varphi_i) = \deg(\fL_{u|_{C_i}}^{\otimes p_i})$ for $i = 1,2$
		\item $\eta \in H^0(C,(\varphi,u)^*E_k)$ and $\eta_i \in H^0(C_i,(\varphi_i,u|_{C_i})^*E_{k_i})$ are perturbation terms so that 
		\begin{equation}\label{eq:equivalent-delbar} \delbar_J(u|_{C_i}) +\lspan{\eta}\g d\varphi|_{C_i} + \lspan{\eta_i}\g d\varphi_i  = 0\end{equation}
		for $i = 1,2$.
	\end{itemize}
	Instead, one can consider tuples $(t,\varphi,\varphi_1,\varphi_2,u,C = C_1\vee C_2,\eta,\eta_1,\eta_2s)$, where $t\in I = [0,1]$ and $(\varphi,u,\eta_0,\eta_1)$ are the same data as before except that they satisfy the following perturbed Cauchy-Riemann equation
	\begin{equation}\label{eq:cobordism-delbar} 
		\delbar_J (u|_{C_i}) + t\,\lspan{\eta}\g d\varphi|_{C_i} + (1-t)\,\lspan{\eta_i}\g d\varphi_i = 0
	\end{equation}
	for $i = 1,2$ instead of \eqref{eq:equivalent-delbar}. This yields the cubical cobordism in the case where the underlying stable map graph $\Gamma$ has a single edge.
	 
	To define the cubical cobordism $\cKc_\Gamma$ of global Kuranishi charts for an arbitrary graph $\Gamma$, we define for a contraction $\Gamma\to \Gamma'$ of stable map graphs the function 
	\begin{equation}\label{} \chi_{\Gamma'} = \chi_{\Gamma\to \Gamma'}\cl I^{E(\Gamma)} \rightarrow [0,1]\end{equation} 
	by 
	\begin{equation}\label{eq:interpolating-perturbations} \chi_{\Gamma'}(t) = \prod_{e\in E(\Gamma')} t_e \prod_{e\notin E(\Gamma')} (1-t_e)\end{equation}
	using that there exists a canonical embedding $E(\Gamma')\hkra E(\Gamma)$. 
	We record a few properties of these functions here that will be used later.
	\begin{enumerate}[label=($\chi$\arabic*),leftmargin= 30pt,ref=$\chi$\arabic*]
		\item\label{vanishing} If $e$ is contracted by $\Gamma'\to \Gamma$, then $\chi_{\Gamma'}$ vanishes on $\{t_e = 1\}$.
		\smallskip
		\item\label{restricting} If the map $\Gamma\to \Gamma'$ contracts the edge $e$, then $\chi_{\Gamma\to\Gamma'}|_{\{t_e = 0\}} = \chi_{\Gamma_e\to\Gamma'}$.
		\smallskip
		\item\label{summing} We have for any $\Gamma$ that $\s{\Gamma\to \Gamma'}{\chi_{\Gamma'}}\equiv 1$.
		\item\label{clutching} If $\Gamma$ is the clutching of $\Gamma_1$ and $\Gamma_2$ along the edge $e_*$, then we have for any contraction $\Gamma\to \Gamma'$, which does not contract $e_*$, that $\Gamma'$ is the clutching of contractions $\Gamma_1\to \Gamma'_1$ and $\Gamma_2\to \Gamma_2'$ and  we have for $t\in I^{E(\Gamma)}$ with $t_{e_*} = 1$ that 
		$$\chi_{\Gamma'}(t) = \chi_{\Gamma_1'}((t_e)_{e\in E(\Gamma_1)})\;\chi_{\Gamma_2'}((t_e)_{e\in E(\Gamma_2)}).$$
	\end{enumerate}
	The third property follows from induction; the other properties are immediate from the definition. We can now start with the definition of the global Kuranishi charts $\cKc_\Gamma$. The construction is divided into three steps.\par

%	To make the exposition cleaner, we do the construction in three steps. We first construct a first version of the required global Kuranishi charts. Then, we add in the smooth structures, which will require a further outer-collaring.\\
	
	\noindent\textbf{Step 1:} We first define a rel--$C^\infty$ global Kuranishi chart $\wh{\cKc_\Gamma}$ for $[0,1]^{E(\Gamma)}\times \Mbar_\Gamma(X,L)$ for each suitable stable map graph $\Gamma$ so that all claims of the proposition hold but with relative smoothness instead of smoothness and with the caveat that we do get a `uniform' evaluation map since we stabilise them in a very specific way. Fix a suitable $\Gamma$ and define the base space $\wh{\cBc_\Gamma}$ to be simply the base space $\cBc_\Gamma$ defined above. The thickening $\wh{\cTc_\Gamma}$ consists of tuples 
	\begin{equation}\label{eq:point-in-cubical-cobordism}
		(t,\varphi= (\varphi_{\Gamma'})_{\Gamma\to \Gamma'},u = (u_v)_{v\in V(\Gamma)},(\gamma_{\Gamma'})_{\Gamma\to \Gamma'},\eta = (\eta_{\Gamma'})_\Gamma,(\xi_{v,e})_{(v,e)\in F(\Gamma)})
	\end{equation}
	where 
	\begin{itemize}[leftmargin=20pt]
		\item $t\in [0,1]^{E(\Gamma)}$,
		\item $\varphi\in \cBc_\Gamma$, so that $\varphi_\Gamma = (\varphi_v)_{v\in V(\Gamma)}$
		\item $u_v \cl C_v:=\cC_{\varphi_v}\to X$ is a smooth map of degree $\beta_v$
		\item $\xi_{v,e}\in T_{u_v(z_{v,e})}Y_e$ is a tangent vector, where $Y_e = X$ if $e$ is an interior edge and $Y_e = L$ otherwise,
		\item $\gamma_{\Gamma'} =(\gamma_{\Gamma',v'})_{v'\in V(\Gamma')}$ is a sequence of elements $\gamma_{\Gamma',v'}\in H^1(C_{v'},\cO_{C_{v'}})$ satisfying
			\begin{equation*}\label{eq:line-bundle-matchup} [\varphi_v^*\cO_{\bC P^{N_v}}(-1)]\otimes [\fL_{u_v}^{\otimes p_v}] = \exp(\gamma_{\Gamma',v})\end{equation*} 
			with $\varphi_{\Gamma',v'}$,
		\item $\eta_{\Gamma'} = (\eta_{\Gamma',v})_{v\in V(\Gamma)}$ is a sequence of perturbations with 
		\[\eta_{\Gamma',v} \in H^0(C_v,(\varphi_{\Gamma'}|_{C_v},u_v)^*E_{\Gamma',f(v)}) \] where $f \cl \Gamma\to \Gamma'$ is the underlying contraction, 
	\end{itemize}
 so that 
 \begin{enumerate}[label=\roman*),leftmargin=23pt,ref=\roman*]
 	\item  for any edge $e = \{v,v'\}$ of $\Gamma$ we have
 	\begin{equation*}\label{eq:almost-agreeing-maps} 
 		\exp_{u_v(z_{v,e})}(t_e\xi_{v,e}) = \exp_{u_{v'}(z_{v',e})}(t_e\xi_{v',e})
 	\end{equation*}
 	and
 	\begin{equation}
 		\label{eq:almost-agreeing-perturbation} 
 		d\exp_{u_v(z_{v,e})}(t_e\xi_{v,e})\g\eta_{\Gamma',v}(z_{v,e}) = d\exp_{u_{v'}(z_{v',e})}(t_e\xi_{v',e})\g\eta_{\Gamma',v'}(z_{v',e})
 	\end{equation}
 	\item any $u_v$ satisfies the perturbed Cauchy--Riemann equation 
 	\begin{equation}\label{eq:cubical-perturbation} 
 		\delbar_J\, u_v + \s{\Gamma\to \Gamma'}{\chi_{\Gamma',\Gamma}(t)\lspan{\eta_{\Gamma',v}}\g d\varphi_{\Gamma'}|_{C_v}} = 0,
 	\end{equation}
 	 where $\chi_{\Gamma,\Gamma'}$ is defined in Equation~\eqref{eq:interpolating-perturbations}, and the perturbed operator
 	 \begin{gather*}\label{eq:perturbed-enhanced-cob} 
 	 	\notag D\delbar_J(u_v) + \s{\Gamma\to \Gamma'}{\chi_{\Gamma',\Gamma}(t)\lspan{\cdot}\g d\varphi_{\Gamma'}|_{C_v}}\\ 
 	 	 C^\infty(C_v,u_v^*TX)\oplus \bigoplus\limits_{\Gamma\to \Gamma'}H^0(C_v,(\varphi_{\Gamma'}|_{C_v},u_v)^*E_{v'}) \to \Omega^{0,1}(C_v,u_v^*(TX,TL))
 	 \end{gather*}
  is surjective when restricted to $C^\infty(C_v,u_v^*TX)_{D_v} \oplus \bigoplus\limits_{\Gamma\to \Gamma'}H^0(C_v,(\varphi_{\Gamma'}|_{C_v},u_v)^*E_{v'})$.
  \item for any $f\cl \Gamma\to \Gamma'$, the evaluation map 
  \begin{gather*}\label{} 
  	\bigoplus\limits_{v\in V(\Gamma)}H^0(C_v,(\varphi_{\Gamma'}|_{C_v},u_v)^*E_{v'}) \to \Omega^{0,1}(C_v,u_v^*(TX,TL)) \ \to\ \bigoplus\limits_{e\in E(\Gamma)}T_{\exp_{u_v(z_{v,e})}(t\xi_{v,e})}Y_e\\
  	\wh\eta_{\Gamma'} \ \mapsto \ (d\exp_{u_v(z_{v,e})}(t_e\xi_{v,e})(\wh\eta_{\Gamma',v}(z_{v,e})) - d\exp_{u_{v'}(z_{v',e})}(t_e\xi_{v',e})(\wh\eta_{\Gamma',v'}(z_{v',e})))_{e\in E(\Gamma)}
  \end{gather*}
is surjective.\end{enumerate} 
We define the obstruction bundle $\wh{\cEc_\Gamma}\to \wh{\cTc_\Gamma}$ to have fibre
\begin{equation*}
	\label{eq:fibre-obstruction-cubical-cobordism-enhanced}
	 \bigoplus\limits_{\Gamma\to \Gamma'}\lbr{\bigoplus\limits_{v'\in V(\Gamma)}H^1(C_{v'},(\bC,\bR))\oplus \fp\fo(N_{v'} +1)\oplus \bigoplus\limits_{v\in f\inv(v')}H^0(C_v,(\varphi_{\Gamma'}|_{C_v},u_v)^*E_{v'})}
\end{equation*}
over the element~\eqref{eq:point-in-cubical-cobordism}. We let $\wh\fs_\Gamma$ be the canonical section. The covering group agrees with the covering group of $\cKc_\Gamma$. Since regularity is an open condition, the canonical map $\wh\fs\inv(0)\to I^{E(\Gamma)}\times \Mbar_\Gamma(X,L) $ induces an isomorphism on the quotient. Applying the mollification procedure of Lemma~\ref{lem:mollification-of-obstruction-section} inductively to the obstruction sections, we thus obtain a system of rel--$C^\infty$ global Kuranishi charts $\wh{\cKc_\Gamma}$ satisfying the first four properties. We define the evaluation map $\wh\eva_\Gamma\cl \wh{\cKc_\Gamma}\to X^k \times L^\ell$ by stabilising the natural evaluation maps \emph{except} in the case where $\Gamma$ has a constant vertex of genus zero with at most one boundary component and at most one outgoing marked point (which is a boundary marked point in the case of discs). Thus, the system $\set{\wh{\cKc_\Gamma}}_\Gamma$ satisfies all the claimed properties except for smoothness and~Property \eqref{evaluation-cubical}.\\

\noindent\textbf{Step 2:} We now equip the cubical cobordisms constructed in the first step with smooth structures. Since the relative smoothing theory discussed in \S\ref{subsec:smoothing-theory} only yields a smooth structure, which is unique up to concordance (Lemma~\ref{lem:invariance-of-smoothing}), we have to add a bigger outer collar to also interpolate between possibly different choices of smooth structures. These outer collars will also allow us to interpolate between the stabilised and non-stabilised evaluation maps on the global Kuranishi charts for moduli spaces of constant discs and spheres with at most one outgoing marked point (which is on the boundary in the case of discs). To do so, shrink $\wh{\cKc_\Gamma}$ down to the interval $[\frac14,\frac34]$, rescaling the functions $\chi_{\Gamma,\Gamma'}$ appropriately. Then, use Theorem~\ref{thm:to-smooth} to lift the rel--$C^\infty$ structure on $\wh{\cKc_\Gamma} \rightarrow \cBc_\Gamma \times Y_\Gamma$, where $Y_\Gamma$ denotes the target of the evaluation map, to a smooth structure on $\wh{\cKc_\Gamma}$. This ensures the evaluation maps are smooth. If $\Gamma$ represents a constant map of genus $0$ with at most one outgoing marked point, we can just use the usual smooth structure on the trivial global Kuranishi chart $\Mbar_\Gamma$. 

Since one can lift a $G$-smoothing through a rel--$C^\infty$ covering, we obtain for any stable map graph $\Gamma$ and any vertex $v$ of $[0,1]^{E(\Gamma)}$ a relative $G$-smoothing. Using Lemma~\ref{lem:invariance-of-smoothing} inductively, one can extend these over the strips $[0,\frac14]$ and $[\frac34,1]$ to obtain the charts ${\cKc_\Gamma}$ of the claim. We illustrate this step for the case of $\Gamma = (v_1 \stackrel{e_1}{-}v_2\stackrel{e_2}{-}v_3)$ with the following figure over $[0,1]^2$.
\begin{figure}[H]
	\includegraphics[scale=1.5]{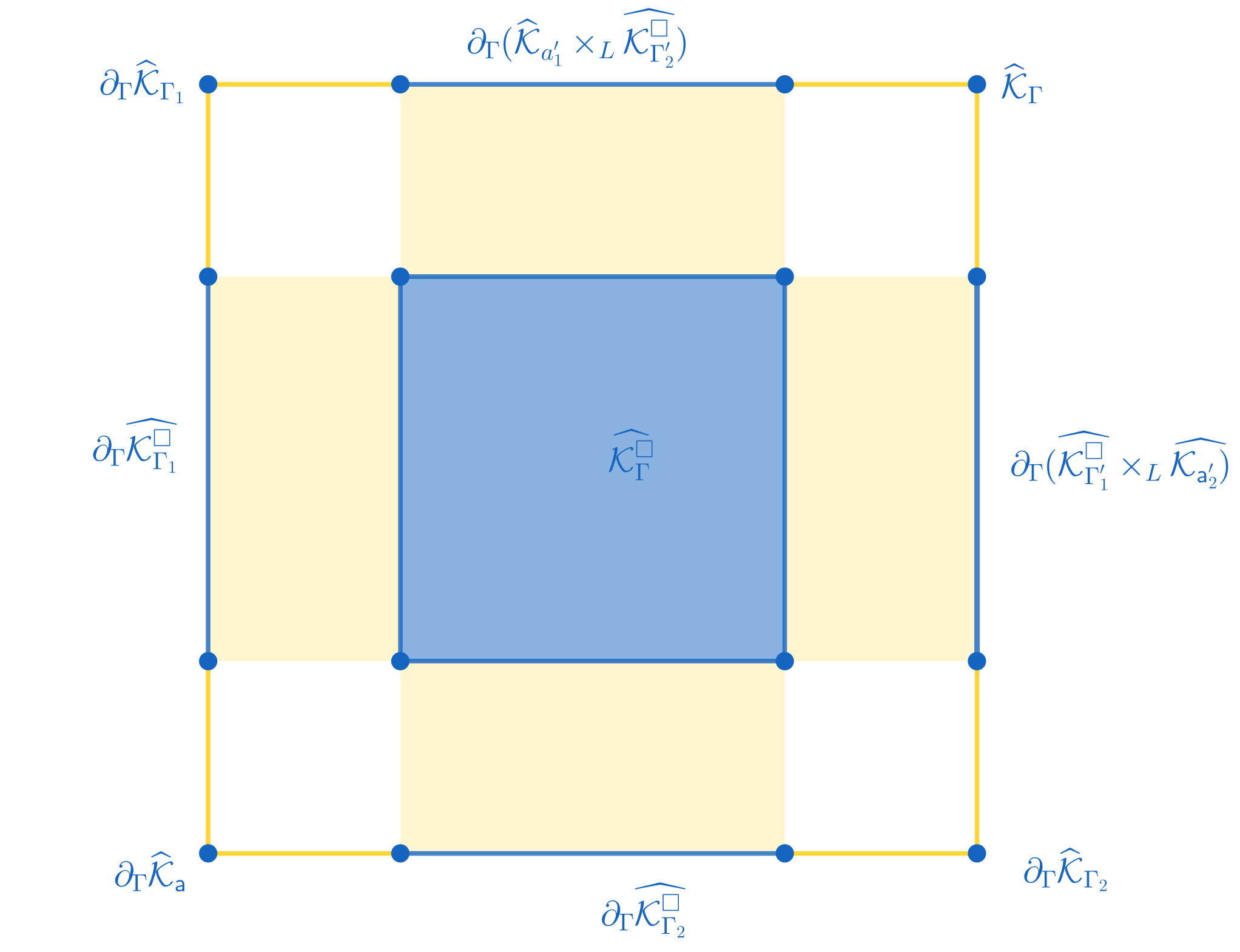}
	\caption{Constructing the smooth structure on ${\cK^\square_\Gamma}$}
	\end{figure}
	
	\noindent
	Here $\Gamma_1$ is obtained from $\Gamma$ by collapsing the edge $e_2$ and similarly for $\Gamma_2$, while $\mathsf{a}$ is the graph obtained from $\Gamma$ by collapsing all edges. 
 We start off with smooth structures on the blue parts which are given by Theorem~\ref{thm:to-smooth}, and are, respectively, pulled back from the respective boundary stratum. Then, we apply Lemma~\ref{lem:invariance-of-smoothing} a first time to extend the smooth structures from the blue parts (after stabilisation) over the yellow lines and rectangles. Finally, we apply Lemma~\ref{lem:invariance-of-smoothing} a second time to extend the smooth structure over the white squares. We also use these `additional' `collars to interpolate between the stabilisation of the evaluation map and its non-stabilised version on moduli spaces of constant stable maps of genus $0$.\\
 
 \noindent\textbf{Step 3:} We will modify the construction of the smooth structure on the cubical cobordisms so that the forgetful maps are smooth. For graphs which have a minimal number of marked points to be stable, i.e. those graphs for which the only components with marked points are constant curves of genus $0$ or $1$, construct the smooth structure as in Step 2. Then, for any other stable map graph $\Gamma$, let $\Gamma_0$ be the graph obtained by forgetting all marked points, except those that are needed on constant components. Define the smooth structure on ${\cKc_\Gamma}'$ by the fibre product 
 \begin{equation*}
 	\begin{tikzcd}
 		{\cKc_\Gamma}' \arrow[r,""] \arrow[d,""]&{\cKc_{\Gamma_0}} \arrow[d,""]\\ 
 		\cBc_\Gamma \arrow[r,""] &\cBc_{\Gamma_0}.
 		\end{tikzcd} 
 	\end{equation*}
 	The evaluation maps on ${\cKc_\Gamma}'$ might not be smooth. Thus, we define ${\cKc_\Gamma}$ to be the stabilisation of ${\cKc_\Gamma}'$, and take the smooth structure to be the one induced by Lemma \ref{lem:smoothing-maps}. Inducting over all stable map graphs gives the result.
\end{proof}

\begin{definition}
	\label{def:stabilisation-map-on-cube}
	Let the map $\cKc_\Gamma \rightarrow I^{E(\Gamma^{\stb})} \times \Mbar^{\Gamma^{\stb}}$ be given as follows.
	The map $\cKc_{\Gamma} \rightarrow \Mbar^{\Gamma^{\stb}}$ is the usual map stabilising the domain of the map.
	The map $\cKc_\Gamma \rightarrow I^{E(\Gamma^{\stb})}$ is given by the composition $\cKc_\Gamma \rightarrow I^{E(\Gamma)} \rightarrow I^{E(\Gamma^{\stb})}$, where the map $I^{E(\Gamma)} \rightarrow I^{E(\Gamma^{\stb})}$ is defined as follows.
	The stable graph $\Gamma^{\stb}$ is obtained from $\Gamma$ by contracting certain edges or by combining edges together (in the case of interior vertices being forgotten).
	Suppose $e_1, \dots, e_k \in \Gamma$ get combined into the edge $e \in \Gamma^{\stb}$. Then the map $I^{E(\Gamma)} \rightarrow I^{E(\Gamma^{\stb})}$ sends \[(t_{e_1}, \dots, t_{e_k}) \mapsto t_e = 1 - \prod_{i} (1-t_{e_i}).\]
	Moreover it forgets the parameters associated to contracted edges.
\end{definition}

\begin{example}
	Consider the stabilisation of a stable map graph shown in the figure:
	\begin{figure}[H]
		\label{fig:stable-graph-contraction}
		\includegraphics[scale = 0.7]{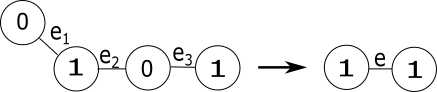}
		%		\caption{}
	\end{figure} 
	\noindent All vertices denote Riemann surfaces with one boundary component, and the number in the vertex denotes the genus. The stabilisation map contracts the edge $e_1$, and the edges $e_2$ and $e_3$ get combined to form the edge $e$.
	The map $I^{E(\Gamma)} \rightarrow I^{E(\Gamma^{\stb})}$ is thus given by 
	\begin{equation*}\label{eq:stabilisation-on-cube} 
		(t_1, t_2, t_3) \mapsto t = 1-(1-t_2)(1-t_3).
	\end{equation*}
\end{example}

\begin{lemma}\label{lem:extended-clutching-maps} Given stable map graphs $\Gamma$ and $\Gamma'$ with marked points labelled by $y \sqcup y'$, respectively $y'\sqcup y''$, there exists a smooth embedding
	\begin{equation}
		\label{eq:extended-clutching-map-gkc}
		\psi\, \cl\, {\cKc_{\Gamma_2}}{\times}_{X^{y'_c}\times L^{y'_o}} {\cKc_{\Gamma_1}}\,\to \,{\cKc_{\Gamma}}, 
		\end{equation}
	where $\Gamma = \Gamma_1 *_{y'}\Gamma_2$, covering the clutching map 
	\begin{equation*}
		\label{eq:extended-clutching-curves} 
		I^{E(\Gamma_1^{\stb})}\times\Mbar_{\Gamma_1^{\stb}}\times I^{E(\Gamma_2^{\stb})}\times  \Mbar^{\Gamma_2^{\stb}}\,\to \,I^{E(\Gamma^{\stb})}\times \Mbar_{\Gamma^{\stb}} 
		\end{equation*} 
	defined by the clutching map on the moduli spaces of stable curves and by $(t_1,t_2)\mapsto (t_1,t_2,1)$ on the cubes.
\end{lemma}

\begin{proof} Replace ${\cKc_\Gamma}$ and ${\cKc_{\Gamma'}}$ by the group-enlarged moduli space, where we allow for more framings (see e.g., the first step in the proof of Proposition~\ref{prop:uniqueness-up-to-equivalence}). Then, the clutching map is defined by simply combining the data coming from the two factors. By construction, this is a well-defined smooth embedding.
\end{proof}

\subsection{Thom forms} Recall that the virtual fundamental class of an oriented derived orbifold chart $(\cT,\cE,\obs)$ for $Z$ is given by 
\begin{equation}\label{}\lspan{\gamma,\vfc{Z}} \,=\, \int_{\cT}\gamma\cdot \obs^*\text{Th}_\cE,\end{equation}
where $\text{Th}_\cE$ is the Thom class of $\cE$. This virtual fundamental class is unique and only depends on the equivalence class of the chart. In the case with boundary, there is no such unique invariant, similar to how manifolds with boundary have a (non-unique) fundamental chain. In our setting, we will also have to choose a representative of the Thom form $\text{Th}_\cE$. Since these representatives will be ubiquitous in the next section, we make the following definition.

\begin{definition}\label{} A \emph{Thom form} is a representative of the Thom class of an orbi-bundle. A Thom form $\eta$ of a derived orbifold chart $(\cT,\cE,\obs)$ is a Thom form of its obstruction bundle. We call it \emph{proper} if $\obs^*\eta$ has compact support.\end{definition}

%Clearly, the space of Thom forms, which in our setting will be a subset of the space of rel--$C^\infty$ differential forms, is affine.

\begin{remark}\label{rem:properness-thom-form} Given a Thom form $\eta$ for a derived orbifold chart $\cK= (\cT,\cE,\obs)$ and a neighbourhood $U \sub \cT$ of $\obs\inv(0)$, we can pull back $\eta$ to $\cE|_U$ to obtain a Thom form for $\cK|_U$. However, this operation does not preserve the properness of Thom forms.\end{remark}

\noindent
We first show that (proper) Thom forms can be chosen to behave compatibly under stabilisations. To this end, let $(W, p^*\cE \oplus p^*W, \widetilde{s} :=p^*s \oplus \Delta_{W} )$ be obtained from $(\cT, \cE, s)$ by stabilisation along the orbi-bundle $p: W \rightarrow \cT$. We consider the commutative diagram
\begin{equation*}	
	\begin{tikzcd}
		p^*\cE \oplus p^*W \arrow[rrd,bend left,"\pi_{\cE}"]
		\arrow[ddr,bend right,swap,"\pi_{W}"]\\
		& \cE \oplus W \arrow[lu,"\Delta_W"] \arrow[d] \arrow[r,"p_{\cE}"] & \cE \arrow[d]  \\
		& W \arrow[r,swap,"p"]  & \cT
	\end{tikzcd}
\end{equation*}
where the map $\Delta_W(e,w) = (w,e,w)$ for $(e,w)\in \cE\oplus W$ is induced by the diagonal map.

\begin{lemma}
	\label{lem: from base to stabilisation}
	Given Thom forms $\eta_{\cE}$ and $\eta_{W}$ for $\cE \rightarrow \cT$ and $W \rightarrow \cT$, the form 
	$$\widetilde{\eta} := \pi_{\cE}^*\eta_{\cE} \wedge \pi_{W}^*\eta_W$$\noindent
	is a Thom form for $p^*\cE \oplus p^*W$. It satisfies $p_*\wt\obs^*\widetilde{\eta} = \obs^*\eta_\cE$. If $\eta_\cE$ is proper, then so is $\wt\eta$.
\end{lemma}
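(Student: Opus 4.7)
The plan is to verify the three claims of the lemma in sequence, all of which reduce to essentially routine checks using the projection formula and the fibrewise description of Thom forms.

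First I would check that $\widetilde{\eta}$ is a Thom form for $p^*\cE \oplus p^*W \to W$. Closedness is immediate from being a wedge of pullbacks of closed forms. For the fibre condition, note that the fibre of $p^*\cE \oplus p^*W \to W$ over $w \in W$ is canonically $\cE_{p(w)} \oplus W_{p(w)}$, and the maps $\pi_\cE$ and $\pi_W$ restrict on this fibre to the canonical identifications with $\cE_{p(w)} \subset \cE$ and $W_{p(w)} \subset W$ respectively. Hence the restriction of $\widetilde{\eta}$ to the fibre is the external wedge product of fibrewise Thom forms, which integrates to $1$. Vertical compact support follows because the support of $\widetilde{\eta}$ in the fibre at $w$ is contained in the product of the fibrewise supports of $\eta_\cE$ at $p(w)$ and $\eta_W$ at $p(w)$, each of which is compact.

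Next I would establish the push-forward identity. Unpacking the definition of $\widetilde{s}$, for $w \in W$ we have $\widetilde{s}(w) = (s(p(w)), \Delta_W(w))$, so $\pi_\cE \circ \widetilde{s} = s \circ p$ and $\pi_W \circ \widetilde{s} = \ide_W$. Therefore
\begin{equation*}
    \widetilde{s}^*\widetilde{\eta} \;=\; p^*s^*\eta_\cE \wedge \eta_W.
\end{equation*}
Applying the projection formula for the (rel--$C^\infty$ orbi-bundle) projection $p\colon W \to \cT$, which is available in the paper's framework via Proposition~\ref{prop:pushforward-orbifolds}, and using that $p_*\eta_W = 1$ since $\eta_W$ is a Thom form, gives $p_*\widetilde{s}^*\widetilde{\eta} = s^*\eta_\cE \wedge p_*\eta_W = s^*\eta_\cE$, as claimed.

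Finally, for properness, the same computation shows that the support of $\widetilde{s}^*\widetilde{\eta}$ is contained in $p^{-1}(\text{supp}(s^*\eta_\cE)) \cap \text{supp}(\eta_W)$. If $\eta_\cE$ is proper then $\text{supp}(s^*\eta_\cE)$ is compact in $\cT$, and since $\eta_W$ has vertically compact support over $\cT$, the intersection with $p^{-1}$ of a compact set is compact. Hence $\widetilde{s}^*\widetilde{\eta}$ has compact support, i.e. $\widetilde{\eta}$ is proper. No step here presents a genuine obstacle; the only mild care required is keeping the fibrewise identifications $\Delta_W$ straight and invoking the projection formula in the appropriate rel--$C^\infty$ setting.
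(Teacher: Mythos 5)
Your proof is correct and follows essentially the same route as the paper's: verify the fibre integral via the product decomposition $\cE_{p(w)}\oplus W_{p(w)}$, compute $\widetilde{s}^*\widetilde\eta = p^*s^*\eta_\cE\wedge\eta_W$ using $\pi_\cE\circ\widetilde{s} = s\circ p$ and $\pi_W\circ\widetilde{s}=\ide_W$, and then apply the projection formula with $p_*\eta_W=1$. The support argument for properness also matches the paper's observation that $\text{supp}(\widetilde{s}^*\widetilde\eta)\subset p^{-1}(\text{supp}(s^*\eta_\cE))\cap\text{supp}(\eta_W)$.
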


\begin{proof} First note that 
	$$\wt\obs^*\widetilde{\eta} = (p^*s)^* \pi_{\cE}^* \eta_{\cE} \wedge \Delta_W^* \pi_W^* \eta_W = p^*s^*\eta_\cE \wedge \eta_W,$$\noindent whence it has support contained in the support of $\eta_W$. Thus $p_*(\wt\obs^*\widetilde{\eta})$ is well defined. Next observe that $\widetilde{\eta}$ is closed, and for $w \in W$, we have: \begin{equation*}
		\int_{(p^*\cE \oplus p^*W)_w} \pi_{\cE}^*\eta_{\cE} \wedge \pi_{W}^*\eta_W = \left(\int_{\cE_{p(w)}} \eta_\cE\right)\left(\int_{W_{p(w)}} \eta_W\right) = 1,
	\end{equation*}
	deducing that $\widetilde{\eta}$ is indeed a Thom form. Finally, we compute 
	\begin{equation*}
		p_*\wt\obs^*\widetilde{\eta} = p_*(p^*s^*\eta_\cE \wedge \eta_W) = \obs^*\eta_{\cE} \wedge p_*\eta_W = \obs^*\eta_{\cE}.
	\end{equation*}
	As the support of $\wt\obs^*\wt\eta$ agrees with the support of $\obs^*\eta_\cE$, the last assertion is immediate.
\end{proof}

\begin{lemma}
	\label{lem: from stabilisation to base}
	If $\widetilde{\eta}$ is a Thom form  for $p^*\cE \oplus p^*W \xrightarrow{\pi_W} W$,
	the form $$\eta_{\cE} := (p_\cE)_*\Delta_W^*\widetilde{\eta}$$\noindent is a Thom form for $\cE \rightarrow \cT$, satisfying $p_*\wt\obs^*\widetilde{\eta} = \obs^*\eta_\cE$. If $\wt\eta$ is proper, then so is $\eta_\cE$ and if $\wt\eta = \pi_\cE^*\wh\eta_\cE\wedge \pi_W^*\eta_W$, then $\eta_\cE = \wh\eta_\cE$.
\end{lemma}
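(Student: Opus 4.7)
The plan is to check directly that $\eta_\cE$ satisfies the three defining conditions for a Thom form of $\cE \to \cT$ (closedness, compact vertical support, unit fiber integral), and then to derive the identity $p_* \wt\obs^*\wt\eta = \obs^*\eta_\cE$ via base change, which will also imply properness. Closedness is immediate: $d \wt\eta = 0$ gives $d(\Delta_W^* \wt\eta) = 0$, and integration along the fibers of $p_\cE$ commutes with $d$ once one verifies the requisite support condition.

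For the unit fiber integral, which is the key step, I will use a homotopy argument. Define the smooth family $H_t \colon \cE \oplus W \to p^*\cE \oplus p^*W$ by $H_t(e,w) = (tw, e, w)$, using the vector bundle structure of $W$. Then $H_1 = \Delta_W$, while $H_0$ factors through the zero section of $W \to \cT$ and identifies $(\cE \oplus W)_x$ diffeomorphically with the fiber $\pi_W^{-1}(0_{W_x}) = \cE_x \oplus W_x$. Since $\wt\eta$ is closed, fiberwise Stokes's theorem applied to the cylinder $[0,1] \times (\cE \oplus W)_x$ should yield
\[\int_{(\cE \oplus W)_x} H_1^* \wt\eta \;=\; \int_{(\cE \oplus W)_x} H_0^* \wt\eta \;=\; \int_{\pi_W^{-1}(0_{W_x})} \wt\eta \;=\; 1,\]
where the last equality is the Thom form property of $\wt\eta$. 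The left-hand side is $\int_{\cE_x} \eta_\cE$ by Fubini, giving the claim.

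The identity $p_* \wt\obs^*\wt\eta = \obs^*\eta_\cE$ will follow from base change along the cartesian square with vertical maps $p_\cE$ and $p$ and bottom horizontal $\obs$, once one observes that $\Delta_W$ composed with the top horizontal map $w \mapsto (\obs(p(w)), w)$ is precisely $\wt\obs$. Properness of $\eta_\cE$ is then immediate from properness of $\wt\eta$ since $p$ is a vector bundle projection and hence preserves compactness of supports. For the last assertion, in the product case $\wt\eta = \pi_\cE^* \wh\eta_\cE \wedge \pi_W^* \eta_W$ one computes $\Delta_W^* \wt\eta = p_\cE^* \wh\eta_\cE \wedge q^* \eta_W$, where $q \colon \cE \oplus W \to W$ is the natural projection; the projection formula combined with $(p_\cE)_* q^* \eta_W = 1$ (the fiber integral of a Thom form of $W$) then yields $\eta_\cE = \wh\eta_\cE$.

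The main obstacle will be justifying the fiberwise Stokes's theorem in the homotopy step: one must control the support of the global form $H^*\wt\eta$ on $[0,1] \times (\cE \oplus W)_x$ in the $W_x$-direction to rule out boundary contributions at infinity, which is nontrivial because the first coordinate $tw$ can escape every compact of $W$ as $w \to \infty$. This should follow from the compact vertical support of $\wt\eta$ along $\pi_W$ since $\{tw : t \in [0,1]\}$ remains inside any compact containing both $0$ and $w$, but it may require an explicit cut-off argument or, alternatively, a reduction to the product case of Lemma~\ref{lem: from base to stabilisation} by invoking uniqueness of Thom forms up to a fiberwise-exact form.
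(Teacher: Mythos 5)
Your overall structure is sound, and several pieces match the paper exactly: the identity $p_*\wt\obs^*\wt\eta = \obs^*\eta_\cE$ via base change along the cartesian square through $\sigma(w)=(\obs(p(w)),w)$, the properness claim, and the product-case computation via $\pi_\cE\Delta_W = p_\cE$, $\pi_W\Delta_W = q$ and the projection formula. These parts are correct.

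The key difference is the argument for the unit fibre integral. The paper does \emph{not} use a homotopy or Stokes argument; it argues cohomologically. The section $\Delta_W$ intersects the zero section of $p^*\cE\oplus p^*W$ transversely along a copy of $\cT$ (the zero section of $\cE\oplus W$), so $[\Delta_W^*\wt\eta]$ is the Thom class of $\cE\oplus W\to\cT$; the Thom isomorphism then gives that $(p_\cE)_*$ carries it to the Thom class of $\cE\to\cT$. This sidesteps your homotopy entirely, and sidesteps precisely the obstacle you flag.

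That obstacle is indeed a genuine gap in your Stokes argument, and the proposed fix does not close it. You write that $\{tw: t\in[0,1]\}$ stays in a compact containing $0$ and $w$; that is true pointwise in $w$, but for fibrewise Stokes on $[0,1]\times(\cE\oplus W)_x$ you need the set $\{(t,e,w): (tw,e,w)\in\mathrm{supp}\,\wt\eta\}$ to be compact \emph{uniformly in $(e,w)$}. The compact vertical support of $\wt\eta$ is only over compact subsets of $W$, and as $w\to\infty$ the base point $tw$ (for $t$ close to $1$) also goes to infinity in $W$, where the vertical support of $\wt\eta$ may grow without bound. Concretely, writing $\wt\eta$ in local coordinates as a form supported near $\{b = 2a\}$ (with $a$ the base $W$-coordinate and $b$ the fibre $W$-coordinate, thinking of a rank-one $W$), the pullback $H_t^*\wt\eta$ is supported near $\{|(1-2t)w|\le\epsilon\}$, which is all of $W_x$ at $t=1/2$; yet $H_0^*\wt\eta$ and $H_1^*\wt\eta = \Delta_W^*\wt\eta$ are both compactly supported. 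So the intermediate $t$ are the problem, and one cannot reduce to the compactness at the endpoints. Your alternative — reduction to the product case via uniqueness of Thom forms up to fibrewise-exact pieces — faces the same support control on the primitive, so it is also not a free lunch. Use the transversality-plus-Thom-isomorphism route instead; it requires only that $\Delta_W^*\wt\eta$ itself have compact vertical support (which the paper asserts and uses also for the closedness step you flag), not control along a whole homotopy.
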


\begin{proof}
	Since $\Delta_W$ is a section of $p^*(\cE\oplus W)$, $\Delta_W^*\widetilde{\eta}$ has compact vertical support on $\cE \oplus W$. Hence, $(p_\cE)_*\Delta_W^*\widetilde{\eta}$ is well defined. Moreover, $\eta_{\cE}$ is closed as $\widetilde{\eta}$ is closed and the map $p_\cE$ has no vertical boundary. The final property of a Thom form can be checked on cohomology, i.e., we need that $[\eta_\cE]$ is the Thom class for the zero section of $\cE\oplus\cW$. As $\Delta_W$ intersects the zero section of $p^*(\cE\oplus W)$ transversely in $\cT$, it pullbacks back $[\wt\eta]$ to the Thom class of $\cT$ inside $\cE\oplus W$. It follows from the Thom isomorphism theorem that $(p_\cE)_*\Delta_W^*\wt \eta$ is a Thom form for $\cE\to \cT$. The second assertion is a consequence of the projection formula, \cite[Proposition~2.1(4)]{ST16}, while the third assertion is due to the fact that the support of $\obs^*\eta$ is contained in the support of $\wt\obs^*\wt\eta$. The last assertion follows from the fact that $\pi_\cE\Delta_W= p_\cE$ and $\pi_W\Delta_W=  p_W$ and \cite[Proposition~2.1]{ST16}.
\end{proof}

\begin{definition}\label{de:compatible-thom-forms} Let $\cK$ be a global Kuranishi chart and $\cK'$ its stabilisation along a vector bundle $p \cl \cW\to \cT$. Let $P \cl \cE'= p^*\cE\oplus p^*\cW\to \cE$ be the canonical projection. Then, we call a Thom form $\eta$ of $\cK$ \emph{compatible} with a Thom form $\eta'$ for $\cK'$ if $(p_\epsilon)_*\Delta_W^*\eta'$.
\end{definition}

\begin{remark}\label{rem:compatible-thom-forms-consequence} The upshot of having compatible Thom forms is the following. Suppose we are given maps $g^{\pm}: W \rightarrow Y^{\pm}$ that factor through maps $f^{\pm}: \cT \rightarrow Y^{\pm}$ under the projection $p: W \rightarrow \cT$. Then, for  compatible Thom forms $\eta_{\cE}$ and $\wt \eta$, we have 
	\[
	f^+_*((f^-)^*\alpha \wedge \eta_{\cE}) = g^+_*((g^-)^*\alpha \wedge \wt \eta) \]
	for any $\alpha \in \Omega^*(Y^-)$.
\end{remark}

\begin{proposition}\label{prop:thom systems exist} Given a system of (enhanced) cubical cobordisms $\{\cKc_{\Gamma}\}_{\Gamma}$ and a choice of Thom form $\eta_{\mathsf{a}}$ for $\cK_{\mathsf{a}}$ so that $\obs_{\mathsf{a}}^*\eta_{\mathsf{a}}$ is compactly supported, there exists a system of Thom forms $\eta_{\Gamma}$ for $\cE_{\Gamma}$ with the following properties.
	\begin{enumerate}[label=\arabic*),leftmargin=20pt,ref=\arabic*]
		\item The pullback $\obs_{\Gamma}^*\eta_{\Gamma}$ is compactly supported in $\cTc_{\Gamma}$.
		\item If $\Gamma = \mathsf{a}$, then $\eta_{\Gamma} = \eta_{\mathsf{a}}$.
		\item It is compatible with boundary restrictions. More precisely, for any edge $e$ of $\Gamma$, we have that
		\begin{equation}
			\label{eq:thom-contracting-edge}
			\eta_\Gamma|_{\{t_e = 0\}} \sim \eta_{\Gamma_e}|_{\del_\Gamma\cKc_{\Gamma_e}}, 
		\end{equation} 
		and
		\begin{equation}
			\label{eq:thom-separating-edge}
			\eta_\Gamma|_{\{t_e = 1\}} \sim \eta_{\Gamma_1}\times \eta_{\Gamma_2}|_{\cTc_{\Gamma_1}\times_Y \cTc_{\Gamma_2}} 
		\end{equation}
		if $e $ separates $\Gamma$ into $\Gamma_1$ and $\Gamma_2$ with $Y = X$ or $L$ depending on $e$, and 
		\begin{equation}
			\label{eq:thom-nonseparating-edge}
			\eta_\Gamma|_{\{t_e = 1\}} \sim \eta_{\Gamma\sm e}|_{Y\times_{Y^2}\cKc_{\Gamma\sm e}} 
		\end{equation}
		if $e$ is a non-separating node and $\Gamma\sm e$ is the graph given by cutting the edge $e$. Finally, if the edge corresponds to a collapsed boundary circle, then
		\begin{equation}
			\label{eq:thom-boundary-edge-type-E}
			\eta_\Gamma\mid_{\{t_e = 1\}} \sim \eta_{\Gamma\sm e}|_{L\times_{X}\cKc_{\Gamma\sm e}}.
		\end{equation} 
	\end{enumerate}
\end{proposition}

\begin{proof} 
	We construct the system of Thom forms by a double induction. The outer induction is over the partial order on $\cA$ and the inner induction, for each $\mfa \in \cA$, is on $\# E(\Gamma)$ for stable map graphs $\Gamma$ contracting to $\mfa$. The point is that by the time we consider $\Gamma$ we will have already chosen Thom forms for the boundaries appearing in~\eqref{eq:thom-contracting-edge}-\eqref{eq:thom-boundary-edge-type-E}. Thus, we can apply Proposition~\ref{prop:extend-thom-form} to conclude.  
	Explicitly, for $\Gamma$ with $\# E(\Gamma) = 0$, we are given a choice of Thom form already. Suppose, thus, we have constructed the system of Thom forms for all $\cKc_{\Gamma'}$ with $\mfa_{\Gamma'} < \mfa$ and a Thom form for each $\cKc_{\Gamma'}$ with $\#E(\Gamma') < \# E(\Gamma)$. By construction $\cEc_{\Gamma}$ decomposes as a direct sum 
	\[
	\cEc_\Gamma = \bigoplus_{\Gamma \to \Gamma' \to \mfa} \cE_{\Gamma'},
	\]
	where we abuse notation and write $\cE_{\Gamma'}$ for the summands appearing in equation \eqref{eq:fibre-obstruction-cubical-cobordism-enhanced}. For each $\cE_{\Gamma'}$, we obtain a Thom form over certain faces of $I^{E(\Gamma)}$ by stabilising the already obtained Thom forms for the adjacent cubes, or in the case of faces with parameters $t=1$, by stabilising the Thom forms appearing in \eqref{eq:thom-separating-edge}-\eqref{eq:thom-boundary-edge-type-E}. By construction, these agree on lower dimensional faces, so we can use Appendix \ref{sec:extend-Thom-forms} to obtain a Thom form for $\cE_{\Gamma'}$ over the whole $\cKc_\Gamma$. Doing this for all $\Gamma'$ and taking the wedge product yields the required Thom form.
\end{proof}
\subsection{Counting graphs correctly}\label{subsec:graphs} 
To each cubical cobordism $\cKc_\Gamma$, where we now omit the tilde from the notation, we can associate rel--$C^\infty$ maps 
$$\eva^\pm_\Gamma\cl \cTc_\Gamma\to X^{k^\pm}\times L^{\ell^\pm}$$
with $\eva^+_\Gamma$ being a submersion. This allows us to define the so-called $\fq$-operations 
\begin{equation*}
	\fq_\Gamma\cl \Omega^*(X^{k^-}\times L^{\ell^-})\,\to\, \Omega^*(X^{k^+}\times L^{\ell^+}).
\end{equation*}
using the pullback and pushforward along the evaluation maps. In \cite{HH1}, we will construct the morphisms of the DMFT of Theorem~\ref{thm:OCDMFT associated to L} by summing over the operations associated to suitable stable map graphs $\Gamma$. To this end we consider three categories of graphs. 

\begin{definition}\label{de:prestable-graph} An \emph{(open-closed) prestable graph} $\Gamma$ consists of a finite set of vertices $V(\Gamma)$ and a finite set of half-edges $H(\Gamma)$ together with the data of 
	\begin{itemize}[leftmargin=20pt]
		\setlength\itemsep{1.2pt}
		\item an involution $\iota\cl H(\Gamma)\to H(\Gamma)$
		\item a source map $j \cl H(\Gamma)\to V(\Gamma)$
		\item a genus function $g \cl V(\Gamma)\to \bZ_{\ge 0}$,
		\item a finite (ordered) set of boundary circles $S_v$ for each $v \in V(\Gamma)$ allowing us to decompose $V(\Gamma)$ into closed vertices $V_c(\Gamma) = \{v\mid S_v = \emst\}$ and open ones $V_o(\Gamma) = \{v \mid S_v \neq \emst\}$,
		\item a decomposition $H(\Gamma) = H_i(\Gamma) \sqcup H_b(\Gamma)$ into interior and boundary half-edges together with a function 
		\begin{equation}\label{}
			\wt j \cl H_b(\Gamma)\to \union{v}{S_v}
		\end{equation}
		so that $\wt j(h)\in S_{j(h)}$ for each $h \in H_b(\Gamma)$; we set 
		\[S_v\inn \coloneqq \{s\in S_v \mid \wt j\inv(\{s\}) = \emst\}\]
		for $v \in V_o(\Gamma)$;
		\item an ordering of the boundary half-edges $\wt j\inv(s)$ for each $s \in \union{v}{S_v}$,
		\item an orientation (or sign) $\epsilon(e,v)\in \{\pm\}$ for each $h \in H(\Gamma)$ that is either fixed by $\iota$ or with $j(v)$ stable and $h$ being part of a path to another stable vertex.
	\end{itemize}
	We call $\Gamma$ \emph{stable} if each vertex $v \in V(\Gamma)$ satisfies the stability condition 
	\begin{equation}\label{eq:stability-conditon} 2g(v) + |S_v| +2 H_{v,i} + H_{v,b} > 2\end{equation}
	or if $(g(v),h(v)) = 0$ and $v$ has one interior half-edge that is not fixed by $\iota$. We call such unstable vertices \emph{collapsed boundaries}.\footnote{Such unstable vertices do model collapsed boundaries.}
\end{definition}

\begin{definition}\label{de:morphism-prestable-graphs} A \emph{morphism of prestable graphs} $f \cl \Gamma\to \Gamma'$ is the data of functions 
	\begin{enumerate}
		\item $f_V \cl V(\Gamma)\to V(\Gamma')$,
		\item $f_S\cl \union{v}{S_v}\to \union{v'}{S'_{v'}}$,
		\item $f_H \cl H(\Gamma')\to H(\Gamma)$ respecting the decomposition into interior and boundary half-edges,
	\end{enumerate} 
	and satisfying
	\begin{itemize}[leftmargin=15pt]
		\item $f_V$ is surjective and $f_H$ is injective,
		\item $f_H$ intertwines the involutions $\iota$ and $\iota'$,
		\item we have 
		\[j' = f_V \g j \g f_H \qquad \qquad \wt j' = f_S \g \wt j \g f_H\]
		on $H(\Gamma')$,
		\item the genus map satisfies $$g'(v') = b_1(\Gamma_{v'}) + \s{v \in f\inv(v')}{g(v)}$$ for any $v'\in V(\Gamma')$, where $\Gamma_{v'}$ is the graph that is contracted onto $v'$ by $f$ and $b_1$ is the first Betti number;
		\item for any $v'\in V(\Gamma')$ the map $\union{v\in f\inv(v')}{S_v}\to S'_{v'}$ is surjective and $f(S\inn_{v,o})\sub S\inn_{f(v),o}$,
		\item $f_H$ preserves the ordering of the boundary half-edges as described by Figure~\ref{fig:clutching-1} and Figure~\ref{fig:clutching-2} up to cyclic ordering if all boundary half-edges $h$ satisfy $\sigma(h) \neq h$.
	\end{itemize} 
	If $f_H$ preserves the orientation sign of those $h \in H_b(\Gamma')$, which have one, we call it a \emph{orientation-preserving}. It is an \emph{elementary contraction} if $H(\Gamma)\sm\im(f_H)$ consists of a single $\iota$-orbit. We write $\scP\scG(\Gamma,\Gamma')$ for the space of morphisms from $\Gamma$ to $\Gamma'$ and $\scP\scG^+(\Gamma,\Gamma')$ for the subset of orientation-preserving morphisms.
\end{definition}

\begin{remark}\label{} In particular, we have two groups of automorphisms of $\Gamma$: $\Aut(\Gamma)$ and $\Aut^+(\Gamma)$.
\end{remark}

Up to the last point, this is simply an abstract formulation of the clutching maps between different moduli spaces. It may be helpful to compare this definition to Lemma~\ref{prop:boundary-base-space-hg}. We define the category $\scP\scG$ of prestable graphs to with objects prestable graphs, \emph{whose only unstable vertices are of type $(g,h) = (0,1)$}, and morphisms of prestable graphs. It contains the full subcategory $\scS\scG$ of stable graphs. 

\begin{definition}\label{de:stable-map-graph} A \emph{stable map graph} $\Gamma = (\Gamma^{ps},\beta)$ \emph{in $(X,L)$} consists of a prestable graph $\Gamma^{ps}$ together with functions $V_c(\Gamma^{ps})\to H_2(X;\bZ)$ and $ V_o(\Gamma^{ps})\to H_2(X,L;\bZ)$ so that for any unstable vertex $v$ the associated homology class $\beta_v$ satisfies $\omega(\beta_v) > 0$. We define the \emph{total degree} of $\Gamma$ to be 
	$$\deg(\Gamma) \coloneqq \s{v\in V(\Gamma)}{\beta_v},$$ 
	\noindent where we implicitly apply the map $H_2(X;\bZ)\to H_2(X,L;\bZ)$ if $\Gamma$ has both open and closed vertices.
\end{definition}

A morphism of stable map graphs $f\cl (\Gamma,\beta)\to (\Gamma',\beta')$ is a morphism of underlying prestable graphs so that $\beta'_{v'} = \s{f(v) = v'}{\beta_v}$, where we apply the map $H_2(X;\bZ)\to H_2(X,L;\bZ)$ if necessary. This yields a third category $\scS\scG_m$ of graphs. 

\begin{lemma} There exist functors 
	\begin{equation*}\label{}
		\scS\scG_m \,\xra{(-)^{ps}}\, \scP\scG \,\xra{(-)^{\stb}} \scS\scG
	\end{equation*}
	where the first functor is faithful, while the second one is neither full nor faithful in general.\qed
\end{lemma}

From now on, we fix subsets $\text{SG}\sub \text{obj}(\scS\scG)$ and $\text{PG}\sub \text{obj}(\scP\scG)$, so that they contain exactly one representative for each orientation-preserving isomorphism class of graph in the respective category. We define 
\[\text{SG}_m \coloneqq \set{\Gamma\in \text{obj}(\scS\scG_m)\mid \Gamma^{ps} \in \text{PG}}.\]
All graphs that appear in the remainder of this and in the next section will implicitly be taken to be elements of these sets. Given a stable map graph $\Gamma = (\Gamma^{ps},\beta)$, define 
\[\scP\scS^+(\Gamma)\coloneqq \set{(\Gamma^{ps},\beta')\in \text{SG}_m\mid (\Gamma^{ps},\beta')\cong_+ \Gamma }\]
where $\cong_+$ means that they are related by an orientation-preserving isomorphism.
An easy verification shows that 
\begin{equation}\label{eq:prestable-graphs-double-counting}
	|\scP\scS^+(\Gamma)| = \frac{|\Aut^+(\Gamma^{ps})|}{|\Aut^+(\Gamma)|}.
\end{equation}

\begin{example} Consider the (closed) stable graph $\Gamma^{ps}$ with two vertices $v_1,v_2$ of genus $2$, connected by a single edge whose two associated half-edges carry both the sign $+$. If we take $\Gamma = (\Gamma^{ps},\beta)$ where $\beta_{v_1}\neq \beta_{v_2}$, then, $\scP\scS^+(\Gamma) = \{\beta,\beta'\}$, where $\beta'_{v_1} = \beta_{v_2}$ and $\beta'_{v_2} = \beta_{v_1}$. If one half-edge carries the sign $+$ and the other the sign $-$, then $\scP\scS(\Gamma) = \{\beta,\beta'\}$ as before but $\scP\scS^+(\Gamma) = \{\beta\}$.
\end{example}

\begin{remark}\label{rem:permuting-disc-bubbles} For any prestable graph $\Gamma$ with stabilisation $\ov\Gamma$ the inclusion $\Aut^+(\Gamma/\ov\Gamma)\hkra \Aut(\Gamma/\ov\Gamma)$ is an isomorphism. Indeed, any $\phi\in \Aut(\Gamma/\ov\Gamma)$ is given by permuting unstable chains of discs ending in unstable leaves and which are attached to the same stable vertex. By our definition of half-edge orientations, any such automorphism is automatically orientation-preserving.
\end{remark}

Define for a prestable graph $\Gamma$ and a total degree $\beta$ the derived orbifold
\begin{equation}\label{} \cK(\Gamma^{ps},\beta) \coloneqq \scalebox{1.5}{[}\djun{\substack{ps(\Gamma) = \Gamma^{ps}\\\deg(\Gamma) = \beta}}\cKc_\Gamma/\Aut^+(\Gamma^{ps}/\ov\Gamma)\,\scalebox{1.5}{]}
\end{equation}
and for a stable graph $\ov\Gamma$ 
\begin{equation}\label{eq:gkc-of-stable-graph}
	\cK(\ov\Gamma,\beta) \,\coloneqq \djun{\stb(\Gamma^{ps}) = \ov\Gamma}\cK(\Gamma^{ps},\beta)^{\sqcup A^{ps}_\Gamma},
\end{equation}
where 
\begin{equation}\label{eq:non-liftable-automorphisms}
	A_{\Gamma^{ps}}\coloneqq \Aut^+(\ov\Gamma)/\im(\Aut^+(\Gamma^{ps})\to \Aut^+(\ov\Gamma)) 
\end{equation} 
is the set of automorphisms of $\ov\Gamma$ that cannot be lifted to the prestable graph $\Gamma^{ps}$. 

\begin{remark}\label{rem:additional-exponent} 
	We take this additional disjoint union because we are undercounting prestable graphs by taking only one representative per isomorphism class. However, a prestable graph $\Gamma^{ps}$ with stabilisation $\ov\Gamma$ can admit multiple contractions $\Gamma^{ps}\to \ov\Gamma$. Ignoring degrees, the associated clutching maps between moduli spaces (of stable maps) encode different boundary strata if and only if the underlying contractions are \emph{not} related by an automorphism of $\Gamma$. In summary, since 
	$$A_{\Gamma^{ps}} \cong \scP\scG^+(\Gamma^{ps},\ov\Gamma)/\Aut^+(\Gamma^{ps})$$
	we have $|A_{\Gamma^{ps}}|$-many such boundary strata of $\cK_{\ov\Gamma,\beta}$.
\end{remark}

\begin{nota}
	Given an edge $e\in E(\ov\Gamma)$, we let $\cK(\ov\Gamma,\beta)_{|\stb\inv(e)|=1} \sub 	\cK(\ov\Gamma,\beta)$ be the disjoint union indexed by the prestable graphs $\Gamma^{ps}$ where the stabilisation map $\Gamma^{ps}\to \ov\Gamma$ maps a single edge to the edge $e$. By abuse of notation, we denote said edge by $e$ as well.
\end{nota}

\begin{lemma}\label{lem:gkc-fibre-product}
	Suppose $\ov{f}\cl \ov{\Gamma}\to \ov{\Gamma}'$ is an elementary contraction. Then, for any prestable graph ${\Gamma'}^{ps}$, any total degree $\beta$ and the unique edge $e$ contracted by $\ov{f}$, the diagram 
	\begin{equation}\label{dig:contraction-square}
		\begin{tikzcd}
			\cK(\ov\Gamma,\beta)_{|\stb\inv(e)|=1}|_{t_e = 0} \arrow[r] \arrow[d] & \cK(\ov{\Gamma'},\beta)\arrow[d]\\
			I^{E(\cc \Gamma)}\times \Mbar_{\cc \Gamma} |_{t_e = 0} \arrow[r] &I^{E(\cc \Gamma')}\times\Mbar_{\cc \Gamma'} 
		\end{tikzcd}
	\end{equation}
	is a Cartesian square of derived orbifolds up to codimension $1$ in $\cK(\cc\Gamma,\beta)_{|\stb\inv(e)|=1}|_{t_e = 0}$.
\end{lemma}

\begin{proof} It suffices to prove the claim separately for each $\cK({\Gamma'}^{ps},\beta)^{\sqcup A_{\Gamma^{ps}}}$, where ${\Gamma'}^{ps}$ is a prestable graph with stabilisation $\ov\Gamma'$. Suppose first that ${\Gamma'}^{ps}$ is stable itself. In this case, the automorphism groups in \eqref{eq:gkc-of-stable-graph} and $A_{\Gamma^{ps}}$ are trivial. It suffices to show the claim for a single global Kuranishi chart $\cKc_{\Gamma'}\sub \cK(\ov\Gamma',\beta)$. Moreover, because the retraction $\ov{f}\cl \ov\Gamma\to \ov\Gamma'$ is fixed, any $\cKc_{\Gamma}\sub \cKc(\ov\Gamma,\beta)$ is mapped to a unique such $\cKc_{\Gamma'}$. Thus, we can reduce the claim further to showing that
	\begin{equation}\label{dig:reduced-twice}\begin{tikzcd}
			\djun{\substack{\Gamma\\ \ov{f}(\Gamma) = \Gamma'}}\cKc_{\Gamma}|_{\{t_e = 0\}} \arrow[r,""] \arrow[d,""]&\cKc_{\Gamma'} \arrow[d,""]\\ 
			I^{E(\ov\Gamma)}\times\Mbar_{\ov\Gamma}|_{t_e = 0} \arrow[r,""] & I^{E(\ov\Gamma')}\times\Mbar_{\ov\Gamma'}\end{tikzcd} \end{equation}
	is Cartesian up to strata of codimension at least $1$. For this, let $\cU\sub \cKc_{\Gamma'}$ be the subchart whose thickening is given by curves whose stabilisation is of type `at most $\ov\Gamma$' and let $\cV\sub \djun{}\cKc_{\Gamma}|_{\{t_e = 0\}}$ be the preimage of $\cU$ under the canonical clutching map. Then, 
	\[\cU\,=\, \djun{[f]}\,\cU_{[f]},\] 
	where $f \cl \Gamma\to \Gamma'$ is a retraction covering $\ov{f}$ and $[f]= [\wh{f}]$ if there exists an isomorphism $\phi\cl \Gamma\to \wh\Gamma$ with $\wh{f}\g \phi = f$. Since the underlying prestable graph of $\Gamma$ and $\wh\Gamma$ is simply $\ov\Gamma$, we must have $\phi\in \Aut(\ov\Gamma)_{\ov{f}}$. Thus, either $\phi$ is the identity, or $\phi(\Gamma)\in \scP\scS(\Gamma)\sm\{\Gamma\}$ and the graph appears in the indexing set of the left top corner of~\eqref{dig:reduced-twice}. Thus, the collection of trees we consider classifies the top-dimensional (possibly non-connected) strata of $\cU$. The remainder of the proof (of the stable case) is similar to the proof of \cite[Lemma~7.3]{HS22}. By construction of the cubical cobordisms, it suffices to show that the square
	\begin{equation}\label{eq:square-with-thickenings}
		\begin{tikzcd}
			\djun{\Gamma}\cTc_{\Gamma,\cV}|_{\{t_e = 0\}} \arrow[r,""] \arrow[d,""]&\cTc_\cU \arrow[d,""]\\ 
			I^{E(\ov\Gamma)}\times\Mbar_{\ov\Gamma}|_{\{t_e = 0\}}\arrow[r,"\psi_e"] & I^{E(\ov\Gamma')}\times\Mbar_{\ov\Gamma'} \end{tikzcd}		
	\end{equation}
	of orbifolds is Cartesian up to strata of codimension $1$, where $\wh{\cTc}_{\cV}$ is the locus of curves $(\varphi,u,\eta)$ in the thickening of $\cV$ with no perturbation from $\cE_\Gamma$ and where $\lambda_\Gamma(\varphi_{\Gamma},u) = 0$. This orbifold can be identified with an orbifold with the same base space as $\cTc_{\Gamma'}$ and thus also $\cTc_\cU$. Note that the assumption on $\cU$ ensures that the restriction $\cTc_{\cU}\to I^{E(\ov\Gamma')}\times\Mbar_{\ov\Gamma'}$ is a submersion. Let $\cW$ be the fibre product of the square. There exists a canonical map $\wh{\cTc}_\cV\to \cW$. We can define the inverse morphism $\cW\to \cU$ in the category of orbifolds by 
	\[((C_{v},\fj_v,D_v)_{v\in V(\Gamma)},\phi,(\varphi,u,\eta))\,\to\,(\wh\varphi,\wh u,\wh \eta),\]
	where $\phi =(\phi_{v'})_{v'\in V(\Gamma')}$ is a collection of automorphisms of the clutched surfaces $C'_{v'}$ obtained from $(C_v)_v$, and  
	\[\wh\varphi_{v} = (\varphi_{f(v)}\g \phi_{f(v)})|_{C_v}\qquad \quad\wh{u}_v = (u_{f(v)}\g \phi_{f(v)})|_{C_v}\qquad \quad\wh\eta_v = (\eta_{f(v)}\g \phi_{f(v)})|_{C_v}\]
	for $v \in V(\Gamma)$. By~\eqref{eq:almost-agreeing-perturbation} and \eqref{eq:cubical-perturbation} and the reduction we made just below~\eqref{eq:square-with-thickenings}, this is a well-defined map. Note that the composition $\cU\to \cW\to \cU$ does not necessarily map $\cU\cap \cKc_\Gamma$ to $\cKc_\Gamma$, but possibly to $\cKc_{\wh\Gamma}$, where $\wh\Gamma$ and $\Gamma$ are related by an automorphism of $\ov\Gamma$ preserving the contraction $f \cl \ov\Gamma\to \ov\Gamma'$. In particular, the image of $(\varphi,u,\eta)$ under $\cU \to \cW\to \cU$ is related to $(\varphi,u,\eta)$ exactly by an automorphism of the stabilisation of its domain.\par 
	Suppose now ${\Gamma'}^{ps}$ is an unstable graph with stabilisation $\ov\Gamma'$. Let $E' \sub E({\Gamma'}^{ps})$ be the subset of edges that are either combined or vanish under the stabilisation procedure. By the observation about the top-dimensional strata of $\cU$ in the previous step, it suffices to show the claim restricted to $\cK({\Gamma'}^{ps},\beta)|_{{t_E' = 0}}$ with a similar restriction on the left hand side. Then, we can  expand~\eqref{dig:contraction-square} to 
	\begin{equation}\label{dig:contraction-square-expanded}
		\begin{tikzcd}
			&\djun{\substack{\stb(\Gamma^{ps}) = \ov\Gamma\\ \stb\inv(e) = e\\\Gamma^{ps}_e \cong_+ {\Gamma'}^{ps}}} \cK(\Gamma^{ps},\beta)^{\sqcup A_{\Gamma^{ps}}}|_{\{t_{e\sqcup E'} = 0\}} \arrow[r] \arrow[d] & \cK({\Gamma'}^{ps},\beta)^{\sqcup A_{{\Gamma'}^{ps}}}|_{\{t_{E'} = 0\}} \arrow[d]\\
			&\cK(\cc \Gamma,\beta)|_{\{t_e = 0\}} \arrow[r] \arrow[d] & \cK(\cc \Gamma',\beta)\arrow[d]\\
			&I^{E(\cc \Gamma)}|_{\{t_e = 0\}}\times \Mbar_{\cc \Gamma}  \arrow[r] & I^{E(\cc \Gamma')} \times \Mbar_{\cc \Gamma'}  
		\end{tikzcd}
	\end{equation}
	We observe first that $\Aut({\Gamma'}^{ps}/\ov\Gamma')$ can at most permute chains of unstable vertices terminating in leaves of ${\Gamma'}^{ps}$ (and attached to the same stable vertex); any internal chain of unstable vertices is connected to two stable vertices that cannot be permuted by any such automorphism. In particular, these are exactly the automorphisms that fix the contraction map ${\Gamma'}^{ps}\to \Gamma'_{E'}$ (the choice of which is irrelevant). The same is true for the top vertical map on the left. Thus, we have reduced the claim to determining the intersection of boundary strata in $\cK(\Gamma',\beta)$. If two boundary strata do not agree, their intersection is a stratum of codimension $2$. In the case where they agree, we may conclude by Remark~\ref{rem:additional-exponent}. 
	If ${\Gamma'}^{ps}$ has internal unstable chains, we can reduce as above to the case where it has no external unstable chains.
\end{proof}

\begin{lemma}\label{lem:forgetful-map-fibre-product}
	Suppose $\ov\Gamma$ is a stable graph obtained from $\ov\Gamma'$ by forgetting an incoming marked point. Then, the square 
	\begin{equation}\label{dig:forgetful-map-square}
		\begin{tikzcd}
			\cK(\ov\Gamma',\beta) \arrow[r] \arrow[d] & \cK(\ov{\Gamma},\beta)\arrow[d]\\
			I^{E(\cc \Gamma')}\times \Mbar_{\cc \Gamma'} \arrow[r] &I^{E(\cc \Gamma)}\times\Mbar_{\cc \Gamma} 
		\end{tikzcd}
	\end{equation}
	is a Cartesian square of derived orbifolds up to codimension $1$ strata for any total degree $\beta$.
\end{lemma}

\begin{proof}
	Since we stabilise incoming marked points if and only if the associated vertex has positive degree or negative Euler characteristic, there is no difference between which marked points of $\cK(\Gamma',\beta)$ and of $\cK(\Gamma,\beta)$ are stabilised. Thus, the argument is a straightforward adaption of the proof of Lemma~\ref{lem:gkc-fibre-product}. 
\end{proof}

\appendix
\section{Moduli spaces of regular open stable maps}\label{sec:gluing}
\addtocontents{toc}{\protect\setcounter{tocdepth}{1}}

\subsection{Representability} Suppose $(X,\omega)$ is a symplectic manifold and $L \sub X$ be a Lagrangian submanifold. Fix an $\omega$-tame almost structure $J$ and $\beta\in H_2(X,L;\bZ)$. Note that we do not assume $X$ or $L$ to be compact. Fix a family $\pi\cl \cC\to \cV$ of nodal curves with $h$ (ordered) boundary components and a linear map 
$$P \cl E\to C^\infty_c(\cC\inn\times X,\Omega^{0,1}_{\cC\inn/\cV}\boxtimes_\bC TX)$$
defined on a finite-dimensional vector space, where $\cC\inn \sub \cC$ is the complement of the critical points of $\pi$ and the vertical boundary $\del^v\cC$. Given these data, we define the functor 
$$\fM := \fM^J_P(\pi,\beta)\cl (C^\infty/\cdot)^{\text{op}}\to \Set$$
by letting $\fM(Y/S)$ be the set of diagrams
\begin{equation}\label{eq:functor-de}\begin{tikzcd}
		\cC\arrow[d,"\pi"] & \cC_S\arrow[d]\arrow[l] &\cC_Y\arrow[d]\arrow[l]\arrow[r,"F"] & X\\
		\cV& S\arrow[l,"\varphi"]&Y\arrow[l,]\arrow[r,"w"] & E\end{tikzcd} \end{equation}
where 
\begin{enumerate}[\normalfont i)]
	\item $\varphi$ is continuous,
	\item both squares are cartesian,
	\item $F\cl \cC_Y/\cC_S \to X/*$ is of class rel--$C^\infty$ with $F(\del \cC_Y) \sub L$,
	\item for each $y \in Y$, the restriction $u_y := F|_{C_y}$ is a smooth stable map with boundary on $L$ representing $\beta$, where $C_y$ is the fibre of $\cC_Y\to Y$ over $y$. It satisfies 
	\begin{equation}\label{eq:perturbed-pde-representability}\delbar_J u_y  + P(w(y)) = 0 \end{equation}
	and the (perturbed) linearised Cauchy-Riemann operator at $u_y$ is surjective.
\end{enumerate}

\begin{remark}\label{} Here $C^\infty/\cdot$ is the category of rel--$C^\infty$ manifolds, defined originally in \cite{Swa21}. We briefly recall the relevant notions in \textsection\ref{subsec:rel-smooth-manifolds} and apologise for the non-chronological ordering of the appendices.\end{remark}

\noindent
In order to show this representability result, we adapt the strategy of \cite{Swa21} to our setting. A key ingredient is the following representability criterion.

\begin{proposition}\cite[Proposition 2.16]{Swa21}\label{prop:rep-criterion}  Suppose $X/S$ is an $S$-space and $\fF\cl (C^\infty/\cdot)^{\text{op}}\to \Set$ a functor with the following properties.
	\begin{enumerate}[1),leftmargin=*]
		\item\label{rep:base} $S$ represents the functor $\fF_{\text{base}}(T) = \fF(\emst/T)$.
		\item\label{rep:top} There exists an open cover $\{V_i\}\iI$ of $S$ so that $p\inv(V_i)$ represents the functor $$(\fF|_{V_i})^{\text{top}}: Y\mapsto \fF|_{V_i}(Y/Y).$$ 
		\item\label{rep:locally} $\fF$ is representable near each $x \in X$, that is, for each $x\in X$ there exists a neighbourhood $V \sub S$ of $p(x)$ and $U\sub p\inv(V)$ of $x$ such that $\fF^U|_V$ is representable.
	\end{enumerate}
	In this case, $X/S$ admits a rel--$C^\infty$ structure with which it represents $\fF$.
\end{proposition}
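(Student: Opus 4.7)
The plan is to build the rel--$C^\infty$ structure on $X/S$ by gluing local models produced by hypothesis \eqref{rep:locally} and then verify that the resulting object represents $\fF$. The key observation is that once one knows $\fF$ is locally representable, Yoneda's lemma forces all local representing objects to be canonically and uniquely identified on overlaps, so the gluing is purely formal provided we are careful about what category we are gluing in.

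First, I would fix notation. By \eqref{rep:base}, the object $S$ comes equipped with a universal element $\xi_S \in \fF(\emst/S)$. By \eqref{rep:top}, for each $V_i$ there is a tautological element $\xi_i \in \fF|_{V_i}(p^{-1}(V_i)/p^{-1}(V_i))$ exhibiting $p^{-1}(V_i)$ as a representing $V_i$-space. Restricting $\xi_i$ along any open subset $U \subseteq p^{-1}(V_i)$ lying over $W \subseteq V_i$ yields a universal element for $\fF^U|_W$ on $U/W$ by the Yoneda lemma, since any map $Y/W \to U/W$ of rel--$C^\infty$ objects is determined by and pulls back $\xi_i|_U$ correctly.

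Next, I would use \eqref{rep:locally} to produce, for every $x \in X$, an open neighbourhood $U_x \subseteq p^{-1}(V_{i(x)})$ and a rel--$C^\infty$ structure on $U_x/V_{i(x)}$ representing $\fF^{U_x}|_{V_{i(x)}}$. On a double overlap $U_x \cap U_{x'}$ (refining $V_{i(x)} \cap V_{i(x')}$ if necessary), both $U_x$ and $U_{x'}$ represent the same functor, so Yoneda provides a unique identification that is automatically rel--$C^\infty$ in both directions. These identifications satisfy the cocycle condition on triple overlaps, again by the uniqueness clause of the universal property. Standard descent for rel--$C^\infty$ structures on topological spaces over a fixed base (which exists because the presheaf of rel--$C^\infty$ functions is a sheaf in the target variable) then produces a global rel--$C^\infty$ structure on $X/S$.

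Finally, to verify that $(X/S, \xi)$ with the glued universal element represents $\fF$, I would argue as follows. Given $(Y \to T, \eta)$ with $\eta \in \fF(Y/T)$, the underlying continuous classifying map $Y \to X$ exists because $\fF^{\text{top}}$ is represented by $X$ on each $p^{-1}(V_i)$ by \eqref{rep:top}; uniqueness of this continuous map follows from the fact that the $V_i$ cover $S$. To check that $Y \to X$ is rel--$C^\infty$ over $T \to S$, I restrict to the preimage of a chart $U_x$, where by \eqref{rep:locally} the representation is genuinely in $C^\infty/\cdot$, so the induced map to $U_x$ is rel--$C^\infty$; rel--$C^\infty$ being a local condition, these paste together.

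The main obstacle is the gluing step: one must verify that the transition maps between local rel--$C^\infty$ charts $U_x$ are themselves rel--$C^\infty$ in both directions, and that the cocycle condition holds. Both points are really applications of Yoneda, but they require one to know that a map of representable rel--$C^\infty$ objects which is a set-theoretic bijection on points and which intertwines the universal elements is an \emph{isomorphism} in $C^\infty/\cdot$; this is where the precise definition of rel--$C^\infty$ morphisms (and the fact that local representability in \eqref{rep:locally} is with respect to the same category) is crucial, and it is the only non-formal point in the argument.
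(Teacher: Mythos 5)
The paper does not supply a proof of this proposition; it is quoted directly from \cite[Proposition~2.16]{Swa21} and used as an external tool (the remark immediately after points to \cite[Lemma~2.15(b)(1)]{Swa21} for a corollary), so there is no in-paper argument to compare yours against.

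On its own merits, your sketch captures the correct overall shape: produce local rel--$C^\infty$ models from hypothesis~\eqref{rep:locally}, identify them on overlaps by Yoneda uniqueness, descend, and then verify representability. Two points in the execution need repair, however. First, the gluing step tacitly assumes that the local representing object coming from~\eqref{rep:locally} has underlying topological space canonically identified with the fixed open set $U_x\subseteq X$ over $V_{i(x)}\subseteq S$. That identification is not part of~\eqref{rep:locally} on its own; it is precisely hypothesis~\eqref{rep:top}, via the topological representability of $X$ over each $V_i$, that pins down the underlying space of any local representing object and lets one transport the rel--$C^\infty$ structure onto the given subset of $X$ (rather than onto some abstract space merely in bijection with $U_x$). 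You invoke~\eqref{rep:top} only in the final verification, but it is already indispensable to make the gluing well posed. Second, you introduce the universal element $\xi_S$ from~\eqref{rep:base} and then never use it; in the verification, given $(Y/T,\eta)$ one must first extract the base morphism $T\to S$ by applying~\eqref{rep:base} to $\eta|_{\emptyset/T}$ before a classifying map $Y\to X$ over it can even be sought, and that step is missing. Both omissions are repairable and the Yoneda-driven descent you describe is the right mechanism, but as written the argument does not account for why the local models live on the prescribed opens of $X$ nor for where the base map comes from.
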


\begin{remark}
	Property \eqref{rep:top} implies that $X$ represents $\fF^{\text{top}}$ by (the non-relative version of) \cite[Lemma 2.15(b)(1)]{Swa21}
\end{remark}

\noindent
Our candidate to represent $\fM$ is given by 
$$\Mbar_{g,h}^J(\pi,\beta)^{\reg} := \set{(v,u,e)\mid v \in V,\; u \cl (C_v,\del C_v) \to  (X,L)\;\delbar_J u + P(e)|_{\cC_v} = 0,\, \eqref{eq:perturbed-pde-representability} \text{ regular}}$$
considered as a $\cV$-space via the obvious forgetful map $\Mbar_{g,h}^J(\pi,\beta)^{\reg}\to \cV$. We endow it with the Gromov--Hausdorff metric on the associated graphs in $\cC\times X$.

\begin{theorem}\label{thm:representable} $\fM$ is representable by $\Mbar_{g,h}^J(\pi,\beta)^{\reg}/\cV$.
\end{theorem}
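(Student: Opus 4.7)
The plan is to invoke Proposition~\ref{prop:rep-criterion} with $X = \Mbar_{g,h}^J(\pi,\beta)^{\reg}$ and $S = \cV$, equipped with the obvious forgetful map $p\cl X \to S$. This follows the template of \cite[Theorem~2.19]{HS22}, with the extra work concentrated at boundary nodes of types (H1), (H2), (H3) and at interior nodes of type (E).

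For condition \eqref{rep:base}, observe that $\fF_{\text{base}}(T) = \fM(\emst/T)$ amounts to specifying a continuous map $T \to \cV$ together with the tautological pullback family $\cC_T \to T$, no map $F$ being required. Hence $\cV$ represents $\fF_{\text{base}}$. For condition \eqref{rep:top}, I would take the trivial cover $\{\cV\}$ and show that a continuous map $Y \to X$ over $\cV$ is the same as a diagram \eqref{eq:functor-de} with $S=Y$. Given $Y \to X$, evaluating pointwise produces a family of regular $J$-holomorphic maps parametrised by $Y$; continuity in the Gromov--Hausdorff topology on $X$ is equivalent to continuity of the resulting family $F$ over $Y$, and when $S = Y$ the rel-$C^\infty$ requirement on $F$ reduces to continuity by definition of the rel-$C^\infty$ category.

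The heart of the argument is condition \eqref{rep:locally}. Around a point $(v_0, u_0) \in X$, I would construct an explicit local rel-$C^\infty$ chart in two steps. First, if the domain $C_{v_0}$ is smooth, a rel-$C^\infty$ chart is produced directly by applying an implicit-function-theorem argument to the Cauchy-Riemann equation, using that the linearisation of $\delbar_J$ at $u_0$ (with the Lagrangian boundary condition $F(\del\cC_Y) \subset L$) is surjective by regularity, and varying $v$ in a neighbourhood of $v_0$ inside $\cV$. The continuous family of right inverses furnished by Proposition~\ref{prop:sc-implicit-function} yields the desired rel-$C^\infty$ local model over $\cV$. Second, if $C_{v_0}$ is nodal, I would reduce to the closed case by doubling. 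Concretely, by \cite[\textsection 3]{KL06} and Lemma~\ref{lem:doubling-preserves-regularity} the complex double $u_{0,\bC}$ of $u_0$ is a regular stable map on the closed curve $C_{v_0,\bC}$, and a family version of the doubling construction identifies $\cC$ locally near $v_0$ with the fixed locus of an anti-holomorphic involution on a family of closed nodal curves over the complex double $\cV_\bC$ of $\cV$. Applying the closed gluing result of \cite{Swa21} (used in the form of \cite[Proposition~3.4]{HS22}) to $u_{0,\bC}$ produces a rel-$C^\infty$ chart over $\cV_\bC$, and the induced involution on this chart cuts out the desired rel-$C^\infty$ chart representing $\fM$ on a neighbourhood of $(v_0, u_0)$, with the four node types (H1), (H2), (H3), (E) corresponding to the four possibilities for a node of the double lying on, or being exchanged by, the involution.

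The main obstacle will be the equivariant gluing and parameter analysis at the nodes: I need the gluing maps from \cite{Swa21} to be equivariant for the anti-holomorphic involutions on both the domain family and the complex doubled target $(X_\bC, L)$, and I need the resulting rel-$C^\infty$ chart to have the correct boundary-with-corners structure reflecting that boundary nodes contribute one real gluing parameter $\bR_{\geq 0}$ while interior nodes contribute a complex one $\bD \subset \bC$. This requires choosing gluing data (cut-off functions, pre-gluing templates, right inverses) that are invariant under the involution — a standard but notationally heavy adaptation — and checking rel-$C^\infty$ dependence on $v \in \cV$ as in \cite[Proposition~3.4]{HS22}. Once local representability is established, Proposition~\ref{prop:rep-criterion} endows $X$ with a rel-$C^\infty$ structure making it represent $\fM$, and the resulting structure is unique by the universal property.
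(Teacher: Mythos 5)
Your overall template — invoke Proposition~\ref{prop:rep-criterion}, dispatch conditions~\eqref{rep:base} and~\eqref{rep:top} quickly, and concentrate the work on local representability~\eqref{rep:locally} — matches the paper, which handles the first two conditions by the same argument as \cite[Lemma 3.15, Lemma 3.16]{Swa21} and then spends \textsection\ref{subsec:deformation-of-curve}--\textsection\ref{subsec:gluing-map} on the third. However, your strategy for the local step diverges from the paper's and has a genuine gap.

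You propose to reduce the nodal case to the closed case by forming the complex double $u_{0,\bC}$ of the stable map $u_0$ and applying the closed gluing result of \cite{Swa21} equivariantly. The problem is that the complex double of a \emph{map} $u_0\cl (C_0,\del C_0)\to (X,L)$ does not exist for a general symplectic pair $(X,L)$: to define $u_{0,\bC}$ on $C_{0,\bC} = C_0\cup_{\del C_0}\cc{C_0}$ one needs an anti-$J$-holomorphic involution $\phi$ on $X$ with $\text{Fix}(\phi)=L$, and to set $u_{0,\bC}|_{\cc{C_0}} = \phi\circ u_0\circ\sigma$. Lemma~\ref{lem:doubling-preserves-regularity} is stated precisely under that hypothesis, which holds for the auxiliary pair $(\bC P^N,\bR P^N)$ used to build base spaces but not for the actual target $(X,L)$. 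What \emph{does} always double is the linear data — the Riemann--Hilbert pair $(u_0^*TX, (u_0|_{\del C_0})^*TL)$ over $(C_0,\del C_0)$ extends to a holomorphic bundle over $C_{0,\bC}$ by \cite[Theorem~3.3.13]{KL06} — and the paper uses exactly this in proving surjectivity results such as Lemma~\ref{prop:transversality-achieved}. But for the \emph{nonlinear} gluing, the implicit function theorem must be run in a Banach manifold of maps from bordered domains with Lagrangian boundary condition; there is no closed problem to descend from.

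The paper therefore carries out the gluing analysis directly in the bordered setting: it introduces separate gluing parameters $\alpha\in\bB^{N_i}$ for interior nodes and real parameters $\beta\in[0,1)^{N_b}$ for boundary nodes, defines pre- and anti-gluing for vector fields and maps with the appropriate totally-real boundary conditions, sets up weighted Sobolev spaces with boundary conditions (\textsection\ref{subsec:gluing-estimates}), proves relative $sc$-smoothness of the Cauchy--Riemann map (Theorem~\ref{prop:cr-sc-smooth}) by adapting \cite{HWZ17} and \cite{Jem20}, and applies the relative $sc$-implicit function theorem (Proposition~\ref{prop:sc-implicit-function}) to build the gluing map. This is the step your reduction would need to replace, and it cannot be replaced by a purely equivariant argument on the closed moduli space. (As a smaller point: the four types E, H1, H2, H3 are not four distinct equivariant configurations of nodes on the double — an interior node contributes a conjugate pair of nodes on $C_\bC$, while all four boundary-node types contribute a single node fixed by the involution, distinguished only by the local topology of $\del C$.)
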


\noindent
The first two conditions of Proposition \ref{prop:rep-criterion} follow from the same argument as in \cite[Lemma 3.15, Lemma 3.16]{Swa21}. It remains to show local representability. The arguments will be very similar to \cite{Swa21}, the only difference is that we have to deal with boundary nodes. 

\begin{remark}\label{} It might seem strange that, given the existence of codimension $1$ strata, we can represent $\Mbar_{g,h}^J(\pi,\beta)^{\reg}$ as a rel--$C^\infty$ manifold over $\cV$. The reason for this is that the corners come from deformations of the domain and not of the map. Indeed, our thickening will be a rel--$C^\infty$ manifold over a base space that \emph{is} a manifold with corners.\end{remark}

\subsection{Local models and universal deformations}\label{subsec:deformation-of-curve}
In this section we discuss universal deformations of nodal curves with boundary. The classical model for an interior node of a nodal surface is given by 
$$\cN_i = \{z \in \bC^2 \mid z_1z_2 = 0\}$$
with deformation given by 
$$\wt\cN_i = \{(t,z)\in \bC\times\bC^2\mid z_1z_2 = t\}.$$
In the case of  boundary nodes, we have several cases to distinguish, depending on the type of boundary node. Recall that we have the following types:
\begin{itemize}[leftmargin=20pt]
	\item (E) a boundary circle collapses to a point - it is forgotten when passing to the normalisation
	\item (H1) two boundary components intersect - the nodes appears as two points on a boundary circle of the normalised surface
	\item (H2) a boundary circle self-intersects and the resulting node is non-separating - the node corresponds to two points on two \emph{distinct} boundary circles of \emph{one} component of the normalisation.
	\item (H3) a boundary circle self-intersects and the node is separating - it corresponds to two points on two \emph{distinct} boundary circles of two \emph{distinct} components of the normalisation.
\end{itemize}

\noindent
The different symbol for the first singularity follows from the fact that it is an elliptic singularity, while the other singularities are hyperbolic and are locally indistinguishable. We have the local models
$$\cN_E = \{z \in \bC^2 \mid z_1^2 + z_2^2  = 0,\, \Im(z_1) \geq 0,\,\Im(z_2)\geq 0\}.$$
with boundary given by $\cN_E \cap \bR^2$ and with deformation given by 
$$\wt\cN_E = \{(t,z) \in \bR_{\geq 0}\times\bC^2 \mid z_1^2 + z_2^2  = t,\, \Im(z_1) \geq 0,\,\Im(z_2)\geq 0\}$$
mapping to $\bR_{\geq 0}$ by projection onto the first factor. Note how the boundary over the fibre $t > 0$ is simply the circle of radius $\sqrt{t}$. If $t < 0$, a crosscap appears:
\begin{center}
	\includegraphics[scale=0.7]{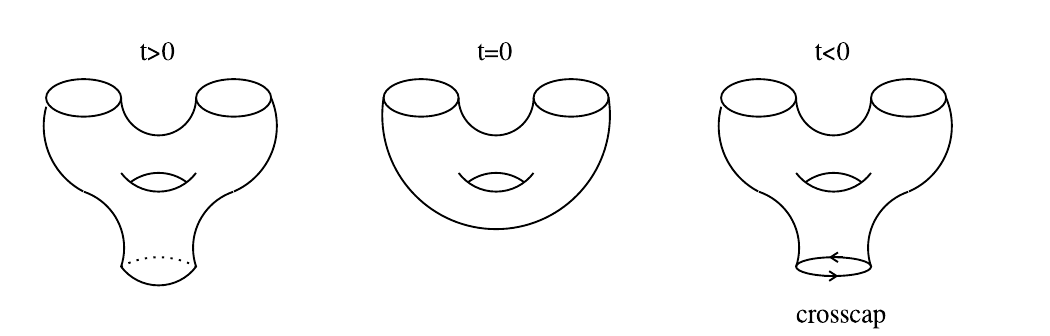}
\end{center}
Meanwhile, the local model for a boundary node of type $(H1)$, $(H2)$, or $(H3)$ is
$$\cN_{H} = \{z \in \bC^2 \mid z_1^2-z_2^2 = 0,\, \Im(z_1) \geq 0,\,\Im(z_2)\geq 0\},$$  
whose deformation is given by 
$$\wt\cN_{H} = \{(t,z) \in \bR_{\geq 0}\times\bC^2 \mid z_1^2 - z_2^2  = t,\, \Im(z_1) \geq 0,\,\Im(z_2)\geq 0\}.$$
In this case, passing to $t< 0$, a M\"obius strip appears:

\medskip

\begin{center}
	\includegraphics[scale=0.7]{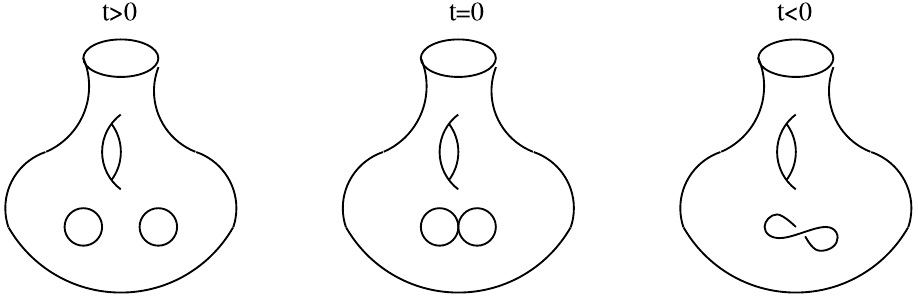}
\end{center}

\medskip

\noindent
Suppose $(C,\del C)$ is a stable nodal Riemann surface of genus $g$ with $h$ ordered boundary components. Denote by $N_i$ and $N_b = N_b^E\sqcup N_b^H$ the set of interior as well as (elliptic and hyperbolic) boundary nodes. For $x \in N_i$ fix two cylindrical ends 
$$\psi^\pm_x \cl [0,\infty)\times S^1 \to C$$
and for $y \in N_b^H$ fix two strip-like ends:
$$\vartheta^\pm_y \cl [0,\infty)\times[0,1]\to C$$
with $\vartheta_y^\pm([0,\infty)\times \{1\})\sub \del C$. Finally, for each elliptic node $y \in N^E_b$ we fix one cylindrical end 
$$\vartheta_y \cl [0,\infty)\times S^1 \to C.$$
We choose these ends so that they have disjoint images. We denote by the same symbol the obvious continuous extension to $[0,\infty]\times S^1$ respectively $[0,\infty]\times [0,1]$.
\subsection{Pre-gluing and anti-gluing}\label{subsec:gluing-set-up}
 Fix thus $(v_0,u_0,e_0) \in \Mbar_{g,h}^J(\pi,\beta)^{\reg}$ with $(C_0,\fj_0)$ the fibre over $v_0$ with its induced complex structure. Let $N_i$ be the set of interior nodes of $C_0$ and $N_i = N_i^E \sqcup N_i^H$ be the set of boundary nodes decomposed further into elliptic and hyperbolic nodes.\par 
 
 Fix cylindrical ends as in \textsection\ref{subsec:deformation-of-curve}. 
 Let $\fj_0 $ denote the complex structure on $C_0$ and let $V$ be a sufficiently large finite-dimensional vector space and $\{\fj_v = \fj(v)\}_{v\in V}$ be a smooth family of complex structures on $C_0$ so that $\fj(0) = j_0$ and $\fj_v$ agrees with $\fj_0$ in a neighbourhood of the images of the cylindrical and strip like ends as well as the boundary.
 
 Given $\alpha \in \bB^{N_i}$ with $\alpha_x = e^{-R_x-\text{i}\theta_x}$ and $\beta \in [0,1)^{N_b}$ with $\beta_i = e^{-T_i}$, we define the curve $C_{\alpha,\beta}$ to be the quotient of the disjoint union 
 $$ \wh C_0 \sqcup \djun{\substack{x \in N_i}{*\in \{\pm\}}}{\psi_x^*([0,R_x]\times S^1)}\sqcup \djun{\substack{y \in N_b^H\\ *\in \{\pm\}}}{\psi_y^*([0, 2 T_y]\times [0,1])}\sqcup \djun{y \in N_b^E}{\psi_y(A_{|\beta_y|})}$$
modulo the relations 
$$\psi_x^+(s,t) \sim \psi_x^-(s',t') \qquad \dimp \qquad \begin{cases}
	s + s' =  _x < \infty, \; t + t' = \theta_x\\
	s = s' = R_x = \infty \end{cases}$$
for $x \in N_i$, while for $y \in N_b^H$ we have 
$$ \psi_y^+(s,t) \sim \psi_y^-(s',t') \qquad \dimp \qquad t = t' \text{ and } \begin{cases} s + s' =  2 T_y < \infty,\\
	s = s' = T_y = \infty\end{cases}.$$
Note that we impose no relations on the cylindrical ends at elliptic boundary nodes.\par
As the cylindrical and strip like ends are holomorphic and by our assumption on the family $\{\fj_v\}_{v\in V}$, we see that each $\fj_v$ descends to $C_{\alpha\beta}$ for any $\alpha$ and $\beta$. We denote the induced complex structure by $\fj_v$ as well.\par 
 Now define $\scV := V\times \bB^{N_i}\times [0,1)^{N_b}$ and let $\uppi\cl \scC\to \scV$ be the unique family of curves with fibre over $(v,\alpha,\beta)$ being $(C_{\alpha\beta},\fj_v)$. It is a straightforward verification that $\scC\to \scV$ is a family of prestable curves with boundary.

 \begin{lemma}\label{lem:reduction-to-versal-deformation} We can choose $V$ above and a map 
 	\begin{equation*}P' \cl  E\to C^\infty_c(\scC\inn\times X,\Omega^{0,1}_{\scC\inn/\scV}\boxtimes_\bC TX) \end{equation*}
 	 so that $\fM^{J}_{P'}(\wt \pi)_\beta$ is representable near $(0,0,u_0,e_0)$ if and only if $\fM^{J}_P(\pi,\beta)$ is representable near the point $(v_0,u_0,\wt e_0)$. 
 \end{lemma}

\begin{proof} The proof of \cite[Lemma~4.1]{Swa21} carries over verbatim.
\end{proof}

\noindent
We assume from now on that $\cV= \scV$ and that $\cC = \scC$ is given by the family of curves constructed above. 
 Fix a Riemannian metric $g$ on $X$ so that $L$ is totally geodesic with respect to $g$ and $g$ is flat near $u_0(N_i \cup N_b)$. The existence of a metric with the first property is asserted by \cite[Lemma~4.3.4]{MS12}. Note that the standard metric in the chosen local coordinates of the lemma satisfies Equation (4.3.4) loc. cit., so we may choose this metric to be flat near the given points. Moreover, fix open subsets $\{U_p\}_{p\in u_0(N_i)}$ and $\{U'_q\}_{q\in u_0(N_b)}$ with pairwise disjoint closure so that 
 \begin{itemize}
 	\item $p\in U_p$ and $q\in U'_q$ and
 	\item $(U_p,g)\cong (B_1(\bR^{2m}),g_{\normalfont\text{std}})$ and $(U'_q,L\cap U'_q,g)\cong (B_1(\bR^{2m}),\bR^n\times\{0\}\cap B_1(\bR^{2m}),g_{\normalfont\text{std}})$,
 	\item the image of $u_0\g \psi_x^\pm$ is contained in $U_{u_0(x)}$ and similarly for $u_0\g \psi^\pm_y$ and $u_0\g \psi_y$,
 \end{itemize}
 for $p \in u_0(N_i)$ and $q\in u_0(N_b)$, respectively, where these isometries map the respective image of the nodal point to $0$. Write 
 \begin{equation*} \phi \cl \djun{p\in u(N_i\sqcup N_b)}U_p \to \bR^{2n}\end{equation*}
 for the induced map. Let $\conn$ be a $J$-linear connection on $TX$ that preserves $TL$. Given $x,x' \in X$ sufficiently close to each other, we denote by 
 \begin{equation*}\label{} \Phi_{x\to x'} \cl T_xX\to T_x X'\end{equation*} the $J$-linear parallel transport along the unique geodesic with respect to $g$ from $x$ to $x'$. By assumption on $g$ and $\conn$, $\Phi_{x\to x'}$ maps $T_xL$ to $T_{x'}L$ if $x,x'\in L$.\par
 
  Fix also two cut-off functions $\chi_1,\chi_2 \cl \bR\to [0,1]$ so that 
 \begin{equation*}\label{}\chi_1(s) = 
 	\begin{cases}
 		1 \quad & s \leq -1\\
 		0 \quad & s \geq 1
 \end{cases}\end{equation*}
we have $-1 < \chi_1' \leq 0$ and $\chi_1^2+ \chi_2^2 \equiv 1$.

\begin{definition}\label{de:glued-spaces}
	Given gluing parameters $\alpha$ and $\beta$, let 
	$$Z_{\alpha_x} := \frac{\bR^+\times S^1 \sqcup \bR^-\times S^1}{(s,t)\sim (2R_x-s,\theta_x-t)},$$\noindent
	with the relation holding for $s \in [0,2R_x]$ if $\alpha_x \neq 0$. Similarly, let 
	$$\Theta_{\beta_y} := \frac{\bR^+\times [0,1] \sqcup \bR^-\times [0,1]}{(s,t)\sim (2T_y-s,t)}$$\noindent
	with the relation applying to $s \in [0,2T_y]$ if $\beta_y \neq 0$ and $y$ is hyperbolic and 
	$$\Theta_{\beta_y} :=  [0,\infty)\times S^1$$\noindent
	when $y$ is elliptic. 
	We define $C_{\alpha\beta}$ as above, and set
	$$\wt Z_{\alpha\beta}:=  \p{\alpha_x \neq 0}{Z_{\alpha_x}}\times\p{\beta_y \neq 0}{\Theta_{\beta_y}},$$\noindent
	if $(\alpha,\beta)\neq 0$ and $ \wt Z_{00} = \emst$.
\end{definition}

\begin{remark}\label{}The coordinates $[s,t]$ on $\bR_+\times S^1$ extend to $Z_{\alpha_x}$ via $s' = s-2R_x$ and $t' = \theta_x-t$ on the other component and similarly for $\Theta_y$.
 \end{remark}

\begin{notation*} Given a continuous vector field $\xi \cl C\to u^*TX$, we denote for $w$ an interior or hyperbolic boundary node and $y$ an elliptic boundary node the vector fields
$$\xi^\pm_w := d\phi(u(\psi_w^\pm))\xi(\psi_w^\pm) \qquad \qquad \xi_y := d\phi(u(\psi_y))\xi(\psi_y).$$\noindent
We define the induced maps $u_w^\pm$ and $u_y$ similarly, while for $\eta \cl \scC\to \Lambda^{0,1}_{\scC,\fj_0}\otimes u^*TX$, we set
$$\eta_w^\pm(s,t) := d\phi(u(w))\Phi_{u^\pm_w\to u(w)}\eta_{\psi_x^+(s,t)}(\del_s)\qquad \qquad \eta_y(s,t) := d\phi(u(y))\Phi_{u^\pm_w\to u(w)}\eta_{\psi_y(s,t)}(\del_s).$$\noindent
\end{notation*}

\begin{definition}[Pre- and anti-gluing of vector fields]\label{}
	Given gluing parameters $\alpha$ and $\beta$ and $\xi \in C^0(C_0,u_0^*(TX,TL))$, define the \emph{pre-glued vector field} $\oplus_{\alpha\beta} \xi$ on $C_{\alpha\beta}$ as follows: it agrees with $\xi$ on away from the necks of $C_{\alpha\beta}$ that correspond to nodes of $C_0$, while over an interior neck associated to $x$ we have in the chosen trivialisation
	\begin{equation*}\label{}\oplus_{\alpha\beta} \xi(s,t) = 
		\chi_1(s-R_x)\xi^+_x(s,t)+\chi_2(s-R_x)\xi^-_x(2R_x-s,\theta_x-t), \end{equation*}
	for $s \in [0,2R_x]$, while over a boundary strip corresponding to the node $y$,
	\begin{equation*}\label{}\oplus_{\alpha\beta} \xi(s,t) = 
		\chi_1(s-T_y)\xi^+_y(s,t)+\chi_2(s-T_y)\xi^-_y(2T_y-s,t), \end{equation*}
	for $s \in [0,2T_y]$, while over an elliptic disc
	\begin{equation*}\label{}\oplus_{\alpha\beta} \xi(s,t) = \chi_1(s-T_y)\xi_y(s,t).\end{equation*}
	The \emph{anti-glued vector field} $\ominus_{\alpha\beta}\xi$ is the product of maps $Z_{\alpha_x}\to TX$ and $\Theta_{\beta_y}\to TX$ given by 
	\begin{equation*}\label{} [s,t]\mapsto 
		-\chi_2(R_x-s)(\xi^+_x(s,t)-[\xi]_{R_x}) + \chi_1(s-R_x)(\xi^-_x(2R_x-s,\theta_x-t)-[\xi]_{R_x}) 
	\end{equation*}
	where 
	$$[\xi]_{R_x} := \frac{1}{2}\lbr{\int_{S^1}\xi^+_x(R_x,\cdot) +\xi^-_x(R_x,\cdot)\,dt},$$\noindent
	respectively, 
	\begin{equation*}\label{} [s,t] \mapsto 
		-\chi_2(s-T_y)(\xi^+_y(s,t)-[\xi]_{T_y}^\bR) + \chi_1(s-T_y)(\xi^-_y(2T_y-s,t)-[\xi]_{T_y}^\bR) 
	\end{equation*}
	where 
	$$[\xi]_{T_y}^\bR := \frac{1}{2}\,\Re\lbr{\int_{0}^1\xi^+_y(T_y,\cdot) +\xi^-_y(T_y,\cdot)\,dt}.$$\noindent
	For an elliptic boundary node $y$ with $\beta_y \neq 0$, the map $\Theta_{\beta_y}\to TX$ is given by
	$$(s,t)\mapsto \xi_y(s,t)-\int_{S^1}\xi_y(T_y,\cdot)\,dt.$$
\end{definition}

\begin{definition}[Pre- and anti-gluing of maps]\label{}
	Given gluing parameters $\alpha$ and $\beta$ and a function $v\cl C\to X$, the \emph{pre-glued map} 
	$$\oplus_{\alpha\beta} v \cl C_{\alpha\beta}\to X$$\noindent 
	agrees with $v$ on away from the necks of $C_{\alpha\beta}$ that correspond to nodes of $C_0$, while over an interior neck corresponding to $x \in N_i$ we have
	\begin{equation*}\label{}\oplus_{\alpha\beta} v(z) = 
			\exp_{v(x)}((\chi_1(s-R_x)+\chi_2(s-2R_x))\exp_{v(x)}\inv(v(z))) \end{equation*}
		for $z = \psi^+_x(s,t)$ and $s \in [0,2R_x]$.
	The definition of $\oplus_{\alpha\beta} v$ over a hyperbolic ``boundary neck" is analogous. Over an elliptic ``boundary disc", we define 
	\begin{equation*}\label{}\oplus_{\alpha\beta} v(z) = \exp_{v(y)}(\chi_1(s-T_y)\exp_{v(y)}\inv(v(z)))\end{equation*}
	for $z = \psi_y(s,t)$ with $s \in [T_y-1,2T_y]$.\par
	The \emph{anti-glued map} is $\ominus_{\alpha\beta}v$ is the product of maps $Z_{\alpha_x}\to X$ and $\Theta_{\beta_y}\to X$ given by 
	\begin{equation}\label{} [s,t] \mapsto 
		\exp_{v(x)}(-\chi_2(s-R_x)\xi_x^+(s,t) + \chi_1(s-R_x)\xi_x^-(2R_x-s,\theta_x-t)) 
	\end{equation}
where $\xi = \exp_{v(x)}\inv(v(\cdot))$. The definition of $\ominus_{\alpha\beta}$ is analogous for a hyperbolic boundary node, while for an elliptic boundary node $y$ the map $\Theta_{\beta_y}\to X$ is simply $v_y$.
\end{definition}

\noindent
Note that $$\oplus_{\alpha\beta} u(\del C_{\alpha\beta})\sub L\qquad \text{and}\qquad\ominus_{\alpha\beta}u(\del Z_{\alpha\beta})\sub \p{\alpha_x,\beta_y\neq 0}{L}$$\noindent 
due to our choice of Riemannian metric and connection. The same is true for pre-glued vector fields.

\begin{remark}\label{}By construction, the exponential map intertwines the two constructions in the sense that 
$$\exp_{\oplus_{\alpha\beta} u_0}(\oplus_{\alpha\beta}\xi) = \oplus_{\alpha\beta} \exp_{u_0}(\xi)$$\noindent
and similarly for $\ominus_{\alpha\beta}$. 
\end{remark}

\begin{notation*}In the coming subsections, we abbreviate $u := u_0$ and $u_{\alpha\beta} := \oplus_{\alpha\beta}u$.\end{notation*}

\subsection{Fredholm set-up} 
We now define the Sobolev spaces we work with and show that the pre-gluing and anti-gluing map together define an isomorphism from sections of $u^*TX$ to those of $u_{\alpha\beta}^*TX$. See \cite{Jem20} for a similar exposition.\par
Given $k \gg 1$ (e.g. $k \geq 6$) and $\delta \in (0,\frac12)$, as well as $\alpha\in \bB^{N_i}$ and $\beta \in [0,1)^{N_b}$ close to $0$, we define the weighted Sobolev spaces 
$$W^{k,2,\delta}_{TL}(C_{\alpha\beta},u_{\alpha\beta}^*TX)\sub \{\xi \in C^0(C_{\alpha\beta},u_{\alpha\beta}^*TX)\mid \xi(\del C_{\alpha\beta})\sub u_{\alpha\beta}^*TL\}$$\noindent
by the usual Sobolev norm on $\wh C_0 \sub C_{\alpha\beta}$ and by 
\begin{equation}\label{eq:norm-interior-end} \norm{\xi}_{k,2,\delta}^2 = \norm{\xi(x)}^2+\int_{\bR_+\times S^1}\lbr{\norm{\xi-\xi(x)}^2 + \sum_{j =1}^{k}\norm{D^k\xi}^2}e^{2\delta s}dsdt  \end{equation}
over each cylindrical end asymptotic to an interior node or an elliptic boundary node $x$, and by 
\begin{equation}\label{eq:norm-boundary-end}  \norm{\xi}_{k,2,\delta}^2 = \norm{\xi(y)}^2+\int_{\bR_+\times [0,1]}\lbr{\norm{\xi-\xi(y)}^2 + \sum_{j =1}^{k}\norm{D^k\xi}^2}e^{2\delta s}dsdt \end{equation}
over a strip-like end asymptotic to a boundary node $y\in N_b$. Over an interior neck, we define 
\begin{equation}\label{eq:norm-interior-neck}  \norm{\xi}_{k,2,\delta}^2 = \norm{[\xi]_R}^2 + \int_{[0,2R_x]\times S^1}\lbr{\norm{\xi-[\xi]_{R_x}}^2 + \sum_{j =1}^{k}\norm{D^k\xi}^2}e^{2\delta \min\{s,2R_x-s\}}dsdt\end{equation}
while over a hyberbolic boundary neck we have 
\begin{equation}\label{eq:norm-boundary-neck}  \norm{\xi}_{k,2,\delta}^2 = \norm{[\xi]_{T_y}^\bR}^2 + \int_{[0,2T_y]\times [0,1]}\lbr{\norm{\xi-[\xi]_{T_y}^\bR}^2 + \sum_{j =1}^{k}\norm{D^k\xi}^2}e^{2\delta \min\{s,2T_y-s\}}dsdt\end{equation}
and over an elliptic boundary neck,
\begin{equation}\label{eq:norm-elliptic-boundary-neck}  \norm{\xi}_{k,2,\delta}^2 = \norm{\int_{S^1}\xi(\psi_y(T_y,\cdot))\,dt}^2 + \int_{[0,2T_y]\times S^1}\lbr{\norm{\xi-[\xi]_{T_y}^\bR}^2 + \sum_{j =1}^{k}\norm{D^k\xi}^2}e^{2\delta \min\{s,2T_y-s\}}dsdt\end{equation}
Here we use the trivialisations chosen in \textsection\ref{subsec:gluing-set-up} to make sense of $\xi-\xi(x)$, respectively, $\xi-[\xi]_R$.

We define the weighted Sobolev norm on 
$$W^{k-1,2,\delta}_{TL}(C_{\alpha\beta},\Omega^{0,1}_{\scC_{\alpha\beta},\fj_v}\boxtimes u_{\alpha\beta}^*TX)\sub \{\eta \in C^0(C_{\alpha\beta},\Omega^{0,1}_{\scC_{\alpha\beta},\fj_v}\boxtimes u_{\alpha\beta}^*TX)\mid \eta(T\del C_{\alpha\beta})\sub u_{\alpha\beta}^*TL\}$$\noindent
by the usual $(k,2)$-norm on $\wh C_0$. Over the ends and necks, we trivialise using the chosen biholomorphisms and charts on $X$ and evaluate at $\del_s$, so that we can consider the $(0,1)$-forms as functions valued in $\bR^{2n}$. We then define the norms by the same expressions as \eqref{eq:norm-interior-end}-\eqref{eq:norm-elliptic-boundary-neck} except that we use $\norm{\eta}$ instead of $\norm{\eta-\eta(w)}$ or $\norm{\eta-[\eta]}$. 

Define for $\alpha_x \neq 0$, the Sobolev spaces
$$W_{\alpha_x} := W^{k,2,\delta}(Z_{\alpha_x},\bC^n) \qquad \wh W_{\alpha_x} := W^{k-1,2,\delta}(Z_{\alpha_x},\Omega^{0,1}\otimes\bC^n)$$\noindent
with the norms
\begin{multline}\label{eq:cylinder-norm} \norm{\xi}_{k,2,\delta}^2 =\int_{\bR_+\times S^1}\lbr{\norm{\xi-\xi(\infty)}^2 + \sum_{j=1}^{k}\norm{D^j\xi}^2}e^{2\delta s}dsdt \\+ \int_{\bR_-\times S^1}\lbr{\norm{\xi-\xi(-\infty)}^2 + \sum_{j=1}^{k}\norm{D^j\xi}}^2e^{2\delta|s'|}ds'dt\end{multline}
and 
\begin{equation}\label{eq:cylinder-norm-01} \norm{\eta}_{k-1,2,\delta}^2 =\int_{\bR\times S^1}\sum_{j=0}^{k-1}\norm{D^j\eta}^2e^{2\delta|s|}dsdt\end{equation}
Meanwhile, for $\beta_y \neq 0$, define
$$W_{\beta_y} := W^{k,2,\delta}_{\bR^n}(\Theta_{\beta_y},\bC^n) \qquad \wh W_{\beta_y} := W^{k-1,2,\delta}_{\bR^n}(\Theta_{\beta_y},\Omega^{0,1}\otimes\bC^n)$$\noindent
with the analogous of the norms \eqref{eq:cylinder-norm} and \eqref{eq:cylinder-norm-01}. Let 
$$\cV_{\alpha\beta} \sub \p{\alpha_x\neq 0}{W_{\alpha_x}}\times \p{\beta_y\neq 0}{W_{\beta_y}}$$\noindent
by the subspace of elements with $\xi_w(\infty) = -\xi_w(-\infty)$ for an interior or a hyberbolic boundary node $w$, and define 
$$\wh\cV_{\alpha\beta} :=  \p{\alpha_x\neq 0}{\wh W_{\alpha_x}}\times \p{\beta_y\neq 0}{\wh W_{\beta_y}}.$$\noindent

\begin{proposition}\label{prop:comparing-smoothings} The complex-linear maps
	\begin{equation}\label{eq:total-gluing}\boxdot_{\alpha\beta}\cl W^{k,2,\delta}_{TL}(C_0,u^*TX)\to W^{k,2,\delta}_{TL}(C_{\alpha\beta},\oplus_{\alpha\beta}u^*TX)\oplus \cV_{\alpha\beta} :\xi \mapsto (\oplus_{\alpha\beta}\xi,\ominus_{\alpha\beta}\xi) \end{equation}
	 and 
	 $$\wh\boxdot_{\alpha\beta}\cl W^{k-1,2,\delta}_{TL}(C_0,\Lambda^1_{\scC,\fj_0}\otimes\, u^*TX)\to W^{k-1,2,\delta}_{TL}(\scC_{\alpha\beta},\Lambda^1_{\scC_{\alpha\beta},\fj_0}\otimes\,\oplus_{\alpha\beta}u^*TX)\oplus \wh\cV_{\alpha\beta}$$\noindent
	 given by $$\wh\boxdot_{\alpha\beta}\eta = (\wh\oplus_{\alpha\beta}\eta,\wh{\ominus}_{\alpha\beta}\eta),$$ \noindent
	are isomorphisms and there exists a constant $c_0 $ independent of $\alpha$ and $\beta$, for $\max\{|\alpha|,|\beta|\}\leq \frac14$, so that 
	\begin{equation}\label{eq:bounded-comparison}\frac1{c_0}\norm{\xi}_{k,2,\delta} \leq \norm{\boxdot_{\alpha\beta}\xi}_{k,2,\delta}\leq c_0\norm{\xi}_{k,2,\delta} \end{equation}
	and similarly for $\wh\boxdot_{\alpha\beta}$.
\end{proposition}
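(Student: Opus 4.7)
The crucial algebraic fact is that $\chi_1^2+\chi_2^2=1$, so that on each neck the matrix
\[
M(s) := \begin{pmatrix} \chi_1(s-R) & \chi_2(s-R) \\ -\chi_2(R-s) & \chi_1(s-R) \end{pmatrix}
\]
is pointwise orthogonal. This is exactly what will allow us to recover the two values $\xi^+, \xi^-$ of $\xi$ on the two cylindrical ends attached to a node from the pre-glued and anti-glued data. The proof splits into three steps: the upper bound, the construction of an explicit inverse (which simultaneously yields the lower bound), and the handling of the boundary cases. The statement for $\wh\boxdot_{\alpha\beta}$ follows the same scheme and is strictly easier, since $(0,1)$-forms in $W^{k-1,2,\delta}$ have no asymptotic constants and $\wh\cV_{\alpha\beta}$ carries no linear constraint.

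First I will establish the upper bound in \eqref{eq:bounded-comparison}. Away from the necks of $C_{\alpha\beta}$, the map $\oplus_{\alpha\beta}\xi$ agrees with $\xi$ under the obvious identification, so this region contributes trivially. Over an interior neck $[0,2R_x]\times S^1$, the pre-glued vector field is a $\chi_{1,2}$-weighted sum of $\xi^+$ and the reflected $\xi^-$, and since the derivatives of the cutoffs are uniformly bounded independently of $\alpha,\beta$, the pointwise $C^k$-bounds are under control. Because the weight $e^{2\delta\min(s,2R-s)}$ on the neck is dominated by $e^{2\delta s}$ on $[0,R]$ and by $e^{2\delta(2R-s)}$ on $[R,2R]$, each contribution is bounded by a universal multiple of the weighted norm of $\xi^\pm$ on the corresponding cylindrical ends of $C_0$. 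The same calculation handles the anti-glued component, where the subtraction of the constant $[\xi]_R$ is controlled by the $C^0$-norm of $\xi$ on the circle $\{s=R\}$, itself dominated by $\|\xi\|_{k,2,\delta}$ via the Sobolev embedding for $k \gg 1$. Boundary necks (hyperbolic and elliptic) are treated analogously.

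Second, I will construct an explicit inverse $(\zeta,\zeta')\mapsto\xi$. Away from the necks, set $\xi=\zeta$ under the canonical identification of complementary subdomains of $C_0$ and $C_{\alpha\beta}$. On the cylindrical ends attached to an interior node $x$, the orthogonality of $M(s)$ lets us solve
\[
M(s)\begin{pmatrix}\xi^+(s,t)-c\\ \xi^-(2R-s,\theta-t)-c\end{pmatrix}=\begin{pmatrix}\zeta(s,t)-c\\ \zeta'(s,t)\end{pmatrix}
\]
uniquely for $\xi^\pm-c$ in terms of $\zeta,\zeta'$, for any constant $c\in \bC^n$. The correct choice $c=[\xi]_R$ is forced by requiring the two asymptotic limits $\xi^\pm(\infty)$ to coincide, so that $\xi$ descends to a continuous vector field on the nodal $C_0$; by the asymptotic evaluation of $\zeta'$, this matching condition is precisely the constraint $\xi_w(\infty)=-\xi_w(-\infty)$ built into the definition of $\cV_{\alpha\beta}$. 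For $s>R$, the pre-gluing cutoff is identically $1$ and the anti-glued is supported elsewhere, so $\xi^+$ extends along the infinite cylindrical end of $C_0$ directly via $\zeta$. Running the bounds of Step~1 in reverse produces a well-defined element $\xi\in W^{k,2,\delta}_{TL}(C_0,u^*TX)$ together with the lower bound $\|\xi\|_{k,2,\delta}\leq c_0\|\boxdot_{\alpha\beta}\xi\|_{k,2,\delta}$.

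Third, I will treat the boundary cases. Hyperbolic boundary nodes are identical to interior nodes upon replacing $S^1$ by $[0,1]$, and the reality conditions on the trivialisations together with the choice of $J$-linear, $TL$-preserving connection $\conn$ ensure that $\xi$ restricts to $u^*TL$ along $\del C_0$. Elliptic boundary nodes are one-sided: $\oplus_{\alpha\beta}\xi=\chi_1(s-T_y)\xi_y$ on $\psi_y(A_{|\beta_y|})$ and the antiglued is $\xi_y$ minus its circle-average at $s=T_y$, so $\xi_y$ is recovered immediately by adding the antiglued to the average of the pre-glued. The main technical obstacle throughout is uniformity in $\alpha,\beta$ as the neck lengths $R_x,T_y\to\infty$: both the constants arising from the cutoff derivatives and the comparison between the weighted norms on $C_0$ and those on $C_{\alpha\beta}$ must remain independent of the gluing parameters. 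This is the reason for the specific weight $e^{2\delta\min(s,2R-s)}$ on the necks and for working with the normalised anti-glued component (with the subtraction of $[\xi]_R$) inside $\cV_{\alpha\beta}$; without this normalisation the estimates would degenerate as $R\to\infty$.
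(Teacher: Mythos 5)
The paper's own proof is a citation: it invokes \cite[Theorem~2.23 and Theorem~2.26]{HWZ10} and the adaptation to the bordered case in \cite[Proposition~12.0.1]{Jem20}, and then explains how the slightly different norms and cut-offs used there are commensurate with the ones in this paper. Your proposal attempts a self-contained direct proof, which is a genuinely different (and potentially more informative) route, but as written it has a concrete gap in the inverse construction.

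The problem is your matrix equation. The paper's pre-gluing $\oplus_{\alpha\beta}\xi$ does \emph{not} subtract the asymptotic constant $c=[\xi]_{R_x}$: on the neck one has
$\oplus_{\alpha\beta}\xi = \chi_1(s-R)\xi^+ + \chi_2(s-R)\xi^-$,
so the first row of $M(s)\begin{pmatrix}\xi^+-c\\ \xi^--c\end{pmatrix}$ equals $\oplus_{\alpha\beta}\xi - (\chi_1+\chi_2)c$, not $\zeta - c$. Since the cut-offs satisfy $\chi_1^2+\chi_2^2\equiv 1$ rather than $\chi_1+\chi_2\equiv 1$, the discrepancy $(\chi_1+\chi_2-1)c$ is of order one near $s=R$ (it equals $(\sqrt{2}-1)c$ at $s=R$ if $\chi_1(0)=\chi_2(0)$), so it cannot be absorbed as a small error. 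Consequently you cannot read off $\xi^\pm - c$ by inverting $M(s)$ against $(\zeta - c,\zeta')$ as you claim: the constant must be tracked separately, by comparing $[\oplus_{\alpha\beta}\xi]_{R_x}$ with $[\xi]_{R_x}$, and only then do the oscillatory parts decouple via $M(s)$. Relatedly, the claim that $M(s)$ is \emph{orthogonal} does not follow from $\chi_1^2+\chi_2^2=1$ once the lower-left entry is $-\chi_2(R-s)$ rather than $-\chi_2(s-R)$ (the paper's formula, possibly a typo); what the argument actually needs, and what one can verify once this is corrected, is uniform pointwise invertibility with uniformly bounded inverse, as in \cite{HWZ10}. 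Once the constant bookkeeping and the cut-off indices are repaired, the rest of your outline (upper bound from bounded cut-off derivatives, lower bound from the explicit inverse, boundary nodes handled by the reality conditions and the $TL$-preserving connection) is the right shape of a direct proof.
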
	

\begin{proof} This follows from \cite[Theorem~2.23 and Theorem~2.26]{HWZ10} adapted to case with boundary as in \cite[Proposition~12.0.1]{Jem20} (with $m = 0$). In \cite{Jem20}, $E$ and $F$ denote the scale Banach spaces with $$\cX_m  =  W^{k+m,2,\delta_m}_{TL}(C_0,u^*TX)\qquad \qquad \cY_m  =  W^{k-1+m,2,\delta_m}_{TL}(C_0,\Lambda^{0,1}_{C_0,\fj_0}\otimes u^*TX),$$\noindent 
	of which we consider only the case of $m = 0$. Meanwhile $G^a$ loc. cit. corresponds to the right hand side of \eqref{eq:total-gluing}. There are two more relevant differences: over the necks of $C_{\alpha\beta}$, they use a different norms than \eqref{eq:norm-interior-neck}-\eqref{eq:norm-elliptic-boundary-neck}, which, however, are commensurate to the respective norm used here. Moreover, they use a single cut-off function $\beta$ with the same property as $\chi_1$ and take $1-\beta$ instead of requiring $\chi_1^2+ \chi_2^2 \equiv 1$ as we do. The proof of the estimate, \cite[Lemma~2.27]{HWZ10}, used in both statements, goes through verbatim, observing additionally that over $[0,2R]\times S^1$, respectively $[0,2R]\times[0,1]$, the norm $\normd_{k,2,-\delta}$ is commensurate with $\normd_{k,2,\delta}$.
\end{proof}

\noindent
Given $v \in V$ sufficiently close to the origin, the map 
$$I(v) := \frac12(\Ide-\fj_v\fj_0) \cl (T\wt C,\fj_0)\to (T\wt C,\fj_v)$$\noindent
is an isomorphism which is the identity near the preimages of nodes of $C$. Given $\alpha$ and $\beta$ close to $0$, $I(v)$ thus descends to an isomorphism $I_{\alpha\beta}(v)\cl(T\wt C_{\alpha\beta},\fj_0)\to (T\wt C_{\alpha\beta},\fj_v)$.

\subsection{Cauchy-Riemann equation}\label{subsec:gluing-estimates}
Set $$\cX := W^{k,2,\delta}_{TL}(C_0,u^*TX)\qquad \qquad \cY:= W^{k-1,2,\delta}_{TL}(\wt C,\Omega^{0,1}_{\wt C}\otimes \wt{u}^*TX).$$\noindent 
Define 
$$\cF\cl V\times \bB^{N_x}\times [0,1)^{N_b}\times \cX\times E\to \cY$$\noindent
by 
\begin{multline}\label{eq:cr-map} \cF_{v,\alpha\beta}(\xi,e) :=\\\, \wh\boxdot_{\alpha\beta}\inv\left[(\Phi_{\exp_{u_{\alpha\beta}}(\oplus_{\alpha\beta}\xi)\to u_{\alpha\beta}}\otimes I_{\alpha\beta}(v))\lbr{ d(\exp_{u_{\alpha\beta}}(\oplus_{\alpha\beta}\xi))^{0,1}_{J,\fJ_v}+ P(e_0+e)(\alpha,\beta,\cdot,\exp_{u_{\alpha\beta}}(\oplus_{\alpha\beta}\xi))},\delbar\ominus_{\alpha\beta}\xi\right].\end{multline}
 $\cF$ is only defined near the origin and we will implicitly always take $(v,\alpha,\beta,\xi,e)$ close to zero. The following result is an adaptation of the gluing analysis in \cite{HWZ17}, see also \cite{Jem20}. However, we do not use notions from scale calculus beyond the next statement.\par
  Recall that an \emph{$sc$-Banach space} $\cZ$ is a sequence $(\cZ_m)_{m\geq 0}$ of Banach spaces together with compact embeddings $\cZ_{m+1}\hkra \cZ_m$ so that $\cZ_\infty := \inter{m\geq 0}{\cZ_m}$ is dense in $\cZ_0$. If $U \sub \cZ$ is an open subset, we call a map $f \cl U\to \cZ'$ to another $sc$-Banach space \emph{of class $sc^0$} if $f(U\cap \cZ_m)\sub\cZ_m$ for any $m\geq 0$ and $f$ is continuous. We say $f$ is \emph{$sc^1$} if there exists a fibrewise-linear ${sc}^0$ map 
  $$Tf \cl TU = U\times \cZ_1\to T\cZ' : (x,h)\mapsto (f(x),df(x)h)$$\noindent
  so that 
  $$\limes{\norm{h}_1}{0}{\frac{1}{\norm{h}_1}\norm{f(x+h)-f(x)-Tf(x,h)}} =0$$\noindent
  for any $x \in U$ and $h \in \cZ_1$. We call $f$ \emph{of class $\text{sc}^n$} for $n \geq 1$ if $Tf$ is of class $sc^{n-1}$ and we call it $sc$-smooth if it is of class $sc^n$ for any $n$.\par 
  If $U = B\times U' \sub W\times \cZ$, where $ W$ is a finite-dimensional manifold, possibly with boundary, and $\cZ$ is an $sc$-Banach space, we say $f \cl U \to \cZ'$ is \emph{$sc$-smooth relative to $B$} if the map $f_b := f(b,\cdot)$ is $sc$-smooth for any $b\in B$ and $b \mapsto T^kf_b$ is continuous for any $k \geq 0$.

\begin{theorem}\label{prop:cr-sc-smooth} $\cF$ is 
	$sc$-smooth relative to $V\times \bB^{N_x}\times[0,1)^{N_i}$ with respect to the filtrations 
	$$\cX_m := W^{k+m,2,\delta_m}_{TL}(C_0,u^*TX) \qquad \qquad \cY_m := W^{k-1,2,\delta}_{TL}(\wt C,\Omega^{0,1}_{\wt C}\otimes \wt{u}^*TX)$$\noindent
	where $(\delta_m)_{m\geq 0}$ is any strictly increasing sequence with $\delta = \delta_0 < \delta_m  < \frac12$.
\end{theorem}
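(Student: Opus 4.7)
The plan is to decompose $\cF$ into a composition of maps, each of which will be shown to be $sc$-smooth relative to $V\times\bB^{N_x}\times[0,1)^{N_i}$. Since compositions of $sc^n$-maps that are $sc^n$ relative to a finite-dimensional parameter space remain $sc^n$ relative to that space, this reduces the theorem to a finite list of building blocks.

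First, I would dispense with the dependence on the complex structure parameter $v\in V$. The operator $I_{\alpha\beta}(v)$ is the identity on the (fixed) cylindrical and strip-like ends by construction of the family $\{\fj_v\}$, and depends smoothly on $v$ as a bundle automorphism supported away from the nodes. Post-composition of a $(0,1)$-form with $I_{\alpha\beta}(v)$ is therefore a bounded linear operator on each $\cY_m$ depending smoothly on $v$; this already gives $sc$-smoothness of $I_{\alpha\beta}(\cdot)$ relative to $V$ because the weight functions are trivial near the ends where $v$ enters nontrivially.

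The real content lies in the three operations that involve the gluing parameters: the pre-gluing map $\xi\mapsto \oplus_{\alpha\beta}\xi$, the anti-gluing map $\xi\mapsto\ominus_{\alpha\beta}\xi$, and the operation of translating Cauchy--Riemann data along the ``neck'' by the gluing length. The first two are built from cut-off functions and shifts in the neck variable $s$, and the third appears when one writes the Cauchy--Riemann operator on $C_{\alpha\beta}$ in terms of data on $C_0$. Working in local charts near the image of each node, the composition $\Phi_{\exp_{u_{\alpha\beta}}(\oplus_{\alpha\beta}\xi)\to u_{\alpha\beta}}\circ\delbar_{J,\fj_v}\exp_{u_{\alpha\beta}}(\oplus_{\alpha\beta}\xi)$ becomes a smooth pointwise operation combined with one derivative and a shift; the pre-composition with $I_{\alpha\beta}(v)$ is bounded and $sc$-smooth. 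Finally, $\wh\boxdot_{\alpha\beta}^{-1}$ is the inverse of the isomorphism in Proposition~\ref{prop:comparing-smoothings}, which together with the estimate \eqref{eq:bounded-comparison} controls it uniformly in $(\alpha,\beta)$. The necessary $sc$-smoothness of the translation maps and the pre/anti-gluing construction on weighted Sobolev spaces with strictly increasing weights $\delta_m$ is exactly the content of \cite[Theorem~2.23, Theorem~2.26 and Lemma~2.27]{HWZ10} in the interior-node case; the hyperbolic boundary-node case was treated in \cite[Chapter~12]{Jem20}, and the elliptic boundary-node case reduces to the interior-node case on the one cylindrical end (with no shift identification), using the norm \eqref{eq:norm-elliptic-boundary-neck}.

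Assembling these pieces, I would verify $sc$-smoothness relative to the finite-dimensional parameter space $V\times\bB^{N_x}\times[0,1)^{N_i}$ by checking continuity of all higher tangent maps $T^k\cF_{v,\alpha\beta}$ in the parameters, which follows because every ingredient depends continuously on the parameters with values in the relevant space of bounded multilinear maps. The main obstacle, as in HWZ, is that the usual derivative with respect to the gluing parameter at $\alpha=0$ or $\beta=0$ does not exist in the strong sense on a single Banach space: the shift operator fails to be even continuous on a fixed $W^{k,2}$-space, and the derivative gains a factor from differentiating the cut-off translate of a non-decaying asymptotic profile. This is precisely repaired by the scale structure and the shift-in-weight $\delta_m<\delta_{m+1}$, which allows the weight gap to absorb the loss coming from differentiating in the gluing parameter: the necessary exponential decay estimates for $\wh\boxdot_{\alpha\beta}^{-1}$ and for derivatives of the cut-off shifts are Lemma~2.27 loc.\ cit., and it is only these estimates that require the strict inequalities $\delta_0<\delta_1<\cdots<\tfrac12$.
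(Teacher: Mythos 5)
Your proposal takes essentially the same route as the paper's proof: reduce to the known $sc$-smoothness results of Hofer--Wysocki--Zehnder for interior nodes (with cut-off-shift and exponential-map estimates absorbed by the scale structure), handle hyperbolic boundary nodes via the bordered adaptation of Jemisson, and observe that elliptic boundary nodes are the easy case. Your identification of where the scale structure really earns its keep — the failure of naive differentiability of the gluing profile at $\alpha=0,\beta=0$, repaired by the weight gap $\delta_m<\delta_{m+1}$ — matches the reasoning in the paper.

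One attribution issue worth flagging: you cite HWZ10, Theorems~2.23, 2.26 and Lemma~2.27, as supplying the $sc$-smoothness of the Cauchy--Riemann functor $\cF$. Those results establish that the linear pre-/anti-gluing $\boxdot_{\alpha\beta}$ is a uniformly bounded isomorphism (this is what the paper uses for Proposition~\ref{prop:comparing-smoothings}), but they do \emph{not} themselves prove $sc$-smoothness of the full nonlinear map \eqref{eq:cr-map}, which combines gluing with the exponential map, parallel transport, and the Cauchy--Riemann operator. The paper's proof instead reduces to \cite[Propositions~4.11 and~4.21]{HWZ17}, which handle precisely that nonlinear composite, and then checks that the three local modifications — different cut-offs $\chi_1,\chi_2$ with $\chi_1^2+\chi_2^2\equiv 1$, commensurate-but-not-identical neck norms, and the anti-gluing precomposed with $\Phi_{u\to u(w)}$ — do not disturb the estimates. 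Your decomposition-into-pieces strategy is valid in principle and could substitute for the citation of HWZ17 if carried out, but as written you are citing the linear toolbox where the paper cites (and genuinely relies on) the nonlinear theorem; and you do not address the parallel-transport twist of the anti-gluing, which, while harmless, is a place where the construction deviates from the cited source and needs to be explicitly dismissed.
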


\begin{proof} This is a generalisation of \cite[Proposition~4.21]{HWZ17} to the case with boundary and the same proof goes through, with certain ingredients replaced by the corresponding statement for bordered Riemann surfaces. We will summarise the proof and point out the changes.\par
	Away from the cylindrical or strip-like ends, the proof of (relatives) $sc$-smoothness follows from standard elliptic theory, as well as the fact that the maps involved are the differential operator $d$ and pre- or post-composition with maps defined on smooth closed manifolds, respectively their tangent bundles. See also the discussion below \cite[Equation~(4.16)]{HWZ17}.\par 
	Near interior or elliptic boundary nodes, the proof of \cite[Proposition~4.21]{HWZ17} goes through verbatim since we defined the gluing and anti-gluing over these nodes in the same way save for the domain. In particular, the proof of \cite[Proposition~4.11]{HWZ17} goes through. We note here that we use slightly different cut-off functions, which does not affect the relevant estimates. We use different norms over the necks, which, however, are commensurate with those of \cite{HWZ17}. Finally, the anti-gluing defined here differs from the anti-gluing of \cite{HWZ17} in that we precompose with the parallel transport $\Phi_{u \to u(w)}$ for a node $w$. This invertible operator is independent of the element of the domain of $\cF$ and hence does not affect any estimate apart from increasing certain constants. Finally, the proof of \cite[Proposition~4.11]{HWZ17} for gluing and anti-gluing at an elliptic boundary node is in the same vein, in fact, even easier.
\end{proof}

\subsection{Gluing map}\label{subsec:gluing-map} 
We need a relative version of (a special case of) \cite[Theorem~4.6]{HWZ09}.

\begin{proposition}[Relative $sc$-implicit function theorem]\label{prop:sc-implicit-function} Let $\cZ$ and $\cZ'$ be $sc$-Banach spaces and $T$ a finite-dimensional manifold, possibly with corners. Suppose $f \cl T\times \cZ\to \cZ'$ is an $sc$-smooth Fredholm map relative to $T$, so that for some $t_0 \in T$ and $z_0 \in f_{t_0}\inv(\{0\})$, the linearisation $df_{t_0}(z_0)\cl T_{z_0}\cZ\to T_{f_{t_0}(z_0)} \cZ'$ is surjective and there exists a continuous family $M$ of $sc$-complements to $\ker(df_t(z_0))$ for $t$ near $t_0$. Then $S := f\inv(\{0\})$ admits a \emph{good parametrisation} near $(t_0,z_0)$.\par That is, there exists neighbourhoods $W\sub T\times \ker(df_{t_0}(z_0))$ and $U\sub T\times E$ near $\{(t_0,0)\}$, respectively $(t_0,z_0)$, as well as an $sc$-smooth map
	$$\Psi \cl W\to M,$$\noindent 
	compatible with the projection to $T$, so that
	\begin{enumerate}[1),leftmargin=20pt,ref=\arabic*]
		\item\label{i:origin} $\Psi(t,0) = 0$ and $d\Psi_t(0) = 0$;
		\item\label{i:graph} the map $$\Gamma\cl W\to \cZ : w \mapsto (t,z_0 + w + \Psi_t(w))$$\noindent 
		maps onto $S\cap U$ and is compatible with the projection to $T$,
		\item\label{i:kernels} if $(t,z) = \Gamma(t,w)\in S\cap U$, then the map $\ker(df_{t_0}(z_0))\to \ker(df_t(z))$ given by 
		$$v\mapsto v + d\Psi_t(w)v$$\noindent
		is a linear isomorphism.
	\end{enumerate}
\end{proposition}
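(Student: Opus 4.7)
The strategy is to reduce to the non-relative sc-smooth implicit function theorem \cite[Theorem~4.6]{HWZ09} by absorbing the parameter space $T$ into the sc-smooth data. As a first step, I would trivialize the continuous family of sc-complements. Setting $K := \ker(df_{t_0}(e_0))$, the hypothesis provides, after shrinking the neighborhood of $t_0$, a continuous family of sc-linear projections $\pi_t^K \cl E \to K$ with kernel $M_t$. The map $\ide - \pi_t^K|_{M_{t_0}} \cl M_{t_0} \to M_t$ is then an sc-isomorphism depending continuously on $t$, which presents the family $\{M_t\}$ as a continuous deformation of the fixed complement $M_{t_0}$.

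With this identification in hand, define $\tilde{f} \cl T \times K \times M_{t_0} \to F$ by $\tilde f(t,k,m) := f(t, e_0 + k + m)$. This map is sc-smooth relative to $T$, and the partial derivative $\partial_m \tilde f(t_0,0,0) = df_{t_0}(e_0)|_{M_{t_0}}$ is a linear sc-isomorphism by the surjectivity hypothesis and the definition of $M_{t_0}$. Since the set of sc-isomorphisms is open and the family $t \mapsto \partial_m \tilde f(t,0,0)$ varies continuously, these partial derivatives remain sc-isomorphisms for $t$ in a neighborhood of $t_0$.

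The heart of the argument is to run the Newton-iteration underlying \cite[Theorem~4.6]{HWZ09} on $\tilde f$, treating $(t,k) \in T \times K$ jointly as parameters. Since $T$ is finite-dimensional and enters only through the relative sc-smoothness, the analytic estimates all take place in the sc-direction, and continuity of $t \mapsto T^j \tilde f_t$ in $t$ (built into the relative sc-smoothness hypothesis) guarantees that the contraction estimates are uniform on a compact neighborhood of $t_0$. This yields an sc-smooth map $\Psi \cl W \to M_{t_0}$ defined on a neighborhood $W$ of $(t_0,0) \in T \times K$ and satisfying $\tilde f(t,k,\Psi(t,k)) = 0$ together with $\Psi(t_0, 0) = 0$. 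Property (i) then follows by differentiating this identity at $(t_0,0)$, using $d\tilde f_{t_0}(0,0)(k,0) = df_{t_0}(e_0)k = 0$ for $k \in K$ and invertibility of $\partial_m \tilde f(t_0,0,0)$. Property (ii) is immediate from the construction, and property (iii) follows by differentiating $\tilde f(t,k,\Psi(t,k))=0$ in the $K$-direction at a fixed parameter value.

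The main obstacle is the parametric nature of the statement, combined with the fact that $T$ may have corners. The corner structure poses no real analytic difficulty, since $T$ contributes only finite-dimensional parameters with respect to which no Sobolev-type estimates are required; all convergence of the Newton iteration happens in the sc-Banach space $M_{t_0}$. The essential technical content is ensuring that the quantitative estimates in \cite[Theorem~4.6]{HWZ09} can be run with uniform constants as $t$ varies in a compact neighborhood, which is precisely what the hypothesis of relative sc-smoothness provides.
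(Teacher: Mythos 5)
Your proposal is correct and takes essentially the same approach as the paper: reduce to the non-relative $sc$-implicit function theorem of \cite{HWZ09} by viewing the finite-dimensional $T$ (together with the finite-dimensional kernel $K$) as a pure parameter space, using the continuity built into relative $sc$-smoothness to obtain uniform estimates for the Newton iteration. The paper's own proof is terser, pointing to the relative inverse function theorem of \cite[Lemma~5.10]{Swa21} as the template for the adaptation, but the mechanism you describe is the same; the one place to be slightly careful is that the hypothesis furnishes $M_t$ as a complement to the \emph{varying} kernel $\ker(df_t(e_0))$ rather than to the fixed $K$, and one needs surjectivity of $df_t(e_0)$ near $t_0$ (hence constancy of $\dim\ker$) to conclude that $M_t$ is still transverse to $K$ — but this is the same continuity argument you already invoke.
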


\begin{proof} The claim in the case of $T = \pt$ given by \cite[Theorem~4.6]{HWZ09}. The general case follows from a straightforward adaptation to the relative setting; refer e.g. to the proof of the relative inverse function theorem, \cite[Lemma~5.10]{Swa21}. Denote by $\Psi(\cdot)_\cX$, respectively $\Psi(\cdot)_\cX$ the composition of $\Psi$ with the projection onto the respective vector space.
\end{proof}

\begin{lemma}\label{} There exists a linear subspace $E'\sub E$ and finitely many points $q_1,\dots,q_m \in \wh C_0$ and subspaces $V_i\sub T_{u(q_i)}X$ so that the map 
	
	\begin{align*}\label{} L \cl \cX\oplus E&\to E/E'\oplus \bigoplus\limits_{i = 1}^m \modulo{T_{u(q_i)}X}{V_i} \\ (\xi,e) &\mapsto (e + E')\oplus (\xi(q_i)+ V_i)_{i = 1,\dots,m}\end{align*}
	restricts to an isomorphism on $K_{v,\alpha\beta}$ for any $v,\alpha,\beta$ close to $0$. 
\end{lemma}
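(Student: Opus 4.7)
The plan is to first establish the evaluation isomorphism at the central point $(v,\alpha,\beta) = (0,0,0)$, and then show that the isomorphism property persists under small perturbations using the $sc$-implicit function theorem from Proposition \ref{prop:sc-implicit-function}.

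First I would work at $(0,0,0)$. The kernel $K_0 := \ker(D\cF_{(0,0,0)}(0))$ is finite-dimensional since $D\cF_{(0,0,0)}(0)$ is a Fredholm operator which, by the regularity hypothesis on $u_0$, is surjective; thus $\dim K_0$ equals the Fredholm index. Since elements of $K_0$ are (via the isomorphism $\boxdot_{00}$) vector fields on $C_0$ satisfying a linear Cauchy--Riemann-type equation, unique continuation implies that evaluation at sufficiently many smooth points of $\wh C_0$ separates elements of $K_0$. Explicitly, an inductive procedure gives finitely many points $q_1, \dots, q_m \in \wh C_0$ such that the evaluation map $K_0 \to \bigoplus_{i=1}^m T_{u(q_i)}X$ is injective. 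I would then choose subspaces $V_i \subset T_{u(q_i)}X$ complementary to the image of this evaluation map in each summand (or more carefully, so that the projection to $\bigoplus T_{u(q_i)}X/V_i$ is an isomorphism). This makes $L|_{K_0}$ a linear isomorphism onto $\bigoplus_i T_{u(q_i)}X/V_i$.

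Next I would extend to nearby parameters. By Proposition \ref{prop:sc-implicit-function} applied to $\cF$ with parameter space $T = V \times \bB^{N_i} \times [0,1)^{N_b}$, the zero locus $S = \cF^{-1}(0)$ admits a good parametrisation near $(0,0,0;0)$. In particular, item \eqref{i:kernels} provides, for each $(v,\alpha,\beta;\xi) \in S \cap U$, a linear isomorphism $K_0 \to \ker(D\cF_{v,\alpha\beta}(\xi)) =: K_{v,\alpha\beta}$ given by $\zeta \mapsto \zeta + d\Psi_{(v,\alpha,\beta)}(w)\zeta$, where $w$ is the parameter with $\Gamma(v,\alpha,\beta;w) = (v,\alpha,\beta;\xi)$. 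Since $d\Psi_{(0,0,0)}(0) = 0$ by item \eqref{i:origin}, this isomorphism is close to the identity for small $(v,\alpha,\beta)$.

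Finally, I would observe that the composition of $L$ with this isomorphism gives a continuous family of linear maps $K_0 \to \bigoplus_i T_{u(q_i)}X/V_i$ between spaces of equal dimension, which is an isomorphism at $(0,0,0)$. Here I use that the points $q_i$ lie in $\wh C_0$, which is canonically identified with a subset of $C_{\alpha\beta}$ away from the neck regions, and that evaluation at these fixed points is a continuous (in fact $sc$-smooth) linear functional on $\cX$. By openness of the invertibility condition in the space of linear maps between finite-dimensional vector spaces, $L|_{K_{v,\alpha\beta}}$ remains an isomorphism for $(v,\alpha,\beta)$ in a neighbourhood of the origin, which is the desired conclusion. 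The main subtlety here is making sure the identification of $K_{v,\alpha\beta}$ as sections on $C_{\alpha\beta}$ (via the gluing map $\boxdot_{\alpha\beta}$) is compatible with evaluation at the fixed points $q_i$; since the $q_i$ may be chosen to lie in the complement of the necks where the pre-gluing map is the identity, this compatibility is automatic.
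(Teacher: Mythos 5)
Your overall strategy — unique continuation at the central parameter to choose the points $q_i$ and subspaces $V_i$, then a continuity/openness argument to propagate to nearby $(v,\alpha,\beta)$ — is the right one and matches what the cited \cite[Lemma~4.10]{Swa21} does. The observation at the end about the $q_i$ living in the thick part $\wh C_0$, away from the necks, so that evaluation commutes with the pre-gluing identification, is a good and necessary point.

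However, the mechanism you use for the propagation step is both circular and based on a misidentification of $K_{v,\alpha\beta}$. First the misidentification: $K_{v,\alpha\beta}$ is the kernel of the linearised operator $D_{v,\alpha\beta} = D\cF_{v,\alpha\beta}(0)$ at the fixed reference point $0\in\cX$ (i.e., at the pre-glued map $u_{\alpha\beta}$), not at a varying solution $\xi$ on the zero locus $S$. The lemma's conclusion must hold for every $(v,\alpha,\beta)$ near $0$, and is used to build the right inverse $Q_{v,\alpha\beta} = (D_{v,\alpha\beta}|_{\ker L})^{-1}$, which only makes sense if $\ker(L)$ is a complement of $\ker(D_{v,\alpha\beta})$ itself. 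Item~\eqref{i:kernels} of Proposition~\ref{prop:sc-implicit-function}, which you invoke, instead compares $K_0$ with $\ker(D\cF_{v,\alpha\beta}(\xi))$ at zeros $\xi$, which is a different object. Second, and more seriously, the circularity: the hypothesis of Proposition~\ref{prop:sc-implicit-function} explicitly requires a continuous family $M$ of $sc$-complements to $\ker(df_t(e_0))$ near $t_0$, and in the paper it is precisely this lemma that supplies $M = \ker(L)$ before the implicit function theorem is invoked. You cannot therefore appeal to the implicit function theorem to prove the lemma.

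The correct route, replacing your invocation of the implicit function theorem, is a direct argument: by Theorem~\ref{prop:cr-sc-smooth}, the linearisations $D_{v,\alpha\beta} = D\cF_{v,\alpha\beta}(0)$ depend $sc$-continuously on the parameters; $D_{0,0,0}$ is surjective by regularity of $u_0$; by openness of surjectivity and local constancy of the Fredholm index, $D_{v,\alpha\beta}$ is surjective with $\dim K_{v,\alpha\beta} = \dim K_0$ for $(v,\alpha,\beta)$ close to $0$. One then verifies that the family of subspaces $K_{v,\alpha\beta}$ converges to $K_0$ (for instance by showing that any bounded projection onto $K_0$ restricts to an isomorphism on $K_{v,\alpha\beta}$ for small parameters, or by exhibiting $K_{v,\alpha\beta}$ as the graph of a small map $K_0\to\ker(\pi_0)$). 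Since $L|_{K_0}$ is an isomorphism between finite-dimensional spaces and $L$ is a fixed bounded linear map, the restriction $L|_{K_{v,\alpha\beta}}$ is an isomorphism for small $(v,\alpha,\beta)$. This is the step your proof needs in place of the implicit function theorem.
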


\noindent
Refer to \cite[Lemma~4.10]{Swa21} for a proof, which carries over verbatim to our setting.\par

Define 
\begin{equation}\label{eq:right-inverse} Q_{v,\alpha\beta}\cl \cY\to \cX : \eta \mapsto (D_{v,\alpha\beta}|_{\ker(L)})\inv(\eta). \end{equation}
By Theorem \ref{prop:cr-sc-smooth}, $Q_{v,\alpha\beta}$ is strongly continuous in $v, \alpha$ and $\beta$. Taking $(t_0,z_0) = (0,0,e_0)$, Proposition \ref{prop:sc-implicit-function} asserts the existence of maps $\Psi$ and $\Gamma$ with the properties \eqref{i:origin}-\eqref{i:kernels}.

\begin{definition}\label{de:gluing-amp} 
	We define the \emph{gluing map} 
	$$\varphi = \varphi_{u_0} \cl U\sub V\times \bB_{\epsilon}^{N_i}\times [0,\epsilon)^{N_b} \times K_{0,0}\to \Mbar^J(\pi,X,L)$$\noindent
	by 
	$$\varphi(v,\alpha,\beta,\xi,e) = \lbr{v,\alpha,\beta,\oplus_{\alpha\beta}\exp_{ u}(\xi+\Psi_{v,\alpha\beta}(\xi,e)_\cX),\Psi_{v,\alpha\beta}(\xi,e)_E},$$\noindent
	where $U$ is a sufficiently small neighbourhood of the origin. 
\end{definition}

\noindent
Since $K_{0,0}$ is finite-dimensional, the strong operator topology on bounded operators $K_{0,0} \to \cX$ agrees with the topology induced by the operator norm, so $\Psi$ defines a relatively smooth map with respect to the operator norm. 

\begin{lemma}\label{lem:gluing-injective} $\varphi$ is injective near the origin.
\end{lemma}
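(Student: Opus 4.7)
My plan is to reduce injectivity of $\varphi$ to injectivity of its derivative at the origin, and then exploit unique continuation.

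The first observation is that $\varphi$ records its first three arguments $(v,\alpha,\beta)$ as coordinates of the image, so any collision of two inputs under $\varphi$ must agree in these parameters. It therefore suffices to fix $(v,\alpha,\beta)$ and prove injectivity of
$$\Phi_{v,\alpha,\beta}\cl K_{0,0}\to W^{k,2,\delta}_{TL}\text{-maps }(C_{\alpha\beta},\partial C_{\alpha\beta})\to(X,L),\qquad \xi\mapsto \oplus_{\alpha\beta}\exp_u(\xi+\Psi_{v,\alpha\beta}(\xi))$$
on a neighborhood of $0$. Since $K_{0,0}$ is finite-dimensional and $\Phi_{v,\alpha,\beta}$ is (relatively) smooth, an application of the inverse function theorem will reduce this to showing that $d\Phi_{v,\alpha,\beta}(0)$ is injective.

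Next, I would compute $d\Phi_{v,\alpha,\beta}(0)$ using property (\ref{i:origin}) of Proposition~\ref{prop:sc-implicit-function}, namely $\Psi_{v,\alpha\beta}(0)=0$ and $d\Psi_{v,\alpha\beta}(0)=0$. Combined with the linearity of $\oplus_{\alpha\beta}$ on vector fields and the trivialisation of $d\exp_u$ at the zero section, this gives
$$d\Phi_{v,\alpha,\beta}(0)\,\delta\xi \;=\; \oplus_{\alpha\beta}\delta\xi \;\in\; W^{k,2,\delta}_{TL}(C_{\alpha\beta},u_{\alpha\beta}^*TX).$$
Injectivity of this derivative thus reduces to injectivity of the pre-gluing $\oplus_{\alpha\beta}$ on the finite-dimensional space $K_{0,0}$.

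The heart of the argument, and the main obstacle, is this last injectivity claim. Suppose $\delta\xi\in K_{0,0}$ satisfies $\oplus_{\alpha\beta}\delta\xi=0$. Since $\oplus_{\alpha\beta}$ acts as the identity on the common open subset $\wh C_0\sub C_0\cap C_{\alpha\beta}$ (i.e.\ away from the necks), $\delta\xi$ must vanish on $\wh C_0$. By definition, $\delta\xi$ lies in $\ker(D_{0,0})$ for the linearised Cauchy--Riemann operator $D_{0,0}$ on $u_0^*(TX,TL)$, a first-order elliptic operator on the normalisation of $C_0$ with totally real boundary conditions. Because $\wh C_0$ meets every irreducible component of the normalisation in a nonempty open set, elliptic unique continuation (e.g.\ \cite[Proposition~2.3.1]{MS12}) forces $\delta\xi$ to vanish on each component, hence $\delta\xi=0$.

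Once derivative injectivity is in hand, the inverse function theorem in the relatively smooth framework, combined with the uniform comparability estimate \eqref{eq:bounded-comparison} of Proposition~\ref{prop:comparing-smoothings} and the continuity of $\Psi$ in the parameters, will yield local injectivity of $\Phi_{v,\alpha,\beta}$ on a neighborhood of $0$ whose size is uniform in $(v,\alpha,\beta)$ sufficiently close to the origin. The subtle point to be careful about is precisely this uniformity as $\alpha,\beta\to 0$ (where the neck lengths $R_x,T_y\to\infty$ and the underlying curve degenerates), but the weighted norms were designed so that the relevant constants remain bounded.
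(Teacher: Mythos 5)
Your approach differs genuinely from the paper's: you reduce local injectivity to injectivity of $d\Phi_{v,\alpha,\beta}(0)=\oplus_{\alpha\beta}|_{K_{0,0}}$ via the inverse function theorem, and then prove the latter by elliptic unique continuation. The paper's proof is instead a short algebraic argument: if two inputs have the same image, the parameters $(v,\alpha,\beta)$ must coincide; then (after the Sobolev embedding drops the $L^\infty$ norm below the injectivity radius of $\exp$) the pre-glued vector fields $\oplus_{\alpha\beta}(\xi+\Psi_{v,\alpha\beta}(\xi))$ and $\oplus_{\alpha\beta}(\xi'+\Psi_{v,\alpha\beta}(\xi'))$ agree; evaluating at the slicing points $q_i\in\wh C_0$ --- where $\oplus_{\alpha\beta}$ acts as the identity --- and using that $\im(\Psi_{v,\alpha\beta})\sub\ker(L)$ while $L|_{K_{0,0}}$ is injective gives $\xi=\xi'$ directly. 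Note that even your claim that $\oplus_{\alpha\beta}$ is injective on $K_{0,0}$ is more economically obtained this way than by unique continuation: $\oplus_{\alpha\beta}\delta\xi=0$ forces $L(\delta\xi)=0$, hence $\delta\xi=0$.

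The genuine gap in your argument is the uniformity you flag in the last paragraph, and it is not a minor cleanup issue --- it is precisely the subtle part of gluing arguments that the $L$-functional device is designed to circumvent. To get a single injectivity neighbourhood from the IFT, one needs (i) a lower bound $\norm{\oplus_{\alpha\beta}\delta\xi}\geq c\norm{\delta\xi}$ for $\delta\xi\in K_{0,0}$ \emph{uniform} as $(\alpha,\beta)\to 0$, and (ii) uniform control on the modulus of continuity of $d\Phi_{v,\alpha,\beta}$. Estimate \eqref{eq:bounded-comparison} only bounds $\boxdot_{\alpha\beta}=(\oplus_{\alpha\beta},\ominus_{\alpha\beta})$ below, not $\oplus_{\alpha\beta}$ alone, so (i) would still require showing that $\ominus_{\alpha\beta}$ is uniformly small on $K_{0,0}$ (using exponential decay of kernel elements across long necks); and (ii) would require invoking the uniform estimates inside the proof of Theorem~\ref{prop:cr-sc-smooth}. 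Neither is automatic, and ``continuity of $\Psi$ in the parameters'' is not enough: pointwise continuity of a family does not give uniform quantitative control. So while your route is plausibly salvageable, as written it defers the real difficulty to a remark, whereas the paper's argument is complete in a few lines and applies for all $(v,\alpha,\beta)$ in the domain simultaneously without any uniformity analysis.
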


\begin{proof} Suppose there exists sequences $(v_j,\alpha_j,\beta_j,\xi_j,e_j)\jN$ and $(v'_j,\alpha'_j,\beta'_j,\xi'_j,e_j')\jN$ converging to $0$ so that 
	$$(v_j,\alpha_j,\beta_j,\xi_j,e_j)\neq (v'_j,\alpha'_j,\beta'_j,\xi'_j,e_j')\qquad \qquad \varphi(v_j,\alpha_j,\beta_j,\xi_j) = \varphi(v'_j,\alpha'_j,\beta'_j,\xi'_j,e_j')$$\noindent 
	for each $j\geq 1$. This implies $(v_j,\alpha_j,\beta_j) = (v'_j,\alpha'_j,\beta'_j)$ as well as  
	$$\Psi_{v_j,\alpha_j,\beta_j}(\xi_j,e_j)_E = \Psi_{v_j,\alpha_j,\beta_j}(\xi_j,e_j)_E$$\noindent
	 and
	$$\exp_{u_{\alpha_j\beta_j}}(\oplus_{\alpha\beta}(\xi_j+\Psi_{v_j,\alpha_j,\beta_j}(\xi_j,e_j)_\cX)) = \exp_{u_{\alpha_j\beta_j}}(\oplus_{\alpha\beta}(\xi_j'+\Psi_{v_j,\alpha_j,\beta_j}(\xi'_j,e_j')_\cX)).$$\noindent
	 By the Sobolev embedding theorem, we have $$\norm{\oplus_{\alpha\beta}\xi_j+\Psi_{v_j,\alpha_j,\beta_j}(\xi_j,e_j)_\cX}_{L^\infty} \leq\normalfont\text{injr}(X,g)$$ for $j \gg 1$ and similarly for $\xi'_j$, whence
	 $$\oplus_{\alpha\beta}\xi_j+\Psi_{v_j,\alpha_j,\beta_j}(\xi_j,e_j) =\oplus_{\alpha\beta}\xi'_j+ \Psi_{v_j,\alpha_j,\beta_j}(\xi'_j,e_j').$$\noindent
	 As $\im(\Psi_{v,\alpha,\beta})\sub \ker(L)$ and $L$ is injective on $K_{0,0}$, the claim follows by applying $L$.
\end{proof}

\begin{lemma}\label{lem:gluing-surjective} $\varphi$ surjects onto a neighbourhood of $(0,0,0,u_0,e_0)$. 
\end{lemma}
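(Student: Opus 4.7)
My plan is to apply Proposition~\ref{prop:sc-implicit-function} in reverse. Given a regular $J$-holomorphic map $(v',\alpha',\beta',u')$ close to $(0,0,0,u)$, I will (i) express $u'$ as the exponential $\exp_{u_{\alpha'\beta'}}(\eta)$ of a small vector field along the pre-gluing, (ii) transport $\eta$ back to $\cX$ via the isomorphism $\boxdot_{\alpha'\beta'}$ of Proposition~\ref{prop:comparing-smoothings}, and (iii) recognise the resulting $\zeta \in \cX$ as a zero of $\cF_{v',\alpha',\beta'}$, to which the parametrisation $\Gamma$ then applies. More concretely, I would take a sequence $(v_j,\alpha_j,\beta_j,u_j) \to (0,0,0,u)$ in $\Mbar^J(\pi,X,L)^{\reg}$ and argue that it eventually lies in $\im(\varphi)$. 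The first task is to produce $\eta_j \in W^{k,2,\delta}_{TL}(C_{\alpha_j\beta_j},u_{\alpha_j\beta_j}^*TX)$ with $u_j = \exp_{u_{\alpha_j\beta_j}}(\eta_j)$ and $\|\eta_j\|_{k,2,\delta} \to 0$, which I would obtain by combining elliptic bootstrapping on compact subsets away from the necks with exponential decay estimates near each of the (interior, H1/H2/H3 hyperbolic, and E elliptic) nodal cylinders.

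Once $\eta_j$ is in hand, I set $\zeta_j := \boxdot_{\alpha_j\beta_j}^{-1}(\eta_j,0) \in \cX$, so that $\oplus_{\alpha_j\beta_j}\zeta_j = \eta_j$, $\ominus_{\alpha_j\beta_j}\zeta_j = 0$, and $\|\zeta_j\|_\cX \leq c_0 \|\eta_j\|_{k,2,\delta} \to 0$ by \eqref{eq:bounded-comparison}. The second component of $\cF_{v_j,\alpha_j,\beta_j}(\zeta_j)$ is $\delbar(0) = 0$, while the first vanishes because $\exp_{u_{\alpha_j\beta_j}}(\oplus_{\alpha_j\beta_j}\zeta_j) = u_j$ is $(J,\fj_{v_j})$-holomorphic by assumption; hence $\cF_{v_j,\alpha_j,\beta_j}(\zeta_j) = 0$. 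Theorem~\ref{prop:cr-sc-smooth}, the right inverse \eqref{eq:right-inverse}, and the estimate \eqref{eq:bounded-comparison} supply all hypotheses of Proposition~\ref{prop:sc-implicit-function} at $(t_0,e_0) = (0,0)$ with parameter space $T = V \times \bB^{N_i}\times [0,1)^{N_b}$. Therefore, for $j$ large, $(v_j,\alpha_j,\beta_j,\zeta_j)$ lies in the neighbourhood on which $\Gamma$ parametrises $\cF^{-1}(0)$, giving a unique small $\xi_j \in K_{0,0}$ with $\zeta_j = \xi_j + \Psi_{v_j,\alpha_j,\beta_j}(\xi_j)$; by construction $\varphi(v_j,\alpha_j,\beta_j,\xi_j)$ reproduces $u_j$.

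The hard part will be the first step: upgrading Gromov--Hausdorff convergence of graphs to convergence of $\eta_j$ in the weighted Sobolev norm on the varying smoothed domains $C_{\alpha_j\beta_j}$. Specifically, I need a uniform exponential decay of $u_j$ on each neck towards the nodal value $u(x)$ with rate strictly exceeding $\delta$, in order to control the weights appearing in \eqref{eq:norm-interior-neck}--\eqref{eq:norm-elliptic-boundary-neck}. For interior nodes this is the classical cylindrical annulus estimate; for hyperbolic boundary nodes it follows by a parallel argument exploiting the totally real Lagrangian boundary condition (as in the bordered analysis of \cite{Liu20,Jem20}); for the elliptic boundary nodes the decay is provided by the removable singularity theorem for holomorphic disks with Lagrangian boundary combined with the fact that the limiting boundary circle collapses to the constant disk at $u(y)$.
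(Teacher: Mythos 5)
Your proposal is correct and reconstructs the argument that the paper delegates to Pardon's \cite[Proposition~B.11.5]{P16}; the paper's own proof consists essentially of that citation together with a dictionary of notations, so you are filling in what the paper leaves implicit. Your key reduction is the right one: set $\zeta_j := \boxdot_{\alpha_j\beta_j}^{-1}(\eta_j,0)$, so that by definition of $\boxdot$ one has $\oplus_{\alpha_j\beta_j}\zeta_j = \eta_j$ and $\ominus_{\alpha_j\beta_j}\zeta_j = 0$; the intertwining identity $\exp_{u_{\alpha\beta}}(\oplus_{\alpha\beta}\zeta) = \oplus_{\alpha\beta}\exp_u(\zeta)$ then identifies $\exp_{u_{\alpha_j\beta_j}}(\oplus_{\alpha_j\beta_j}\zeta_j)$ with $u_j$, so both components of $\cF_{v_j,\alpha_j,\beta_j}(\zeta_j)$ vanish, and the uniform bound \eqref{eq:bounded-comparison} places $\zeta_j$ in the neighbourhood covered by $\Gamma$. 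Unwinding $\zeta_j = \xi_j + \Psi_{v_j,\alpha_j,\beta_j}(\xi_j)$ through $\exp_u$ and $\oplus_{\alpha_j\beta_j}$ then recovers $u_j = \varphi(v_j,\alpha_j,\beta_j,\xi_j)$.

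You also correctly isolate the only non-formal input: the uniform exponential decay of $\eta_j$ on the necks with rate beating $\delta < 1/2$, for which the combination of the cylindrical annulus estimate, its strip analogue with totally real Lagrangian boundary conditions, and the removable singularity result on the shrinking elliptic disks is the right toolkit. One minor imprecision in the last sentence: for an elliptic (E) node there is no disk component in the limit; the boundary circle degenerates to an \emph{interior marked point} (the node becomes a puncture on a single component after normalisation) and the relevant fact is that the energy on the shrinking disk tends to zero, which is what feeds the decay estimate. This imprecision does not affect the correctness of the argument.
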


\begin{proof} This follows from the same arguments as in \cite[Proposition~B.11.5]{P16}. He only has the interior gluing parameters $\alpha$, which does not affect the arguments, and $\kappa$ instead of $\xi$. The expression $\oplus_{\alpha\beta}(\xi +\Psi_{v,\alpha\beta}(\xi,e)_\cX)$ corresponds to his $\kappa^\infty_{v,\alpha}(\xi,e)$.
\end{proof}

\noindent
By \cite[Lemma~B.12.1]{P16} the map $\varphi$, possibly restricted to an even smaller neighbourhood of the origin, is a homeomorphism onto a neighbourhood of $(0,0,0,u,e_0)$. 
\subsection{Rel--$C^\infty$ structure}

This part of the proof is exactly as in \cite[\textsection4.4.]{Swa21} and only reproduced here for the sake of completeness. Shrinking the domain of $\varphi_u$ further, we may assume  
$$U_u = V_u \times \bB_r^{N_i}\times [0,r)^{N_b}\times K_u$$
where $V_u \sub V$ and $K_u \sub K$ are small neighbourhoods of the respective origin. Let 
$$\pi_u \cl U_u \to \cS_u := V_u \times \bB_r^{N_i}\times [0,r)^{N_b}$$ 
be the projection and let $\cC_u \to \cS_u$ be the restriction of the universal curve. Define 
$$\eva_u \cl \cC_u \times K_u \to X : ((v,\alpha,\beta,z),(\xi,e))\mapsto \oplus_{\alpha\beta}\exp_u(\xi + \Psi_{v,\alpha\beta}(\xi,e)_\cX)(z).$$
By construction, $\eva_u(\delbar \cC_u\times K_u)\sub L$.\par
Let $W_u \sub \Mbar_{P}^J(\pi,\beta)^{\reg}$ be the open subset given by the image of $\varphi_u$. Recall that the topological space $\Mbar_{P}^J(\pi,\beta)^{\reg}$ represents the functor 
$$\fM^{\normalfont\text{top}} : Y\mapsto \fM(Y/Y)$$ 
(by the arguments of \cite[\textsection3.2.2]{Swa21}). We define the functor $\fM^{W_u}$ by
$$\fM^{W_u}(Y/T) = \set{\chi \in \fM(Y/T)\mid \chi^{\normalfont\text{top}}\in \fM^{\normalfont\text{top}}(Y) \text{ has image contained in }W_u}.$$ 

\begin{proposition}[Local representability]\label{prop:local-representability} The commutative diagram
		\begin{equation}\label{} \begin{tikzcd}
			\cC  \arrow[d,"\pi"]&\cC\arrow[l,"="] \arrow[d,""] & \cC_u\times K_u\arrow[r,"\eva_u"]\arrow[l,""]\arrow[d,"\pi_u\times\ide"]  & X\\ 
			\cS & \cS \arrow[l,"="] & U_u\arrow[l,"\pi_u"]\arrow[r,"w"] & E \end{tikzcd} \end{equation} 
		defines an element $\wt\varphi_u \in \fM(U_u/\cS_u)$ lifting $\varphi_u \in \fM^{\normalfont\text{top}}(U_u)$. The associated natural transformation $U_u/\cS_u \to \fM$ induces an isomorphism $U_u/\cS_u \xra{\sim}\fM^{W_u}$. 
\end{proposition}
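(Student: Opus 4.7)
The proof will follow the strategy of \cite[Proposition 4.22]{Swa21}, adapted to the bordered setting, by verifying the two assertions separately and reducing each to properties of the good parametrization $\Psi$ from Proposition \ref{prop:sc-implicit-function}.

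First, I will check that $\wt\varphi_u$ genuinely defines an element of $\fM(U_u/\cS_u)$. That the two squares of the displayed diagram are cartesian is immediate from the definitions. For each point $(v,\alpha,\beta,\xi)\in U_u$, the restriction of $\eva_u$ to the corresponding fiber is by construction $\oplus_{\alpha\beta}\exp_u(\xi+\Psi_{v,\alpha\beta}(\xi))$, which satisfies the Cauchy-Riemann equation thanks to property \eqref{i:graph} of the good parametrization, and whose restriction to $\del\cC_{\alpha\beta}$ lies in $L$ because the Riemannian metric and connection chosen in \textsection\ref{subsec:gluing-set-up} preserve $TL$ while $\xi\in\cX$ has boundary values in $u^*TL$. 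The regularity of each such map is ensured by property \eqref{i:kernels}, which identifies $\ker(df_{v,\alpha,\beta}(\xi+\Psi_{v,\alpha\beta}(\xi)))$ with the finite-dimensional kernel $K_u$ of the originally regular operator $D\hpd_J(u)$; since this kernel has the expected dimension, surjectivity persists. Finally, rel--$C^\infty$-ness of $\eva_u$ follows from the fact that $\Psi$ is $sc$-smooth relative to $\cS_u$ by Proposition \ref{prop:sc-implicit-function} (applied to the $sc$-smooth map $\cF$ of Theorem \ref{prop:cr-sc-smooth}), combined with the observation that pointwise evaluation at smooth loci of the family $\cC_u\to \cS_u$, followed by the exponential map and the pre-gluing operation (which by construction depends smoothly on the gluing parameters away from the nodes), is a rel--$C^\infty$ operation on $U_u$ with values in $X$.

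Next, I will establish the natural transformation $U_u/\cS_u\to \fM^{W_u}$ induced by $\wt\varphi_u$ is an isomorphism of functors. Given $(Y/T)\in C^\infty/\cdot$ and $\chi\in\fM^{W_u}(Y/T)$, by the definition of $\fM^{W_u}$ the topological map $\chi^{\text{top}}\colon Y\to W_u$ factors through a unique continuous map $g\colon Y\to U_u$ compatible with a continuous map $h\colon T\to\cS_u$, as a consequence of Lemma \ref{lem:gluing-injective} and Lemma \ref{lem:gluing-surjective}. The content of surjectivity is therefore to show that $g$ is automatically rel--$C^\infty$ and that $\chi$ arises from $g$ via pullback of $\wt\varphi_u$.

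The verification that $g$ is rel--$C^\infty$ is the crux of the argument and proceeds via the injective evaluation $L\colon\cX\to \bigoplus_i T_{u(q_i)}X/V_i$, which restricts to an isomorphism $K_u\xra{\sim} L(K_u)$ and, by continuous extension, to an isomorphism on $\ker(df_{v,\alpha,\beta}(\xi+\Psi_{v,\alpha,\beta}(\xi)))$ for all $(v,\alpha,\beta,\xi)\in U_u$ close to the origin. Composing $g$ with projection to $\cS_u$ recovers $h$, so by Proposition \ref{prop:sc-implicit-function} the problem reduces to the $K_u$-factor. There, the composite $L\circ(\text{evaluation of }\chi\text{ at }q_1,\dots,q_m)\colon Y\to L(K_u)$ is rel--$C^\infty$ because $\chi$ is, and this composite equals $L\circ g|_{K_u}$ by construction. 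Inverting $L|_{K_u}$, which is a linear isomorphism of finite-dimensional vector spaces, yields the rel--$C^\infty$ structure on the $K_u$-factor of $g$. That $\chi = g^*\wt\varphi_u$ then follows because both define the same family of holomorphic maps by Lemma \ref{lem:gluing-injective}, and injectivity of the natural transformation is clear for the same reason.

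The main obstacle is the verification in the last paragraph, namely extracting the rel--$C^\infty$ structure on $g$ from the rel--$C^\infty$ data of $\chi$; this requires the fact that the kernels of the linearized Cauchy-Riemann operators form a continuous (even rel--$C^\infty$) family, so that $L$ remains an isomorphism on these kernels throughout $U_u$, a consequence of the strong operator continuity of $Q_{v,\alpha,\beta}$ noted after \eqref{eq:right-inverse}.
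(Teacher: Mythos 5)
Your proof takes essentially the same route as the paper's, which delegates both assertions to \cite[Theorems~4.14 and~4.16]{Swa21} (carrying over verbatim, minus the perturbation space $E$) and which rests on exactly the two ingredients you flag: the $sc$-smoothness of the good parametrization $\Psi$ from Proposition~\ref{prop:sc-implicit-function} to obtain rel--$C^\infty$-ness of $\eva_u$, and the finite-dimensional evaluation map $L$ to transfer the rel--$C^\infty$ structure from $\chi$ to the lift $g$. One small economy the paper makes explicit: since Lemmas~\ref{lem:gluing-injective}--\ref{lem:gluing-surjective} already ensure $\varphi_u$ is a homeomorphism onto an open subset of the regular locus $\Mbar_{g,h}^J(\pi,\beta)^{\reg}$, the first assertion reduces solely to showing $\eva_u$ is rel--$C^\infty$, so your extra checks of holomorphicity, boundary conditions, and regularity for $\wt\varphi_u$ are correct but already baked into the construction of $\varphi_u$.
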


\begin{proof} The first claim follows as soon as we have shown that $\eva_u\cl \cC_u\times K_u/\cC$ is relatively smooth. This can be seen from the same arguments as in the proof of \cite[Theorem~4.14]{Swa21}, whose notation we follow closely. The second assertion is our analogue of \cite[Theorem~4.16]{Swa21}, whose proof carries over verbatim. Note that in both cases, we do not have the additional perturbation space $E$ since we consider honest pseudo-holomorphic curves, while \cite{Swa21} allows for perturbations of the Cauchy-Riemann equation.\end{proof}

This result relieves us of having to show that the transition functions between the local charts given by $\varphi_u\cl U_u \to W_u$ are relatively smooth with respect to $\cS$. It completes thus the proof of Theorem \ref{thm:representable}.
\section{Orientations}\label{sec:orientations}
We discuss orientations of global Kuranishi charts and summarise how to orient the index bundles of Cauchy--Riemann operators over a bordered surface with relatively spin totally real boundary condition.

\subsection{Orientations of global Kuranishi charts}\label{subesc:gkc-conventions}
We use the following conventions for the orientation of quotients and vector bundles.

\begin{definition}\label{de:or-quotient} Suppose a Lie group $G$ acts almost freely, properly, and locally linearly on an oriented topological manifold $M$. We orient the quotient $M/G$ (over $\bQ$) by pulling back the orientation $(-1)^{(\dim(M)-\dim(G))\dim(G)}\fo_M$ along the homeomorphism \eqref{eq:slice-homeo}.
\end{definition}

\begin{definition}\label{de:or-vector-bundle} Given an oriented orbi-bundle $\pi \cl W\to Y$ over an oriented orbifold, we orient the total space $W$ via the Thom isomorphism $H^{\dim(W)}(W;\bQ)\cong H^{\dim(Y)}(Y;\bQ)\otimes \bQ\lspan{\tau_W}$. 
\end{definition}

\begin{remark}\label{rem:or-smooth-case} If $M$ and $Y$ are smooth, then this can be said more concisely. That is, letting $q$ be the quotient map, we orient $T(M/G)$ and $TW$ via the isomorphisms 
	$$TM\cong q^*T(M/G)\oplus \fg\qquad \text{and}\qquad TW\cong \pi^*TY\oplus \pi^*W.$$
\end{remark}

\begin{definition}\label{de:orientation-gkc} For a rel--$C^\infty$ global Kuranishi chart $\cK = (G,\cT/\cB,\cE,\obs)$ for $Z$, we define its \emph{determinant} 
	\begin{equation}\label{de:determinant-gck}\det(\cK) := \det(T\cT)\otimes \det(\fg)\dul \otimes\det(\cE)\dul,\end{equation}
	which we consider as a vector bundle germ near $\obs\inv(0)$. An \emph{orientation} of $\cK$ is a section of its \emph{orientation line} $(\det(\cK)\sm \{0\})/\bR_{> 0}$.
\end{definition}

\noindent
As was already observed in \cite{AMS21}, a global Kuranishi chart might be orientable even though its thickening and obstruction bundle are not. However, in this case one can always stabilise by the obstruction bundle to obtain a global Kuranishi chart whose thickening and obstruction bundle are orientable. 

\begin{definition}\label{de:product-orientation} If $\cK_i$ is a global Kuranishi chart for $i = \{0,1\}$, equipped with an orientation $\fo_{\cK_i}$, then we orient the \emph{product global Kuranishi chart} 
	$$\cK_0 \times\cK_1 = (G_0\times G_1,\cT_0\times\cT_1,\cE_0\boxplus \cE_1,\obs_0\boxtimes\obs_1)$$\noindent
	via the \emph{product orientation} $\fo_{\cK_0}\times\fo_{\cK_1} := \fo_{\cK_0}\wedge \fo_{\cK_1}$. 
\end{definition}

\begin{remark}\label{} Given orientations of thickening, covering group and obstruction bundle, another natural choice of orientation of $\cK_0\times\cK_1$ would have been 
	$$(\fo_{\cK_0}\times\fo_{\cK_1})' := \fo_{\cT_0}\wedge \fo_{\cT_1}\wedge \fo_{\fg_1}\dul \wedge \fo_{\fg_0}\dul \wedge \fo_{\cE_1}\dul \wedge \fo_{\cE_0}\dul.$$\noindent 
	The advantage of Definition \ref{de:product-orientation} is that it does not require a choice of the components of the global Kuranishi charts and the property $\fo_{\cK_1}\times \fo_{\cK_0} = (-1)^{\vdim(\cK_0)\vdim(\cK_1)}\fo_{\cK_0}\wedge \fo_{\cK_1}$. \end{remark}

\noindent
More generally, we define the fibre product of global Kuranishi charts and its orientation as follows, recovering  Definition \ref{de:orientation-fibre-product} in case both obstruction bundles are trivial.

\begin{definition}\label{de:fibre-product-gkc} Suppose $\cK_0$ and $\cK_1$ are oriented global Kuranishi charts, each equipped with a $G_i$-invariant rel--$C^\infty$ submersion $f_i \cl \cK_i \to Y$ to a smooth oriented manifold $Y$. We equip the \emph{fibre product global Kuranishi chart} 
	$$\cK_0 \times_Y\cK_1 := (G_0\times G_1,\cT_0\times_Y\cT_1,\cE_0\times \cE_1|_{\cT_0\times_Y\cT_1},\fs_0\times\fs_1|_{\cT_0\times_Y\cT_1})$$\noindent
	with the unique orientation $\fo_{\cK_0\times_L\cK_1}$ so that the canonical isomorphism 
	$$\det(\cK_0\times_Y\cK_1)\otimes \det(TY) \xra{\cong} \det(\cK_0)\otimes \det(\cK_1)$$\noindent
	maps it to $(-1)^{\vdim(\cK_1)\dim(Y)}\fo_{\cK_0}\times\fo_{\cK_1}$.
\end{definition}

\noindent
For the next lemma, we think of the thickenings as the orbifolds $\cT/G$ as this is how we will use it in the main application, and thus just write $\fo_{\cT}$ instead of $\fo_\cT\wedge \fo_\fg\dul$.

\begin{lemma}\label{lem:fibre-product-orientations} Let $\cK_0$ and $\cK_1$ be as in Definition \ref{de:fibre-product-gkc} with submersion to $Y$. Then the obstruction bundle of $\cK_0 \times_Y\cK_1$ is given by $\cE_0 \boxplus\cE_1$. Suppose $\cE_0$ and $\cE_1$ are orientable and endow their direct sum with the direct sum orientation. Then, write $\wt\fo_{\cT_0\times_Y\cT_1}$ for the induced orientation of the thickening and $\fo_{\cT_0\times_Y\cT_1}$ for the standard fibre product orientation, we obtain that 
	$$\wt\fo_{\cT_0\times_Y\cT_1} =(-1)^{\rank(\cE_0)(\dim(Y) +\vdim(\cK_1))} \fo_{\cT_0\times_Y\cT_1}. $$
\end{lemma}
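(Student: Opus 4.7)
The plan is to carry out the sign computation directly from the definitions. Unwinding Definition \ref{de:orientation-gkc} and using the direct sum orientations on $\fg_0\oplus\fg_1$ and $\cE_0\oplus\cE_1$, one can write
\[
\fo_{\cK_0\times_Y\cK_1} = \wt\fo_{\cT_0\times_Y\cT_1}\wedge \fo_{\fg_0}^\vee\wedge\fo_{\fg_1}^\vee \wedge \fo_{\cE_0}^\vee\wedge\fo_{\cE_1}^\vee,
\]
whereas $\fo_{\cK_i} = \fo_{\cT_i}\wedge \fo_{\fg_i}^\vee\wedge \fo_{\cE_i}^\vee$. I would substitute these decompositions into the defining equation
\[
\fo_{\cK_0\times_Y\cK_1}\otimes \fo_Y = (-1)^{\vdim(\cK_1)\dim Y}\,\fo_{\cK_0}\otimes \fo_{\cK_1}
\]
of Definition \ref{de:fibre-product-gkc}, bring both sides into a common ordering via the Koszul sign rule, and then read off the relationship between $\wt\fo_{\cT_0\times_Y\cT_1}$ and $\fo_{\cT_0\times_Y\cT_1}$.

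Concretely, on the right-hand side I would move $\fo_{\cT_1}$ into the second slot past $\fo_{\fg_0}^\vee\wedge\fo_{\cE_0}^\vee$ and then separate $\fo_{\fg_1}^\vee$ from $\fo_{\cE_0}^\vee$, picking up a Koszul sign of $(-1)^{\dim\cT_1(\dim\fg_0 + \rank\cE_0) + \rank\cE_0\,\dim\fg_1}$. On the left-hand side, I would bring $\fo_Y$ into position just after $\wt\fo$, past the four dual orientations, contributing $(-1)^{\dim Y(\dim\fg_0 + \dim\fg_1 + \rank\cE_0 + \rank\cE_1)}$. With both sides now written as wedge products in the same order, the standard fibre product identification $\fo_{\cT_0}\wedge\fo_{\cT_1} = \fo_{\cT_0\times_Y\cT_1}\wedge\fo_Y$ lets me cancel the matching tail $\fo_Y\wedge\fo_{\fg_0}^\vee\wedge\fo_{\fg_1}^\vee\wedge\fo_{\cE_0}^\vee\wedge\fo_{\cE_1}^\vee$ and solve for $\wt\fo$ as an explicit sign times $\fo_{\cT_0\times_Y\cT_1}$.

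The final step will be to substitute $\vdim(\cK_1)\equiv \dim\cT_1 + \dim\fg_1 + \rank\cE_1\pmod 2$ into the aggregated sign. The contributions of the form $\dim Y\cdot\dim\fg_1$ and $\dim Y\cdot\rank\cE_1$ cancel against the corresponding pieces of $\vdim(\cK_1)\dim Y$, and the remaining cross-terms collect into $\rank(\cE_0)\bigl(\dim Y + \vdim(\cK_1)\bigr)$ modulo $2$, which is the claimed sign.

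The principal obstacle is combinatorial bookkeeping rather than anything conceptual: one must apply the Koszul sign rule consistently across six tensor factors of mixed primal and dual type. The sign $(-1)^{\vdim(\cK_1)\dim Y}$ built into Definition \ref{de:fibre-product-gkc} was introduced precisely so that $\wt\fo$ coincides with $\fo_{\cT_0\times_Y\cT_1}$ when the $\fg_i$ and $\cE_i$ are trivial, and the lemma asserts that in general the only residual discrepancy comes from commuting $\fo_{\cE_0}^\vee$ past both the $Y$-factor and the entire $\cK_1$-block.
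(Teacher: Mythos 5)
Your approach is essentially the same as the paper's: unwind Definitions \ref{de:orientation-gkc}, \ref{de:fibre-product-gkc} and \ref{de:orientation-fibre-product} and Koszul-commute. The paper streamlines the bookkeeping by absorbing the group factors $\fo_{\fg_i}\dul$ into the thickening orientations (see the remark immediately preceding the lemma), so only the two $\cE$-factors and $\fo_Y$ need to be moved; keeping the $\fg_i$ explicit as you propose is legitimate but generates extra cross-terms between the $\fg_i$ and the $\cE_j$ that must all cancel, so there is simply more to track.

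There is, however, a concrete error that contradicts your own sanity check. You invoke the ``standard fibre product identification $\fo_{\cT_0}\wedge\fo_{\cT_1} = \fo_{\cT_0\times_Y\cT_1}\wedge\fo_Y$,'' but Definition \ref{de:orientation-fibre-product} of this paper has a sign: $\fo_{\cT_0}\wedge\fo_{\cT_1} = (-1)^{\dim(\cT_1)\dim(Y)}\fo_{\cT_0\times_Y\cT_1}\wedge\fo_Y$. When the $\fg_i$ and $\cE_i$ are trivial one has $\vdim(\cK_1) = \dim(\cT_1)$, so the compensating sign $(-1)^{\vdim(\cK_1)\dim(Y)}$ in Definition \ref{de:fibre-product-gkc} is precisely $(-1)^{\dim(\cT_1)\dim(Y)}$; with your signless identification you would conclude $\wt\fo = (-1)^{\dim(\cT_1)\dim(Y)}\fo_{\cT_0\times_Y\cT_1}$ in the trivial case, which contradicts the equality you say that compensating sign was designed to produce. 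Restoring the sign from Definition \ref{de:orientation-fibre-product} is what kills the residual $\dim(\cT_1)\dim(Y)$ piece of $\vdim(\cK_1)\dim(Y)$, which your sketch otherwise fails to account for (you only mention cancelling the $\dim(\fg_1)\dim(Y)$ and $\rank(\cE_1)\dim(Y)$ pieces). You should also fix a convention for $(\omega_1\wedge\omega_2)\dul$ versus $\omega_1\dul\wedge\omega_2\dul$: the paper's computation is consistent with the order-reversing convention, and the forward convention introduces a stray $(-1)^{\rank(\cE_0)\rank(\cE_1)}$ which the paper is only insensitive to because the obstruction bundles are eventually arranged to have even rank, cf. Remark \ref{rem:right-gkc}.
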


\begin{proof} Let $n = \dim(Y)$ and $m_i = \rank(\cE_i)$. We have 
	$$\wt\fo_{\cT_0\times_Y\cT_1}\wedge( \fo_{\cE_0}\wedge \fo_{\cE_1})\dul\wedge\fo_Y= (-1)^{\vdim(\cK_1)n}\fo_{\cT_0}\wedge \fo_{\cE_0}\dul\wedge \fo_{\cT_1}\wedge \fo_{\cE_1}\dul$$\noindent
	whence 
	
	\begin{align*}(-1)^{n(m_0+m_1) }\wt\fo_{\cT_0\times_Y\cT_1}\wedge\fo_Y &= (-1)^{\vdim(\cK_1)n+ m_0\vdim(\cK_1)} \fo_{\cT_0}\wedge \fo_{\cT_1} \\&=(-1)^{\vdim(\cK_1)n+ m_0\vdim(\cK_1)+n\dim(\cT_1)} \fo_{\cT_0\times_Y\cT_1}\wedge \fo_Y. 
	\end{align*}
\end{proof}

\begin{lemma}\label{lem:permuting-fibre-products} Suppose we have transverse fibre products $X_i\times_{Z_i}Y_i$ for $i = \{0,1\}$ and $Y_1 \times_WY_2$ of orbifolds so that $(X_1\times_{Z_1}Y_1)\times_W(X_2\times_{Z_2}Y_2)$ is transverse as well. If all orbifolds are oriented, then 
	\begin{gather*}\label{} (X_1\times_{Z_1}Y_1)\times_W(X_2\times_{Z_2}Y_2) = (-1)^{\epsilon}(X_1\times_W X_2)\times_{Z_1\times Z_2} (Y_1\times_W Y_2)\\
		\text{with }\quad \epsilon =(\dim Y_1 -\dim Z_1)(\dim X_2-\dim Z_2 -\dim W)\end{gather*}
\end{lemma}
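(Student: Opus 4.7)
Write $a_i = \dim X_i$, $b_i = \dim Y_i$, $c_i = \dim Z_i$, and $w = \dim W$. In the setting of interest --- as in the application in the proof of Lemma~\ref{lem:h3-clutching-relation} --- the $W$-matching on the LHS is imposed through maps $X_i \to W$ (which induce the structure map $P_i := X_i \times_{Z_i} Y_i \to W$), while the $Y_i$ carry no map to $W$ and hence $Y_1 \times_W Y_2$ on the RHS reduces to the ordinary product $Y_1 \times Y_2$. With this reading, both sides cut out the same subset
$$\{(x_1, y_1, x_2, y_2) \in X_1 \times Y_1 \times X_2 \times Y_2 : f_1(x_1) = g_1(y_1),\ f_2(x_2) = g_2(y_2),\ h_1(x_1) = h_2(x_2)\}$$
of the ambient product, and a transverse dimension count gives $a_1 + b_1 + a_2 + b_2 - c_1 - c_2 - w$ on both sides. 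The plan is thus to identify LHS and RHS as the same underlying orbifold and to compute the two induced orientations via the fibre product convention of Definition~\ref{de:fibre-product-gkc}.

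Specialised to smooth oriented orbifolds with trivial obstruction bundles, that convention reads
$$\fo_{A \times_C B} \wedge \fo_C = (-1)^{\dim B \cdot \dim C}\, \fo_A \wedge \fo_B.$$
The next step is to apply this identity to $\fo_{\text{LHS}}$: unpacking the two inner fibre products $P_i$ and then the outer $P_1 \times_W P_2$, and subsequently moving $\fo_{Z_1}$ past $\fo_{P_2}$, one arrives at an expression of the shape $(-1)^{\epsilon_L}\,\fo_{X_1} \wedge \fo_{Y_1} \wedge \fo_{X_2} \wedge \fo_{Y_2}$ after wedging with $\fo_W \wedge \fo_{Z_1} \wedge \fo_{Z_2}$. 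An analogous unpacking of $\fo_{\text{RHS}}$ via $Q := X_1 \times_W X_2$, the product $R = Y_1 \times Y_2$, and the outer $Q \times_{Z_1 \times Z_2} R$ produces, after wedging with the same form, $(-1)^{\epsilon_R}\,\fo_{X_1} \wedge \fo_{X_2} \wedge \fo_{Y_1} \wedge \fo_{Y_2}$; a single transposition of $\fo_{X_2}$ with $\fo_{Y_1}$ then contributes $(-1)^{a_2 b_1}$ and puts both expansions into the same ordering.

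Collecting the three contributions and reducing modulo $2$ yields the total sign
$$\epsilon \equiv \epsilon_L - \epsilon_R - a_2 b_1 \equiv (b_1 - c_1)(a_2 - c_2 - w) \pmod{2},$$
as claimed. The main obstacle is purely organisational: each application of the fibre product convention contributes a $(-1)^{\dim B \cdot \dim C}$ factor, and each rearrangement of the wedge product contributes a graded commutativity sign, so the unpackings must be applied in a fixed order on both sides and the single permutation of wedge factors reserved for the very end, so that the various parities can cancel transparently.
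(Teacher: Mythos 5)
Your proposal is correct and matches the paper's approach: the paper reduces pointwise to a local model where each orbifold is a product of vector spaces and each structure map is a coordinate projection, then unwinds both sides through a chain of graded transpositions, while you achieve the same bookkeeping by applying the defining identity $\fo_{A\times_C B}\wedge\fo_C = (-1)^{\dim B\cdot\dim C}\fo_A\wedge\fo_B$ from Definition~\ref{de:orientation-fibre-product} directly and comparing against a common reference ordering. The sign arithmetic you indicate does close up to the claimed $(b_1-c_1)(a_2-c_2-w)\pmod 2$, including your observation that $Y_1\times_W Y_2$ is really an ordinary product in the intended situation, which agrees with how the paper's proof actually treats it.
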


\noindent
The same result holds for fibre products of global Kuranishi charts, replacing dimension by virtual dimension.

\begin{proof} As the question is pointwise, we may assume that all orbifolds are manifolds (in fact, vector spaces) and $X_i = X'_i \times Z_i$ and $X'_1 = X''_1\times W$ and $X'_2 = W\times X''_2$. Then 
	$$X_i \times_{Z_i} Y_i \cong X'_i \times Y_i\qquad\qquad (X''_1\times W)\times_W (W\times X''_2) = X''_1\times W\times X''_2.$$\noindent
	Thus, we have 
	
	\begin{align*}(X_1\times_{Z_1}Y_1)&\times_W(X_2\times_{Z_2}Y_2) =  (X''_1\times W\times Y_1)\times_W(W\times X''_2\times Y_2)
		\\&= (-1)^{\dim Y_1 \dim X'_1} Y_1 \times (X''_1\times W\times X''_2)\times Y_2 
		\\&=(-1)^{\dim Y_1\dim X''_2} (X''_1\times W\times X''_2)\times(Y_1\times Y_2)
		\\&=(-1)^{\dim Y_1\dim X''_2} (X''_1\times W\times X''_2)\times Z_1\times Z_2\times_{Z_1\times Z_2}(Y_1\times Y_2)
		\\&=(-1)^{\dim Y_1\dim X''_2+\dim Z_1\dim X'_2} ((X''_1\times Z_1\times W\times X''_2\times Z_2)\times_{Z_0\times Z_1}(Y_1\times Y_2)
		\\&  =(-1)^{\dim Y_1\dim X''_2+\dim Z_1\dim X'_2} ((X''_1\times Z_1\times W) \times_W X_2)\times_{Z_1\times Z_2} (Y_1\times Y_2)
		\\&=(-1)^{\dim Y_1\dim X''_2+\dim Z_1\dim X''_2}(X_1\times_W X_2)\times_{Z_1\times Z_2} (Y_1\times Y_2)
	\end{align*}
	so that 
	\begin{equation*}\epsilon \equiv(\dim Y_1 +\dim Z_1)(\dim X_2-\dim Z_2 -\dim W).\end{equation*}
\end{proof}

\subsection{Cauchy--Riemann operators on bordered surfaces}\label{subsec:orientation-construction} This subsection is a brief summary of the definition of \cite{CZ24}, describing how a relative spin structure defines an orientation on Cauchy-Riemann problem on a surface with boundary. Proposition \ref{prop:orientation-for-H1-node} is the analogue of \cite[Theorem~4.3.3(c)]{WW17} using the constructions and conventions of \cite{CZ24}. Concretely, it describes how orientations compare as a surface develops a boundary node of type (H1). To maximise compatibility with \cite{CZ24} we identify a surface of boundary with the symmetric surface given by taking its complex double with the induced real structure.\footnote{As we consider any surface with boundary to be endowed with an ordering of its boundary components, this induces a decoration on its complex double in the sense of \cite{CZ24}.}\\

\noindent\textbf{Spin structures: }Recall that $\text{Spin}(n)$ is the universal cover of $\text{SO}(n)$ and that a spin structure on an oriented vector bundle $E\to B$ is a lift of the classifying map $B\to B\text{SO}(n)$ to $B\to B\text{Spin}(n)$. 
Since $\pi_2(B\text{Spin}(n))  =0$ and a nullhomotopy of $B\to B\text{SO}(n)$ determines a (homotopy class) of trivialisation, this definition can be rephrased if $B$ is a CW complex and $n \ge 3$. In this case, such a lift is equivalent to a choice of homotopy class of trivialisations of $E$ restricted to the $2$-skeleton of $B$, cf. \cite[Definition~1.2]{CZ24}. In particular, it yields a choice of homotopy class of trivialisation of $\gamma^*E$ for any map $\gamma\cl S^1\to B$. It is exactly this property of yielding `coherent \emph{choices} of trivialisations' that will be crucial for the construction of orientations of Cauchy-Riemann operators below.

\begin{definition}\label{de:ospin} An \emph{OSpin structure} $\fo\fs$ on a real vector bundle pair $(V,\varphi)$ over a symmetric surface $(C,\sigma)$ is an orientation $\fo$ on $V^\varphi$ together with a spin structure $\fs$ on $V^\varphi \to C^\sigma$ lifting $\fo$.
\end{definition}

\noindent\textbf{On a smooth surface:} Suppose $(V,\varphi)$ is a real vector bundle pair of rank $n$ over a decorated symmetric surface $(C,\sigma)$ equipped with an OSpin structure $(\fo,\fs)$. Suppose $D$ is a real Cauchy--Riemann operator on $(V,\varphi)$. Let $\{U_i\}_{i\leq r}$ be a set of $\sigma$-invariant tubular neighbourhoods of the boundary components so that $\cc{U_i}\cap \cc{{U_j}} = \emst$ if $i \neq j$ and $L|_{\cc{U_i}}$ is trivial. Let $\gamma_i^\pm := \del^\pm U_i$ and let $C_0$ be the surface obtained from $C$ by collapsing all loops $\gamma_i^\pm$. Let $\cC\to Q:= \bB^{2r} \sub \bC^{2r}$ be the associated family of deformations so that $C = \cC|_{\{\mathbf{t}_1\}}
$ and $C_0 =\cC|_{\{\mathbf{0}\}}$  and let $(\cV,\wt\varphi)$ be an extension of $(V,\varphi)$ over all of $\cC$. As the base of the deformation is connected, $(\fo,\fs)$ determines an OSpin structure on $(V_{\mathbf{t}},\varphi_{\mathbf{t}}) = (\cV,\wt\varphi)|_{\cC_{\mathbf{t}}}$ for each $\mathbf{t}\in Q$. It remains to associate to $\fo_{x_{\mathbf{0}}}$ an orientation on $C_{0}$. Pulling $(V_0,\wt\varphi_0)$ back to $\wt C_0$ and using the exact triple 
$$0 \to D_0 \to \wt D_0 \to \oplus \bigoplus\limits_{z \in \cN_{C_0}} (V_{0})_z\to 0$$\noindent
of Fredholm operators, it remains to fix an orientation of $\det(\wt D_0)$. The normalisation $\wt C_0$ is given by a disjoint union of a closed surface $\Sigma$ and several copies of the disc labelled by $\{1,\dots,r\}$ according to the given ordering of the boundary circles, each of which is attached to $\Sigma$ by an interior node $z_i$. This yields an isomorphism 
\begin{equation}\label{eq:orientation-line-normalisation}
	\det(D_0)\otimes \bigotimes\limits_{1 \leq i \leq r} (V_{0})_{z_i}\cong \bigotimes\limits_{1 \leq i \leq r}\det(\wt D_0|_{\bD_i})\otimes \det(\wt D_0|_\Sigma) 
\end{equation}
 We endow $\det(\wt D_0|_\Sigma)$ with the complex orientation if and only if $\lspan{w_{\fs_0}(L_0),[\Sigma]} = 0$. Meanwhile, we use the trivialisation of the real bundle pair over each disc induced by the spin structure to identify $\det(\wt D_0|_{\bD_i})$ with $\bR^n$, e.g., given by evaluating at a point on the boundary and equipping it with the canonical induced orientation. Transporting the induced orientation of $\det(D_0)$ back to $\cC_1$, we obtain the \emph{intrinsic orientation} of $\det(D_{(L,\phi)})$.
\bigskip

\noindent\textbf{On a nodal surface:} Let $\mathbf{C}$ be a decorated symmetric nodal Riemann surface and suppose $(V,\varphi)$ is a real vector bundle pair of rank $n$ over the underlying symmetric surface $(C,\sigma)$ equipped with an OSpin structure. Suppose $D_V$ is a real Cauchy-Riemann operator on $V$. Let $(\wt C,\wt\sigma)$ be its normalisation and let $(\wt V,\wt\varphi)$ and $D_{\wt V}$ be the pullback of $(V,\varphi)$ and $D_V$ to the normalisation. Then we obtain a short exact sequence 
$$0 \to D_V \to D_{\wt V}\to \bigoplus\limits_{x\in \cN^i_C}(V_\bC)_x\oplus \bigoplus\limits_{x\in \cN^b_C}V_x\to 0$$\noindent
of Fredholm operators, where $\cN^i_C$ and $\cN^b_C$ are the sets of interior and boundary nodes of $C$. The last map is induced by the evaluation map $\xi \mapsto (\xi(x_+)-\xi(x_-))_{x\in \cN_C}$. By \cite{Zi16}, we obtain a unique isomorphism 
$$\det(D_V) \otimes \bigotimes\limits_{x\in \cN^i_C}(V_\bC)_x\otimes \bigotimes\limits_{x\in \cN^b_C}V_x\cong \det(D_{\wt V})$$\noindent
so an orientation of $\det(D_{\wt V})$ and an orientation of $V_x$ for each $x \in \cN_x$ induces an orientation of $\det(D)$. If the orientation of $V$ and $\det(D_{\wt V})$ is induced by the pullback of an OSpin structure $\fo\fs$ on $(V,\varphi)$, we call the induced orientation $\fo_{\normalfont\text{int}}(V,\varphi;\fo\fs)$ of $\det(D)$ the \emph{intrinsic orientation} of $(V,\varphi)$ induced by $\fo\fs$.\par 
On the other hand, letting $\pi \cl \cC\to B$ be a flat deformation of $(C,\sigma)$ over a connected base $B$ and endowing the fibres of $\pi$ with the induced decoration, we can extend $(V,\varphi)$ to a real bundle pair $\cV$ over $(\cC,\wh\sigma)$. Let $\mathbf{C_1}$ be the fibre of $\pi$ over a regular value $b_1$ and let $(V_1,\varphi_1)$ be the restriction of $\cV$ to $C_1$. Any OSpin structure $\fo\fs$ on $(V,\varphi)$ extends to an OSpin structure on $\cV$, which in turn restricts to an OSpin structure $\fo\fs_1$ on $(V_1,\varphi_1)$. Thus, it induces an orientation on $\det(D_{V_1})$ via the construction for smooth surfaces. Using any path from $b_1$ to the image of $C$ under $\pi$, we obtain a homotopy of Fredholm operators and, in particular, an induced orientation on $\det(D_V)$. Following \cite{CZ24}, we call this orientation the \emph{limiting orientation} $\fo_{\lim}(V,\varphi;\fo\fs)$ of $D_V$.

\begin{proposition}\label{prop:orientation-for-H1-node} Suppose $\mathbf{C}$ is a decorated symmetric surface with a unique node $\normalfont\text{nd}$ of type (H1). Let $(V,\varphi)$ be a real bundle pair over $\mathbf{C}$ of rank $n$ equipped with a relative OSpin structure $\fo\fs$. Then, the intrinsic and the limiting orientation on $D_{(V,\varphi)}$ agree if and only if 
	\begin{equation}\label{} n(|\pi_0(C^\sigma)|-i) + \frac{n(n-1)}{2} \in 2\bZ,\end{equation} 
	where $i$ is the position of the singular component of $C^\sigma$.
\end{proposition}

\begin{proof} Changing the order of the boundary components, we may assume by \cite[CROrient~$1\fo\fs$(2)]{CZ24} that the singular boundary component comes last. Collapsing the boundary of a tubular neighbourhood of the singular component of $C$ and applying \cite[CROrient~7C(a)]{CZ24}, we may assume that $C$ is a cylinder with an (H1) node and that $(V,\varphi)$ is the trivial real bundle pair, with the trivialisation obtained from the OSpin structure $(\fo,\fs).$
	
There are two ways of proving the desired statement, either by adapting the construction of \cite[\textsection8.1]{CZ24} to the case of the cylinder using \cite[Theorem~C.1.10]{MS12} or by leveraging the results of \cite[\textsection 8]{CZ24}. We will pursue the latter approach. 

Let $(\cC\to[0,2],\wt \sigma)$ be a flat deformation of $C= C_0$ to a smooth cylinder $C_1 = \cC|_{\{1\}}$ and further to a cylinder $C_2$ with one conjugate (i.e., interior) node.
Extend the decoration of $(C,\sigma)$ and the real vector bundle pair $(V,\varphi)$ to a fibrewise decoration and a real vector bundle pair $(\cV,\wh\varphi)$ on $\cC$.
By \cite[CROrient~7C(a)]{CZ24}, the intrinsic orientation of $D_{V_2}$ agrees with the limiting orientation coming from the intrinsic orientation of $D_{V_1}$. Thus we may compare the intrinsic orientation of $D_{V_0}$ with the orientation induced by this homotopy and the intrinsic orientation of $D_{V_2}$. The normalisations $\wt C_0$ and $\wt C_2$ are a disc and a disjoint pair of discs $\bD_1\sqcup \bD_2$, respectively. Recall that the orientation on $\det(D_0)$ and $\det(D_2)$ is induced by the exact sequences of Fredholm operators 
	\begin{equation}\label{fr-ses-1}0 \to D_0 \to \wt D_0 \to V^\varphi\to 0,\end{equation}
	respectively, 
	\begin{equation}\label{fr-ses-2}0 \to D_2 \to \wt D_2 \to V\to 0,\end{equation}
	where we identify the finite-dimensional vector spaces with the Fredholm operator given by the map to the trivial vector space. Since the trivial Cauchy--Riemann operator on the disc is unobstructed, we have that $H^0(\wt D_0) \cong V^\varphi$ and $H^0(\wt D_2 ) = (V^\varphi)^{\oplus 2}$. It follows that~\eqref{fr-ses-1} yields the long exact sequence
	\begin{equation}\label{fr-ses-3}
		0 \to \ker(D_0)\to V^\varphi \xra{v\mapsto v-v} V^\varphi\to \coker(D_0)\to 0,
	\end{equation}	
	whence $\orl(D_0) = \orl(V^\varphi) \orl(V^\varphi)\dul \cong \orl(0)$. The isomorphism is orientation-preserving since the zero map $V^\varphi\to V^\varphi$ is homotopic to the identity. Meanwhile~\eqref{fr-ses-2} is isomorphic to 
	\begin{equation}\label{fr-ses-4}
		0 \to \ker(D_2)\to V^\varphi\oplus V^\varphi \xra{(v,w)\mapsto w-v} V \to V/V^\varphi\to 0,
	\end{equation}
	where we identify the vector bundles with the trivial vector space (using the spin structure). We can homotope the map $V^\varphi\oplus V^\varphi \xra{(v,w)\mapsto w-v} V$ to the map $(v,w)\mapsto w$. Then, using that the canonical orientation of $V$ as a complex vector space is given by $v_1,\ii v_1,v_2,\dots,\ii v_n$ for any basis $v_1,\dots,v_n$ of $V^\varphi$, we obtain that the induced isomorphism 
	\[\det(D_2)\oplus V \cong V^\varphi\oplus V^\varphi\g V/V^\varphi\]
	has orientation sign $(-1)^{\frac{n(n-1)}{2}}$ with respect to the canonical orientation of $V^\varphi=\det(D_2)$.
\end{proof}
	
\noindent
For later use, we record the following statement about perturbed Cauchy--Riemann operators. 

\begin{lemma}\cite[Lemma~3.2]{Bao23}\label{lem:orientation-of-sum} Suppose $D\cl V\to W$ is a Fredholm operator and $\nu \cl Z\to W$ is a linear map from a finite-dimensional vector space so that  $D+\nu$ is surjective. Then there exists a canonical isomorphism 
	$$\det(D+\nu) \cong \det(D)\otimes \topw Z$$\noindent
	given by 
	$$v\wedge (v_{k+1},z_1)\wedge \dots \wedge (v_n,z_{n-k})\mapsto v\otimes ([\nu(z_{n-k})]\wedge \dots \wedge[\nu(z_m)])\dul \otimes z_1\wedge \dots \wedge z_m$$\noindent
	for any $v \in\topw\ker(D)$ and any basis $z_1,\dots,z_m$ of $Z$, where we identify $v  =v_1 \wedge \dots v_k$ with the vector $(v_1,0)\wedge \dots \wedge (v_k,0)\in \Lambda^k\ker(D+\nu)$.\qed
\end{lemma}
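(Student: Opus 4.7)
The plan is to realise $D+\nu$ as a Fredholm operator $\tilde D\cl V\oplus Z\to W$ and fit it into a short exact sequence of two-term complexes together with the original operator $D$. Concretely, consider the diagram
\begin{equation*}
\begin{tikzcd}
0 \arrow[r] & V \arrow[r,"\iota"] \arrow[d,"D"] & V\oplus Z \arrow[r,"\pi"] \arrow[d,"\tilde D"] & Z \arrow[r] \arrow[d,"0"] & 0 \\
0 \arrow[r] & W \arrow[r,"="] & W \arrow[r] & 0 \arrow[r] & 0
\end{tikzcd}
\end{equation*}
with $\iota(v)=(v,0)$ and $\pi(v,z)=z$. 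Each column is a two-term complex and the rows are exact, so we obtain a short exact sequence of Fredholm complexes.

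A standard fact (see e.g.\ the discussion preceding \cite[Theorem~A.2.2]{MS12} or \cite[\textsection5]{Zi16}) is that any short exact sequence $0\to C_\bullet\to C'_\bullet\to C''_\bullet\to 0$ of two-term Fredholm complexes induces a canonical isomorphism $\det(C'_\bullet)\cong \det(C_\bullet)\otimes \det(C''_\bullet)$. Applied here, and using that $\det(0\cl Z\to 0)=\topw Z$, this yields the desired canonical isomorphism
\begin{equation*}
\det(\tilde D)\;\cong\;\det(D)\otimes \topw Z.
\end{equation*}
Since $\tilde D=D+\nu$ is surjective by hypothesis, its determinant simplifies to $\topw\ker(D+\nu)$, so the isomorphism has the form claimed.

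It remains to check that the explicit formula in the statement agrees with this abstract isomorphism. Set $k=\dim\ker D$, $n=\dim\ker(D+\nu)$, and $m=\dim Z$. Choose $v_1,\dots,v_k$ spanning $\ker D$ and complete them to a basis of $\ker(D+\nu)$ by vectors of the form $(v_i,z_{i-k})$ for $k<i\leq n$; the defining relation $D v_i+\nu(z_{i-k})=0$ shows that, after applying the connecting homomorphism of the snake lemma, the classes $[\nu(z_{n-k+1})],\dots,[\nu(z_m)]$ form a basis of $\coker D$ dual to the image of the remaining basis vectors of $Z$ in the quotient. Tracing through the snake lemma then recovers, up to sign, exactly the formula displayed in the lemma. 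The only step that requires care is the sign, which arises from the conventional ordering of the factors in $\det(D)=\topw\ker D\otimes(\topw\coker D)\dul$; this is a routine bookkeeping exercise and matches the cited statement \cite[Lemma~3.2]{Bao23}, so I would simply quote it.
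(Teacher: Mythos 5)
The paper does not prove this lemma; it simply cites \cite[Lemma~3.2]{Bao23}, so there is no in-paper proof to compare against. Your argument via the short exact sequence of two-term Fredholm complexes, the resulting long exact sequence from the snake lemma, and the induced canonical isomorphism of determinant lines is the standard way to establish this, and your identification of the connecting homomorphism $\delta\cl Z\to\coker D$, $z\mapsto[\nu(z)]$, is correct; the outline is sound and the punt on the sign bookkeeping to the cited reference is reasonable.

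Two small remarks. First, there appears to be an off-by-one typo in the paper's statement: the displayed formula writes $[\nu(z_{n-k})]\wedge\dots\wedge[\nu(z_m)]$, but $(v_n,z_{n-k})\in\ker(D+\nu)$ forces $[\nu(z_{n-k})]=0$ in $\coker D$; the correct range is $[\nu(z_{n-k+1})]\wedge\dots\wedge[\nu(z_m)]$, which also makes the dimension count $m-(n-k)=\dim\coker D$ work out. You silently use the corrected indices, which is right. Second, your phrase that these classes are ``dual to the image of the remaining basis vectors'' is slightly loose: what is true is that $z_{n-k+1},\dots,z_m$ map under $\delta$ to a basis $[\nu(z_{n-k+1})],\dots,[\nu(z_m)]$ of $\coker D$, and it is the dual of the corresponding top wedge that appears because $\det(D)=\topw\ker D\otimes(\topw\coker D)\dul$; you may want to rephrase to avoid the impression of a dual-basis pairing.
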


\section{Rel--$C^\infty$ structures and relative smoothings}\label{sec:rel-smooth-forms}

\subsection{The category of rel--$C^\infty$ manifolds}\label{subsec:rel-smooth-manifolds} We recall the definition of a rel--$C^\infty$ manifold from \cite{Swa21}. We write $Y/S$ for the datum of a continuous map $p\cl Y\to S$, whenever $p$ is clear from the context.

\begin{definition}[Rel--$C^\infty$ maps]
	Given spaces $S$ and $T$ and integers $m,n\ge 0$, suppose $U\subset\bR^n\times S$ and $V\subset\bR^m\times T$ are open subsets. A \emph{rel--$C^\infty$ map} $\varphi = (\tilde\varphi,\bar{\varphi})\cl U/S\to V/T$ consists of a continuous map $\varphi\cl S\to T$ and a continuous map $\tilde\varphi\cl U\to V$ having the form
	\begin{align*}
		\tilde\varphi(x,s) = (F(x,s),\varphi(s)),
	\end{align*}
	where $F\cl U\to\bR^m$ is such that all its partial derivatives $D^\alpha_xF\cl U\to\bR^m$ with respect to $x\in\bR^n$ are defined and continuous as functions on $U$.
\end{definition}

\begin{definition}[Rel--$C^\infty$ manifolds]
	For a continuous map of spaces $p\cl Y\to S$ and an integer $n\ge 0$, a \emph{chart} of relative dimension $n$ is an open subset $U\subset Y$ together with an open embedding $\varphi\cl U\to \bR^n\times S$ which is compatible with the projection maps to $S$. Two charts $(U,\bar{\varphi})$ and $(V,\psi)$ of relative dimension $n$ for $Y/S$ are said to be \emph{rel--$C^\infty$ compatible} if
	\begin{align*}
		(\psi\circ\varphi^{-1},\text{id}_S)\cl \varphi(U\cap V)/S\to\psi(U\cap V)/S
	\end{align*}
	is a \emph{rel--$C^\infty$ diffeomorphism}, i.e., a rel--$C^\infty$ map with a rel--$C^\infty$ inverse.
	
	A \emph{rel--$C^\infty$ structure} on $Y/S$ is a maximal atlas of rel--$C^\infty$ compatible charts for $Y/S$. In this situation, we refer to $Y/S$ with this atlas as a \emph{rel--$C^\infty$ manifold} or we say that $Y$ has the structure of a rel--$C^\infty$ manifold over $S$.
\end{definition}

\begin{definition}	Given rel--$C^\infty$ manifolds $Y/S$ and $Z/T$, a \emph{rel--$C^\infty$ map $\varphi=(\tilde\varphi,\bar{\varphi})\cl Y/S\to Z/T$} is a pair of continuous maps intertwining the structure maps so that, in any local chart belonging to the maximal atlases of $Y/S$ and $Z/T$, the pair $(\tilde\varphi,\bar{\varphi})$ induces a rel--$C^\infty$ map between open subsets of $\bR^n\times S$ and $\bR^m\times T$ for some integers $m,n\ge 0$.
	
	The \emph{category of rel--$C^\infty$ manifolds} $(C^\infty/\cdot)$ has as objects rel--$C^\infty$ manifolds and as morphisms rel--$C^\infty$ maps.
\end{definition}

\begin{definition} A \emph{rel--$C^\infty$ vector bundle} on $Y/S$ is a topological vector bundle $E$ on $Y$ along with a maximal atlas of local trivialisations whose transition functions are rel--$C^\infty$ maps to $\GL_\bK(\rank)$ for $\bK\in \{\bR,\bC\}$. 
\end{definition}

\noindent
As, $E/S$ is naturally a rel--$C^\infty$ manifold, there is a well-defined notion of rel--$C^\infty$ sections of a rel--$C^\infty$ vector bundle.

\begin{example} The pullback of a continuous vector bundle $E\to S$ is a rel--$C^\infty$ vector bundle on $Y/S$.
\end{example}

\begin{example}
	Any rel--$C^\infty$ manifold $Y/S$ has a well-defined \emph{vertical tangent bundle} denoted by $T_{Y/S}$. This is a rel--$C^\infty$ real vector bundle on $Y/S$.
\end{example}

\begin{definition}
	A morphism $f\cl Y'/S\to Y/S$ in $(C^\infty/\cdot)$, where the underlying map $S\to S$ is the identity, is called a \emph{(rel--$C^\infty$) vertical submersion} if the differential $df\cl T_{Y'/S}\to f^*T_{Y/S}$ is surjective.
\end{definition}

\noindent
Rel--$C^\infty$ submersions are given by coordinate projections in suitable local coordinates just like $C^\infty$ submersions. This follows from the inverse function theorem with parameters, \cite[Lemma 5.10]{Swa21}. Similarly, we have a well-behaved notion of \emph{vertical transversality} for rel--$C^\infty$ maps, which is the analogue of usual transversality formulated using the vertical tangent bundle.

\begin{definition}\label{} Let $(\wt{C}^\infty/\cdot)$ be the category whose objects are rel--$C^\infty$-manifolds $Y\xra{p} S$, together with the structure of a smooth manifold with corners on $S$, and whose morphisms are rel--$C^\infty$ morphism $(\wt\varphi,\bar{\varphi}$, where $\bar{\varphi}$ is required to be smooth.
\end{definition}

\noindent
There is a canonical forgetful functor $(\wt{C}^\infty/\cdot)\to (C^\infty/\cdot)$. By abuse of notation, we call objects of $(\wt{C}^\infty/\cdot)$ also rel--$C^\infty$ manifolds with the understanding that the base will from now on always admits a smooth structure.

\begin{remark}\label{} There also exists a forgetful functor from $(\wt{C}^\infty/\cdot)$ to the category of topological manifolds.\end{remark}

\begin{definition}\label{de:tangent-bundle} The \emph{tangent bundle} of $Y\xra{p} S$ in $(\wt{C}^\infty/\cdot)$ is the rel--$C^\infty$ vector bundle $$TY := T_{Y/S}\oplus p^*TS.$$
\end{definition}

\begin{definition}\label{} An \emph{orientation} of $Y/S$ is an orientation of the tangent bundle $TY$.\end{definition}

\noindent
Clearly, any morphism $\varphi = (\wt\varphi,\bar{\varphi}\cl Y/S\to Y'/S'$ has a well-defined differential 
\begin{equation}\label{eq:differential}d\varphi \cl TY\to TY' : (y,\hat{y},v)\mapsto (\wt\varphi(y), d\wt\varphi(y)\hat{y},d\bar{\varphi}(p(y))v) \end{equation}
which defines a rel--$C^\infty$ map $TY/S\to TY'/S'$. If $\varphi$ is an isomorphism, its \emph{orientation sign} is $\pm 1$, depending on whether it is orientation-preserving or orientation-reversing. For the next definition, we have to be careful in the case where the base has corners. Following \cite{ST16}, we say a smooth map $S\to S'$ between manifolds with corneres is a submersion if it is locally diffeomorphism to a projection $\bR_{\geq 0}^{k}\times\bR_{\geq 0}^{n-k}\to \bR_{\ge 0}^k$.

\begin{definition}\label{de:submersion} We call $\varphi \cl Y/S\to Y'/S'$ a \emph{rel--$C^\infty$ submersion} if the map $S\to S'$ is a submersion and $d^v\varphi\cl T^vY\to T^vY'$ is fibrewise surjective.
\end{definition}

\noindent
This recovers the notion of a vertical submersion whenever the base spaces agree, as well as the usual notion of a submersion when the base spaces are just points.

\begin{lemma}\label{lem:fibre-product} If $\varphi \cl Y/S\to Y'/S'$ is a rel--$C^\infty$ submersion and $\psi \cl Z/T\to Y'/S'$ is any rel--$C^\infty$ map, then the fibre product 
	$$\modulo{Y\times_{Y'}Z}{S\times_{S'}T}$$
	is an object of $(\wt{C}^\infty/\cdot)$. 
\end{lemma}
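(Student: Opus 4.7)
The strategy is to first establish the smooth structure on the base fibre product, then use the rel--$C^\infty$ implicit function theorem to produce local product models for the total fibre product.

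\textbf{Step 1: $\bar\varphi\cl S\to S'$ is a smooth submersion.} By Definition \ref{de:tangent-bundle}, the tangent bundles split as $TY = T_{Y/S}\oplus p^*TS$ and $TY' = T_{Y'/S'}\oplus {p'}^*TS'$. Because $\varphi$ intertwines the projections to $S$ and $S'$, the differential $d\varphi$ sends $T_{Y/S}$ into $\tilde\varphi^*T_{Y'/S'}$ and the composition $p^*TS \hookrightarrow TY \xrightarrow{d\varphi} \tilde\varphi^*TY' \twoheadrightarrow \tilde\varphi^*{p'}^*TS' = p^*\bar\varphi^*TS'$ equals $p^*d\bar\varphi$. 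Surjectivity of $d\varphi$ therefore forces $d\bar\varphi$ to be surjective. Since $\bar\varphi$ is smooth between smooth manifolds with corners (by the definition of $(\wt C^\infty/\cdot)$), it is a smooth submersion, and so $S\times_{S'}T$ acquires a canonical smooth structure with corners, of dimension $\dim S + \dim T - \dim S'$.

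\textbf{Step 2: Local normal form for $\tilde\varphi$.} Fix $(y_0,z_0)\in Y\times_{Y'}Z$ lying over $(s_0,t_0)\in S\times_{S'}T$ with common image $y'_0:=\tilde\varphi(y_0)=\tilde\psi(z_0)$. Choose charts $\bar\varphi(s_0)\in W'\subset S'$, $s_0\in W\subset S$ on which $\bar\varphi$ is a projection $\bR^{\dim S}\supset W \twoheadrightarrow W'\subset \bR^{\dim S'}$. By the rel--$C^\infty$ implicit function theorem with parameters (Swaminathan \cite{Swa21}, cited as Lemma~5.10 in the excerpt) applied to $\tilde\varphi$, after shrinking, we may choose rel--$C^\infty$ coordinates so that $Y$ and $Y'$ take the form $\bR^n\times\bR^k\times W$ and $\bR^n\times W'$, and
\[
\tilde\varphi(x_1,x_2,s) \;=\; (x_1,\bar\varphi(s)).
\]
Write the given map as $\tilde\psi(z)=(\psi_1(z),\bar\psi(p_Z(z)))$ for a rel--$C^\infty$ function $\psi_1\cl Z\to\bR^n$ defined near $z_0$.

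\textbf{Step 3: Local product description of the fibre product.} In these coordinates,
\[
Y\times_{Y'}Z \;=\; \{(x_1,x_2,s,z)\;:\; x_1=\psi_1(z),\ \bar\varphi(s)=\bar\psi(p_Z(z))\}.
\]
Eliminating $x_1$ via the first equation identifies a neighbourhood of $(y_0,z_0)$ with an open subset of
\[
\bR^k \times \bigl(S\times_{S'}T\bigr)\times_T Z,
\]
where the last factor is the pullback of $Z/T$ along $(S\times_{S'}T)\to T$. The natural projection to $S\times_{S'}T$ corresponds to $(x_2,(s,t),z)\mapsto(s,t)$. The factor $(S\times_{S'}T)\times_T Z$ is rel--$C^\infty$ over $S\times_{S'}T$ (pullback of a rel--$C^\infty$ manifold along a smooth map preserves the structure, since rel--$C^\infty$ charts pull back), and multiplying by $\bR^k$ remains rel--$C^\infty$. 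This furnishes a chart on $Y\times_{Y'}Z$ over $S\times_{S'}T$.

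\textbf{Step 4: Compatibility and conclusion.} Given two such local charts, the transition map is determined by the transition functions for the $Y$-coordinates (rel--$C^\infty$ by assumption), the function $\psi_1$ (rel--$C^\infty$ by assumption on $\tilde\psi$), and the transitions of the chosen chart on $Z/T$ (rel--$C^\infty$ by hypothesis), all glued by smooth changes of coordinates on the bases. Hence the overlap is a rel--$C^\infty$ diffeomorphism over $S\times_{S'}T$, and the collection of charts above generates a maximal atlas exhibiting $(Y\times_{Y'}Z)/(S\times_{S'}T)$ as an object of $(\wt{C}^\infty/\cdot)$.

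The main technical point is Step~2: extracting a local product normal form for a rel--$C^\infty$ submersion. Everything else is a formal consequence once one knows $\tilde\varphi$ is locally a coordinate projection and $\bar\varphi$ is a smooth submersion.
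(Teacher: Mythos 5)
Your proof is correct, but it takes a more explicit route than the paper. The paper's own proof is two sentences: it factors $\varphi$ through the pullback $Y'' := S\times_{S'}Y'$, observes that $Y/S \to Y''/S$ is then a \emph{vertical} submersion (both spaces over the same base $S$), and cites \cite[Lemma~5.7]{Swa21}, which handles fibre products with vertical submersions as a black box. You instead carry out the full chart construction: you note that the block-diagonal form of $d\varphi$ forces $\bar\varphi$ to be a smooth submersion, then use the rel--$C^\infty$ implicit function theorem (\cite[Lemma~5.10]{Swa21}) to put $\tilde\varphi$ into a local product normal form, eliminate the redundant coordinate, and check chart compatibility. The content is essentially the same — Swaminathan's Lemma~5.7 presumably also proceeds via the local normal form for a vertical submersion — but your version is self-contained and, usefully, makes explicit a point the paper leaves implicit: that $\bar\varphi$ being a smooth submersion is what makes $S\times_{S'}T$ a smooth manifold with corners in the first place, given that no transversality condition is imposed on $\bar\psi$. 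Your Step~1 argument for this is correct precisely because the differential of a rel--$C^\infty$ map is, by the paper's Definition~\eqref{eq:differential}, block-diagonal with respect to the vertical/horizontal splitting, so surjectivity of $d\varphi$ forces surjectivity of $d\bar\varphi$.
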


\begin{proof}
	Since $S \rightarrow S'$ is a submersion, we may form the fibre product $S \times_{S'} T$. Then, $Y\times_S(S\times_{S'}T)$ admits a canonical rel--$C^\infty$ structure over $S\times_{S'} T$. The result now follows from the canonical homeomorphism
	\[
	(Y \times_{S} (S \times_{S'} T)) \times_{(Y' \times_{S'} (S \times_{S'} T))} (Z \times_T (S \times_{S'} T)) \ \cong\ Y \times_{Y'} Z
	\]
	and \cite[Lemma~5.7]{Swa21}.
\end{proof}

\begin{cor}\label{cor:fibre-of-submersion} If $\varphi \cl Y/S\to Y'/S'$ is a rel--$C^\infty$ submersion, then $$\varphi\inv(\{y'\}) := (\wt\varphi\inv(\{y'\}),\bar\varphi\inv(\{s'\}))$$ 
	is a rel--$C^\infty$ submanifold of $Y/S$ for any $y'\in Y'$ with image $s'\in S'$. Furthermore, $$T\varphi\inv(\{y'\}) = \ker(d\varphi)|_{\wt\varphi\inv(\{y'\})}.$$
\end{cor}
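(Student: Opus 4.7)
The plan is to produce the fibre as a rel--$C^\infty$ manifold by invoking Lemma~\ref{lem:fibre-product} directly, and then to promote it to a submanifold and compute its tangent bundle using the local normal form supplied by the rel--$C^\infty$ inverse function theorem with parameters from \cite[Lemma~5.10]{Swa21}.

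First I would observe that the surjectivity of $d\varphi\cl TY \to \wt\varphi^*TY'$, composed with the projection $TY' \to p'^*TS'$, forces $d\bar\varphi$ to be surjective. Hence $\bar\varphi\cl S \to S'$ is a smooth submersion of manifolds with corners, so $S'' := \bar\varphi\inv(\{s'\})$ is a smooth submanifold of $S$. Viewing $\{y'\}/\{s'\}$ as a rel--$C^\infty$ manifold of relative dimension $0$, the inclusion $\iota\cl \{y'\}/\{s'\} \to Y'/S'$ is trivially rel--$C^\infty$, so Lemma~\ref{lem:fibre-product} applied to $\varphi$ and $\iota$ equips $\wt\varphi\inv(\{y'\})/S''$ with the structure of an object of $(\wt{C}^\infty/\cdot)$.

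Next I would argue locally at a point $y \in \wt\varphi\inv(\{y'\})$ with $p(y) = s \in S''$ that this rel--$C^\infty$ manifold is in fact a submanifold of $Y/S$. Choosing charts, neighbourhoods of $y$ and $y'$ may be identified with open subsets of $\bR^n \times S$ and $\bR^m \times S'$, respectively, so that $\varphi$ takes the form $(x,\sigma) \mapsto (F(x,\sigma),\bar\varphi(\sigma))$. Surjectivity of $d\varphi$ together with the fact that $\bar\varphi$ is a submersion permits an application of the parametric implicit function theorem \cite[Lemma~5.10]{Swa21} to straighten $\varphi$: after a rel--$C^\infty$ change of coordinates near $y$ and a smooth change of coordinates on $S$ adapted to $S''\sub S$, the map $\varphi$ becomes a coordinate projection, and $\wt\varphi\inv(\{y'\})$ appears as a coordinate slice sitting over $S''$.

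The tangent bundle identification is then immediate from this normal form: in the straightened coordinates, $\ker(d\varphi)$ restricted to the fibre coincides with the tangent bundle of the coordinate slice, which by Definition~\ref{de:tangent-bundle} is exactly $T(\wt\varphi\inv(\{y'\}))$ with its induced rel--$C^\infty$ structure over $S''$. The main obstacle I foresee is verifying that the parametric implicit function theorem interfaces cleanly with the corner structure on $S$ (so that the change of coordinates on $S$ can be made adapted to $S''$), but both this point and the straightening itself are handled by the statement cited from \cite{Swa21}.
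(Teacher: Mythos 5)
Your proof takes the same route the paper implicitly intends: the corollary is stated immediately after Lemma~\ref{lem:fibre-product} with no separate proof, so the intended argument is precisely to apply that lemma with $Z/T = \{y'\}/\{s'\}$, and then read off the submanifold structure and the vertical tangent identification from the local normal form supplied by the parametric inverse function theorem (which the paper invokes in the same breath when it remarks that rel--$C^\infty$ submersions are coordinate projections in suitable charts). Your derivation that $d\bar\varphi$ is surjective from the block-diagonal structure of $d\varphi$ in \eqref{eq:differential}, your use of the point inclusion as the map $\psi$, and the local straightening argument all match the paper's framework, so the proposal is correct and essentially identical in approach.
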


\begin{definition}\label{de:orientation-fibre-product} If $\varphi \cl Y/S\to Y'/S'$ and $\psi \cl Z'/T'\to Y'/S'$ are transverse, we equip the fibre product $Z/T:= Y/S\times_{Y'/S'} Z'/T'$ with the unique orientation so that the canonical isomorphism 
	$$T_y Y\oplus T_{z'}Z' \to T_{(y,z')}Z \oplus T_{\varphi(y)}Y'$$
	has orientation sign $(-1)^{\dim(Z')\dim(Y')}$ for any $(y,z')\in Z$. We call it the \emph{fibre product orientation}.
\end{definition}

\begin{remark}\label{}This agrees with the convention of \cite{ST23b}. \end{remark}

\noindent
We call a rel--$C^\infty$ manifold $Y/S$ \emph{Hausdorff, respectively paracompact} if $Y$ and $S$ are Hausdorff, respectively paracompact.

\begin{lemma}\label{lem:partition-of-unity} Suppose $Y\xra{p}S$ is a paracompact Hausdorff rel--$C^\infty$ manifold. Given any countable locally finite cover $\{U_i\}\iI$ there exists a rel--$C^\infty$ partition of unity $\{\rho_i\}\iI$ subordinate to $\{U_i\}\iI$, i.e., for each $i \in I$, $\rho_i\cl Y/S\to [0,1]/*$ is a rel--$C^\infty$ map.
\end{lemma}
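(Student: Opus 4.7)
The strategy is to mimic the classical construction of smooth partitions of unity, exploiting the fact that in a rel--$C^\infty$ chart $\varphi: V \xrightarrow{\sim} \bR^n \times S' \subset \bR^n \times S$, a function of the form $\chi(|x|^2)$ for a smooth bump $\chi: \bR_{\geq 0} \to [0,1]$ (supported in $[0,1)$, identically $1$ on $[0,1/4]$) is automatically rel--$C^\infty$, since its dependence on the base coordinate $s$ is trivial (hence continuous) and its dependence on $x$ is smooth. From any such local rel--$C^\infty$ bumps, a partition of unity is assembled in the usual way: sum, check positivity, normalize, and regroup by the index set $I$.

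For the reduction to charts, I first invoke paracompactness of $Y$ together with Dieudonn\'e's theorem (paracompact Hausdorff spaces are normal), and use the shrinking lemma to produce open covers $\{W_i\}_{i \in I}$ and $\{V_i\}_{i \in I}$ of $Y$ with $\bar{W_i} \subset V_i$ and $\bar{V_i} \subset U_i$. For each $y \in Y$, picking $i(y)$ with $y \in W_{i(y)}$, I choose a rel--$C^\infty$ chart $\varphi_y: \Omega_y \xrightarrow{\sim} B_2(0) \times S_y \subset \bR^n \times S$ sending $y$ to $(0, p(y))$, shrunk so that $\Omega_y \subset V_{i(y)}$ and so that $\varphi_y^{-1}(\bar{B_1(0)} \times S_y)$ is closed in $Y$ (see below). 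The countable cover $\{\varphi_y^{-1}(B_1(0) \times S_y)\}$ admits a locally finite refinement $\{\Omega'_\alpha\}_{\alpha \in A}$ by paracompactness, with each $\Omega'_\alpha$ lying in some chart $\Omega_{y(\alpha)}$; I retain the bump $\psi_\alpha(z) := \chi(|x(z)|^2)$ on $\Omega_{y(\alpha)}$, extended by zero to $Y$. The extension is continuous because its support is the closed set $\varphi_{y(\alpha)}^{-1}(\bar{B_1(0)} \times S_{y(\alpha)})$. Then $\Psi := \sum_{\alpha} \psi_\alpha$ is a locally finite sum, hence rel--$C^\infty$, and $\Psi \geq 1$ on $Y$ because at every $y$ at least one $\psi_\alpha$ equals $1$. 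Setting $\bar\psi_\alpha := \psi_\alpha/\Psi$ and $\rho_i := \sum_{\alpha \,:\, i(\alpha)=i} \bar\psi_\alpha$ (assigning each $\alpha$ to one index $i(\alpha)$ with $\Omega_{y(\alpha)} \subset U_{i(\alpha)}$) produces the desired partition of unity subordinate to $\{U_i\}$.

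The main obstacle lies in arranging the chart $\varphi_y$ so that $\varphi_y^{-1}(\bar{B_1(0)} \times S_y)$ is closed in $Y$, so that extension by zero is continuous. Concretely, the worry is that a sequence $z_n \in \varphi_y^{-1}(\bar{B_1(0)} \times S_y)$ with $z_n \to z_0 \in Y$ may have $p(z_0) \in \bar{S_y} \setminus S_y$ in $S$, producing a point outside $\Omega_y$ where $\psi_\alpha$ fails to have a continuous zero limit. I handle this by a preparatory use of normality of $Y$: I first find an open neighborhood $\tilde\Omega_y$ of $y$ with $\overline{\tilde\Omega_y} \subset \Omega_y$ (possible since $Y$ is regular), then replace $\varphi_y$ by its restriction to $\tilde\Omega_y$, and shrink the target so that $\varphi_y^{-1}(\bar{B_1(0)} \times S_y)\subset \tilde\Omega_y$. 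Since $\overline{\tilde\Omega_y} \subset \Omega_y$, the support of the resulting bump is then the intersection of a closed set in $\Omega_y$ with a set closed in $Y$, hence closed in $Y$. With this arranged, all the steps above go through verbatim.
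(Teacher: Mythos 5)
The paper takes a different and cleaner route. After refining so that each $U_i$ admits a product chart $h_i\colon U_i \cong V_i \times B$ with $V_i\subset S$, it uses paracompactness of $S$ to choose $\{V_i\}$ locally finite and takes a smooth partition of unity $\{\tau_i\}$ on $S$ subordinate to $\{V_i\}$; the candidate functions are then products $\rho_i'(y) = \tau_i(p(y))\chi_i(\mathrm{pr}_2 h_i(y))$. The factor $\tau_i$ handles the truncation in the base direction, while $\chi_i$ handles the fibre direction, so the support of each $\rho_i'$ is automatically closed in $Y$ and the sum is automatically locally finite because $\{\mathrm{supp}\,\tau_i\}$ is. Your approach instead uses paracompactness of $Y$ to produce a locally finite refinement, and tries to handle the base-direction truncation by the normality argument alone. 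That is a legitimately different strategy, but as written it has two gaps.

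First, local finiteness of $\Psi = \sum_\alpha \psi_\alpha$ is not established. The refinement $\{\Omega'_\alpha\}$ is locally finite, but the bump $\psi_\alpha = \chi(|x|^2)$ is defined using the \emph{original} chart $\varphi_{y(\alpha)}$ and has support $\varphi_{y(\alpha)}^{-1}(\overline{B_1(0)}\times S_{y(\alpha)})$, which contains but may be much larger than $\Omega'_\alpha$. Local finiteness of the small sets $\Omega'_\alpha$ does not control the family of large supports; one can arrange infinitely many supports to overlap a fixed point while the $\Omega'_\alpha$ themselves stay locally finite. To fix this you must either shrink each $\psi_\alpha$ so its support is contained in $\Omega'_\alpha$ (as in the classical construction), or establish that the larger sets $\{\mathrm{supp}\,\psi_\alpha\}$ form a locally finite family — neither follows merely from paracompactness of the cover $\{\varphi_y^{-1}(B_1\times S_y)\}$. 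Second, the claim "$\Psi \geq 1$ because at every $y$ at least one $\psi_\alpha$ equals $1$" does not follow from $y\in\Omega'_\alpha \subset \varphi_{y(\alpha)}^{-1}(B_1(0)\times S_{y(\alpha)})$: this only gives $|x(y)|<1$, and $\chi(|x|^2)=1$ requires $|x|\le 1/2$. You should refine the cover $\{\varphi_y^{-1}(B_{1/2}(0)\times S_y)\}$ instead (or otherwise arrange $\psi_\alpha$ to be $1$ on $\Omega'_\alpha$). There is also a smaller imprecision in the closedness-of-support paragraph: to shrink $S_y$ by the tube lemma you need the compact slice $\varphi_y^{-1}(\overline{B_1(0)}\times\{p(y)\})$ to lie in $\tilde\Omega_y$, so $\tilde\Omega_y$ must be chosen as a neighbourhood of that compact set (via local compactness/regularity), not merely of the single point $y$.
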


\begin{proof} Refining $\{U_i\}\iI$, we may assume without loss of generality that each $U_i$ is of the form $h_i : U_i \cong V_i \times B$, where $V_i$ is the image of $U_i$ in $S$ and $B\sub \bR^n$ is the open unit ball, so that $\{h_i\inv(V_i\times B_{\delta_i})\}\iI$ is still an open cover of $Y$ for some $0 < \delta_i < 1$. Using the paracompactness of $S$, we may refine further and assume that $\{V_i\}\iI$ is a locally finite cover of (an open subset of) $S$ as well. Let $\{\tau_i\}\iI$ be a smooth partition of unit subordinate to $\{V_i\}\iI$ and fix for each $i \in I$ a smooth compactly supported bump function $\chi_i \cl B\to [0,1]$, which is identically $1$ on $B_{\delta_i}$. Then 
	$$\rho'_i(y) := \begin{cases}
		\tau_i(p(y))\chi_i(\pr_2 h_i(y))\qquad & y \in U_i\\
		0 \quad & \text{otherwise}
	\end{cases}$$ 
	is a relatively smooth map $Y/S\to [0,1]$ supported in $U_i$. By the assumptions on $\{U_i\}\iI$, the function $a := \s{i\in I}{\rho_i'}$ is rel--$C^\infty$ and positive everywhere. Thus, $\{\rho_i := \frac{1}{a}\rho_i'\}\iI$ is a suitable partition of unity.
\end{proof}

\subsection{Rel--$C^\infty$ orbifolds} In this subsection, we adapt the definitions of \cite{ST16} to the relative setting. The main (technical) result is Lemma~\ref{lem:equivalent-to-ep-groupoid}, allowing us to go from a global Kuranishi chart with corners to a derived orbifold chart with corners.

\begin{definition}\label{de:rel-smooth-groupoid} A \emph{rel--$C^\infty$ groupoid} $\cY = [Y_1/S_1 \rightrightarrows Y_0/S_0]$ consists of a groupoid, where the object and morphism space are rel--$C^\infty$ manifolds, the source and target map are rel--$C^\infty$ submersions, and all other structure maps are relatively smooth.\par Its \emph{orbit space} is $$|\cY| := Y_0/\sim,$$
	where $y \sim y'$ if there exists $\phi \in Y_1$ with $s(\phi) = y$ and $t(\phi) = y'$.
\end{definition}

\begin{definition}\label{de:rel-smooth-groupoid-map} A \emph{morphism of rel--$C^\infty$ groupoids} $\varphi \cl \cY\to \cY'$ consists of two rel--$C^\infty$ maps $\varphi_i \cl Y_i/S_i \to Y_i'/S_i'$ for $i = 0,1$ that intertwine the structure maps of the groupoids.
\end{definition}

\noindent
We will call a rel--$C^\infty$ groupoid just groupoid from now on. The usual definitions carry over.

\begin{definition}\label{de:ep-groupoid} A groupoid $\cY$ is \emph{\'etale} if source and target map are local diffeomorphisms. It is \emph{proper} if $(s,t) \cl Y_1/S_1 \to  Y_0/S_0\times Y_0/S_0$ is proper.
\end{definition}

\begin{definition}\label{de:equivalent-groupoids} We call $\varphi\cl \cY\to \cY'$ an \emph{equivalence of rel--$C^\infty$ groupoids} if $\varphi$ is an equivalence of categories and $\varphi_0$ is a submersion. We call $\cY$ and $\cY'$ \emph{equivalent} if there exists a sequence  $\cY\xleftarrow{}\cY_0\to\dots\leftarrow\cY_n \to\cY'$ of equivalences. 
\end{definition}

\noindent
Following \cite{ST23b}, we call an equivalence $\varphi$ a \emph{refinement} if $\varphi_0$ is a local diffeomorphism. 

\begin{remark}\label{} We need somewhat more definitions as our orbifolds are naturally given as transformation groupoids, a perspective which also easier for some constructions, and we have to pass between the two models for orbifolds back and forth.\end{remark}

\begin{example}\label{ex:finite-group-groupoid} Suppose $G$ is a compact Lie group acting by rel--$C^\infty$ diffeomorphisms on $Y/S$, i.e., acting smoothly on $S$ and continuously on $Y$ so that $Y\to S$ is equivariant and the action map $G\times Y/G\times S\to Y/S$ is relatively smooth. Then 
	$$[(Y/S)/G] := [G\times Y/S\rightrightarrows Y/S]$$
	is a groupoid. If $G$ is finite, it is \'etale and proper.
\end{example}

\begin{example}\label{ex:disjoint-union-groupoids} A disjoint union of étale proper Lie groupoids where the object spaces have the same dimension is an étale proper Lie groupoid.\end{example}

\noindent
The following lemma is standard and written here for completeness. It is used to combine the output of Theorem \ref{thm:open-gw-global-chart} with the results of \cite{ST16}.

\begin{lemma}\label{lem:equivalent-to-ep-groupoid} Suppose $G$ is a compact Lie group acting by rel--$C^\infty$ diffeomorphisms on $Y/S$ so that $G_y$ is finite for any $y \in Y$ and $Y/G$ is paracompact. Then $[(Y/S)/G]$ is equivalent to an \'etale proper groupoid.
\end{lemma}

\noindent
Note that we do not require the action of $G$ on $S$ to be almost free. This will not be the case in our main application. The proof is a generalisation of \cite[Example~2.6]{RS06} to the rel--$C^\infty$ setting.

\begin{proof} Let $y \in Y$ with image $s \in S$ be arbitrary. By \cite[Lemma~5.9]{HS22} applied to a neighbourhood of $y\in Y$ and $\Gamma = G_s$\footnote{The cited result assumes that $\Gamma$ is a finite group. However, the same argument works with any compact Lie group, replacing the sum by integration over the Haar measure.} we can find a smooth submanifold $W_s \sub S$ and $T_y \sub Y$ so that $T_y/W_s$ is a rel--$C^\infty$ manifold, $T_y$ is $G_y$-invariant, and the action map 
	\begin{equation}\label{eq:slice-homeo} G\times_{G_y} T_y \to Y\end{equation}
	is a homeomorphism onto an open neighbourhood $U_y$ of $y$. The inclusion $T_y \hkra U_y$ extends to an equivalence 
	$$[(T_y/W_s)/G_y]\xra{\simeq} [(U_y/S)/G].$$
	Let $Z \sub Y$ be any countable subset so that $\{U_y/G\}_{y\in Z}$ is an open locally finite cover of $Y/G$. Define $$\wt Y_0 := \djun{y\in Z}T_y \qquad \wt Y_1 := \djun{(y,y')\in Z^2}\{(g,x,x')\in G\times T_y\times T_{y'}\mid g\cdot x = x'\}$$
	and define $\wt S_0$ and $\wt S_1$ similarly with $W_s$ instead of $T_y$. The structural maps are given by
	$$s(y,y',g,x,x') = (y,x) \in T_y \qquad t(y,y',g,x,x') = (y',x')\in T_{y'}$$
	and analogously on the level of base spaces. Multiplication, unit element and inversion are defined in the obvious way. This defines an \'etale proper groupoid $\wt\cY$. Using equivariant tubular neighbourhoods, one shows that $\wt \cY$ is equivalent to the groupoid $\cU$ defined in the same way with the collection $\{U_y/S\}$ instead of $\{T_y/W_{p(y)}\}$. Finally, the inclusions $U_y \hkra Y$ define an equivalence $\Psi\cl \cU \to [(Y/S)/G]$. 
\end{proof}

\begin{remark}\label{} This lemma shows in particular that equivalence does not preserve the property of being \'etale. \end{remark}

\begin{definition}\label{} A \emph{orbifold} $(Y,\cY)$ consists of a topological space $Y$ together with an étale proper Lie groupoid $\cY$ so that $|\cY|$ is homeomorphic to $Y$.
\end{definition}

\subsection{Going from a rel--$C^\infty$ to a smooth structure}\label{subsec:smoothing-theory}
In this subsection we explain how to use the smoothing theory of Lashof, \cite{Las79}, to equip the `total space' of a rel--$C^\infty$ manifold $X/B$ over a smooth manifold $B$ with a smooth structure so that the structural map $X\to B$ becomes smooth.

\begin{definition} Let $G$ be a compact Lie group acting continuously by rel--$C^\infty$ diffeomorphisms on the rel--$C^\infty$ manifold $X/B$. A \emph{relative $G$-smoothing} of $X\xra{\pi} B$ is a $G$-equivariant rel--$C^\infty$ diffeomorphism $\psi \cl X\to V$, where $V$ is a smooth $G$-manifold with a smooth submersion $p \cl V\to B$ and $p\g\psi = \pi$.
\end{definition}

\begin{theorem}\label{thm:to-smooth} Suppose $X/B$ is a rel-$C^\infty$ manifold equipped with a rel--$C^\infty$ action by a compact Lie group $G$ that acts smoothly on $B$. Then, given any compact $G$-invariant subset $K \sub X$,  there exists an orthogonal $G$-representation $W$ and a relative $G$-smoothing of $X'\times W$, where $X'$ is an open invariant neighbourhood of $K$.\par
	Moreover, if $f\cl X/B\to Y/*$ is a rel--$C^\infty$ submersion, we can find a relative $G$-smoothing with respect to which $f$ is smooth.
\end{theorem}

\begin{proof} The proof strategy is similar to the one of \cite[Theorem~1.3]{Las79}. However, instead of embedding $X$ into an orthogonal $G$-representation, we will embed it into $B\times V$ for some orthogonal $G$-representation $V$. For this, we will use that a rel--$C^\infty$ structure (in fact, rel--$C^1$ suffices) yields a vertical tangent bundle $T^vX$ and any choice of rel--$C^\infty$ $G$-invariant Riemannian metric on $T^vX$ determines a $G$-equivariant rel--$C^\infty$ embedding 
	\begin{equation}\label{} \varphi \cl \Omega\sub T^vX \to X\times_B X
	\end{equation}
	where $\Omega$ is an open subset of the zero section, $\pr_1\varphi(x,v) = x$, $\varphi(x,0) = (x,x)$ and $\varphi$ is relatively smooth over $B$.\par 
	Suppose first that there exists a vector bundle $E \to B$ so that $T^vX \cong \pi^*E$. Let $W$ be the orthogonal $G$-representation and $j \cl X'/B\to B\times W/B$ be the relatively smooth $G$-equivariant embedding of Proposition~\ref{prop:embedding-in-representation}. For the rest of the proof we will replace $X$ by $X'$ without further mention. Since $\im(j)$ is a relatively smooth submanifold of $B\times W/B$, we can use a relative version of the tubular neighbourhood theorem and find an equivariant rel--$C^\infty$ retraction $r \cl U\to X$, where $U$ is an invariant open neighbourhood of $j(X)$. Define 
	\begin{equation}\label{} 
		\theta \cl X\times W\to B\times W : (x,w)\mapsto (\pi(x),j_2(x)+w),
	\end{equation}
where $j_2$ is the composition of $j$ with the projection to $W$. This map is clearly equivariant and relatively smooth. Since $\theta(x,0) \in \im(j)$, we can find an invariant rel--$C^\infty$ map $\delta \cl X\to \bR_{> 0}$ so that $\theta(x,w)\in U$ if $\norm{w} < \delta(x)$. Let $W_1\sub W$ be the open unit ball and define $\theta'\cl X\times W_1\to U$ by $\theta'(x,w) = \theta(x,\delta(x)w)$. Choosing $\delta$ sufficiently small, we can ensure that $(r\theta'(x,y),x)\in \varphi(\Omega)$. Now, define 
\begin{equation}\label{}
	\psi \cl X\times W\to E\oplus W:  (x,w)\mapsto \lbr{\theta'(x,w),\pr_2\varphi\inv((r\theta'(x,y),x))}.
\end{equation}
This is rel--$C^\infty$ and equivariant. It follows from the same argument as in \cite[Theorem~1.3]{Las79} that $\psi$ is an open embedding. Since we can equip $E$ with a smooth structure that makes the $G$-action smooth, we can pull back the smooth structure on $E\oplus W$ along $\psi$. For the general case one can use the following relative version of \cite{Las79} as loc. cit: If $\rho \cl E\to X$ is a rel--$C^\infty$ vector bundle, then a relative rel--$C^\infty$ vector bundle reduction $\varphi\cl \Omega\sub T^vX\to X\times_B X$ induces a relative rel--$C^\infty$ vector bundle reduction $\wh\varphi\cl \rho^*(E\oplus T^vX)\to E\times_B E$. The proof of this claim is verbatim the same as the one of \cite[Lemma~1.6]{Las79}.\par 
The last assertion follows immediately from the former by noting that in this case $X/B\times Y$ is a rel--$C^\infty$ manifold due to Lemma~\ref{lem:fibre-product}.
\end{proof}

\begin{proposition}\label{prop:embedding-in-representation} Suppose $X/B$ is a rel--$C^\infty$ manifold with a relatively smooth $G$-action. Given any compact $G$-invariant subset $K \sub X$, there exists a $G$-representation $W$ and an invariant neighbourhood $X'$ of $K \subset X$ with a relatively smooth equivariant embedding $j \cl X'\to B\times W$ over $B$.
\end{proposition}

\begin{proof}
	  Let $d$ be the rel--$C^\infty$ invariant fibrewise metric associated to a chosen Riemannian metric on $T^vX$. To construct $j$, suppose we have constructed for each orbit a $G$-equivariant rel--$C^\infty$ embedding $f'_x \cl  B_\epsilon(G\cdot x)\to B\times W'_x$ for some orthogonal $G$-representation $W'_x$. Choosing $\epsilon  > 0$ sufficiently small, we may assume $f'_x$ has bounded image in $W'_x$ (with respect to some arbitrary $G$-invariant norm) since $G$ is compact. Equip $\bR$ with the trivial $G$-representation and let $W_x = W'_x \oplus \bR$. Then, define 
	\begin{equation}\label{} 
		f_x(y) = \frac{1}{1+\norm{f'_x(y)}}(f'_x(y),1).
	\end{equation} 
	This is clearly still a $G$-equivariant rel--$C^\infty$ embedding. Note that it takes values in the unit sphere. Now, choose a finite set $Z\sub X$ such that $\{B_\epsilon(G\cdot x)\mid x \in Z\}$ covers $K$ and let $\{\rho_x\}_{x\in Z}$ be a $G$-invariant partition of unity subordinate to it. Given these data, we can define 
	\begin{equation}\label{} 
		j \cl X' \to B\times\p{x\in Z}{W_x}
	\end{equation}
	by 
	\begin{equation}\label{}
		j(y) = (\pi(y),(\rho_x(y)f_x(y))_{x\in Z}),
	\end{equation}
	where $\pi \cl X\to B$ is the structural map and we extend $\rho_x f_x$ by zero to all of $X'$. Then, $j$ is manifestly relatively smooth and equivariant. Let us check that it is injective. If $j(y) = j(z)$, then, as $f_x$ has image in the unit sphere, $\rho_x(y) = \rho_x(z)$. Thus, $f_x(y) = f_x(z)$ for any $x \in Z$ so that $\rho_x(y) > 0$. As some such $x$ must exist, we can use that $f_x$ is an embedding to obtain that $y = z$. Closedness follows from the fact that each $f_x$ is closed.\par 
	In order to construct $f := f'_x$, we use an inductive argument as in the proof of \cite[Theorem~6.1]{Mos57}. The first step yields a global embedding; for the second step, where we `filter' the fibre, we restrict to a neighbourhood of an orbit. Fix, thus, $b \in B$ and define the $G$-invariant closed subsets 
	\[B_{\ge b} := \{b'\in B\mid [G_b] \sub [G_{b'}]\}\qquad B_{> b} := \{b'\in B\mid [G_b] \sub [G_{b'}]\}\]
	and set $B_b := B_{\ge b}\sm B_{> b}$. Note that $B_{\ge b}$ and $B_{> b}$ are disjoint unions of smooth submanifolds. Then, $X_{* b } = X\times_B B_{* b}$ is canonically a rel--$C^\infty$ manifold over $B_{* b}$ for $*\in \{=,\ge ,>\}$. Suppose we are given $G$-equivariant rel--$C^\infty$ embeddings 
	\begin{equation}\label{eq:building-blocks} 
		f_= \cl X_{ = b}/B_{= b} \to B_{= b}\times V_=/B_{= b}  \qquad f_{>} \cl X_{ > b}/B_{> b} \to B_{> b}\times V_>/B_{> b}.
	\end{equation}
	Set $V_{\ge} := V_= \oplus V_{>}$. Let $\rho\cl B_{\ge b}\to [0,1]$ be a smooth $G$-invariant function with $\rho\inv(\{0\}) = B_{\ge b}$. By the equivariant Tietze extension theorem, we can extend $f_>$ to a $G$-equivariant continuous function 
	\[\wt f_{>} \cl X_{\ge b}\to B_{\ge b}\times V_{>}.\] 
	Then, define $f_{\ge}\cl X_{\ge b}/B_{\ge b} \to B_{\ge b}\times V_\ge/B_{\ge b}$ by 
	\begin{equation}\label{eq:patching} 
		f_{\ge}(x) = \begin{cases}
			(\pi(x),\rho(\pi(x))f_{=}(x),(1+\rho(\pi(x)))\wt f_{>}(x))\qquad & x\in X_{=b}\\
			(\pi(x),0,f_{>}(x))\quad & x \in X_{> b}.
		\end{cases}
	\end{equation}
	Using induction on the longest sequence of stabilisers $[G_{b_1}]\sub [G_{b_2}]\dots \sub G$ (in an invariant neighbourhood of the compact subset $K$), this shows that we can find a $G$-equivariant embedding $X/B\hkra B\times W/B$ for some orthogonal $G$-representation $W$ once we have constructed $f_=$ as in~\eqref{eq:building-blocks} for each $b$. Fix $x_0 \in X/B$ with $b_0  =\pi(x_0)$ and let $B_0 := B_{= b_0}$ and $X_0 = B_\epsilon(G\cdot x)\cap \pi\inv(B_0)$. Then $X_0/B_0$ is canonically relatively smooth. To construct $f := f_=\cl X_0/B_0 \to B_0 \times V/B_0$ we use the same inductive construction as before, now filtering $X_0$ by the (conjugacy class of the) stabiliser. Replacing $X_0$ by a neighbourhood of $G\cdot x_0$, we may assume $G_y \sub G_{x_0}$ for any $y \in X_0$ up to conjugacy. Given functions
	\begin{equation}\label{eq:building-blocks-vertical} 
		f_= \cl X_{ = x_0}/B_{0} \to B_{0}\times V_=/B_{0}  \qquad f_{>} \cl X_{ > x_0}/B_0 \to B_0\times V_>/B_0,
	\end{equation}
we can use Lemma~\ref{lem:extending-rel-smoothly} and the formula~\eqref{eq:patching} to construct $f_{\ge} \cl X_{\ge x_0}\to B_0\times V_{=}\oplus V_{>}$ using a relatively smooth function $\rho \cl X_{\ge x_0}\to [0,1]$ instead with $\rho\inv(0) = X_{> x_0}$ and so that all (relative) derivatives of $\rho$ tend to zero locally uniformly near $X_{> x_0}$. Since $X_{> x_0}$ is a (disjoint union of) smooth submanifold(s), this can be achieved by using a tubular neighbourhood as in Lemma~\ref{lem:extending-rel-smoothly}. It remains thus to construct an equivariant embedding
\begin{equation*}\label{}
	f \cl X_{= x_0}/B_0\to B_0 \times V.
 \end{equation*}
Recall for this, that we assumed $X_0$ to be a neighbourhood of the orbit $G\cdot x_0$. Fix a $G_{x_0}$-equivariant chart $\varphi \cl U_b \times U^v \to X_{=x_0}$. Let $B' \sub U_b$ be a $G$-slice through $b_0$ and $S \sub U^v$ be a $G_{b_0}$-slice through $x_0$. The key observation is that 
\begin{equation}\label{eq:iso} 
	G\times_{G_{x_0}} (B'\times S)\,\cong\, G\times_{G_{b_0}}(B'\times (G_{b_0}\times_{G_{x_0}}S))
\end{equation}
equivariantly, whence $B'\times S$ is a $G$-slice through $x \in X_{=x_0}$, this space is relatively smooth over $B$, and the map 
\begin{equation*}\label{} 
G\times_{G_{x_0}} (B'\times S)\to  X_= : [g,(b,s)]\mapsto g\cdot \varphi(b,s)
\end{equation*}
is a relatively smooth diffeomorphism (over $B$). Choose a \emph{smooth} embedding $j \cl S\hkra \bR^n$. By \cite[Lemma~5.2]{Mos57}, we can pick an orthogonal $G$-representation $V$ admitting $v \in V$ with $G_v = G_{x_0}$. Then, define 
\begin{equation}\label{} 
	\psi' \cl G\times_{G_{x_0}} (B'\times S) \to B\times V^{\oplus n} : [g,b,s]\mapsto (g\cdot b,(j_k(s)g\cdot v)_{1 \leq k \le n}).
\end{equation}
Precomposing $\psi'$ with the rel--$C^\infty$ diffeomorphism in~\eqref{eq:iso} yields the desired embedding.
\end{proof}

\begin{lemma}\label{lem:extending-rel-smoothly} Let $X/B$ be a rel--$C^\infty$ closed manifold and $A\sub X$ a closed rel--$C^\infty$ submanifold over $B$. Given a rel--$C^\infty$ function $f \cl A/B\to B\times V/B$, where $V$ is some Euclidean space, there exists a rel--$C^\infty$ extension $\wt f\cl X/B\to B\times V/B$. The same is true in the equivariant setting.
\end{lemma}

\begin{proof}
 Fix a rel--$C^\infty$ Riemannian metric on $X$ and let $\epsilon \cl A\to [0,1]$ be a sufficiently small cut-off function so that $U = \union{x\in A}{B_{\epsilon(x)}(x)}$ is a tubular neighbourhood of $A$. Let $r \cl U\to A$ be the canonical retraction and fix a cut-off function $\rho \cl \bR\to [0,1]$ with $\rho(0) = 1$ and $\rho(t) = 0$ for $t \ge \frac12$. Then, define $d(y) := \frac{d(y,r(y))}{\epsilon(r(y))}$ for $y \in U$ and set 
 $$\wt f(y) =\begin{cases} (\pi(y),\rho(d(y))f_V(y)) & y\in U\\
 	(\pi(y),0)\quad & \text{otherwise}.\end{cases}$$
 \noindent This is clearly rel--$C^\infty$ and we can choose the extension to be $G$-equivariant by choosing the metric to be $G$-invariant and picking invariant functions $\epsilon$ and $\rho$.
\end{proof}

\begin{definition} Two relative $G$-smoothings $\psi_0,\psi_1$ of a rel--$C^\infty$ $G$-manifold $X/B$ are \emph{concordant} if there exists a relative $G$-smoothing $\wt\psi$ of $X\times[0,1]/B\times[0,1]$ with $\wt\psi|_{B\times\{i\}} = \psi_i$.
\end{definition}

%\begin{lemma}\label{lem:invariance-of-smoothing}
%	Any two relative $G$-smoothings $\psi_0,\psi_1$ of a rel--$C^\infty$ $G$-manifold $X/B$ near a compact $G$-invariant subset $K$, constructed as in Theorem~\ref{thm:to-smooth}, are concordant after stabilising each by an orthogonal $G$-representation.
%\end{lemma}
%
%\begin{proof}
%\end{proof}

In \cite[Stable $G$-smoothing theory II]{Las79}, Lashof proves that isotopy classes of vector bundle lifts of the tangent microbundle correspond bijectively to sliced concordance classes of equivariant smoothings. In particular, two smoothings associated to the same vector bundle lift (as is the case for us) are shown to be sliced concordant. For our main application, Theorem~\ref{thm:cubical-cobordisms-enhanced}, we need the following stronger statement.

\begin{lemma}\label{lem:invariance-of-smoothing}
	Suppose $X/B$ is a rel--$C^\infty$ $G$-manifold. Given any relative $G$-equivariant diffeomorphism 
	\[\psi \cl \del[0,1]^k\times X\to \del[0,1]^k\times V\]
	over $\del[0,1]^k\times B$ where $V$ is a smooth $G$-manifold with a smooth equivariant submersion $V\to B$ and the diffeomorphism is constructed as in Theorem~\ref{thm:to-smooth}, there exists an orthogonal $G$-representation $W$ and a `filling' 
	\[\wt\psi \cl [0,1]^k\times X\times W\to [0,1]^k\times V\times W\]
	of $\psi\times\ide_W$, that is, a rel--$C^\infty$ diffeomorphism over $[0,1]^k\times B$, which extends $\psi$.
\end{lemma}

\begin{proof} Reparametrise the interval to $[-1,1]$. We first observe the following. Suppose $j \cl \del[-1,1]^k\times X\to \del[-1,1]^k\times B\times W$ is an equivariant rel--$C^\infty$ embedding. Fix another equivariant embedding $i \cl X\to B\times W'$ and let $r \cl [-1,1]^k\sm\{0\}\to \del[-1,1]^k$ be a smooth retraction. Pick a cut-off function $\chi$ on $[-1,1]^k$, which is zero near the origin and $1$ near $\del[-1,1]^k$. Then, define 
\begin{equation*}\label{} 
	\notag\wt j \cl [-1,1]^k\times X\to [-1,1]^k\times B\times W\times W'
\end{equation*}
by 
\begin{equation}\label{} 
	\wt j(t,x) = (t,\pi(x),\chi(t)j_{r(t)}(x),(1-\chi(t))i(x)).
\end{equation}
This is clearly an equivariant rel--$C^\infty$ embedding that extends $j\oplus 0$. 
Going through the proof of Theorem~\ref{thm:to-smooth} with the embedding $j\oplus 0$ instead of the embedding $j$, we obtain the diffeomorphism $\psi\times \ide_{W'}$ instead of $\psi$. This completes the proof.
\end{proof}

Finally, we make the following observation, which has a similar conclusion to \cite[Lemma~4.5]{AMS21}. For completeness, we include a proof.

\begin{lemma}
	\label{lem:smoothing-maps}
	Suppose $\cT$ and $X$ are smooth manifolds and $f\cl \cT \rightarrow X$ is a continuous map. Let $\exp$ be the exponential map of a Riemannian metric on $X$ and let $\wt f = \exp_{f}\cl  f^*TX \rightarrow X$ be the stabilisation of $f$. If $X$ is compact, there exists a smooth structure on a neighbourhood of the zero section of $f^*TX$ such that 
	\begin{enumerate}[label=\normalfont \arabic*),leftmargin=20pt,ref=\arabic*]
		\item $\wt f$ and the projection $f^*TX \rightarrow \cT$ are smooth;
		\item if $\cT/\cB$ is a rel--$C^\infty$ manifold and $f$ is rel--$C^\infty$, then the rel--$C^\infty$ structure induced by the smooth structure on $f^*TX$ agrees with the canonical one;
		\item if $f$ is already smooth near a set $C \subset \cT$, then the smooth structure on $f^*TX$ is the given one near $(f|_C)^*TX$.
	\end{enumerate}
	If $f$ is invariant with respect to a smooth action by a compact Lie group $G$ on $\cT$, then we can choose the $G$-action to be smooth with respect to the new smooth structure and assume $\wt f$ to be invariant as well.
\end{lemma}
\begin{proof}
	Let $\epsilon$ be the injectivity radius of the metric and let $g$ be a smooth deformation of $f$ with $||f-g||_{C^0} < \epsilon/2$. Then $g^*TX$ has a canonical smooth structure, and there exists an isomorphism of topological (or rel--$C^\infty$) vector bundles \[
	\Phi\cl f^*TX \cong g^*TX.
	\]
	For $u \in \cT$, the images of the maps $\wt f, \wt g \circ \Phi\cl  B_\epsilon(0) \subset T_{f(u)}X \rightarrow X$ intersect in an open neighbourhood of $f(u)$. Let $h := \wt f^{-1} \circ \wt g \circ \Phi$ wherever it is defined. Then, we have that $\wt f = \wt g \circ \Phi \circ h^{-1}$ on a neighbourhood of the zero section. Define the smooth structure on $f^*TX$ to be the one pulled back via the map $\Phi \circ h^{-1}$. The same procedure works in the $G$-equivariant setting.
\end{proof}

\begin{remark}\label{} 
	Note that the zero section $\cT \hookrightarrow f^*TX$ might not be smooth with respect to the smooth structure on $f^*TX$ given by this lemma, but it is rel--$C^\infty$ whenever $f$ is.
\end{remark}
\section{Extensions of Thom forms}\label{sec:extend-Thom-forms}
The main result of this section says that we can extend certain differential forms given on a collection of submanifolds to a differential form on the whole manifold, if the restrictions to intersections agree. 

\begin{proposition}\label{prop:extend-thom-form} Let $M$ be a  $G$-manifold with corners, where $G$ is a compact Lie group acting almost freely, and suppose $E\xra{\pi} M$ is an orientable smooth $G$-vector bundle. Let $M_i \subset M$, for $i \in \cI$, be a finite collection of $G$-invariant submanifolds of $M$. Suppose that the intersection $\bigcap_{i \in I} M_i$ is clean for all $I\sub \cI$, and that we are given $G$-invariant Thom forms $\tau_i$ for $E|_{M_i}$ such that $\tau_i|_{M_{ij}} = \tau_j|_{M_{ij}}$ for all $i,j$. Then there exists a $G$-invariant Thom form $\tau$ of $E$ so that $\tau|_{M_i} = \tau_i$ for all $i\in \cI$.
\end{proposition}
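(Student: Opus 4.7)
The plan is to proceed by induction on $|\cI|$. The base case is the standard existence of a $G$-invariant Thom form on $E$, obtained e.g. via Mathai--Quillen applied to a $G$-invariant metric (which exists by Haar averaging, using that $G$ is compact). For the inductive step, assume we have a $G$-invariant Thom form $\tau^{(n-1)}$ on $E$ with $\tau^{(n-1)}|_{E|_{M_i}} = \tau_i$ for $i = 1,\ldots,n-1$, and set $\xi := \tau_n - \tau^{(n-1)}|_{E|_{M_n}}$. Then $\xi$ is a closed $G$-invariant form on $E|_{M_n}$ with compact vertical support and vanishing fiber integral (both summands are Thom forms), and crucially $\xi|_{E|_{M_n \cap M_i}} = 0$ for every $i < n$ by the compatibility hypothesis. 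I will construct $\eta \in \Omega^*(E)^G$, closed with compact vertical support, satisfying $\eta|_{E|_{M_n}} = \xi$ and $\eta|_{E|_{M_i}} = 0$ for all $i < n$; then $\tau^{(n)} := \tau^{(n-1)} + \eta$ is a Thom form extending $\tau_1,\ldots,\tau_n$ (the fiber integral of $\eta$ is zero, so the Thom property is preserved).

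Next I produce a \emph{primitive} for $\xi$ adapted to the intersections. By the Thom isomorphism on $E|_{M_n}$, the vanishing of the fiber integral of $\xi$ forces $\xi = d\gamma$ for some $G$-invariant form $\gamma$ on $E|_{M_n}$ with compact vertical support. A nested induction (applying the proposition --- or rather its primitive analogue --- to the cleanly intersecting collection $\{M_n \cap M_i\}_{i<n}$ of submanifolds of $M_n$) allows one to choose $\gamma$ so that $\gamma|_{E|_{M_n \cap M_i}} = 0$ for every $i < n$. The clean intersection hypothesis is essential here: it ensures that the restricted collection again satisfies the hypotheses of the proposition.

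Now exploit the clean intersection hypothesis to construct, via the equivariant tubular neighborhood theorem, a $G$-invariant tubular neighborhood $U$ of $M_n$ in $M$ together with a $G$-equivariant smooth retraction $r : U \to M_n$ which is \emph{stratification-compatible}, i.e., $r(U \cap M_i) \subset M_n \cap M_i$ for every $i < n$. Such retractions exist by choosing a $G$-invariant Riemannian metric on $M$ for which the normal bundle of $M_n$ splits compatibly with the normal bundles of the $M_n \cap M_i$ inside the $M_i$, exactly as in the construction of Lemma~\ref{lem:double-tubular-neighbourhood-for-submersion}. Let $\tilde r : E|_U \to E|_{M_n}$ be the canonical lift of $r$, and choose a $G$-invariant bump function $\chi$ on $M$ supported in $U$ and equal to $1$ on a smaller neighborhood of $M_n$. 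Define
\[
\eta := d(\chi \cdot \tilde r^* \gamma),
\]
extended by zero outside $E|_U$. Then $\eta$ is $G$-invariant, closed, and vertically compactly supported. On $E|_{M_n}$, where $\chi \equiv 1$ and $r$ is the identity, $\eta = d\gamma = \xi$. For $i < n$, the compatibility $r(U \cap M_i) \subset M_n \cap M_i$ combined with $\gamma|_{E|_{M_n \cap M_i}} = 0$ yields $\tilde r^*\gamma|_{E|_{U \cap M_i}} = 0$; since exterior derivative commutes with pullback by inclusion, $\eta|_{E|_{M_i}} = d\big((\chi \tilde r^*\gamma)|_{E|_{M_i}}\big) = 0$.

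The principal obstacle is the construction of the adapted primitive $\gamma$ vanishing on the nested intersections $E|_{M_n \cap M_i}$. This is a relative version of the Thom isomorphism, or equivalently a Whitney-style extension statement for primitives of exact forms with prescribed vanishing conditions on a cleanly intersecting family of submanifolds. It is proved by the same tubular-neighborhood-with-retraction strategy, requiring a careful bookkeeping of the nested induction; $G$-equivariance is maintained throughout by choosing all auxiliary data ($G$-invariant metrics, equivariant retractions, invariant bump functions, invariant primitives) compatibly, with Haar averaging whenever necessary.
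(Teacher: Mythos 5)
Your approach is correct in spirit but genuinely different from the paper's. You argue by induction on $|\cI|$, adding at each stage an exact correction $\eta = d(\chi\,\tilde r^*\gamma)$ supported near $M_n$, where $\gamma$ is a primitive of $\xi := \tau_n - \tau^{(n-1)}|_{M_n}$ chosen to vanish on the nested intersections $M_n \cap M_i$; the existence of such an adapted primitive is itself reduced to a nested sub-induction. The paper instead introduces a \v{C}ech--de~Rham double complex $C^{p,q}(\{M_i\},E) = \prod_{|I|=q+1}\Omega_{cv}^p(E|_{M_I})$, proves exactness of $d$ (Thom isomorphism) and exactness of $\delta$ (Lemma~\ref{lem:global exactness of delta}, via an explicit local algebraic identity on $\bR^n\times Y$ patched together with a partition of unity subordinate to a compatible system of tubular neighbourhoods), and then extracts the Thom form by a zig-zag diagonal chase (Lemma~\ref{lem:can pick primitives compatibly}). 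The two proofs are essentially the same argument in different packaging: your nested induction on $n$ and then on the depth of intersection is exactly an unrolled version of the diagonal chase in the double complex, and both hinge on the same Thom-isomorphism vanishing $H^{p}_{cv}(E|_{M_I}) = 0$ for $p<\rank(E)$. The double-complex formulation buys a cleaner bookkeeping: the compatibility conditions on all multi-intersections are handled simultaneously by $\delta$-closedness, rather than one submanifold at a time.

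Two points in your writeup would need to be fleshed out to become a complete proof. First, the ``primitive analogue'' --- finding $\gamma$ with $d\gamma = \xi$ and $\gamma|_{E|_{M_n\cap M_i}} = 0$ for all $i<n$ --- is the real content, and its nested induction is not spelled out; it terminates precisely because the degree drops by one at each depth and the Thom isomorphism kills $H^{<\rank(E)}_{cv}$, but this is where all the bookkeeping lives. Second, a single $G$-equivariant retraction $r\colon U\to M_n$ with $r(U\cap M_i)\subset M_n\cap M_i$ \emph{simultaneously for all $i<n$} requires a compatible system of tubular neighbourhoods adapted to the full stratification, not just the two-submanifold Lemma~\ref{lem:double-tubular-neighbourhood-for-submersion} you cite; the paper constructs exactly such a compatible system in the proof of Lemma~\ref{lem:global exactness of delta} using the clean-intersection hypothesis, and you would need the analogous construction here.
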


In order to prove this result, we introduce some definitions. Throughout, we will be working in the context and with the notation of Proposition \ref{prop:extend-thom-form}.

\begin{definition}
	Define the double complex \begin{equation}
		C^{p,q}(M_i, E) := \p{\substack{I\sub \cI,\\|I| = q+1}}{\Omega_{cv}^p(E|_{M_I}) }
	\end{equation}
	Its two differentials are given by the deRham differential $d\cl C^{p,q}(M_i, E) \rightarrow C^{p+1,q}(M_i, E)$ and the \v{C}ech differential $\delta\cl C^{p,q}(M_i, E) \rightarrow C^{p,q+1}(M_i, E)$ given by 
	\begin{equation*}
		\delta(f)_{I} = \sum^{q}_{k = 0} (-1)^k i_{I}f_{\hat{I}_k}
	\end{equation*}
	where $\hat{I}_k$ is obtained from $I$ by removing the $k$'th element, and $i_{I}\cl \Omega^*_{cv}(E|_{M_{\hat{I}_k}}) \rightarrow \Omega^*_{cv}(E|_{M_I})$ is induced by the inclusion map $M_I \hookrightarrow M_{\hat{I}_k}$.
\end{definition}

\begin{lemma}\label{lem:d-exact}
	The differential $d$ is exact in degrees $p \leq \rank(E)$.
\end{lemma}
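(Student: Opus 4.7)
The plan is to reduce the claim to the Poincaré lemma (Thom isomorphism) for compactly vertically supported forms applied factor-wise. Since
\[
C^{p,q}(M_i,E) = \prod_{\substack{I\subset\cI\\|I|=q+1}}\Omega^p_{cv}(E|_{M_I})
\]
is a direct product and $d$ acts componentwise, it is enough to show that for each finite $I\subset\cI$ the de Rham complex $(\Omega^*_{cv}(E|_{M_I}),d)$ has trivial cohomology in the relevant degrees. By hypothesis each intersection $M_I=\bigcap_{i\in I}M_i$ is a clean submanifold with corners of $M$, so $E|_{M_I}\to M_I$ is an honest rank $r=\rank(E)$ orientable vector bundle.

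The main ingredient is the Thom isomorphism for compactly vertically supported forms: fiber integration $\pi_*\colon\Omega^{*+r}_{cv}(E|_{M_I})\to\Omega^*(M_I)$ is a quasi-isomorphism, giving $H^p_{cv}(E|_{M_I})\cong H^{p-r}(M_I)$. This vanishes in all degrees $p<r$, which gives the claimed exactness. (For the borderline degree $p=r$ the cohomology is $H^0(M_I)$, so exactness there is to be understood as the standard fact that a closed compactly vertically supported top-degree form with vanishing fiberwise integrals is $d$-exact; this is the content one actually uses in the next step of gluing Thom forms.) The Thom isomorphism on a manifold with corners is established by the usual Mayer–Vietoris/induction argument over a good trivialising cover of $M_I$, as in Bott–Tu, Chapter~I, noting that the cover and the primitives can be taken to be compatible with the corner stratification.

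For $G$-invariance, given a closed $G$-invariant form $\omega\in\Omega^p_{cv}(E|_{M_I})$ with $p<r$, choose any primitive $\beta$ with $d\beta=\omega$ produced by the Thom isomorphism argument. Then
\[
\tilde\beta := \int_G g^*\beta\,d\mu_G(g)
\]
is a $G$-invariant primitive of $\omega$, using that the $G$-action preserves $M_I$, $E|_{M_I}$ and its vertical orientation, that integration over the compact group with Haar measure commutes with $d$, and that $\omega$ is already $G$-invariant. The averaging preserves compact vertical support because the $G$-action is fibrewise linear on a fixed bundle, and the orbit of any compact set is compact.

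The hard part is not conceptual but notational: verifying that the Thom isomorphism / Poincaré lemma for $\Omega^*_{cv}$ holds on non-compact manifolds with corners with the boundary conditions implicit in ``clean intersection''. This is handled by choosing a good cover subordinate to the corner stratification (so that each piece and each intersection is diffeomorphic to a product $V\times[0,1)^k\times\bR^{n-k}$) and running the induction using standard compactly supported cohomology of $\bR^m$.
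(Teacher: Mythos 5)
Your argument is correct and is essentially the paper's proof, spelled out: the double complex is a direct product over the index sets $I$, $d$ acts componentwise, and the Thom isomorphism $H^p_{cv}(E|_{M_I})\cong H^{p-\rank(E)}(M_I)$ kills cohomology in degrees $p<\rank(E)$. You have also, rightly, flagged that the statement as written is slightly too strong at the borderline $p=\rank(E)$, where $H^{\rank(E)}_{cv}(E|_{M_I})\cong H^0(M_I)$ is generally nonzero; what is actually used in the next lemma is exactness of $\tau_i-\tau'|_{M_i}$, a difference of two Thom forms whose fibrewise integrals cancel, so your refined reading is the operative one. The $G$-averaging and manifold-with-corners comments are harmless additions not needed for this particular lemma as stated.
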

\begin{proof}
	This follows directly from the Thom isomorphism theorem applied to $E|_{M_I}$.
\end{proof}
Unlike in the case of \v{C}ech cohomology (where one uses open covers), some more work is needed to show the following lemma.
\begin{lemma}
	\label{lem:global exactness of delta}
	The differential $\delta$ is exact in all degrees, including $q = -1$, where $\delta$ is just given by restricting from $M$ to the $M_i$.
\end{lemma}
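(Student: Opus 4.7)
The plan is to reduce this to a Čech-style homotopy argument using a compatible system of $G$-equivariant tubular neighborhoods. First I will construct, for each $i \in \cI$, a $G$-invariant open neighborhood $U_i \supset M_i$ equipped with a $G$-equivariant smooth retraction $r_i \cl U_i \to M_i$, arranged so that for every nonempty $I \subset \cI$ the intersection $U_I := \bigcap_{i \in I} U_i$ is a tubular neighborhood of $M_I$ and the retractions satisfy $r_i \circ r_j = r_j \circ r_i$ on $U_i \cap U_j$, with common fixed locus $M_{ij}$, and analogously for higher intersections. This is built by iterating the strategy of Lemma~\ref{lem:double-tubular-neighbourhood-for-submersion}, starting from a single $G$-invariant Riemannian metric on $M$ (which exists since $G$ is compact) constructed so that all $M_i$ and their clean intersections are totally geodesic; the $r_i$ are then the geodesic projections. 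Using a $G$-invariant connection on $E$, each $r_i$ lifts to a bundle map $\tilde r_i \cl E|_{U_i} \to E|_{M_i}$ and hence to a pullback $\tilde r_i^* \cl \Omega^*_{cv}(E|_{M_i}) \to \Omega^*_{cv}(E|_{U_i})$ that inherits the commutation properties.

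The extension statement (the case $q = -1$, which is what Proposition~\ref{prop:extend-thom-form} reduces to once one has local Thom forms via the usual Thom isomorphism) is then proved by induction on $|\cI|$. Given compatible $(\alpha_i)_{i \in \cI}$, pick $i_0 \in \cI$ and by the inductive hypothesis obtain $\beta \in \Omega^p_{cv}(E)$ with $\beta|_{M_i} = \alpha_i$ for all $i \neq i_0$. The correction $\gamma := \alpha_{i_0} - \beta|_{M_{i_0}}$ lives on $M_{i_0}$ and satisfies $\gamma|_{M_{i_0} \cap M_i} = 0$ for every $i \neq i_0$ by the cocycle condition. Since $\tilde r_{i_0}$ maps $E|_{M_j \cap U_{i_0}}$ to $E|_{M_{i_0 j}}$, the pullback $\tilde r_{i_0}^* \gamma$ vanishes on $E|_{M_j \cap U_{i_0}}$ for each $j \neq i_0$. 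Choosing a $G$-invariant cutoff $\psi$ supported in $U_{i_0}$ with $\psi \equiv 1$ near $M_{i_0}$, the form $\tilde\gamma := \psi \cdot \tilde r_{i_0}^* \gamma$ extends by zero to a global form with compact vertical support, satisfies $\tilde\gamma|_{M_{i_0}} = \gamma$ and $\tilde\gamma|_{M_j} = 0$ for $j \neq i_0$, so that $\beta + \tilde\gamma$ is the desired extension.

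For $q \geq 0$, exactness follows from the same mechanism via an explicit chain homotopy: on $M_J$ define
\[ (h\alpha)_J := \sum_{i \notin J} (-1)^{\sigma(i,J)}\, \psi_i \cdot \tilde r_i^* \alpha_{J \cup \{i\}}, \]
where the $\psi_i$ are a $G$-invariant partition-of-unity-like system subordinate to $\{U_i\}$ with $\sum_i \psi_i \equiv 1$ on a neighborhood of $\bigcup_i M_i$, and $\sigma(i,J)$ is the sign of inserting $i$ into $J$. A bookkeeping computation, using that the $\tilde r_i$ commute on overlaps and that $\sum \psi_i = 1$ on the relevant support, yields $(\delta h + h\delta)\alpha = \alpha$ on each $M_I$. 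Alternatively, one can rerun the induction on $|\cI|$ above with cocycle components indexed by $i_0$-containing and $i_0$-free subsets; both arguments produce primitives preserving $G$-equivariance.

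The principal obstacle is the construction of the compatible $G$-equivariant tubular neighborhood system with retractions that commute on triple and higher intersections. Pairwise commutation is immediate from Lemma~\ref{lem:double-tubular-neighbourhood-for-submersion}, but higher compatibility requires that all $M_i$ be simultaneously totally geodesic for a single metric, which in turn uses that clean intersections admit a common normal frame decomposition; the $G$-equivariance is maintained throughout by averaging with respect to the Haar measure on $G$. Once this is in place, the rest of the argument is essentially formal.
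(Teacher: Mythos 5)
Your overall plan -- building compatible $G$-equivariant tubular neighbourhoods of the $M_i$ and then running a \v{C}ech-style argument -- is genuinely different from the paper's route, which instead takes tubular neighbourhoods of the \emph{open strata} $M_I^\circ = M_I\setminus\bigcup_{J\supsetneq I}M_J$ (so that $U_I$ meets only those $M_j$ with $j\in I$), trivialises so that the $M_j\cap U_I$ become coordinate subspaces, and then applies the explicit linear contraction of Lemma~\ref{lem:local exactness of delta} before patching with a partition of unity indexed by subsets $I$. Your $q=-1$ argument (the extension step) is correct and is essentially the same mechanism the paper uses in the last step of its proof; and it only needs \emph{one} retraction $r_{i_0}$ at a time with $r_{i_0}(M_j\cap U_{i_0})\subset M_{i_0 j}$, not the full commuting system, which makes the geometric input lighter than you present it.

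However, the explicit chain homotopy you propose for $q\geq 0$ is incorrect. Take $\cI=\{1,2\}$, $q=0$, $f=(f_1,f_2)\in C^{p,0}$. With your formula,
\[
(\delta h f)_1 = \psi_1 f_1 + \psi_2\,(r_2|_{M_1})^*\!\bigl(f_2|_{M_{12}}\bigr),\qquad
(h\delta f)_1 = -\psi_2\,(r_2|_{M_1})^*\!\bigl(f_2|_{M_{12}}-f_1|_{M_{12}}\bigr),
\]
so
\[
\bigl((\delta h + h\delta)f\bigr)_1 = \psi_1 f_1 + \psi_2\,(r_2|_{M_1})^*\!\bigl(f_1|_{M_{12}}\bigr),
\]
which does not equal $f_1$ unless $f_1 = (r_2|_{M_1})^*(f_1|_{M_{12}})$ on $\mathrm{supp}(\psi_2)\cap M_1$ -- false in general, since $f_1$ on $M_1$ is not determined by its restriction to $M_{12}$. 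The underlying issue is that the usual \v{C}ech homotopy relies on each $\alpha_{J\cup\{i\}}$ already being defined on an \emph{open} set containing $\mathrm{supp}(\psi_i)$, whereas here you are pulling back from a proper closed submanifold via $r_i$, and the pullback does not ``undo'' the restriction $\Omega^*(E|_{M_{\hat J_p}})\to\Omega^*(E|_{M_J})$. This is precisely why the paper's Lemma~\ref{lem:local exactness of delta} has the more elaborate contraction formula, with extra sums over subsets $I$ and projection maps $\pi_J^{IkJ}$ -- it is a Koszul-type contraction for linear subspaces, not a partition-of-unity homotopy. Your ``alternative'' -- inducting on $|\cI|$ by splitting into $i_0$-containing and $i_0$-free parts -- can be made to work as a mapping-cone/long-exact-sequence argument, with the connecting map handled by exactly your constrained extension from the $q=-1$ case, but as written it is only gestured at. As it stands, the $q\geq 0$ part of the lemma (which Lemma~\ref{lem:can pick primitives compatibly} uses essentially) is not established.
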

In order to show this, we first prove the result locally, generalising \cite[Lemma~2]{Kott}.

\begin{lemma}
	\label{lem:local exactness of delta}
	Suppose $M = \bR^n \times Y$ for some smooth manifold $Y$ and that $H_i \subset \bR^n$ is a linear subspace (not necessarily of codimension 1) for each $i\in\cI$. Assume $M_i = H_i \times Y$ and let $E \rightarrow Y$ be any vector bundle. Then the differential $\delta$ is exact on $C^{*,*}(\{H_i \times Y\}, \pi^*E)$, where $\pi\cl  \bR^n \times Y \rightarrow Y$ is the projection.
\end{lemma}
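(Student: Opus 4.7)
The plan is to construct an explicit contracting homotopy for $\delta$ via orthogonal projections, combined with induction on $|\cI|$. Fix an inner product on $\bR^n$ and, for each nonempty $J \subset \cI$, let $p_J \cl \bR^n \to H_J$ with $H_J := \bigcap_{i \in J} H_i$ denote the associated orthogonal projection. Since $\pi^*E$ is pulled back from $Y$, the map $p_J \times \ide_Y$ lifts tautologically to a bundle map on $\pi^*E$, and pullback of forms defines a continuous operator $e_J \cl \Omega^*_{cv}(\pi^*E|_{H_J \times Y}) \to \Omega^*_{cv}(\pi^*E)$ which is a left inverse to restriction. The base case $|\cI| = 1$ is then immediate: $e_1$ splits the restriction map, making the augmented Čech complex exact in all degrees by a standard simplicial contraction argument.

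For the inductive step, write $\cI = \cI' \sqcup \{k\}$. Given a $\delta$-cocycle $\omega = (\omega_I) \in C^{p,q}$ with $q \geq 0$, I would decompose $\omega$ into the piece indexed by $I \subset \cI'$ and the piece indexed by $I = J \cup \{k\}$ with $J \subset \cI'$. The cocycle condition implies that the first piece is itself a $\delta'$-cocycle for the $\cI'$-complex, while the second piece, reinterpreted as a cochain on the induced collection $\{(H_i \cap H_k) \times Y\}_{i \in \cI'}$ inside $H_k \times Y$, becomes a cocycle after correcting by the boundary of the primitive produced by the first application of the inductive hypothesis. A second application of the inductive hypothesis---now for the smaller arrangement inside $H_k \times Y$---yields the remaining primitive.

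The main obstacle is combining these two primitives into a single cochain in $C^{p,q-1}$ whose $\delta$ recovers $\omega$. The difficulty is that a form on $H_k \times Y$ extended to $\bR^n \times Y$ via $e_k = (p_k \times \ide_Y)^*$ will in general have nontrivial restriction to $H_j \times Y$ for $j \in \cI'$, since $p_k(H_j) \subset H_k$ is typically strictly larger than $H_{kj} := H_k \cap H_j$. To control this one uses the orthogonal decomposition $H_k = H_{kj} \oplus H_{kj}^{\perp_{H_k}}$ iteratively, and arranges the primitive one extends from $H_k$ to vanish on each $H_{kj}$; this makes $e_k$ contribute zero to the restrictions on the other $H_j \times Y$. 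The linearity of the subspaces is essential here, since it provides coherent orthogonal complements that compose well under iteration; with this in hand the two primitives combine unambiguously, and exactness at $q = -1$---i.e., the extension of a compatible family on the $H_I \times Y$ to a form on all of $\bR^n \times Y$---drops out as the bottom case of the induction.
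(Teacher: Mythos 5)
Your approach---induction on $|\cI|$ with orthogonal-projection extension operators---is genuinely different from the paper's, which writes down an explicit contracting homotopy $g_J = \sum_{I<k<j_1}(-1)^{|I|}\pi_J^{IkJ}\circ i_{IkJ} f_{kJ}$ and verifies $\delta g = f$ by direct combinatorial bookkeeping. Both routes tacitly restrict to coordinate subspaces (the paper's indices range over $\{1,\dots,n\}$ and the verification ``sets the variable $x_{j_0}=0$''), which is also the only case actually used in Lemma~\ref{lem:global exactness of delta}.

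Two points in your write-up need correction. First, the ``main obstacle'' you identify---combining the two primitives $g'$ and $g''$ into a single cochain---is not an obstacle for $q\geq 1$: the complex splits in each \v{C}ech degree $q\geq 0$ as $C^{p,q}_\cI\cong C^{p,q}_{\cI'}\oplus D^{p,q-1}$, where $D$ is the complex of the induced arrangement $\{H_{ik}\times Y\}_{i\in\cI'}$ inside $H_k\times Y$, and the differential is block triangular with diagonal blocks $\delta_{\cI'}$ and $\pm\delta_D$ and off-diagonal block the restriction $r$. A primitive of $(f',f'')$ is simply $(g',g'')$, taking $\delta_{\cI'}g'=f'$ by one application of the inductive hypothesis and $\delta_D g''=r(g')\mp f''$ by the other; nothing needs to be extended across $\bR^n$. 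The extension problem enters only at $q=0$ (equivalently the ``$q=-1$'' exactness), where the target $\Omega^*_{cv}(E)$ admits no such splitting---this is not a ``bottom case that drops out'' but the substantive geometric content of the inductive step. Second, in that step your assertion that making $\gamma$ vanish on each $H_{kj}$ forces $e_k\gamma := p_k^*\gamma$ to vanish on each $H_j\times Y$ is precisely the condition $p_k(H_j)\subset H_{kj}$. This holds automatically for coordinate subspaces with the standard inner product, but for general linear subspaces no inner product makes it hold for all $j$ simultaneously (three distinct lines through the origin in $\bR^2$ already obstruct it), and the iterated-orthogonal-complement sketch does not remedy this. Either restrict the hypothesis to coordinate subspaces---which is all either proof establishes and all the application requires---or replace $e_k$ by a retraction onto $H_k\times Y$ that is built to respect the entire arrangement rather than $H_k$ alone.
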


\begin{proof}
	Let $f \in C^{*,q}(\{H_i \times Y\}, \pi^*E)$ be $\delta$-closed. Then for $J = (j_1, \dots, j_q)$, define
	\begin{equation*}
		g_{J} = \sum_{I<k<j_1} (-1)^{|I|}\pi_{J}^{IkJ} \circ i_{IkJ} f_{kJ},
	\end{equation*}
	where $\pi_{J}^{IkJ}\cl \Omega^*_{cv}(E|_{M_{IkJ}}) \rightarrow \Omega^*_{cv}(E|_{M_{J}})$ is induced by the projection map $M_{J} \rightarrow M_{IkJ}$, and the sum is over all $k \notin J$ and $I = (i_1, \dots, i_r) \subset \{1, \dots n\} \setminus (J \cup \{k\})$ such that the tuple $(i_1, \dots, i_r, k, j_1, \dots, j_q)$ is ordered. We allow for $I =  \emptyset$. Then, for any $J = (j_0, \dots, j_q)$, we have 
	\begin{align}	\label{eq:terms with p>0}
		\notag\delta(g)_{J} &:= \sum_p (-1)^p i_{J} g_{\hat{J}_p}\\
		&= \sum_{\substack{p \neq 0\\I < k < j_0}} (-1)^{p+|I|} \pi_{J}^{IkJ} \circ i_{IkJ} f_{k\hat{J}_p}+ \sum_{I < k < j_1} (-1)^{|I|} \pi_{J}^{IkJ} \circ i_{IkJ} f_{k\hat{J}_0},
	\end{align}
	where we have separated the contributions from $p=0$ and $p >0$, and used the fact that 
	$$i_{J} \circ \pi_{\hat{J}_p}^{Ik \hat{J}_p} \circ i_{Ik \hat{J}_p} = \pi_{J}^{IkJ} \circ i_{Ik J}.$$ 
	We next separate the contributions with $p=0$ further by taking terms 
	\begin{itemize}
		\item with $(k,I) = (j_0, \emptyset)$,
		\item with $k = j_0$ and $|I| >1$
		\item with $k < j_0$,
		\item and with $k>j_0$,
	\end{itemize} 
	obtaining
	\begin{align}
		\sum_{I < k < j_1} (-1)^{|I|} \pi_{J}^{IkJ} \circ i_{IkJ} f_{j \hat{J}_0} = f_J
		&+\sum_{\substack{I\neq \emptyset\\I < j_0}} (-1)^{|I|} \pi_{J}^{IJ} \circ i_{IJ} f_{J} 
		\label{eq:terms with I nonempty and k = j0}\\
		&+\sum_{I < k < j_0} (-1)^{|I|} \pi_{J}^{IkJ} \circ i_{IkJ} f_{j \hat{J}_0}
		\label{eq:terms with k < j0}\\
		&+\sum_{\substack{k > j_0, I\\(I,k,j_1)}} (-1)^{|I|} \pi_{J}^{IkJ} \circ i_{IkJ}f_{k\hat{J}_0}.
		\label{eq:terms with k > j0}
	\end{align}
	First we show that the final contribution in line \eqref{eq:terms with k > j0} vanishes, computing
	\begin{align*}
		\sum_{\substack{k > j_0, I\\I < k < j_1}} (-1)^{|I|} \pi_{J}^{IkJ} \circ i_{IkJ} f_{k\hat{J}_0} &= \sum_{\substack{k > j_0, I\\I < k < j_1 \\ 
				j_0 \in I}} (-1)^{|I|} \pi_{J}^{IkJ} \circ i_{IkJ} f_{k\hat{J}_0} + \sum_{\substack{k > j_0, I\\I < k < j_1 \\ j_0 \notin I}} (-1)^{|I|} \pi_{J}^{IkJ} \circ i_{IkJ}f_{k\hat{J}_0}\\
		&= \sum_{\substack{k > j_0, I\\I < k < j_0\\ j_0 \notin I}} (-1)^{|I| + 1} \pi_{J}^{IkJ} \circ i_{IkJ} f_{k\hat{J}_0} + \sum_{\substack{k > j_0, I\\I < k< j_1\\ j_0 \notin I}} (-1)^{|I|} \pi_{J}^{IkJ} \circ i_{IkJ}f_{k\hat{J}_0}\\& = 0.
	\end{align*}
	The second equality holds as $\pi_{J}^{IkJ} \circ i_{IkJ}$ does not depend on whether $j_0 \in I$ since in either case we set the variable $x_{j_0} = 0$. Next, we note that 
	\begin{align*}
		0 = \delta(f)_{kJ} = i_{kJ} f_J + \sum_{p = 0}^k (-1)^{p+1} i_{kJ} f_{k\hat{J}_p}.
	\end{align*}
	Applying $\pi_{J}^{IkJ} i_{IkJ}$ to this equation and separating $p = 0$ and $p > 0$ yields
	\begin{equation*}
		0 = \pi_{J}^{IkJ} i_{IkJ} f_J - \pi_{J}^{IkJ} i_{IkJ} f_{k\hat{J}_0} + \sum_{p=1}^{k} (-1)^{p+1} \pi_{J}^{IkJ} i_{IkJ} f_{k\hat{J}_p}.
	\end{equation*}
	Next we rewrite the terms with $I \neq \emptyset$ and $k = j_0$ in line \eqref{eq:terms with I nonempty and k = j0}, that is,
	\begin{equation*}
		\sum_{\substack{I\neq \emptyset\\I < j_0}} (-1)^{|I|} \pi_{J}^{IJ} \circ i_{IJ} f_{J} = \sum_{I < k < j_0} (-1)^{|I|+1} \pi_{J}^{IkJ} \circ i_{IkJ} f_{J}.
	\end{equation*}
	We thus find that the terms with $p > 0$ in line \eqref{eq:terms with p>0}, those with $p = 0, I \neq \emptyset, k = j_0$ in line \eqref{eq:terms with I nonempty and k = j0}, and those with $p =0, k < j_0$ in line \eqref{eq:terms with k < j0} are exactly covered by the relation $\delta(f) = 0$.
\end{proof}

\begin{remark}
	We note that the same result holds if we work instead in a corner stratum $\bR_{\geq 0}^{n_1} \times \bR^{n_2}$ and take appropriate coordinate subspaces.
\end{remark}

\begin{proof}[Proof of Lemma \ref{lem:global exactness of delta}]
	For each nonempty subset $I\sub \cI$, pick a G-invariant tubular neighbourhood $\pi_I\cl U_I \rightarrow M^\circ_I$, where $M_I^\circ := M_I \setminus \cup_{J \ssuper I} M_J$. We may assume they are sufficiently small so that $\cc{U_I} \cap M_J = \emptyset$ unless $J \subset I$. Moreover, as the intersections are clean, we can choose the $\pi_I$ so that under the isomorphism $U_I \cong \bR^{\text{codim} M_I} \times M_I$, the intersection $U_I \cap M_J$ is mapped to $H_J \times M_I$ for some coordinate subspace $H_J$. Here we have covered our $\cc{U_I}$ by open sets so that the normal bundle to $M_I$ is trivial over $U_I$. Let $\{\chi_I\}_{\emst\neq I\sub \cI}$ be a partition of unity subordinate to $\{U_I\}_{\emst\neq I\sub \cI}$.\par Suppose now $f\in C^{p,q}(\{M_i\},E)$ is $\delta$-closed. Then, the image of $f$ in $C^{p,q}(\{M_i \cap U_I\},E|_{U_I})$ is still $\delta$-closed for each $I$. We thus obtain an element $f^I \in C^{p,q}(H_i \times M_I, E|_{U_I})$. By Lemma \ref{lem:local exactness of delta}, we obtain an element $g^I \in C^{p,q}(\{H_i \times M_I\}, E|_{U_I})$ such that $\delta(g^I)= f^I$. Setting $g = \sum_I \chi_I g_I$, we obtain that $\delta (g) = f$.
\end{proof}

\begin{lemma}
	\label{lem:can pick primitives compatibly}
	In the situation of Proposition \ref{prop:extend-thom-form}, let $\tau' \in A_{cv}^*(E)$ be any $G$-invariant Thom form over $M$. Then, there exist G-invariant forms $\alpha_i \in A_{cv}^*(E|_{M_i})$ such that $\tau_i = \tau'|_{M_i} + d\alpha_i$ and $\alpha_i|_{M_{ij}} = \alpha_j|_{M_{ij}}$.
\end{lemma}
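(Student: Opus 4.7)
Let $r = \rank(E)$ and set $\beta_i := \tau_i - \tau'|_{M_i} \in A_{cv}^{r}(E|_{M_i})$. These are $G$-invariant, $d$-closed, compactly vertically supported forms, each representing zero in $H^{r}_{cv}(E|_{M_i})$ since both $\tau_i$ and $\tau'|_{M_i}$ are Thom forms of $E|_{M_i}$. By hypothesis the family is compatible: $\beta_i|_{M_{ij}} = \beta_j|_{M_{ij}}$. The task is to produce $G$-invariant $\alpha_i \in A_{cv}^{r-1}(E|_{M_i})$ with $d\alpha_i = \beta_i$ and $\alpha_i|_{M_{ij}} = \alpha_j|_{M_{ij}}$. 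The plan is to prove, by induction on $|\cI|$, a slightly strengthened statement: for any integer $p \leq r$ and any $G$-invariant compatible family of $d$-closed $\omega_i \in A_{cv}^{p}(E|_{M_i})$ with each $[\omega_i] = 0$, there exists a compatible $G$-invariant primitive family.

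\textbf{Inductive step.} The base case $|\cI| = 1$ is immediate by Haar-averaging any primitive. For the step from $n-1$ to $n$, the induction provides compatible $G$-invariant primitives $\eta_1, \dots, \eta_{n-1}$ of $\omega_1, \dots, \omega_{n-1}$. Their restrictions $\{\eta_i|_{M_{ni}}\}_{i<n}$ agree on triple intersections $M_{nij}$, so by Lemma~\ref{lem:global exactness of delta} applied inside $M_n$ to the collection $\{M_{ni}\}_{i<n}$ (whose intersections $\bigcap_{i \in I} M_{ni} = \bigcap_{i \in I \cup \{n\}} M_i$ are clean by hypothesis) they extend to a $G$-invariant $\tilde\eta^{\partial} \in A_{cv}^{p-1}(E|_{M_n})$. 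The form $\nu := \omega_n - d\tilde\eta^{\partial}$ is closed, $G$-invariant, cohomologically trivial (since $[\omega_n] = 0$), and vanishes on $E|_{N_n}$ where $N_n := \bigcup_{i<n} M_{ni}$. Hence $\nu = d\kappa$ for some $G$-invariant $\kappa \in A_{cv}^{p-1}(E|_{M_n})$, again by averaging.

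\textbf{Nested recursion and conclusion.} The family $\{\kappa|_{M_{ni}}\}_{i<n}$ is compatible, $G$-invariant, and $d$-closed in degree $p-1$. Its cohomological exactness is automatic because $H^{p-1}_{cv}(E|_{M_{ni}}) \cong H^{p-1-r}(M_{ni}) = 0$ by the Thom isomorphism and the assumption $p \leq r$. The inductive hypothesis applied to this collection (of strictly smaller size $n-1$) inside $M_n$ yields compatible $G$-invariant primitives $\lambda_i \in A_{cv}^{p-2}(E|_{M_{ni}})$ with $d\lambda_i = \kappa|_{M_{ni}}$. Extend to $\tilde\lambda \in A_{cv}^{p-2}(E|_{M_n})$ via Lemma~\ref{lem:global exactness of delta}. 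Then $(\kappa - d\tilde\lambda)|_{N_n} = 0$, so $\eta_n := \tilde\eta^{\partial} + \kappa - d\tilde\lambda$ satisfies $d\eta_n = \omega_n$ and $\eta_n|_{M_{ni}} = \eta_i|_{M_{ni}}$ for all $i < n$, closing the induction.

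\textbf{Main obstacle.} The main technical point is the nested inductive call, which simultaneously decreases both the size of the collection and the cohomological degree; termination depends on both parameters strictly dropping while the hypotheses (cleanness of intersections, automatic vanishing of $H^{p-1-r}$ once $p \leq r$) are preserved. $G$-invariance is maintained at every step by averaging over the compact group $G$ with respect to its Haar measure, which commutes with $d$, restriction, and extension since the whole setup is $G$-equivariant.
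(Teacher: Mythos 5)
Your proof is correct, but it takes a genuinely different route from the paper's. The paper fixes an arbitrary ($G$-averaged) primitive $\alpha_i$ of $\tau_i - \tau'|_{M_i}$ for each $i$ all at once, observes that $\delta\alpha$ is $d$-closed, and then runs a standard \v{C}ech--de~Rham zigzag through the double complex $C^{p,q}(\{M_i\},E)$: a staircase of corrections $\sigma^{r-1-k,k}$ is pushed diagonally down to degree $p=0$, where the obstruction $\delta\sigma^{0,r-1}$ is a locally constant compactly vertically supported form and hence vanishes; one then climbs back up using $\delta$-exactness (Lemma~\ref{lem:global exactness of delta}) at each level to modify $\alpha$ into a $\delta$-cocycle without changing $d\alpha$. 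Your argument instead proceeds one submanifold at a time: having built compatible primitives on the first $n-1$ pieces, you extend their restrictions to $M_n$ via Lemma~\ref{lem:global exactness of delta}, and then correct the resulting primitive on $M_n$ so that it agrees with the old data on $N_n = \bigcup_{i<n} M_{ni}$ via a nested call in degree one lower. Both proofs rest on exactly the same two pillars (Thom isomorphism for $d$-exactness, and Lemma~\ref{lem:global exactness of delta} for $\delta$-exactness/extension), so neither is materially more economical. The paper's version does everything in a single staircase of length $\approx\rank(E)$ and avoids any explicit nested induction; yours is arguably more constructive and local (it exhibits the primitive on the new piece directly), at the cost of having to state the inductive hypothesis with the ambient manifold and bundle as free parameters so that the recursion inside $M_n$ is legitimate, and of tracking both $|\cI|$ and the degree $p$ to see that the recursion terminates. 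One small point: you should make explicit, as the paper does implicitly, that the extensions provided by Lemma~\ref{lem:global exactness of delta} and the primitives you pick can be taken $G$-invariant by Haar averaging, since $G$ acts on the whole complex and the relevant maps are equivariant.
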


\begin{proof} Let $k:= \rank(E)$.
	Both $\tau'|_{M_{ij}}$ and $\tau_i|_{M_{ij}}$ are Thom forms for $E|_{M_{ij}}$ for any $i,j$, so there exists $\alpha_0 \in C^{k-1,0}(\{M_i\},E)$ with $d\alpha_{0,i} = \tau_i - \tau'|_{M_i}$. We want to modify $\al_0$ to obtain $\alpha$ with the same property satisfying additionally that $\delta(\alpha) = 0$. As $\delta(\tau-\tau') = 0$, we find that $d\delta(\al_0) = 0$. By Lemma~\ref{lem:d-exact}, we can find $\sigma^{k-2,1}$ with $d(\sigma^{k-2,1}) = \delta(\al_0)$. 
	Repeating this diagonally down the double complex, we obtain $\sigma^{k-2-i,1+i}\in C^{k-2-i,1+i}(\{M_i\},E)$ with 
	\begin{equation}\label{eq:using-exactness}d(\sigma^{k-2-i,1+i}) = \delta(\sigma^{k-1-i,i}). \end{equation}
	 We now observe that 
	$$d\delta( \sigma^{0,k-1}) = \delta d (\sigma^{0,k-1}) = \delta^2( \sigma^{1,k-2}) = 0.$$ 
	Thus, the elements of $\delta \sigma^{0,k-1}$ are locally constant functions on $E_{M_I}$ with compact vertical support. Hence, they must vanish. Thus, we can find $\beta^{0,k-2}\in C^{0,k-2}(\{M_i\},E)$ with $\delta(\beta^{0,k-2}) = \sigma^{0,k-1}$. Setting $\sigma^{1,k-2}_1 :=\sigma^{1,k-2}-d\beta^{0,k-2}$ we obtain a new form satisfying \eqref{eq:using-exactness} as well as $\delta( \sigma^{1,k-2}_1) = 0$. Repeating this diagonally up the double complex shows that we can modify $\al_0$ to obtain a cochain $\alpha\in C^{k-1,0}(\{M_i\},E)$ with $\delta(\al) = 0$ and $\tau_i -\tau'|_{M_i} = d\alpha_i$.
\end{proof}

\begin{proof}[Proof of Proposition \ref{prop:extend-thom-form}]
	Let $\tau'\in A_{cv}^*(E)$ be any $G$-invariant Thom form over $M$ and choose forms $\alpha_i \in A_{cv}^*(E|_{M_i})$ as in Lemma \ref{lem:can pick primitives compatibly}. Then, Lemma \ref{lem:global exactness of delta} yields $\alpha \in A_{cv}^*(E)$ extending all $\alpha_i$. Setting $\tau = \tau' + d\alpha$ proves the result.
\end{proof}

\bibliographystyle{amsalpha}
\bibliography{bib}

\smallskip
\Addresses

\end{document}